\newtheorem{definition}{Definition}[section]
\newtheorem{theorem}[definition]{Theorem}
\newtheorem{corollary}[definition]{Corollary}
\newtheorem{remark}[definition]{Remark}
\newtheorem{proposition}[definition]{Proposition}
\newtheorem{lemma}[definition]{Lemma}
\newtheorem{example}[definition]{Example}
\newtheorem{convention}[definition]{Convention}
\newtheorem*{theorem*}{Theorem}
\newcommand{\RR}{\ensuremath{\mathbb{R}}}
\newcommand{\NN}{\ensuremath{\mathbb{N}}}
\newcommand{\Sw}{\ensuremath{\mathcal{S}}}
\newcommand{\ZZ}{\ensuremath{\mathbb{Z}}}
\newcommand{\eps}{\varepsilon}
\newcommand{\GG}{\ensuremath{\mathcal{G}}}
\newcommand{\GGe}{\GG^\eps}
\newcommand{\wGG}{\ensuremath{\widehat{\mathcal{G}}}}
\newcommand{\wGGe}{\wGG^\eps}
\newcommand{\FF}{\ensuremath{\mathcal{F}}}
\newcommand{\FFr}{\mathcal{F}_{\RR^d}}
\newcommand{\FFg}{\mathcal{F}_{\GG}}
\newcommand{\FFge}{\mathcal{F}_{\GG^{\eps}}}
\newcommand{\dd}{\mathrm{d}}
\newcommand{\supp}{\ensuremath{\mathrm{supp}\,}}
\newcommand{\Dw}{\mathcal{D}}
\newcommand{\EE}{\mathcal{E}}
\newcommand{\CC}{\mathbb{C}}
\newcommand{\CR}{\mathscr{R}}
\newcommand{\para}{\varolessthan}
\newcommand{\lpara}{\varogreaterthan}
\newcommand{\reso}{\varodot}
\newcommand{\naivepara}{\varolessthan}
\newcommand{\naivereso}{\varodot}
\newcommand{\naivempara}{\mathord{\prec\hspace{-6pt}\prec}}
\newcommand{\mpara}{\mathord{\prec\hspace{-6pt}\prec}}
\newcommand{\kkg}{\Psi^{\GG,j}}
\newcommand{\kkge}{\Psi^{\GG^\eps,j}}
\newcommand{\je}{\langle j \rangle_\eps}
\newcommand{\rr}{\boldsymbol{\rho}}
\newcommand{\ww}{\boldsymbol{\omega}}
\newcommand{\mm}{\boldsymbol{\mu}}
\newcommand{\rA}{\mathcal{A}}
\newcommand{\dist}{\mathrm{dist}}
\newcommand{\B}{\mathcal{B}}
\newcommand{\cC}{\mathcal{C}}
\newcommand{\Sww}{\Sw_\omega}
\newcommand{\Eww}{C^\infty_\omega}
\newcommand{\Dww}{\Dw_\omega}
\newcommand{\lsim}{\precsim}
\newcommand{\gsim}{\succsim}
\newcommand*\bigcdot{\mathpalette\bigcdot@{.5}}
\newcommand*\bigcdot@[2]{\mathbin{\vcenter{\hbox{\scalebox{#2}{$\m@th#1\bullet$}}}}}
\newcommand{\scl}{\bigcdot}
\newcommand{\plc}{\cdot}
\newcommand{\ext}[1]{\left(#1\right)_{\mathrm{ext}}}
\newcommand{\aste}{\ast_{\GG^\eps}}
\newcommand{\rast}{\ast_{\RR^d}}
\newcommand{\wexps}{\omega^{\mathrm{exp}}_\sigma}
\newcommand{\wpol}{\omega^{\mathrm{pol}}}
\newcommand{\w}{ p}
\newcommand{\ii}{\imath}
\newcommand{\diam}{\mathrm{diam}}
\newcommand{\short}{\!\!\!\!}
\newcommand{\xshort}{\short\short\short}
\newcommand{\Ff}{\mathcal{F}}
\newcommand{\blacklozeng}{\mathbin{\scriptstyle \blacklozenge}}
\newcommand{\1}{\mathbf{1}}
\newcommand\wcheck[1]{%
\savestack{\tmpbox}{\stretchto{%
  \scaleto{%
    \scalerel*[\widthof{\ensuremath{#1}}]{\kern-.6pt\bigwedge\kern-.6pt}%
    {\rule[-\textheight/2]{1ex}{\textheight}}
  }{\textheight}%
}{0.5ex}}%
\stackon[1pt]{#1}{\scalebox{-1}{\tmpbox}}%
}
\title{Paracontrolled distributions on Bravais lattices and \\ weak universality of the 2d parabolic Anderson model}
\author{Jörg Martin\thanks{Financial support by the DFG via Research Training Group RTG 1845 is gratefully acknowledged.} \\
  Institut f\"ur Mathematik \\
  Humboldt--Universit\"at zu Berlin \\
  \texttt{martin@math.hu-berlin.de}  
  \and
  Nicolas Perkowski\thanks{Financial support by the DFG via the Heisenberg program and via Research Unit FOR 2402 is gratefully acknowledged.} \\
  Institut f\"ur Mathematik, Humboldt--Universit\"at zu Berlin \\
  \& Max-Planck-Institut f\"ur Mathematik in den Naturwissenschaften, Leipzig \\
  \texttt{perkowsk@math.hu-berlin.de}}
\newglossaryentry{varphij}
{
	name={$\varphi_j^{\GG}$},
	description={(discrete) Dyadic partition unity},
	sort={fj}
}
\newglossaryentry{varDeltaj}
{
	name={$\varDelta_j$},
	description={Littlewood-Paley block constructed from the dyadic partititon of unity \gls[noindex]{varphij}},
	sort={deltaj}
}
\newglossaryentry{PsiGGj}
{
	name ={$\Psi^{\GG,j}$},
	description={Fourier transform of $\varphi^{\GG}_j$},
	sort={psij}
}
\newglossaryentry{PsiGGleqj}
{
	name={$\Psi^{\GG,<j}$},
	description={Abbreviation for $\sum_{i<j} \Psi^{\GG,i}$},
	sort={psijj}
}
\newglossaryentry{GG}
{
	name={$\GG,\,\GG^\eps$},
	description={Bravais lattices, $\GG^\eps=\eps\cdot \GG$ denotes the scaled lattice},
	sort={g}
}
\newglossaryentry{widehatGG}
{
	name={$\widehat{\GG}$},
	description={Fourier cell for a Bravais lattice $\GG$},
	sort={g}
}
\newglossaryentry{ww}
{
	name={$\ww$},
	description={Set of functions $\wpol,\,\wexps$ that classify weights},
	sort={o}
}
\newglossaryentry{rromega}
{
	name={$\rr(\omega)$},
	description={The set of weights, whose growth/decay is controlled by $\omega\in \mbox{\gls[noindex]{ww}}$},
	sort={ro}
}
\newglossaryentry{mathscrEeps}
{
	name={$\EE^\eps$},
	description={Extension from Bravais lattices $\GG^\eps$ to $\RR^d$},
	sort={Eeps}
}
\newglossaryentry{Balphapq}
{
	name={$\B^\alpha_{p,q}$},
	description={Besov space},
	sort={Bapq}
}
\newglossaryentry{NN}
{
	name={$\NN$},
	description={Natural numbers including $0$, $\NN=\{0,1,2,\ldots\}$},
	sort={N}
}
\newglossaryentry{FF}
{
	name={$\FFr$},
	description={Fourier transform with convention: $\FFr f(x)=\int \dd \xi\, e^{2\pi \ii x \xi }\,f(\xi)$},
	sort={f}
}
\newglossaryentry{mmomega}
{
	name={$\mm(\omega)$},
	description={Set of jump measures for symmetric random walks},
	sort={mo}
}
\newglossaryentry{spectralsupport}
{
	name={Spectral support},
	description={The support of the Fourier transform of an (ultra-) distribution},
	sort={ss}
}
\newglossaryentry{times}
{
	name={$\times$},
	description={Symbol used to connect products with factors in different lines},
	sort={0}
}
\newglossaryentry{Sww}
{
	name={$\Sww$},
	description={Ultra-differentiable Schwartz functions},
	sort={So}
}
\newglossaryentry{Eww}
{
	name={$\Eww$},
	description={Ultra-differentiable functions},
	sort={Co}
}
\newglossaryentry{Cnb}
{
	name={$C^n_b$},
	description={Functions with bounded derivatives up to order $n$},
	sort={Cnb}
}
\newglossaryentry{lesssim}
{
	name={$\lesssim$},
	description={Means $\leq$ ``up to a multiplicative, deterministic constant''},
	sort={0l}
}
\newglossaryentry{lsim}
{
	name={$\lsim$},
	description={Used for indices $i,j\in \ZZ$. Means $\leq$ ``up to an additive, deterministic constant''},
	sort={0l}
}
\newglossaryentry{LpRRdrho}
{
	name={$L^p(\RR^d,\rho)$},
	description={Weighted $L^p$ space with the convention $\|f\|_{L^p(\RR^d,\rho)}=\|f\cdot \rho\|_{L^p(\RR^d)}$},
	sort={Lp}
}
\newglossaryentry{CR}
{
	name={$\CR$},
	description={Reciprocal Lattice},
	sort={R}
}
\newglossaryentry{jGG}
{
	name={$j_{\GG}$},
	description={The index where $\supp\,\varphi_j$ touches $\partial\,\wGG$},
	sort={jg}
}
\newglossaryentry{varDeltaGGj}
{
	name={$\varDelta_j^\GG$},
	description={Discrete Littlewood-Paley block},
	sort={djg}
}
\newglossaryentry{pkappa}
{
	name={$p^\kappa$},
	description={Polynomial, decaying weight $p^\kappa(x)=(1+|x|)^{-\kappa}$},
	sort={pk}
}
\newglossaryentry{esigmal}
{
	name={$e^\sigma_l$},
	description={Time-dependent, sub-exponential weight},
	sort={esl}
}
\newglossaryentry{mathscrLgammaalphapT}
{
	name={$\mathscr{L}^{\gamma,\alpha}_{p,T}$},
	description={Parabolic space},
	sort={Lgapt}
}
\newglossaryentry{mathcalMgammaTX}
{
	name={$\mathcal{M}^\gamma_T X$},
	description={Weighted space},
	sort={Mgtx}
}
\newglossaryentry{lepsmu}
{
	name={$l^\eps_\mu$},
	description={Fourier multiplier for the diffusion operator $L^\eps_\mu$},
	sort={lem}
}
\newglossaryentry{mathscrLepsmu}
{
	name={$L^{(\eps)}_\mu,\,\mathscr{L}^{(\eps)}_\mu$},
	description={(discrete) Diffusion operator and its associated operator $\mathscr{L}^{(\eps)}_\mu=\partial_t-L^{(\eps)}_\mu$},
	sort={Lepsmu}
}
\newglossaryentry{nwGG}
{
	name={$[\cdot]_{\wGG}$},
	description={Periodic map from $\RR^d$ to $\wGG$},
	sort={0wG}
}
\newglossaryentry{para}
{
	name={$\naivepara$},
	description={Paraproduct, either on $\RR^d$ or on a Bravais lattice},
	sort={0p}
}
\newglossaryentry{reso}
{
	name={$\naivereso$},
	description={Resonant term, either on $\RR^d$ or on a Bravais lattice},
	sort={0r}
}
\newglossaryentry{diamond}
{
	name={$\diamond$},
	description={Wick product},
	sort={0w}
}
\newglossaryentry{bullet}
{
	name={$\bullet$},
	description={Renormalized resonant term},
	sort={0r2}
}
\newglossaryentry{mpara}
{
	name={$\naivempara$},
	description={Modified paraproduct},
	sort={0pm}
}
\newglossaryentry{blacklozenge}
{
	name={$\blacklozenge$},
	description={Renormalized product for PAM (on $\RR^2$)},
	sort={0r3}
}
\newglossaryentry{mathscrDgammaalphapT}
{
	name={$\mathscr{D}^{\gamma,\alpha}_{p,T}$},
	description={Space of paracontrolled distributions for PAM},
	sort={Dga}
}
\newglossaryentry{mathcalCalpha}
{
	name={$\mathcal{C}^\alpha$},
	description={Besov space with $p=q=\infty$},
	sort={Ca}
}
\newglossaryentry{mathcalCalphap}
{
	name={$\mathcal{C}^\alpha_p$},
	description={Besov space with $q=\infty$},
	sort={Cap}
}
\newglossaryentry{FFg}
{
	name={$\FFg$},
	description={Fourier transform on a Bravais lattice $\GG$},
	sort={fg}
}
\newglossaryentry{psi}
{
	name={$\psi$},
	description={Smear function},
	sort={psi}
}
\begin{document}

\maketitle

\begin{abstract}
   We develop a discrete version of paracontrolled distributions as a tool for deriving scaling limits of lattice systems, and we provide a formulation of paracontrolled distributions in weighted Besov spaces. Moreover, we develop a systematic martingale approach to control the moments of polynomials of i.i.d. random variables and to derive their scaling limits. As an application, we prove a weak universality result for the parabolic Anderson model: We study a nonlinear population model in a small random potential and show that under weak assumptions it scales to the linear parabolic Anderson model. 
\end{abstract}

%

\paragraph{MSC:} 60H15, 60F05, 30H25

\paragraph{Keywords:} paracontrolled distributions; scaling limits; weak universality; Bravais lattices; Besov spaces; parabolic Anderson model


\section{Introduction}
Paracontrolled distributions were developed in~\cite{GIP} to solve \emph{singular SPDEs}, stochastic partial differential equations that are ill-posed because of the interplay of very irregular noise  and nonlinearities. A typical example is the two-dimensional continuous parabolic Anderson model,
\[
   \partial_t u = \Delta u + u \xi - u \infty,
\]
where $u\colon \RR_+ \times \RR^2 \to \RR$ and $\xi$ is a space white noise, the centered Gaussian distribution whose covariance is formally given by $\mathbb{E}[\xi(x) \xi(y)] = \delta(x-y)$. The irregularity of the white noise prevents the solution from being a smooth function, and therefore the product between $u$ and the distribution $\xi$ is not well defined. To make sense of it we need to eliminate some resonances between $u$ and $\xi$ by performing an infinite renormalization that replaces $u \xi$ by $u \xi - u \infty$. The motivation for studying singular SPDEs comes from mathematical physics, because they arise in the large scale description of natural microscopic dynamics. For example, if for the parabolic Anderson model we replace the white noise $\xi$ by its periodization over a given box $[-L,L]^2$, then it was recently shown in~\cite{Chouk2017} that the solution $u$ is the limit of $u^\eps(t,x) = e^{-c^\eps t} v^\eps( t/\eps^{2},  x/\eps)$, where $v^\eps \colon \RR_+ \times \{-L/\eps, \dots, L/\eps \}^2 \to \RR$ solves the lattice equation
\[
   \partial_t v^\eps = \Delta^\eps v^\eps + \eps v^\eps \eta,
\]
where $\Delta^\eps$ is the periodic discrete Laplacian and $(\eta(x))_{x \in \{-L/\eps, \dots, L/\eps\}^2}$ is an i.i.d. family of centered random variables with unit variance and sufficiently many moments.

Results of this type can be shown by relying more or less directly on paracontrolled distributions as they were developed in~\cite{GIP} for functions of a continuous space parameter. But that approach comes at a cost because it requires us to control a certain random operator, which is highly technical and a difficulty that is not inherent to the studied problem. Moreover, it just applies to lattice models with polynomial nonlinearities. See the discussion below for details. Here we formulate a version of paracontrolled distributions that applies directly to functions on Bravais lattices and therefore provides a much simpler way to derive scaling limits and never requires us to bound random operators. Apart from simplifying the arguments, our new approach also allows us to study systems on infinite lattices that converge to equations on $\RR^d$, while the formulation of the Fourier extension procedure we sketch below seems much more subtle in the case of an unbounded lattice. Moreover, we can now deal with non-polynomial nonlinearities which is crucial for our main application, a weak universality result for the parabolic Anderson model. Besides extending paracontrolled distributions to Bravais lattices we also develop paracontrolled distributions in weighted function spaces, which allows us to deal with paracontrolled equations on unbounded spaces that involve a spatially homogeneous noise. And finally we develop a general machinery for the use of discrete Wick contractions in the renormalization of discrete, singular SPDEs with i.i.d. noise which is completely analogous to the continuous Gaussian setting, and we build on the techniques of~\cite{Caravenna} to provide a criterion that identifies the scaling limits of discrete Wick products as multiple Wiener-It\^o integrals.

Our main application is a weak universality result for the two-dimensional parabolic Anderson model. We consider a nonlinear population model $v^\eps \colon \RR_+ \times \ZZ^2 \to \RR$,
\begin{equation}\label{eq:intro-nonlin-pam}
   \partial_t v^\eps(t,x) = \Delta^{(\mathrm d)} v^\eps(t,x) + F(v^\eps(t,x)) \eta^\eps(x),
\end{equation}
where $\Delta^{(\mathrm d)}$ is the discrete Laplacian, $F \in C^2$ has a bounded second derivative and satisfies $F(0) = 0$, and $(\eta^\eps(x))_{x \in \ZZ^2}$ is an i.i.d. family of random variables with $\mathrm{Var}(\eta^\eps(0)) = \eps^2$ and $\mathbb{E}[\eta^\eps(0)] = - F'(0) \eps^2 c^\eps$ for a suitable sequence of diverging constants $c^\eps \sim |\log \eps|$. The variable $v^\eps(t,x)$ describes the population density at time $t$ in the site $x$. The classical example would be $F(u) = u$, which corresponds to the discrete parabolic Anderson model in a small potential $\eta^\eps$. In that case $v^\eps$ describes the evolution of a population where every individual performs an independent random walk and finds at every site $x$ either favorable conditions if $\eta^\eps(x) > 0$ that allow the individual to reproduce at rate $\eta^\eps(x)$, or non-favorable conditions if $\eta^\eps(x)<0$ that kill the individual at rate $-\eta^\eps(x)$. We can include some interaction between the individuals by choosing a nonlinear function $F$. For example, $F(u) = u(C - u)$ models a saturation effect which limits the overall population size in one site to $C$ because of limited resources. In Section~\ref{sec:PAM} we will prove the following result:
\begin{theorem*}[see Theorem~\ref{thm:ConvergencePAM}]
	Assume that $F$ and $(\eta^\eps(x))$ satisfy the conditions described above and also that the $p$-th moment of $\eta^\eps(0)$ is uniformly bounded in $\eps$ for some $p>14$. Then there exists a unique solution $v^\eps$ to~\eqref{eq:intro-nonlin-pam} with initial condition $v^\eps(0,x) = \mathbf{1}_{\cdot = 0}$, up to a possibly finite explosion time $T^\eps$ with $T^\eps \to \infty$ for $\eps \to 0$, and $u^\eps(t,x) = \eps^{-2} v^\varepsilon(\eps^{-2} t, \eps^{-1} x)$ converges in law to the unique solution $u \colon \RR_+ \times \RR^2 \to \RR$ of the linear continuous parabolic Anderson model
	\[
	   \partial_t u = \Delta u + F'(0) u \xi - F'(0)^2 u \infty, \qquad u(0) = \delta,
	\]
	where $\delta$ denotes the Dirac delta.
\end{theorem*}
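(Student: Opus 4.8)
The strategy is the by-now classical one for weak universality statements, and it can be run cleanly precisely because the discrete paracontrolled calculus of the previous sections never forces us to control random operators. The plan is: (i) rescale and recognise the renormalised linear parabolic Anderson model as the leading part of the equation for $u^\eps$; (ii) solve the rescaled equation inside the discrete paracontrolled framework in weighted parabolic spaces, with estimates uniform in $\eps$ and continuous in the enhanced noise; (iii) prove convergence in law of the enhanced noise; (iv) combine. For Step~(i), note that $u^\eps(t,x)=\eps^{-2}v^\eps(\eps^{-2}t,\eps^{-1}x)$ is a function on $\RR_+\times\GGe$ with $\GGe=\eps\cdot\ZZ^2$; since both $\Delta^{(\mathrm d)}$ and the map $v^\eps\mapsto u^\eps$ contribute a factor $\eps^{-2}$, the Laplacian term becomes the rescaled discrete Laplacian $\Delta^\eps$. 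Setting $\xi^\eps(x):=\eps^{-2}\bigl(\eta^\eps(x/\eps)+F'(0)\eps^2c^\eps\bigr)$ for $x\in\GGe$ — the standard discretisation of the $2$d space white noise $\xi$, centred with $\mathrm{Var}(\xi^\eps(x))=\eps^{-2}$ — one has $\eta^\eps(x/\eps)=\eps^2\bigl(\xi^\eps(x)-F'(0)c^\eps\bigr)$. Writing $F(\eps^2u^\eps)=F'(0)\eps^2u^\eps+\widetilde G(\eps^2u^\eps)$ with $\widetilde G(y):=F(y)-F'(0)y\in C^2$, $\widetilde G(0)=\widetilde G'(0)=0$, $|\widetilde G(y)|\lesssim y^2$, $|\widetilde G'(y)|\lesssim|y|$ (all thanks to $F\in C^2$, $F''$ bounded, $F(0)=0$), a short computation gives
\[
 \partial_tu^\eps=\Delta^\eps u^\eps+F'(0)\,u^\eps\xi^\eps-F'(0)^2c^\eps\,u^\eps+\eps^{-2}\widetilde G(\eps^2u^\eps)\,\xi^\eps-F'(0)c^\eps\,\eps^{-2}\widetilde G(\eps^2u^\eps),
\]
with $u^\eps(0,\cdot)=\eps^{-2}\1_{\,\cdot\,=0}$, which converges to $\delta$ in a weighted Besov space of regularity slightly below $-1$. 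The first three terms on the right are the discretisation of the renormalised PAM; the last two are remainders which, once $\widetilde G(\eps^2u^\eps)=O(\eps^4(u^\eps)^2)$ is used, carry prefactors $O(\eps^2)$ and $O(\eps^2c^\eps)=O(\eps^2|\log\eps|)$.

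For Step~(ii) recall that PAM on $\RR^2$ requires, besides the noise, only the datum $\Psi^\eps$ solving $(\partial_t-\Delta^\eps)\Psi^\eps=\xi^\eps$ together with the renormalised resonant product $\Psi^\eps\blacklozeng\xi^\eps:=\Psi^\eps\reso\xi^\eps-c^\eps$, where $c^\eps$ is the diverging expectation $\mathbb{E}[\Psi^\eps\reso\xi^\eps]$; the $d=2$ heat-kernel computation forces $c^\eps\sim|\log\eps|$, which is exactly the hypothesis on the centering of $\eta^\eps$. I would make the paracontrolled ansatz $u^\eps=F'(0)\,u^\eps\mpara\Psi^\eps+u^{\eps,\sharp}$ in the weighted parabolic space $\mathscr{L}^{\gamma,\alpha}_{p,T}$ with $\alpha$ slightly below $1$, a time weight absorbing the $\delta$-singularity of the initial datum (as in Hairer--Labbé) and a polynomial spatial weight accommodating the unbounded lattice. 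Decomposing $u^\eps\xi^\eps=u^\eps\para\xi^\eps+u^\eps\lpara\xi^\eps+u^\eps\reso\xi^\eps$ and resolving the resonant term by the commutator lemma and the ansatz yields a fixed-point equation for $(u^\eps,u^{\eps,\sharp})$ whose only stochastic input is $(\xi^\eps,\Psi^\eps\blacklozeng\xi^\eps)$; the counterterm $-F'(0)^2c^\eps u^\eps$ cancels the divergence coming from $F'(0)^2u^\eps(\Psi^\eps\reso\xi^\eps)$. The non-polynomial remainders are handled by paralinearisation: since $\widetilde G\in C^2$ and $u^\eps$ is paracontrolled by $\Psi^\eps$, the composition $\widetilde G(\eps^2u^\eps)$ is itself paracontrolled by $\Psi^\eps$ with derivative proportional to $\eps^2u^\eps$, so the $\para$- and $\lpara$-parts of $\eps^{-2}\widetilde G(\eps^2u^\eps)\xi^\eps$ are bounded in a negative Besov space and killed by the prefactor $\eps^2$, while its resonant part reduces (commutator $+$ paracontrol) to $\eps^2c^\eps\times(\text{bounded})+\eps^2\times(\text{bounded})$; likewise the last remainder is $O(\eps^2c^\eps)$; all vanish. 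Since the discrete operators, paraproducts and Schauder estimates of the earlier sections degenerate to their continuous counterparts as $\eps\to0$, all these bounds are uniform in $\eps\in[0,1]$, so the local solution map $(\xi^\eps,\Psi^\eps\blacklozeng\xi^\eps,\text{datum})\mapsto u^\eps$ is defined up to an explosion time $T^\eps$, depends continuously on its arguments, and has as $\eps\to0$ limit the continuous paracontrolled solution map of $\partial_tu=\Delta u+F'(0)u\xi-F'(0)^2u\infty$; because that limit equation is linear, hence globally well-posed, and the a priori bounds are uniform, one obtains $T^\eps\to\infty$.

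For Step~(iii) one must show $(\xi^\eps,\Psi^\eps\blacklozeng\xi^\eps)\to(\xi,\Psi\blacklozeng\xi)$ in law in the relevant space of weighted distributions. The first coordinate is a functional central limit theorem for the rescaled i.i.d. field, which follows from the moment bounds and scaling-limit criterion developed earlier for polynomials of i.i.d. variables (here the linear statistic). The second coordinate $\Psi^\eps\blacklozeng\xi^\eps$ is a quadratic (second-chaos) functional of $(\eta^\eps(x))$; expanding in discrete Wick products separates the genuinely quadratic Wick part from the contraction part, the latter being exactly $c^\eps$. The martingale moment inequalities give $p'$-th moment bounds on the Wick part uniform in $\eps$, and the criterion built on~\cite{Caravenna} identifies its limit as the second-order Wiener--Itô integral representing $\Psi\blacklozeng\xi$; passing these moment bounds through the weighted Besov embedding for $p'$ large is where the hypothesis $\mathbb{E}[|\eta^\eps(0)|^p]\lesssim1$ for some $p>14$ is used. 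Finally (Step~(iv)), $u^\eps$ is the Step-(ii) solution map applied to $(\xi^\eps,\Psi^\eps\blacklozeng\xi^\eps)$ plus a remainder tending to $0$; by Step~(iii) the data converges in law and by Step~(ii) the map is continuous, so a Skorokhod-representation argument together with the continuous mapping theorem gives convergence in law of $u^\eps$ to the paracontrolled solution $u$ of the stated equation on any interval $[0,T]$ (admissible for $\eps$ small since $T^\eps\to\infty$).

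The two genuinely hard points are: (a) the uniform-in-$\eps$ paracontrolled estimates in the \emph{weighted} parabolic spaces, where the $\delta$-initial datum imposes time weights, the unbounded lattice imposes spatial weights, and the non-polynomial remainder must be tamed by paralinearising $\widetilde G$ — this is exactly where $F\in C^2$ is needed and where the surplus powers of $\eps$ must be shown to beat $c^\eps\sim|\log\eps|$; and (b) the convergence of the renormalised resonant product $\Psi^\eps\blacklozeng\xi^\eps$, which needs the full discrete Wick-contraction and multiple-Wiener-integral machinery for non-Gaussian i.i.d. noise, run in the weighted setting. Everything else is bookkeeping on top of the results of the preceding sections.
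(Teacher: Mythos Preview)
Your high-level strategy matches the paper's: rescale, set up a paracontrolled fixed point in weighted parabolic spaces with estimates uniform in $\eps$, prove convergence of the enhanced noise via the discrete Wick/martingale machinery and the criterion of~\cite{Caravenna}, then conclude by Skorokhod representation and the $(\EE)$-property. The identification of the two hard points is accurate.

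Where you diverge from the paper is in the treatment of the nonlinear remainder $\eps^{-2}\widetilde G(\eps^2 u^\eps)\,\xi^\eps$ and its companion $c^\eps$-term. You propose to paralinearise $\widetilde G$, propagate the paracontrolled structure through the composition, and then argue that the resonant part picks up a factor $\eps^2 c^\eps$ plus bounded remainders. The paper does \emph{not} do this; instead it exploits a purely discrete fact: on $\GG^\eps$ one can freely trade regularity for powers of $\eps$, namely $\|\eps^{\beta'} f\|_{\mathcal{C}^{\beta+\beta'}_q}\lesssim\|f\|_{\mathcal{C}^{\beta}_q}$ and $\|\eps^{-\beta} f\|_{L^q}\lesssim\|f\|_{\mathcal{C}^{\beta}_q}$ for $\beta<0$. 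This lets one put $\xi^\eps$ into $L^\infty$ (at cost $\eps^{-(2-\alpha-2\kappa/\sigma)}$), estimate $R(u^\eps)(u^\eps)^2\xi^\eps$ pointwise in $L^p$, and then boost back to $\mathcal{C}^{2\alpha+2\kappa/\sigma-2}_p$; the net balance is $\eps^{3\alpha+2\kappa/\sigma-2}\|u^\eps\|^2$, which is a positive power of $\eps$ precisely because $\alpha>2/3-2/3\cdot\kappa/\sigma$. This explicit $\eps^\nu$ prefactor on the \emph{quadratic} term in the a~priori estimate is what makes the local existence time $T^{\mathrm{loc}}_\eps\sim(C_{M_\eps}+C_{M_\eps}\eps^\nu r)^{-2/(\alpha-\delta)}$ diverge and hence gives $T^\eps\to\infty$ in probability. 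Your paralinearisation route would have to reproduce this $\eps^\nu$ structure, but with only $F\in C^2$ the Bony paralinearisation remainder is borderline, the paracontrolled derivative $\widetilde G'(\eps^2 u^\eps)\cdot\eps^2\cdot F'(0)u^\eps$ does not match the counterterm actually present in the equation, and the ``bounded'' factors you invoke still depend nonlinearly on $u^\eps$ in weighted spaces. The paper's $\eps$-trading trick sidesteps all of this and requires no renormalisation of the nonlinear piece at all: each of the two remainder terms is individually $o(1)$.

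Two smaller points. First, the solution does not live in polynomially weighted spaces but in the time-dependent sub-exponential weights $e^\sigma_{l+t}$; the inequality $p^\kappa e^\sigma_{l+s}\lesssim e^\sigma_{l+t}/|t-s|^{\kappa/\sigma}$ is what allows the Schauder estimate to absorb the polynomial weight carried by the noise, and is the mechanism behind the $2\kappa/\sigma$ corrections in the regularity accounting. Second, the initial datum $|\GG^\eps|^{-1}\1_{\cdot=0}$ is handled in $\mathcal{C}^0_p$ with $p=1$ (not ``regularity slightly below $-1$''); the singularity at $t=0$ is absorbed by the exponent $\gamma=\alpha$ in $\mathscr{L}^{\gamma,\alpha}_{p,T}$.
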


\begin{remark}
	It may appear more natural to assume that $\eta^\eps(0)$ is centered. However, we need the small shift of the expectation away from zero in order to create the renormalization $-F'(0)^2 u \infty$ in the continuous equation. Making the mean of the variables $\eta^\eps(x)$ slightly negative (assume $F|_{[0,\infty)} \ge 0$ so that $F'(0) \ge 0$) gives us a slightly higher chance for a site to be non-favorable than favorable. Without this, the population size would explode in the scale in which we look at it. A similar effect can also be observed in the Kac-Ising/Kac-Blume-Capel model, where the renormalization appears as a shift of the critical temperature away from its mean field value~\cite{Mourrat2017, Shen2016}. Note that in the linear case $F(u)=u$ we can always replace $\eta^\eps$ by $\eta^\eps + c$ if we consider $e^{c t} v^\eps(t)$ instead. So in that case it is not necessary to assume anything about the expectation of $\eta^\eps$, we only have to adapt our reference frame to its mean.
\end{remark}
\begin{remark}
The condition $p>14$ might seem rather arbitrary. Roughly speaking this requirement is needed to apply a form of Kolmogorov's continuity criterion, see Remark \ref{rem:fourteen} for details.
\end{remark}

\paragraph{Structure of the paper}

Below we provide further references and explain in more details where to place our results in the current research in singular SPDEs and we fix some conventions and notations. In Sections~\ref{sec:BravaisLattices}-~\ref{sec:ParacontrolledAnalysisonBravaisLattices} we develop the theory of paracontrolled distributions on unbounded Bravais lattices, and in particular we derive Schauder estimates for quite general random walk semigroups. Section~\ref{sec:PAM} contains the weak universality result for the parabolic Anderson model, and here we present our general methodology for dealing with multilinear functionals of independent random variables. The appendix contains several proofs that we outsourced. Finally, there is a list of important symbols at the end of the paper.  

\paragraph{Related works}

As mentioned above, we can also use paracontrolled distributions for functions of a continuous space parameter to deal with lattice systems. The trick, which goes back at least to \cite{Mourrat2017} and was inspired by~\cite{Hairer2012Spatial}, is to consider for a lattice function $u^\eps$ on say $\{k\eps: -L/\eps \le k \le L/\eps\}^2$ the unique periodic function $\mathrm{Ext}(u^\eps)$ on $(\RR / (2L\ZZ))^2$ whose Fourier transform is supported in $[-1/\eps,1/\eps]^2$ and that agrees with $u^\eps$ in all the lattice points. If the equation for $u^\eps$ involves only polynomial nonlinearities, we can write down a closed equation for $\mathrm{Ext}(u^\eps)$ which looks similar to the equation for $u^\eps$ but involves a certain ``Fourier shuffle'' operator that is not continuous on the function spaces in which we would like to control $\mathrm{Ext}(u^\eps)$. But by introducing a suitable random operator that has to be controlled with stochastic arguments one can proceed to study the limiting behavior of $\mathrm{Ext}(u^\eps)$ and thus of $u^\eps$. This argument has been applied to show the convergence of lattice systems to the KPZ equation~\cite{Gubinelli2017KPZ}, the $\Phi^4_3$ equation~\cite{Zhu2015}, and to the parabolic Anderson model~\cite{Chouk2017}, and the most technical part of the proof was always the analysis of the random operator. The same argument was also applied to prove the convergence of the Kac-Ising / Kac-Blume-Capel model~\cite{Mourrat2017, Shen2016} to the $\Phi^4_2$ / $\Phi^6_2$ equation. This case can be handled without paracontrolled distributions, but also here some work is necessary to control the Fourier shuffle operator. This difficulty is of a technical nature and not inherent to the studied problems, and the line of argumentation we present here avoids that problem by analysing directly the lattice equation rather than trying to interpret it as a continuous equation.

Other intrinsic approaches to singular SPDEs on lattices have been developed in the context of regularity structures by Hairer, Matetski and Erhard~\cite{Hairer2015Discretisations, Erhard2017} and in the context of the semigroup approach to paracontrolled distributions by Bailleul and Bernicot~\cite{Bailleul2016}, and we expect that both of these works could be combined with our martingale arguments of Section~\ref{sec:PAM} to give an alternative proof of our weak universality result.

We call the convergence of the nonlinear population model to the linear parabolic Anderson model a ``weak universality'' result in analogy to the weak universality conjecture for the KPZ equation. The (strong) KPZ universality conjecture states that a wide class of (1+1)-dimensional interface growth models scale to the same universal limit, the so called KPZ fixed point~\cite{Matetski2016}, while the weak KPZ universality conjecture says that if we change some ``asymmetry parameter'' in the growth model to vanish at the right rate as we scale out, then the limit of this \emph{family} of models is the KPZ equation. Similarly, here the influence of the random potential on the population model must vanish at the right rate as we pass to the limit, so the parabolic Anderson model arises as scaling limit of a \emph{family} of models. Similar weak universality results have recently been shown for other singular SPDEs such as the KPZ equation~\cite{Goncalves2014, HairerQuastel, Gubinelli2015Energy, Gubinelli2016Hairer} (this list is far from complete), the $\Phi^{2n}_d$ equations~\cite{Mourrat2017, Hairer2016Large, Shen2016}, or the (stochastic) nonlinear wave equation~\cite{Gubinelli2017, Oh2017}.

A key task in singular stochastic PDEs is to renormalize and to construct certain a priori ill-defined products between explicit stochastic processes. This problem already arises in rough paths \cite{Lyons} but there it is typically not necessary to perform any renormalizations and general construction and approximation results for Gaussian rough paths were developed in~\cite{Friz2010}. For singular SPDEs the constructions become much more involved and a general construction of regularity structures for equations driven by Gaussian noise was found only recently and is highly nontrivial~\cite{Bruned2016, Chandra2016}. For Gaussian noise it is natural to regroup polynomials of the noise in terms of Wick products, which goes back at least to~\cite{DaPratoDebussche} and is essentially always used in singular SPDEs, see~\cite{SolvingKPZ, RegularityStructures, CatellierChouk, Gubinelli2017KPZ} and many more. Moreover, in the Gaussian case all moments of polynomials of the noise are equivalent, and therefore it suffices to control variances. In the non-Gaussian case we can still regroup in terms of Wick polynomials~\cite{Mourrat2017, Hairer2015Central, Chandra2016Moment, Shen2016Weak}, but a priori the moments are no longer comparable and new methods are necessary. In~\cite{Mourrat2017} the authors used martingale inequalities to bound higher order moments in terms of variances.

In our case it may look as if there are no martingales around because the noise is constant in time. But if we enumerate the lattice points and sum up our i.i.d. variables along this enumeration, then we generate a martingale. This observation was used in~\cite{Chouk2017} to show that for certain polynomial functionals of the noise (``discrete multiple stochastic integrals'') the moments are still comparable, but the approach was somewhat ad-hoc and only applied directly to  the product of two variables in ``the first chaos''. 

Here we develop a general machinery for the use of discrete Wick contractions in the renormalization of discrete, singular SPDEs with i.i.d. noise which is completely analogous to the continuous Gaussian setting. Moreover, we build on the techniques of~\cite{Caravenna} to provide a criterion that identifies the scaling limits of discrete Wick products as multiple Wiener-It\^o integrals. Although these techniques are only applied to the discrete $2d$ parabolic Anderson model, the approach extends in principle to any discrete formulation of popular singular SPDEs such as the KPZ equation or the $\Phi^4_d$ models.

\subsection{Conventions and Notation}
We use the common notation $\lesssim,\,\gtrsim$ in estimates to denote $\leq,\,\geq$ up to a positive constant. The symbol $\approx$ means that both $\lesssim$ and $\gtrsim$ hold true. For discrete indices we mean by $i \lsim j$ that there is a $N\geq 0$ (independent of $i,j$) such that $i\leq j+N$, i.e. that $2^i \lsim 2^j$, and similarly for $j \gsim i$; the notation $i \sim j$ is shorthand for $i\lsim j$ and $j\lsim i$.  

We denote partial derivatives by $\partial^{\alpha}$ for $\alpha\in \mathbb{N}^d := \{0,1,2, \dots\}^d$ and for $\alpha=(\1_{i = j})_j$ we write $\partial^i = \partial^\alpha$. Our Fourier transform follows the convention that for $f\in L^1(\RR^d)$  
\begin{align*}
	\FFr f(y) :=\int_{\RR^d} f(x) e^{-2\pi \imath x \scl y} \, \dd x
,\qquad
	\FFr^{-1} f (x) &:=\int_{\RR^d} f(y) e^{2\pi \imath x \scl y} \,\dd y\,,
\end{align*} 
where $x \scl y$ denotes the usual inner product on $\RR^d$. The most relevant notations are listed in a glossary at the  end of this article.

\section{Weighted Besov spaces on Bravais lattices}
\label{sec:BravaisLattices}

\subsection{Fourier transform on Bravais lattices}
\label{subsec:LatticeFourierTransform}

A \emph{Bravais-lattice} in $d$ dimensions consists of the integer combinations of $d$ linearly independent vectors $a_1,\ldots,a_d \in \RR^d $, that is
\glsadd{GG}
\begin{align}
\label{eq:Lattice}
	\GG:=\ZZ\, a_1 + \ldots +\ZZ\, a_d\,.
\end{align}
Given a Bravais lattice we define the basis $\widehat{a}_1,\ldots,\widehat{a}_d$  of the reciprocal lattice by the requirement 
\begin{align}
\label{eq:ReciprocalLattice}
	\widehat{a}_i \scl a_j= \delta_{ij}\,,
\end{align}
and we set $\mathscr{R} := \ZZ\, \widehat{a}_1 +\ldots + \ZZ\,\widehat{a}_d$.
\glsadd{CR}
However, we will mostly work with the (centered) parallelotope which is spanned by the basis vectors $\widehat{a}_1,\ldots,\widehat{a}_d$:
\glsadd{widehatGG}
\begin{align*}
	\widehat{\GG} &:=[0,1)\,\widehat{a}_1+\ldots +[0,1)\, \widehat{a}_d-\frac{1}{2}(\widehat{a}_1 + \ldots +\widehat{a}_d)=[-1/2,1/2)\,\widehat{a}_1+\ldots +[-1/2,1/2)\, \widehat{a}_d\,.
\end{align*} 
We call $\widehat{\GG}$ the \textit{bandwidth} or \textit{Fourier-cell} of $\GG$ to indicate that the Fourier transform of a map on $\GG$ lives on $\widehat{\GG}$, as we will see below. We also identify $\widehat{\GG}\simeq \RR^d / \mathscr{R}$ and turn $\widehat{\GG}$ into an additive group which is invariant under translations by elements in $\mathscr{R}$. 

\begin{example}
If we choose the canonical basis vectors $a_1=e_1,\ldots,a_d=e_d$, we have simply
\begin{equation*}
	\GG =\ZZ^d \,,\qquad \mathscr{R} = \ZZ^d \,,\qquad \widehat{\GG} =\mathbb{T}^d=[-1/2,1/2)^d \,.
\end{equation*}
Compare also the left lattice in Figure~\ref{fig:BravaisLattices}.
\end{example}

In Figure \ref{fig:BravaisLattices} we sketched some Bravais lattices $\GG$ together with their Fourier cells $\widehat{\GG}$. Note that the dashed lines  between the points of the lattice are at this point a purely artistic supplement. However, they will become meaningful later on: If we imagine a particle performing a random walk on the lattice $\GG$, then the dashed lines could be interpreted as the jumps it is allowed to undertake. From this point of view the lines will be drawn by the diffusion operators we introduce in Section ~\ref{sec:DiffusionOperators}.

\begin{figure}[t!]
\label{fig:BravaisLattices}
\centering 
\includegraphics[width=\textwidth]{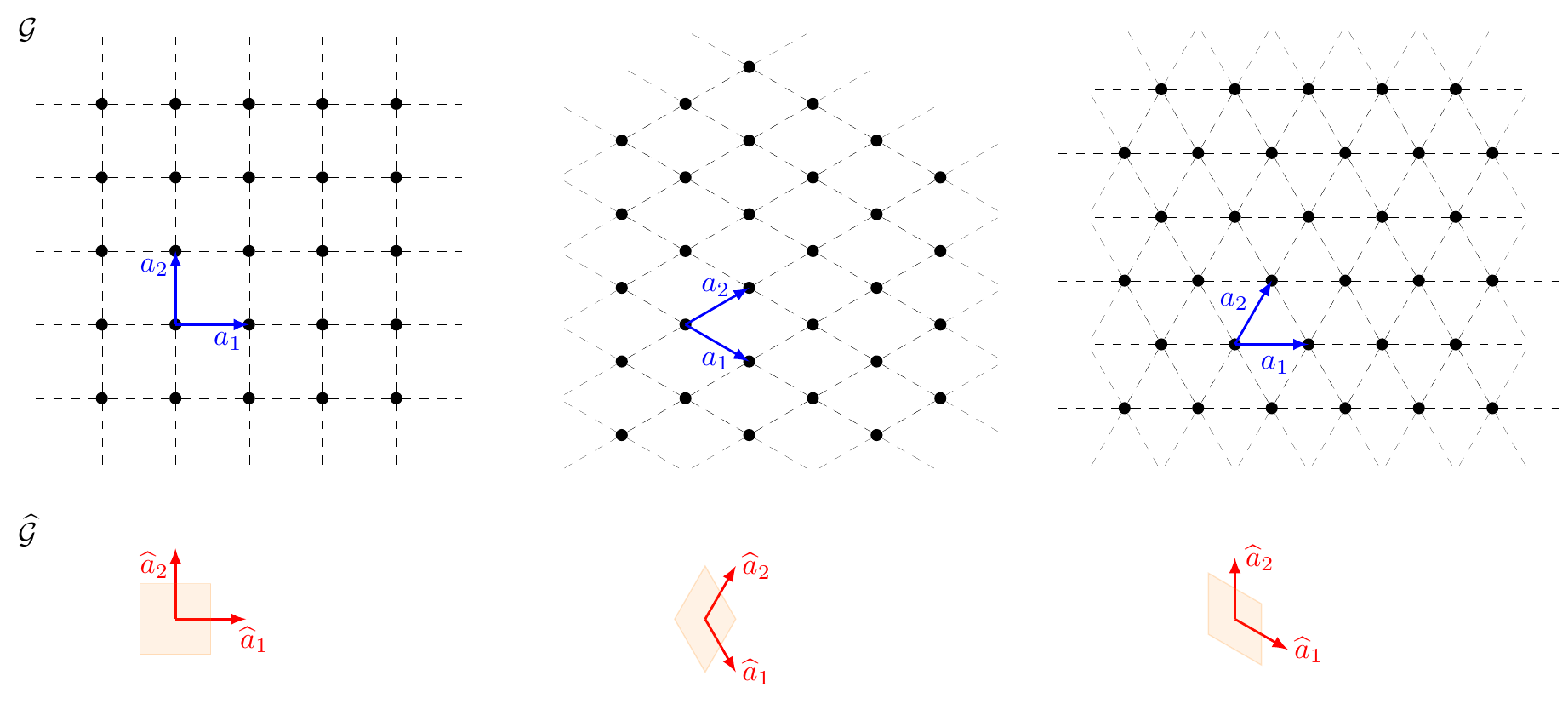}
\caption{Depiction of some Bravais lattices $\GG$ with their bandwiths $\widehat{\GG}$: a square lattice, an oblique lattice and the so called hexagonal lattice. The length of the reciprocal vectors $\widehat{a}_i$ is rather arbitrary since it actually depends on the units in which we measure $a_i$.}
\end{figure}

\begin{definition}
\label{def:LatticeSequence}
Given a Bravais lattice $\GG$ as defined in \eqref{eq:Lattice} we write 
\begin{align*}
	\GG^\eps:=\eps \GG
\end{align*}
for the sequence of Bravais lattice we obtain by \textit{dyadic} rescaling with $\eps=2^{-N},\, N\geq 0$. 
Whenever we say a statement (or an estimate) holds for $\GG^\eps$ we mean that it holds (uniformly) for all $\eps=2^{-N},\,N\geq 0$. 
\end{definition}
\begin{remark}
\label{rem:DyadicLatticeSequence}
The restriction to dyadic lattices fits well with the use of Littlewood-Paley theory which is traditionally built from dyadic decompositions. However, it turns out that we do not lose much generality by this. Indeed, all the estimates below will hold uniformly as soon as we know that the scale of our lattice is contained in some interval $(c_1,c_2)\subset\subset (0,\infty)$. Therefore it is sufficient to group the members of any positive null-sequence $(\eps_n)_{n \geq 0}$ in dyadic intervals $[2^{-(N+1)},2^{-N})$ to deduce the general statement. 
\end{remark}

Given $\varphi \in \ell^1(\GG)$ we define its Fourier transform as
\glsadd{FFg}
\begin{equation}
   \FFg \varphi(x):=|\GG| \sum_{k\in\GG} \varphi(k) e^{- 2\pi \imath k \scl x},\qquad x \in \widehat{\GG},
\end{equation}
where we introduced a ``normalization constant'' $|\GG|:=|\det\left(a_1,\ldots,a_d\right)|$
that ensures that we obtain the usual Fourier transform on $\RR^d$ as $|\GG|$ tends to 0.
We will also write $|\widehat{\GG}|$ for the Lebesgue measure of the Fourier cell $\wGG$.

If we consider $\FFg \varphi$ as a map on $\RR^d$, then it is periodic under translations in $\mathscr{R}$. By the dominated convergence theorem $\FFg \varphi$ is continuous, so since $\wGG$ is compact it is in $L^1(\wGG):= L^1(\wGG, \dd x)$, where $\dd x$ denotes integration with respect to the Lebesgue measure. For any $\psi \in L^1(\wGG)$ we define its inverse Fourier transform as
\begin{equation}
   \FFg^{-1} \psi(k) :=  \int_{\wGG} \psi(x) e^{2\pi \imath k \scl x} \dd x,\qquad k \in \GG.
\end{equation}
Note that $|\GG| = 1/|\wGG|$ and therefore we get at least for $\varphi$ with finite support $\FFg^{-1} \FFg \varphi = \varphi$. The Schwartz functions on $\GG$ are
\[
   \Sw(\GG) := \left\{ \varphi \colon \GG \to \CC : \sup_{k \in \GG} (1+|k|)^m |\varphi(k)| < \infty \text{ for all } m \in \NN\right\},
\]
and we have $\FFg \varphi \in C^\infty(\widehat{\GG})$ (with periodic boundary conditions) for all $\varphi \in \Sw(\GG)$, because for any multi-index $\alpha \in \NN^d$ the dominated convergence theorem gives
\[
   \partial^\alpha \FFg \varphi(x) = |\GG| \sum_{k\in\GG} \varphi (k) (-2\pi \imath k)^{\alpha} e^{- 2\pi \imath k \scl x}.
\]
By the same argument we have $\FFg^{-1} \psi \in \Sw(\GG)$ for all $\psi \in C^\infty(\widehat{\GG})$, and as in the classical case $\GG = \ZZ^d$ one can show that $\FFg$ is an isomorphism from $\Sw(\GG)$ to $C^\infty(\wGG)$ with inverse $\FFg^{-1}$.
Many relations known from the $\ZZ^d$-case carry over readily to Bravais lattices, e.g. Parseval's identity
\begin{align}
	\label{eq:ParsevalsIdentity}
	\sum_{k\in \GG} |\GG|\cdot |\varphi(k)|^2=\int_{\widehat{\GG}} |\widehat{\varphi}(x)|^2  \,\dd x
\end{align}
(to see this check for example with the Stone-Weierstrass theorem that $(|\GG|^{1/2}e^{2 \pi \imath k \cdot})_{k \in \GG}$ forms an orthonormal basis of $L^2(\widehat{\GG}, \dd x)$) and the relation between convolution and multiplication
\begin{align}
\label{eq:ConvolutionLattice}
	\FFg\left(\varphi_1 \ast_{\GG} \varphi_2\right)(x) & :=\FFg\left(\sum_{k\in \GG } |\GG|\, \varphi_1(k) \varphi_2(\cdot-k) \right)(x)= \FFg\varphi_1(x) \cdot \FFg \varphi_2(x), \\
	\FFg^{-1}\left(\psi_2 \ast_{\widehat{\GG}} \psi_2\right)(k)&:=\FFg^{-1} \left( \int_{\widehat{\GG}} \psi_1(x) \psi_2([\plc-x]_{\wGG})\dd x \right)(k)=\FFg^{-1}\psi_1(k) \cdot \FFg^{-1} \psi_2(k).
\label{eq:ConvolutionBandwidth}
\end{align}
where $[z]_{\wGG}$ is for $z\in \RR^d$ the unique element in $\wGG$ such that $z-[z]_{\wGG}\in \CR$.
\glsadd{nwGG}

Since $\Sw(\GG)$ consists of functions decaying faster than any polynomial, the Schwartz distributions on $\GG$ are the functions that grow at most polynomially,
\[
   \Sw'(\GG) := \left\{ f \colon \GG \to \CC : \sup_{k \in \GG} (1+|k|)^{-m} |f(k)| < \infty \text{ for some } m \in \NN\right\},
\]
and $f(\varphi) := |\GG| \sum_{k \in \GG} f(k) \varphi(k)$ is well defined for $\varphi\in \Sw(\GG)$. We extend the Fourier transform to $\Sw'(\GG)$ by setting
\[
   (\FFg f)(\psi)  := f\left( \overline{\FFg^{-1}\overline{\psi}}\right) = |\GG| \sum_{k \in \GG} f(k) \overline{\FFg^{-1}\overline{\psi}}(k), \qquad \psi \in C^\infty(\widehat{\GG}),
\]
where $\overline{\,(\ldots) \,}$ denotes the complex conjugate. This should be read as $(\FFg f)(\psi) = f(\FFg \psi)$, which however does not make any sense because for $\psi \in C^\infty(\widehat{\GG})$ we did not define the Fourier transform $\FFg \psi$ but only $\FFg^{-1} \psi$. The Fourier transform $(\FFg f)(\psi)$ agrees with $\int_{\wGG} \FFg f(x)  \cdot \psi(x) \,\dd x$ in case $f \in \Sw(\GG)$. It is possible to show that $\widehat{f} \in \Sw'(\wGG)$, where
\[
   \Sw'(\wGG):= \{u\colon C^\infty(\wGG) \to \CC: u \text{ is linear and } \exists\, C>0, m\in \NN \text{ s.t. } |u(\psi)| \le C \|\psi\|_{C^m_b(\wGG)}\}
\]
for $\|\psi\|_{C^m_b(\wGG)} := \sum_{|\alpha| \le m} \|\partial^\alpha \psi \|_{L^\infty(\widehat{\GG})}$, and that $\FFg$ is an isomorphism from $\Sw'(\GG)$ to $\Sw'(\wGG)$ with inverse
\begin{align}
\label{eq:InverseFourierTransformLattice}
   (\FFg^{-1} u)(\varphi) := |\GG| \sum_{k \in \GG} u(e^{2\pi \imath k \scl (\plc)}) \varphi(k).
\end{align}

As in the classical case $\GG=\mathbb{Z}$ it is easy to see that we can identify every $f\in \Sw'(\GG)$ with a ``Dirac comb'' distribution $f_{\mathrm{dir}}\in \Sw'(\RR^d)$ by setting
\begin{align}
\label{eq:DiracExtension}
f_{\mathrm{dir}}=|\GG|\sum_{k\in\GG} f(k) \delta(\cdot-k),
\end{align}
where $\delta(\cdot-k)\in \Sw'(\RR^d)$ denotes a shifted Dirac delta distribution. We can identify any element $g\in \Sw'(\widehat{\GG})$ of the frequency space with an $\mathscr{R}$-periodic distribution $g_{\mathrm{ext}}\in \Sw'(\RR^d)$ by setting
\label{IdentificationsFourierTheory}
\begin{align}
\label{eq:PeriodicExtension}
	g_{\mathrm{ext}}(\varphi):=g\left(\sum_{k\in \mathscr{R}} \varphi(\cdot-k)\right),\qquad\varphi\in \Sw(\RR^d)\,.
\end{align}
If $g\in \Sw'(\widehat{\GG})$ coincides with a function on $\widehat{\GG}$ one sees that
\begin{align}
\label{eq:PeriodicExtension2}
g_{\mathrm{ext}}(x)=g([x]_{\wGG})
\end{align}
where $[x]_{\wGG}$ is, as above, the (unique) element $[x]_{\wGG}\in\widehat{\GG}$ such that $[x]_{\widehat{\GG}}-x\in \ZZ \widehat{a}_1+\ldots+\ZZ \widehat{a}_d=\CR$.
Conversely, every $\mathscr{R}$-periodic distribution $g\in\Sw'(\RR^d)$ can be seen as a restricted element $g_{\mathrm{res}}\in\Sw'(\widehat{\GG})$, e.g. by considering 
\begin{align}
\label{eq:PeriodicRestriction}
g_{\mathrm{res}}(\varphi):= (\psi\cdot g)(\varphi_{\mathrm{ext}})=  g(\psi \cdot \varphi_{\mathrm{ext}}),\qquad \varphi\in C^\infty(\widehat{\GG})
\end{align}
where $\psi\in C^\infty_c(\RR^d)$ is chosen such that $\sum_{k\in\mathscr{R}} \psi(\cdot-k)=1$ and where we used in the second equality the definition of the product between a smooth function and a distribution. To construct such a $\psi$ it suffices to convolve $\1_{\widehat{\GG}}$ with a smooth, compactly supported mollifier, and it is easy to check that $(g_{\mathrm{ext}})_{\mathrm{res}} = g$ for all $g \in \Sw'(\GG)$ and that $g_{\mathrm{res}}$ does not depend on the choice of $\psi$. This motivates our definition of the extension operator $\EE$ below in Lemma \ref{lem:ClassicalCaseExtensionOperator}.

With these identifications in mind we can interpret the concepts introduced above as a sub-theory of the classical Fourier analysis of tempered distributions. We will sometimes use the following identity for $f\in \Sw'(\GG)$
\begin{align}
\label{eq:DiracComb}
\ext{\FFg f}=\FFr(f_{\mathrm{dir}})\,,
\end{align}
which is easily checked using the definitions above.

\smallskip
Next, we want to introduce Besov spaces on $\GG$. Recall that one way of constructing Besov spaces on $\RR^d$ is by making use of a dyadic partition of unity. 
\begin{definition}
\label{def:DyadicPartitionOfUnity}
A \emph{dyadic partition of unity} is a family $(\varphi_j)_{j\geq -1} \subseteq C^\infty_c(\RR^d)$ of nonnegative radial functions such that
\begin{itemize}
	\item $\supp \varphi_{-1}$ is contained in a ball around 0, $\supp \varphi_j $ is contained in an annulus around 0 for $j\geq 0\,$,
	\item $\varphi_j=\varphi_0(2^{-j}\cdot)$ for $j\geq 0\,$,
	\item $\sum_{j\geq -1} \varphi_j (x) =1$ for any $x\in\RR^d\,$,
	\item If $|j-j'|>1$ we have $\supp \varphi_j \cap \supp \varphi_{j'}=\emptyset\,$,
\end{itemize} 
\end{definition}

Using such a dyadic partition as a family of Fourier multipliers leads to the Littlewood-Paley blocks of a distribution $f \in \Sw'(\RR^d)$,
\begin{align*}
	\varDelta_j f:=\FFr^{-1} (\varphi_j \cdot \FFr f).
\end{align*}
Each of these blocks is a smooth function and it represents a ``spectral chunk'' of the distribution. By choice of the $(\varphi_j)_{j\geq -1}$ we have $f=\sum_{j\geq -1} \varDelta_j f$ in $\Sw'(\RR^d)$, and measuring the explosion/decay of the Littlewood-Paley blocks gives rise to the Besov spaces
\begin{align}
\label{eq:ClassicalBesovSpaceUnweighted}
	\mathcal{B}^\alpha_{p,q} (\RR^d)=\left\{ f\in \Sw'(\RR^d) : \| (2^{j\alpha} \| \varDelta_j f\|_{L^p})_{j\geq -1} \|_{\ell^q}<\infty \right\}.
\end{align}

In our case all the information about the Fourier transform of $f\in \Sw'(\GG)$, that is $\FFg f\in \Sw'(\widehat{\GG})$, is stored in a finite bandwidth $\widehat{\GG}$. Therefore, it is more natural to decompose the compact set $\wGG$, so that we consider only finitely many blocks. However, there is a small but delicate problem: We should decompose $\wGG$ in a smooth periodic way, but if $j$ is such that the support of $\varphi_j$ touches the boundary of $\widehat{\GG}$, the function $\varphi_j$ will not necessarily be smooth in a periodic sense. We therefore redefine the dyadic partition of unity for $x\in \wGG$ as
\glsadd{varphij}
\begin{align}
	\varphi^\GG_j(x)=
	\left\lbrace
	\begin{array}{ll}
	 \varphi_j(x), & j<j_\GG\, , \\
	 1-\sum_{j<j_\GG} \varphi_j(x), &j=j_\GG\,,
	\end{array}\right.
	\label{eq:LatticePhi}
\end{align}
\label{DiscreteDyadicPartition}
where \glsadd{jGG} $j\leq j_\GG:=\inf\{j\,:\, \supp \varphi_j\cap \partial\wGG \neq \emptyset\}$.
 Now we set for $f\in \Sww'(\GG)$
 \glsadd{varDeltaGGj}
\begin{align*}
\varDelta^\GG_j f := \FFg^{-1}(\varphi^\GG_j \cdot \FFg f)\,,
\end{align*}
which is now a function defined on $\GG$. As in the continuous case we will also use the notation $S_j^{\GG} f=\sum_{i<j} \varDelta_i^{\GG} f$.

 Of course, for a fixed $\GG$ it may happen that $\varDelta^\GG_{-1} = \mathrm{Id}$, but if we rescale the lattice $\GG$ to $\eps \GG$, the Fourier cell $\wGG$ changes to $\eps^{-1} \wGG$ and so for $\eps \to 0$ the following definition becomes meaningful.

\begin{definition}
Given $\alpha\in \RR$ and $p,q\in[1,\infty]$ we define 
\begin{align*}
\mathcal{B}^\alpha_{p,q}(\GG):=\left\{ f\in \Sw'(\GG) \,\vert \, \|f\|_{\mathcal{B}^\alpha_{p,q}(\GG)} = \| (2^{j\alpha } \|    \varDelta_j^\GG f\|_{L^p(\GG)})_{j=-1,\ldots,j_{\GG}}\|_{\ell^q}<\infty \right\},
\end{align*}
where we define the $L^p(\GG)$ norm by 
\begin{align}
	\label{eq:Normlp}
	\|f\|_{L^p(\GG)}:=\left(  |\GG| \sum_{k\in \GG} |f(k)|^p \right)^{1/p} = \| |\GG|^{1/p} f \|_{\ell^p}.
\end{align}
We write furthermore $\mathcal{C}^\alpha_p(\GG):=\mathcal{B}^\alpha_{p,\infty}(\GG)$. 

\end{definition}

The reader may have noticed that since we only consider finitely many $j=-1,\ldots,j_{\GG}$ (and since $\varDelta_j\colon L^p(\GG)\rightarrow L^p(\GG)$ is a bounded operator, uniformly in $j$, as we will see below), the two spaces  $\mathcal{B}^\alpha_{p,q}(\GG)$ and $L^p(\GG)$ are in fact identical with equivalent norms!
However, since we are interested in uniform bounds on $\GGe$ for $\eps \to 0$, we are of course not allowed to switch between these spaces. Whenever we consider sequences $\GG^\eps$ of lattices we construct all dyadic partitions of unity $(\varphi_j^{\GG^\eps})_{j=-1,\ldots,j_{\GG^\eps}}$ from the \emph{same} partition of unity $(\varphi_j)_{j\geq -1}$ on $\RR^d$.

With the above constructions at hand it is easy to develop a theory of paracontrolled distributions on a Bravais lattice $\GG$ which is completely analogous to the one on $\RR^d$. For the transition from the rescaled lattice models on $\GG^\eps$ to models on the Euclidean space $\RR^d$ we need to compare discrete and continuous distributions, so we should extend the lattice model to a distribution in $\Sw'(\RR^d)$. One way of doing so is to simply consider the identification with a Dirac comb, already mentioned in \eqref{eq:DiracExtension}, but this has the disadvantage that the extension can only be controlled in spaces of quite low regularity because the Dirac delta is quite irregular. We find the following extension convenient:

\begin{lemma}
\label{lem:ClassicalCaseExtensionOperator}
   Let $\psi \in C^\infty_c(\RR^d)$ be a positive function with $\sum_{k \in \mathcal R} \psi(\cdot - k) \equiv 1$ and set
   \[
      \EE f := \FFr^{-1}\big(\psi \cdot \ext{\FFg f}\big), \qquad f \in \Sw'(\GG),
   \]
where the periodic extension $\ext{\cdot}\colon\Sw'(\wGG)\rightarrow \Sw'(\RR^d)$ is defined as in \eqref{eq:PeriodicExtension}. Then $\EE f \in C^\infty(\RR^d) \cap \Sw'(\RR^d)$ and $\EE f(k) = f(k)$ for all $k \in \GG$.
\end{lemma}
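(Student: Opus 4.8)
The plan is to unwind the definitions and check the two claims --- smoothness plus tempered growth, and the interpolation property $\EE f(k) = f(k)$ --- in that order. First I would observe that $\FFg f \in \Sw'(\wGG)$ by the Fourier theory developed above, so its periodic extension $\ext{\FFg f}$ is an $\mathscr{R}$-periodic element of $\Sw'(\RR^d)$ by \eqref{eq:PeriodicExtension}. Multiplying by the compactly supported $\psi \in C^\infty_c(\RR^d)$ gives a compactly supported element of $\Sw'(\RR^d)$; its inverse Fourier transform $\EE f = \FFr^{-1}(\psi \cdot \ext{\FFg f})$ is therefore a function whose spectral support is compact, hence (Paley--Wiener, or directly: a compactly supported distribution is smooth after inverse Fourier transform in the sense that $\FFr^{-1}$ of a compactly supported distribution extends to an analytic, in particular $C^\infty$, function) lies in $C^\infty(\RR^d)$. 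That the result is still in $\Sw'(\RR^d)$ is automatic since $\FFr^{-1}$ maps $\Sw'(\RR^d)$ to itself and the growth is at most polynomial --- one can see this concretely from the Dirac-comb picture \eqref{eq:DiracComb} below, since $\psi \cdot \ext{\FFg f}$ has compact support and $\EE f$ is a finite-bandwidth smoothing of a polynomially bounded object.

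The substantive point is the identity $\EE f(k) = f(k)$ for $k \in \GG$, and here I would use the Dirac-comb relation \eqref{eq:DiracComb}, namely $\ext{\FFg f} = \FFr(f_{\mathrm{dir}})$ with $f_{\mathrm{dir}} = |\GG| \sum_{m \in \GG} f(m)\, \delta(\cdot - m)$. Then $\EE f = \FFr^{-1}\big(\psi \cdot \FFr(f_{\mathrm{dir}})\big)$, which is the convolution $\check\psi \ast f_{\mathrm{dir}}$ where $\check\psi = \FFr^{-1}\psi$. Evaluating at a lattice point $k \in \GG$ gives
\begin{align*}
   \EE f(k) = (\check\psi \ast f_{\mathrm{dir}})(k) = |\GG| \sum_{m \in \GG} f(m)\, \check\psi(k - m).
\end{align*}
So the claim reduces to showing $|\GG|\, \check\psi(k - m) = \delta_{k=m}$ for all $k, m \in \GG$, i.e. that the lattice samples of $\check\psi = \FFr^{-1}\psi$ are the Kronecker delta. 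This is precisely a Poisson-summation / partition-of-unity computation: the hypothesis $\sum_{l \in \mathscr{R}} \psi(\cdot - l) \equiv 1$ says that the $\mathscr{R}$-periodization of $\psi$ is the constant $1$, whose inverse Fourier transform (as an $\mathscr{R}$-periodic distribution, equivalently a function on $\wGG$) concentrates on the dual lattice $\GG$; unwinding, $\check\psi(j) = \int_{\RR^d} \psi(x) e^{2\pi \ii j \scl x}\,\dd x$ and for $j \in \GG$ we may fold the integral over the fundamental domains of $\mathscr{R}$, using $e^{2\pi \ii j \scl l} = 1$ for $l \in \mathscr{R}$ (which holds because $j \scl l \in \ZZ$ by \eqref{eq:ReciprocalLattice}), to get $\check\psi(j) = \int_{\wGG} \big(\sum_{l}\psi(\cdot - l)\big)(x) e^{2\pi \ii j \scl x}\,\dd x = \int_{\wGG} e^{2\pi \ii j \scl x}\,\dd x = |\wGG|\,\delta_{j = 0} = |\GG|^{-1}\delta_{j=0}$, using $|\GG| = 1/|\wGG|$. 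Plugging back in collapses the sum to the single term $m = k$, giving $\EE f(k) = |\GG| \cdot f(k) \cdot |\GG|^{-1} = f(k)$.

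The main obstacle --- really the only place one must be a little careful --- is making the manipulations above rigorous at the level of distributions rather than functions: $f_{\mathrm{dir}}$ is a genuine distribution, $\psi \cdot \FFr(f_{\mathrm{dir}})$ is a product of a smooth compactly supported function with a tempered distribution (well-defined), and the interchange ``$\FFr^{-1}(\psi \cdot \FFr(f_{\mathrm{dir}})) = (\FFr^{-1}\psi) \ast f_{\mathrm{dir}}$'' and the subsequent pointwise evaluation at $k$ must be justified. Since $\FFr^{-1}\psi$ is Schwartz and $f_{\mathrm{dir}}$ is a tempered distribution, the convolution $(\FFr^{-1}\psi)\ast f_{\mathrm{dir}}$ is a smooth polynomially bounded function and the formula $\big((\FFr^{-1}\psi)\ast f_{\mathrm{dir}}\big)(x) = f_{\mathrm{dir}}\big((\FFr^{-1}\psi)(x - \cdot)\big)$ holds pointwise; applying this with $x = k$ and unpacking $f_{\mathrm{dir}}$ as a weighted sum of Diracs yields exactly the displayed sum, so there is no real issue, just bookkeeping with the Fourier conventions fixed in the Notation section. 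Everything else --- the smoothness via compact spectral support, membership in $\Sw'(\RR^d)$ --- is then immediate.
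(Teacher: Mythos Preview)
Your argument is correct. The smoothness and tempered-distribution claims are handled the same way as in the paper (compact spectral support), but for the interpolation identity $\EE f(k)=f(k)$ you take a different route: you pass through the Dirac-comb identity \eqref{eq:DiracComb} to write $\EE f=(\FFr^{-1}\psi)\ast f_{\mathrm{dir}}$, and then prove the sampling lemma $|\GG|\,(\FFr^{-1}\psi)(j)=\delta_{j=0}$ for $j\in\GG$ by folding the integral over $\mathscr{R}$-translates of $\wGG$. The paper instead stays on the dual side and computes the pairing directly:
\[
\EE f(k)=\ext{\FFg f}\big(\psi\,e^{2\pi\imath k\scl(\cdot)}\big)=\FFg f\Big(\sum_{r\in\mathscr{R}}\psi(\cdot-r)\,e^{2\pi\imath k\scl(\cdot-r)}\Big)=\FFg f\big(e^{2\pi\imath k\scl(\cdot)}\big)=f(k),
\]
using the definition of $\ext{\cdot}$ and $k\scl r\in\ZZ$. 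Both arguments hinge on the same two ingredients (the partition-of-unity hypothesis and the duality $\GG\cdot\mathscr{R}\subseteq\ZZ$); the paper's version is a line shorter and avoids the auxiliary sampling lemma, while your convolution formulation has the benefit of making explicit the representation $\EE f=\FFr^{-1}\psi\ast_{\GG}f$ that the paper later records separately in \eqref{eq:ExtensionAsConvolution}.
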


\begin{proof}
   We have $\EE f \in \Sw'(\RR^d)$ because  $\ext{\FFg f}$ is in $\Sw'(\RR^d)$, and therefore also $\EE f=\FFr^{-1} ( \psi\cdot  \ext{\FFg f}) \in \Sw'(\RR^d)$.
%
%
%
   Knowing that $\EE f$ is in $\Sw'(\RR^d)$, it must be in $C^\infty(\RR^d)$ as well because it has compact spectral support by definition. Moreover,  we can write for $k \in \GG$
   \begin{align*}
      \EE f(k) & = \ext{\FFg f}(\psi \cdot  e^{2 \pi \imath k \scl(\cdot)})= \FFg f\left(\sum_{r\in \mathcal{R}} \psi(\cdot-r) e^{2\pi \imath k \scl (\cdot-r)} \right)=\mathcal{F}_{\GG}f(e^{2\pi \imath k\scl (\cdot)})=f(k)\,,
   \end{align*}
   where we used the definition of $\ext{\cdot}$ from \eqref{eq:PeriodicExtension} and that $k \scl r \in \ZZ$ for all $k \in \GG$ and $r \in \CR$.
\end{proof}

It is possible to show that if $\EE^\eps$ denotes the extension operator on $\GG^\eps$, then the family $(\EE^\eps)_{\eps > 0}$ is uniformly bounded in $L(\mathcal{B}^\alpha_{p,q}(\GG^\eps), \mathcal{B}_{p,q}^\alpha(\RR^d))$, and this can be used to obtain uniform regularity bounds for the extensions of a given family of lattice models.

\smallskip

However, since we are interested in equations with spatially homogeneous noise, we cannot expect the solution to be in $\mathcal{B}^\alpha_{p,q}(\GG)$ for any $\alpha, p, q$ and instead we have to consider weighted spaces. In the case of the parabolic Anderson model it turns out to be convenient to even allow for subexponential growth of the form $e^{|\cdot|^\sigma}$ for $\sigma \in (0,1)$, which means that we have to work on a larger space than $\Sw'(\GG)$, where only polynomial growth is allowed. So before we proceed let us first recall the basics of the so called \emph{ultra-distributions} on $\RR^d$.

\subsection{Ultra-distributions on Euclidean space}
\label{subsec:UltraDistributions}
A drawback of Schwartz's theory of tempered distributions is the restriction to polynomial growth. As we will see later, it is convenient to allow our solution to have subexponential growth of the form $ e^{\lambda |\cdot|^\sigma}$ for $\sigma\in (0,1)$ and $\lambda > 0$. It is therefore necessary to work in a larger space $\Sw_\omega'(\RR^d)\supseteq \Sw'(\RR^d)$, the space of so called \textit{(tempered) ultra-distributions}, which has less restrictive growth conditions but on which one still has a Fourier transform.
Similar techniques already appear in the context of singular SPDEs in \cite{WeberMourrat}, where the authors use Gevrey functions that are characterized by a condition similar to the one in Definition \ref{def:Gevrey} below. Here, we will follow a slightly different approach that goes back to Beurling and Björck \cite{Bjoerck}, and which mimics essentially the definition of tempered distribution via Schwartz functions. For a broader introduction to ultra-distributions see for example \cite[Chapter 6]{Triebel} or \cite{Bjoerck}. 
\smallskip

Let us fix, once and for all, the following weight functions which we will use throughout this article. 
\begin{definition}
\label{def:Weights}
We denote by 
\begin{align*}
\wpol(x):=\log(1+|x|),\,\wexps(x):=|x|^\sigma,\,\sigma\in (0,1)\,.
\end{align*}
where $x\in \RR^d,\,\sigma\in (0,1)$
For $\omega\in \boldsymbol{\omega}:=\{\wpol\}\cup \{\wexps\,\vert\,\sigma\in (0,1)\}$ we denote by $\boldsymbol{\rho}(\omega)$ the set of measurable, strictly positive $\rho:\RR^d\rightarrow (0,\infty) $ such that
\begin{align}
\label{eq:WeightQuotient}
\rho(x)\lesssim \rho(y) e^{\lambda \omega(x-y)}
\end{align}
for some $\lambda=\lambda(\rho)>0$. We also introduce the notation $\rr(\ww):=\bigcup_{\omega\in\ww}\rr(\omega)$. The objects $\rho\in \rr(\ww)$ will be called \emph{weights}. \glsadd{ww} \glsadd{rromega}
\end{definition}
Note that the sets $\boldsymbol{\rho}(\omega)$ are stable under addition and multiplication for a fixed $\omega\in \ww$. The indices ``$\mathrm{pol}$'' and ``$\mathrm{exp}$'' of the elements in $\ww$ indicate the fact that elements in $\rho\in \boldsymbol{\rho}(\wpol)$ are polynomially growing or decaying while elements in $\boldsymbol{\rho}(\wexps)$ are allowed to have subexponential behavior. Note that
\begin{align*}
 \rr(\wpol)\subseteq \rr(\wexps)
 \end{align*} 
 and that 
 \begin{align}
 \label{eq:Defw}
(1+|x|)^{\lambda}\in \rr(\wpol)
 \end{align}
 and $e^{\lambda|x|^\sigma}\in \rr(\wexps)$ for $\lambda\in \RR,\,\sigma\in (0,1)$. The reason why we only allow for $\sigma<1$ will be explained in Remark \ref{rem:Sigmageq1} below. 

We are now ready to define the space of ultra-distributions.
\begin{definition}
We define for $\omega\in\ww$ the locally convex space 
\begin{align}
\Sw_\omega(\RR^d):=\{f\in \Sw(\RR^d)\,\vert\, \forall	\lambda>0,\,\alpha\in \NN^d\qquad p_{\alpha,\lambda}^\omega (f)+\pi_{\alpha,\lambda}^\omega(f)<\infty\}\,,
\end{align}
which is equipped with the seminorms 
\begin{align}
p_{\alpha,\lambda}^\omega(f):=\sup_{x\in \RR^d} e^{\lambda \omega(x)} |\partial^\alpha f(x)|\,,\label{eq:SemiNormP} \\
\pi_{\alpha,\lambda}^\omega(f):=\sup_{x\in \RR^d} e^{\lambda \omega(x)} |\partial^\alpha  \FFr f(x)| \,.
\label{eq:SemiNormPi}
\end{align}
Its topological dual $\Sww'(\RR^d)$ is called the space of tempered ultra-distributions. \glsadd{Sww}
\end{definition}
\begin{remark}
We here follow \cite[Def. 6.1.2.3]{Triebel} and equip the dual $\Sww'(\RR^d)$ with the strong topology. The choice of the weak-* topology is however also common in the literature \cite{Bahouri}.   
\end{remark}
\begin{remark}
\label{rem:Sigmageq1}
The reason why we excluded the case $\sigma\geq 1$ for $\wexps$ in Definition \ref{def:Weights} is that we want $\Sww$ to contain functions with compact support, which then allows for localization and thus for a Littlewood-Paley theory. But if $\omega=\wexps$ with $\sigma\geq 1$ and $f\in \Sww(\RR^d)$ the requirement $\pi^\omega_{0,\lambda}(f)<\infty$ implies that $\FFr f$ can be bounded by
$e^{-c|x|}$, $c>0$ , which means that $f$ is analytic and the only compactly supported $f\in \Sww(\RR^d)$ is the zero-function $f=0$.
\end{remark}
In the case $\omega=\wexps,\sigma\in (0,1)$ the space $\Sw_\omega'$ is strictly larger than $\Sw'$. Indeed: $e^{c|\cdot|^{\sigma'}}\in\Sw_\omega'(\RR^d)\backslash\Sw'(\RR^d)$ for $\sigma'\in (0,\sigma]$. In the case $\omega=\wpol$ we simply have
\begin{align*}
\Sw_\omega(\RR^d)=\Sw(\RR^d)
\end{align*}
with a topology that can also be generated by only using the seminorms $p^{\omega}_{\alpha,\lambda}$ so that the dual of $\Sw_\omega(\RR^d)=\Sw(\RR^d)$ is given by  
\begin{align*}
\Sw'_\omega(\RR^d)=\Sw'(\RR^d)\,.
\end{align*}
The theory of ``classical'' tempered distributions is therefore contained in the framework above.  

The role of the triple 
\begin{align*}
\Dw(\RR^d):=C^\infty_c(\RR^d)\subseteq \Sw(\RR^d)\subseteq C^\infty(\RR^d)
\end{align*} 
in this theory will be substituted by spaces $\Dww(\RR^d),\,C^\infty_\omega(\RR^d)$ such that 
\begin{align*}
\Dww(\RR^d)\subseteq \Sw_\omega(\RR^d)\subseteq \Eww(\RR^d)\,.
\end{align*} 
\begin{definition}
\label{def:Gevrey}
Let $U\subseteq \RR^d$ be an open set and $\omega\in \ww=\{\wpol\}\cup \{\wexps\,\vert\,\sigma\in (0,1)\}$. We define for $\omega=\wexps$ the set $\Eww(U)$ to be the space of $f\in C^\infty(U)$ such that for every $\eps>0$ and compact $K\subseteq U$ there exists $C_{\eps,K}>0$ such that for all $\alpha\in \NN^d$
\begin{align}
\label{eq:BeurlingUsefullCondition}
 \sup_{K} |\partial^\alpha f|\leq C_{\eps,K}\, \eps^{|\alpha|} (\alpha!)^{1/\sigma}\,.
 \end{align} 
For $\omega=\wpol$ we set $C^\infty_\omega(U)=C^\infty(U)$. We also define 
 \begin{align}
 \label{eq:CompactBeurling}
 \Dww(U)=C^\infty_\omega(U)\cap C^\infty_c(U)\,.
 \end{align}
 The elements of $C^\infty_\omega(U)$ are called \emph{ultra-differentiable functions} and the elements of the dual space $\Dww'(\RR^d)$ are called \emph{ultra-distributions}. \glsadd{Eww}
\end{definition}

\begin{remark}
The space $\Dww'(\RR^d)$ is equipped with a suitable topology \cite[Section 1.6]{Bjoerck} which we did not specify since this space will not be used in this article and is just mentioned for the sake of completeness. 
\end{remark}
\begin{remark}
The factor $\alpha!$ in \eqref{eq:BeurlingUsefullCondition} can be replaced by $|\alpha|!$ or $|\alpha|^{|\alpha|}$ \cite[Proposition 1.4.2]{Rodino} as can be easily seen from $\alpha!\leq |\alpha|!\leq d^{|\alpha|} \alpha!$ and Stirlings formula. 
\end{remark}

The relation between $\Dww,\Sww,\Eww$ and their properties are specified by the following lemma. 
\begin{lemma} 
Let $\omega\in \ww $.
\begin{itemize}
	\item[i)] We have $\Sww(\RR^d)\subseteq \Eww(\RR^d)$ and 
\begin{align}
\label{eq:DwwViaSww}
\Dww(\RR^d)=\Sww(\RR^d)\cap C^\infty_c(\RR^d)\,. 
\end{align}
In particular $\Dww(\RR^d)\subseteq \Sww(\RR^d)\subseteq  C^\infty_c(\RR^d)$. 

	\item[ii)] The space $\Sww(\RR^d)$ is stable under addition, multiplication and convolution.

	\item[iii)] The space $\Eww(\RR^d)$  is stable under addition, multiplication and division in the sense that $f/g \cdot \mathbf{1}_{\supp f}\in \Eww(\RR^d)$ for $f,g \in \Eww(\RR^d),\,\supp f \subseteq \overset{\circ}{\supp g}$.
\end{itemize}
\end{lemma}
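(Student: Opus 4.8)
The plan is to reduce everything to $\omega=\wexps$ with $\sigma\in(0,1)$: for $\omega=\wpol$ we have $\Sww(\RR^d)=\Sw(\RR^d)$, $\Eww(\RR^d)=C^\infty(\RR^d)$ and $\Dww(\RR^d)=C^\infty_c(\RR^d)$, so all three assertions are classical (for iii), $f/g\cdot\1_{\supp f}$ is the function equal to the smooth product $f\cdot(1/g)$ on the open set $\{g\ne0\}\supseteq\supp f$ and to $0$ on $(\supp f)^c$, and these glue to a $C^\infty$ function on $\{g\ne0\}\cup(\supp f)^c=\RR^d$ because all derivatives of $f$ vanish off $\supp f$). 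So from now on I fix $\omega=\wexps$, $\sigma\in(0,1)$, and write $n=|\alpha|$.

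For i) the core is the quantitative Paley--Wiener correspondence between Beurling's Fourier-side decay and the Gevrey derivative bounds. For $\Sww\subseteq\Eww$ I would start from $\pi^\omega_{0,\lambda}(f)<\infty$, which gives $|\FFr f(\xi)|\le p^\omega_{0,\lambda}(f)\,e^{-\lambda|\xi|^\sigma}$ for every $\lambda>0$, and then estimate $|\partial^\alpha f(x)|\le(2\pi)^n p^\omega_{0,\lambda}(f)\int_{\RR^d}|\xi|^n e^{-\lambda|\xi|^\sigma}\dd\xi$; the integral equals $c_{d,\sigma}\,\lambda^{-(n+d)/\sigma}\Gamma(\tfrac{n+d}{\sigma})$, which by Stirling is $\le A_{d,\sigma}B_{d,\sigma}^{\,n}(n!)^{1/\sigma}$, so choosing $\lambda$ with $2\pi B_{d,\sigma}\lambda^{-1/\sigma}\le\eps$ yields \eqref{eq:BeurlingUsefullCondition} uniformly in $x$, i.e. $f\in\Eww$. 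Combined with the definition $\Dww(\RR^d)=\Eww(\RR^d)\cap C^\infty_c(\RR^d)$ from \eqref{eq:CompactBeurling} this gives $\Sww\cap C^\infty_c\subseteq\Dww$. For the reverse inclusion, given $f\in\Dww=\Eww\cap C^\infty_c$, the bound $p^\omega_{\alpha,\lambda}(f)<\infty$ is immediate ($\partial^\alpha f$ is continuous with compact support), and for $\pi^\omega_{\alpha,\lambda}(f)$ I write $\partial^\alpha\FFr f=\FFr((-2\pi\ii x)^\alpha f)$, note that $(-2\pi\ii x)^\alpha f$ is again compactly supported and in $\Eww$ (polynomials are analytic, hence in $\Eww$, and products of $\Eww$-functions are in $\Eww$, cf. iii)), and apply the reverse estimate: if $\supp h\subseteq B(0,R)$ with $\sup|\partial^\beta h|\le C_\delta\,\delta^{|\beta|}(|\beta|!)^{1/\sigma}$ for all $\delta>0$, then integration by parts gives $|\xi|^m|\FFr h(\xi)|\lesssim C_\delta\,\delta^m(m!)^{1/\sigma}$ for all $m$, and optimizing over $m\approx(|\xi|/\delta)^\sigma$ via Stirling yields $|\FFr h(\xi)|\lesssim e^{-(\sigma\delta^\sigma)^{-1}|\xi|^\sigma}$ with exponent constant tending to $\infty$ as $\delta\to0$; hence $\partial^\alpha\FFr f$ decays faster than any $e^{-\lambda|\xi|^\sigma}$ and $\pi^\omega_{\alpha,\lambda}(f)<\infty$. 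This optimization over the differentiation order (and its $\Gamma$-function analogue in the first direction) is the step I expect to be the main obstacle; everything else is bookkeeping.

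For ii), since $\Sww(\RR^d)\subseteq\Sw(\RR^d)$ the sum, product and convolution are a priori Schwartz, so only the extra seminorms need attention and additivity is trivial. For $fg$ I would use Leibniz to get $p^\omega_{\alpha,\lambda}(fg)\le\sum_{\beta\le\alpha}\binom{\alpha}{\beta}p^\omega_{\beta,\lambda}(f)\,p^\omega_{\alpha-\beta,1}(g)<\infty$ (putting all of $e^{\lambda\omega(x)}$ on the $f$-factor, and using $e^\omega\ge1$ so $p^\omega_{\cdot,1}(g)$ dominates the unweighted sup of the $g$-factor), while on the Fourier side $\FFr(fg)=\FFr f\rast\FFr g$ together with the subadditivity $\omega(x)\le\omega(x-y)+\omega(y)$ of $t\mapsto|t|^\sigma$ gives $e^{\lambda\omega(x)}|\partial^\alpha\FFr(fg)(x)|\le\pi^\omega_{\alpha,\lambda}(f)\int e^{\lambda\omega(y)}|\FFr g(y)|\dd y$, which is finite because $|\FFr g(y)|\le\pi^\omega_{0,\lambda'}(g)\,e^{-\lambda'|y|^\sigma}$ with $\lambda'>\lambda$ makes $\int e^{(\lambda-\lambda')|y|^\sigma}\dd y<\infty$. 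For $f\rast g$ one argues mirror-wise: $\partial^\alpha(f\rast g)=(\partial^\alpha f)\rast g$ plus the same weight-splitting gives $p^\omega_{\alpha,\lambda}(f\rast g)\le p^\omega_{\alpha,\lambda}(f)\int e^{\lambda\omega(y)}|g(y)|\dd y<\infty$, while $\FFr(f\rast g)=\FFr f\cdot\FFr g$ and Leibniz give $\pi^\omega_{\alpha,\lambda}(f\rast g)\le\sum_{\beta\le\alpha}\binom{\alpha}{\beta}\pi^\omega_{\beta,\lambda}(f)\,\pi^\omega_{\alpha-\beta,1}(g)<\infty$.

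For iii), additivity is trivial and the key computation is the product bound: on a compact $K$, Leibniz and the Gevrey bounds for $f,g$ with parameter $\delta$ give $\sup_K|\partial^\alpha(fg)|\le C_{\delta,K}^2\,\delta^n\sum_{\beta\le\alpha}\binom{\alpha}{\beta}(\beta!)^{1/\sigma}((\alpha-\beta)!)^{1/\sigma}$, and the elementary inequality $\binom{\alpha}{\beta}\big(\beta!(\alpha-\beta)!\big)^{1/\sigma}=\alpha!\,\big(\beta!(\alpha-\beta)!\big)^{1/\sigma-1}\le(\alpha!)^{1/\sigma}$ (valid since $1/\sigma-1>0$ and $\beta!(\alpha-\beta)!\le\alpha!$), together with the bound $2^n$ on the number of terms, yields $\sup_K|\partial^\alpha(fg)|\le C_{\delta,K}^2(2\delta)^n(\alpha!)^{1/\sigma}$, i.e. \eqref{eq:BeurlingUsefullCondition} after renaming $\delta$. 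Division then reduces to this: where $g\ne0$, the function $1/g=h\circ g$ with $h(t)=t^{-1}$ analytic lies again in $\Eww$ by the classical closure of Gevrey classes under composition (Faà di Bruno; see e.g.\ \cite{Rodino}), so on the open neighbourhood $\{g\ne0\}\supseteq\supp f$ of its support $f/g\cdot\1_{\supp f}=f\cdot(1/g)$ is a product of two $\Eww$-functions, hence in $\Eww$ there; extending it by $0$ across $(\supp f)^c$ (where all derivatives of $f$, and thus of the product, vanish) keeps it in $\Eww(\RR^d)$, and the Gevrey bound on an arbitrary compact $K$ follows from the product bound applied on a compact neighbourhood of $K\cap\supp f$ contained in $\{g\ne0\}$.
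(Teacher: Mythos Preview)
Your proposal is correct and follows essentially the same route as the paper's sketch: the Paley--Wiener correspondence in i) via the $\Gamma$-function/Stirling estimate in one direction and integration-by-parts plus Gevrey bounds in the other, Leibniz and the subadditivity of $\omega$ for ii), and Leibniz plus composition-stability of Gevrey classes for iii). The only visible differences are cosmetic: in the reverse inclusion of i) you optimize over the differentiation order $m$ whereas the paper sums the Taylor series of $e^{\lambda|x|^\sigma}$ directly (these are two presentations of the same estimate), and in ii) and iii) you spell out the computations while the paper simply cites \cite{Bjoerck} and \cite{RainerSchindl}.
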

\begin{proof}[Sketch of the proof]
We only have to prove the statements for $\omega\in \{\wexps\,\vert\, \sigma\in (0,1)\}$.
Take $f\in \Sww(\RR^d)$ and $\eps>0$. We then have for $\alpha\in \NN^d$
\begin{align*}
\partial^\alpha f(x)=(2\pi \ii)^{|\alpha|} \int_{\RR^d}  e^{2\pi \ii x \scl \xi} \,\xi^\alpha\,\FFr f(\xi)\,\dd \xi
\end{align*}
Using further that for $\lambda>0$ (we here follow \cite[Lemma 12.7.4]{HoermanderII})
\begin{align*}
\int |\xi|^{|\alpha|} e^{-\lambda |\xi|^\sigma  } \dd \xi \lesssim \int_0^\infty r^{|\alpha|+d-1} e^{-\lambda r^\sigma} \dd r \lesssim \lambda^{-|\alpha|/\sigma} \Gamma\big((|\alpha|+d)/\sigma\big)\overset{\mbox{Stirling}}{\lesssim} \lambda^{-|\alpha|/\sigma} C^{|\alpha|} |\alpha|^{|\alpha|/\sigma},
\end{align*}
we obtain for $x\in \RR^d$
\begin{align*}
|\partial^\alpha f (x)|\lesssim C_{\lambda}  \lambda^{-|\alpha|/\sigma} C^{|\alpha|} |\alpha|^{|\alpha|/\sigma} \cdot \pi^\omega_{0,\lambda}(f)\,.
\end{align*}
Choosing $\lambda>0$ big enough shows that $f$ satisfies the estimate in \eqref{eq:BeurlingUsefullCondition} (with global bounds) and thus $f\in \Eww(\RR^d)$ and $\Sww(\RR^d)\subseteq \Eww(\RR^d)$. In particular we get $\Sww(\RR^d)\cap C^\infty_c(\RR^d)\subseteq \Dww(\RR^d)$. To show the inverse inclusion consider $f\in \Dww(\RR^d)$. We only have to show that $\pi_{\alpha,\lambda}^\omega(f)<\infty$ for any $\lambda>0$ and $\alpha\in \NN^d$. And indeed for $x\in \RR^d$ with $|x|\geq 1$ (without loss of generality)\footnote{We here follow ideas from  \cite[Proposition A.2]{WeberMourrat}.}
\begin{align*}
 | e^{\lambda |x|^\sigma} \FFr f(x) | &\leq \sum_{k=0}^\infty \frac{\lambda^k}{k!} |x|^{\sigma k} |\FFr f(x)|
\leq \sum_{k=0}^\infty \frac{\lambda^k C^k}{k!} |x|^{\lceil\sigma k \rceil} |\FFr f(x) |\\
&\leq \sum_{i=1}^d\sum_{k=0}^\infty \frac{\lambda^k C^k}{k!} |x_i|^{\lceil\sigma k \rceil} |\FFr f(x) |
=\sum_{i=1}^d\sum_{k=0}^\infty \frac{\lambda^k C^k}{k!} \Big| \int e^{2\pi \ii \xi} \partial^{\lceil \sigma k \rceil e_i} f(\xi)  \,\dd\xi  \Big|\\
&\overset{\eqref{eq:BeurlingUsefullCondition} \,\&\,\scriptsize \mbox{Stirling}}{\leq}  C_{\eps} \sum_{k=0}^\infty \lambda^k C^k \eps^k<\infty 
\end{align*}
where $C, C_\eps>0$ denote as usual constants that may change from line to line and where in the last step we chose $\eps>0$ small enough to make the series converge; note that the bound~\eqref{eq:BeurlingUsefullCondition} holds on all of $\RR^d$ because $f$ is compactly supported by assumption.
 
The stability of $\Sw_\omega(\RR^d)$ under addition, multiplication and convolution are quite easy to check, see \cite[Proposition 1.8.3]{Bjoerck}. 

It is straightforward to check that $f\cdot g\in \Eww(U)$ for $f,g\in \Eww(U)$ using Leibniz's rule. For the stability under composition see e.g. \cite[Proposition 3.1]{RainerSchindl}, from which the stability under division can be easily derived.
\end{proof}

Many linear operations such as addition or derivation that can be defined on distributions can be translated immediately to the space of ultra-distributions $\left(\Dww(\RR^d)\right)'$. We see with \eqref{eq:CompactBeurling} that $\Eww(\RR^d)$ should be interpreted as the set of smooth multipliers for ultra-distributions in $\Dww'(\RR^d)$ and in particular for tempered ultra-distributions $\Sww'(\RR^d)\subseteq \Dww'(\RR^d)$.  The space $\Sw_\omega'(\RR^d)$ is small enough to allow for a Fourier transform. 

\begin{definition}
For $f\in \Sw_\omega'(\RR^d)$ and $\varphi\in\Sw_\omega(\RR^d)$ we set 
\begin{align*}
	\FFr f (\varphi)&:=f(\FFr \varphi), \\
	\FFr^{-1} f(\varphi)&:=f(\FFr^{-1} \varphi).
\end{align*}
By definition of $\Sw_\omega(\RR^d)$ we have that $\FFr$ and $\FFr^{-1}$ are isomorphisms on $\Sw_{\omega}(\RR^d)$ which implies that $\FFr$ and $\FFr^{-1}$ are isomorphisms on $\Sw_\omega'(\RR^d)$. 
\end{definition}

The following lemma proves that the set of compactly supported ultra-differentiable functions $\Dww(\RR^d)$ is rich enough to localize ultra-distributions, which gets the Littlewood-Paley theory started and allows us to introduce Besov spaces based on ultra-distributions in the next section. 
 
\begin{lemma}[\cite{Bjoerck}, Theorem 1.3.7.]
\label{lem:ResolutionOfUnity}
Let $\omega\in \ww$. For every pair of compact sets $K\subsetneq K'\subseteq \RR^d$ there is a $\varphi\in \Dww(\RR^d)$ such that 
\[
	\varphi\vert_{K} =1\,, \qquad \supp \varphi \subseteq K'\,.
\]
\end{lemma}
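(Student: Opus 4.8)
The plan is to reduce to the classical construction of a smooth cutoff via mollification, but with a mollifier and a smoothing scheme chosen carefully enough that the Gevrey-type bounds \eqref{eq:BeurlingUsefullCondition} survive. Since the case $\omega = \wpol$ is trivial ($C^\infty_\omega = C^\infty$ and this is the standard smooth Urysohn lemma), I would immediately restrict to $\omega = \wexps$ with $\sigma \in (0,1)$. First I would fix $r > 0$ small enough that the $2r$-neighborhood of $K$ is still contained in $K'$, let $L$ be the closed $r$-neighborhood of $K$, and set $\varphi := \1_L \rast \theta$ for a suitable nonnegative mollifier $\theta$ supported in the ball of radius $r$ with $\int \theta = 1$. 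Then automatically $\varphi \equiv 1$ on $K$, $\supp\varphi$ lies in the $2r$-neighborhood of $K$ hence in $K'$, and $\varphi \in C^\infty_c$; the entire content of the lemma is the ultra-differentiability estimate $\sup |\partial^\alpha \varphi| \leq C_\eps\, \eps^{|\alpha|} (|\alpha|!)^{1/\sigma}$ for every $\eps>0$. Since $\partial^\alpha \varphi = \1_L \rast \partial^\alpha \theta$, we get $\|\partial^\alpha \varphi\|_{L^\infty} \leq \|\partial^\alpha \theta\|_{L^1}$, so everything comes down to exhibiting a single compactly supported mollifier $\theta$ in the Gevrey class $C^\infty_{\wexps}$ (equivalently, with $\|\partial^\alpha\theta\|_{L^1} \lesssim \eps^{|\alpha|}(|\alpha|!)^{1/\sigma}$ for all $\eps>0$, which is the same class up to the harmless replacements of $\alpha!$ by $|\alpha|!$ noted in the remark after Definition~\ref{def:Gevrey}).

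The core step is therefore the construction of a nonzero compactly supported function in $C^\infty_{\wexps}(\RR^d)$. The standard device is to write it as an infinite convolution product: pick a sequence $a_n > 0$ with $\sum_n a_n < \infty$ but $\sum_n a_n^{-\delta} < \infty$ failing at the right rate — concretely $a_n = c\, n^{-1/\sigma}$ after normalization — and set $\theta := \rast_{n=1}^{\infty} \tfrac{1}{2a_n}\1_{[-a_n,a_n]^d}$ (a tensor product of one-dimensional such products suffices, so one may as well do $d=1$ and take tensor powers). The partial products are compactly supported with total support length $\sum 2a_n < \infty$, they converge uniformly, and differentiating: each factor $\tfrac{1}{2a_n}\1_{[-a_n,a_n]}$ contributes, upon being hit by one derivative, two Dirac masses of total mass $a_n^{-1}$, so $\|\theta^{(k)}\|_{L^\infty} \leq \|\theta^{(k)}\|_{TV} \leq k!\, \sup_{n_1 < \dots < n_k} a_{n_1}^{-1}\cdots a_{n_k}^{-1}$ up to combinatorial factors; choosing the $a_n$ decreasing like $n^{-1/\sigma}$ makes the worst product $\prod_{i=1}^k a_i^{-1} \sim (k!)^{1/\sigma} C^k$, and after the $k!$ from distributing the derivative one lands on a bound of the form $C^k (k!)^{1+1/\sigma}$ — which, one checks, is still a Gevrey-$1/\sigma'$ bound for a slightly worse $\sigma' < \sigma$, or, by reparametrizing the $a_n$ more aggressively (like $n^{-s}$ with $s$ chosen so that after the combinatorics the exponent comes out to exactly $1/\sigma$), the exact class $C^\infty_{\wexps}$. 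This is precisely the classical Denjoy–Carleman construction, and I would either reproduce it in a line or two or simply cite a standard reference (e.g. Hörmander or the Rodino/Rainer–Schindl references already cited) for the existence of nontrivial compactly supported Gevrey functions — indeed, since the lemma is attributed in the statement to Björck's Theorem 1.3.7, citing Björck is the cleanest route and the "proof" can legitimately be a half-page sketch of the mollification argument plus this citation.

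Finally I would assemble the pieces: $\theta$ as above, normalized to integrate to $1$ and rescaled so $\supp\theta \subseteq B(0,r)$; $\varphi := \1_L \rast \theta$; verify $\varphi|_K = 1$ (because for $x \in K$ the whole ball $B(x,r) \subseteq L$ and $\int\theta=1$), $\supp\varphi \subseteq K'$ (because $\supp\varphi \subseteq L + B(0,r)$, the $2r$-neighborhood of $K$), and $\varphi \in C^\infty_{\wexps}(\RR^d) \cap C^\infty_c(\RR^d) = \Dww(\RR^d)$ by \eqref{eq:CompactBeurling}, using $\|\partial^\alpha\varphi\|_{L^\infty} \leq \|\partial^\alpha\theta\|_{L^1}$ and the Gevrey bound on $\theta$ together with $\|\partial^\alpha\theta\|_{L^1} \lesssim_d \sup_{|\beta|=|\alpha|}\|\partial^\beta\theta\|_{L^1}$ in the tensor-product coordinates. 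The only genuinely delicate point — and the one I would expect to be the main obstacle if one insists on a self-contained argument rather than a citation — is getting the combinatorics of the infinite-convolution derivative bounds to produce exactly the exponent $1/\sigma$ rather than something strictly larger; this is the well-known subtlety that the Gevrey scale is not quite closed under the naive convolution estimate, and it is handled by the standard trick of front-loading the decay of the $a_n$ (or equivalently by the observation that $\Dww$ for a given $\sigma$ already contains $\Dw_{\omega^{\mathrm{exp}}_{\sigma'}}$ for $\sigma' < \sigma$, so one may harmlessly work in a slightly larger class and still land inside $\Dww$ for the target $\sigma$).
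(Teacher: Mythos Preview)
The paper does not prove this lemma; it is stated with attribution to Bj\"orck's Theorem~1.3.7 and no argument is given. Your proposal therefore goes beyond the paper, supplying the classical Denjoy--Carleman construction (mollification by an infinite convolution of shrinking boxes) that underlies Bj\"orck's result. The overall plan is sound and is exactly how one proves this from scratch.

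Two small muddles are worth cleaning up if you want the sketch to stand on its own. First, there is no Leibniz-type combinatorial blow-up for derivatives of a \emph{convolution}: one has $(f_1 \ast f_2 \ast \cdots)^{(k)} = f_1' \ast \cdots \ast f_k' \ast f_{k+1} \ast \cdots$ directly, so the bound is simply $\|\theta^{(k)}\|_\infty \lesssim \prod_{n=1}^k a_n^{-1}$ with no extra $k!$. With $a_n \sim n^{-s}$ this already gives $C^k(k!)^{s}$. Second, the paper's Definition~\ref{def:Gevrey} is of \emph{Beurling} type (the bound must hold for \emph{every} $\eps>0$), so the Roumieu-type estimate $C^k(k!)^{1/\sigma}$ for one fixed $C$ is not quite enough. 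The fix is as you say---work with a strictly smaller Gevrey exponent---but you have the inequality reversed: one constructs $\theta$ with $\|\theta^{(k)}\|\lesssim C^k(k!)^{1/\sigma'}$ for some $\sigma'\in(\sigma,1)$ (so $1/\sigma'<1/\sigma$), and then $(k!)^{1/\sigma'-1/\sigma}$ decays faster than any geometric sequence, yielding the Beurling bound for the target $\sigma$. Equivalently, $\Dw_{\omega^{\mathrm{exp}}_{\sigma'}}\subseteq \Dw_{\omega^{\mathrm{exp}}_{\sigma}}$ holds for $\sigma'>\sigma$, not $\sigma'<\sigma$.
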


\subsection{Ultra-distributions on Bravais lattices}\label{subsec:LatticeUltraDistributions}

For the discrete setup we essentially proceed as in Subsection \ref{subsec:LatticeFourierTransform} and define spaces
\begin{align*}
	\mathcal{S}_\omega (\GG)=\left\{ f:\GG\rightarrow \CC\,\middle\vert\,\, \sup_{k\in \GG} e^{\lambda \omega(k)} |f(k)|<\infty \mbox{ for all } \lambda>0\right\},
\end{align*}
and their duals (when equipped with the natural topology)
\begin{align*}
	\mathcal{S}_\omega'(\GG)=\left\{ f:\GG\rightarrow \CC\,\middle\vert\,\, \sup_{k\in \GG} e^{-\lambda \omega(k)} |f(k)|<\infty \mbox{ for some } \lambda>0\right\},
\end{align*}
with the pairing $f(\varphi)=|\GG|\sum_{k\in \GG} f(k) \varphi(k)$, $\varphi \in \mathcal{S}_\omega(\GG)$. As in Subsection \ref{subsec:LatticeFourierTransform} we can then define a Fourier transform $\FFg$ on $\mathcal{S}_\omega'(\GG)$ which maps the discrete space $\mathcal{S}_\omega(\GG)$ into the space of ultra-differentiable functions $S_\omega(\widehat{\GG}):=C^\infty_\omega(\widehat{\GG})$ with periodic boundary conditions. The dual space $\Sw_\omega'(\widehat{\GG})$ can be equipped with a Fourier transform $\FFg^{-1}$ as in \eqref{eq:InverseFourierTransformLattice} such that $\FFg,\FFg^{-1}$ become isomorphisms between $\mathcal{S}_\omega '(\GG)$ and  $\Sw_\omega'(\widehat{\GG})$ that are inverse to each other. For a proof of these statements we refer to Lemma \ref{lem:FourierAnalysisUltraDistributions}.

Performing identifications as in the case of $\Sw'(\RR^d)$ we can interpret these concepts as a sub-theory of the Fourier analysis on $\Sw_\omega'(\RR^d)$ with the only difference that we have to choose the function $\psi$, satisfying $\sum_{k\in \mathscr{R}} \psi(\cdot -k)=1$, on page \pageref{IdentificationsFourierTheory} as an element of $\Dw_\omega(\RR^d)$, see page~\pageref{subsec:ExtensionOperator} below for details.

\subsection{Discrete weighted Besov spaces}
\label{subsec:discreteBesovspaces}

We can now give our definition of a discrete, weighted Besov space, where we essentially proceed as in Subsection~\ref{subsec:LatticeFourierTransform} with the only difference that $\rho\in \boldsymbol{\rho}(\omega)$ is included in the definition and that the partition of unity $(\varphi_j)_{j\geq -1}$, from which $(\varphi_j^{\GG})_{j\geq -1}$ is constructed as on page \pageref{DiscreteDyadicPartition}, must now be chosen in $\Dww(\RR^d)$. 

\begin{definition}
\label{def:DiscreteBesov}
Given a Bravais lattice $\GG$, parameters $\alpha\in \RR$, $p,q\in[1,\infty]$ and a weight $\rho \in \rr(\omega)$ for $\omega\in \ww$ we define 
\begin{align*}
\mathcal{B}^\alpha_{p,q}(\GG,\rho):=\left\{ f\in \Sw_\omega'(\GG) \,\vert \, \|f\|_{\mathcal{B}^\alpha_{p,q}(\GG,\rho)}: = \|( 2^{j\alpha } \|\rho   \cdot  \varDelta_j^\GG f\|_{L^p(\GG)})_{j=-1,\ldots,j_{\GG}}\|_{\ell^q}<\infty \right\}\,,
\end{align*}
where the Littlewood-Paley blocks $(\varDelta_j^\GG)_{j=-1,\ldots,j_{\GG}}$ are built from a dyadic partition of unity $(\varphi_j^\GG)_{j=-1,\ldots,j_{\GG}}\subseteq \Eww(\widehat{\GG})$ on $\wGG$ constructed from some dyadic partition of unity $(\varphi_j)_{j\geq -1}\subseteq \Dww(\RR^d)$ on $\RR^d$ as on page \pageref{DiscreteDyadicPartition}. If we consider a sequence $\GG^\eps$ as in Definition \ref{def:LatticeSequence} we take \emph{the same} $(\varphi_j)_{j\geq -1} \subseteq \Dww(\RR^d)$ to construct for all $\eps$ the partitions $(\varphi_j^{\GGe})_{j=-1,\ldots, j_{\GGe}}$ on $\wGGe$.

We write furthermore $\mathcal{C}^\alpha_p(\GG,\rho)=\mathcal{B}^\alpha_{p,\infty}(\GG,\rho)$ and define 
\begin{align*}
	L^p(\GG,\rho):=\{f\in \Sw_\omega(\GG) \,\vert\, \|f\|_{L^p(\GG,\rho)}:=\|\rho f\|_{L^p(\GG)}<\infty \}\,,
\end{align*}
i.e. $\|f\|_{\mathcal{B}^\alpha_{p,q}(\GG,\rho)} = \| (2^{j\alpha } \|\varDelta_j^\GG f\|_{L^p(\GG,\rho)})_j\|_{\ell^q}$. \glsadd{Balphapq} \glsadd{mathcalCalphap}
\end{definition}
\begin{remark}
   When we introduce the weight we have a choice where to put it. Here we set $\|f\|_{L^p(\GG,\rho)}=\|\rho f\|_{L^p(\GG)}$, which is analogous to~\cite{Triebel} or~\cite{HairerLabbeR2}, but different from~\cite{WeberMourrat} who instead take the $L^p$ norm under the measure $\rho(x) \dd x$. For $p=1$ both definitions coincide, but for $p=\infty$ the weighted $L^\infty$ space of Mourrat and Weber does not feel the weight at all and it coincides with its unweighted counterpart.
\end{remark}
\begin{remark}
\label{rem:ContinousSpaces}
The formulation of this definition for continuous spaces $\mathcal{B}^\alpha_{p,q}(\RR^d,\rho)$, $\mathcal{C}^\alpha_p(\RR^d,\rho)$ and $L^p(\RR^d,\rho)$ is analogous. 
\end{remark}

We can write the Littlewood-Paley blocks as convolutions (on $\GG$):
\begin{align}
\label{eq:BlockAsConvolution}
	\varDelta_j^{\GG} f(x)=\kkg \ast_{\GG} f(x)= |\GG|\,\sum_{k\in \GG} \kkg(x -k) f(k)\,, \qquad x \in \GG\,,
\end{align}
where
\glsadd{PsiGGj}
\[
	\kkg:=\FFg^{-1} \varphi^{\GG}_j.
\]
We also introduce the notation
\glsadd{PsiGGleqj}
\begin{align*}
\Psi^{\GG,<j}:=\sum_{i<j} \Psi^{\GG,j}\,.
\end{align*}
Due to our convention to only consider dyadic scalings we always have the useful property
\begin{align}
\label{eq:PsijScaling}
\kkge=2^{jd}\phi_{\je}(2^j\cdot)
\end{align}
for a lattice sequence $\GG^\eps$ as in Definition~\ref{def:LatticeSequence}, where 
\begin{align}
\label{eq:je}
\je=\begin{cases}
-1, & j=-1, \\
0, & -1<j<j_{\GG^\eps}, \\
\infty, & j=j_{\GG^\eps},
\end{cases}
\end{align}
and where $\phi_{-1},\,\phi_0,\,\phi_{\infty}\in \Sw(\RR^d)$ are Schwartz functions \emph{on} $\RR^d$ with $\FFr \phi_{\je}\in \Dww(\RR^d)$. The functions $\phi_{-1},\,\phi_0,\,\phi_{\infty}$ depend on the lattice $\GG$ used to construct $\GG^\eps=\eps \GG$ but are independent of $\eps$. In a way, this is a discrete substitute for the scaling one finds on $\RR^d$ for $\Psi^j:=\FFr^{-1} \varphi_j =2^{jd} (\FFr^{-1} \varphi_0)(2^j\cdot)$ (for $j\geq 0$) due to the choice of the dyadic partition of unity in Definition \ref{def:DyadicPartitionOfUnity}. We prove the identity \eqref{eq:PsijScaling}, together with a similar result for $\Psi^{\GG,<j}$, in Lemma \ref{lem:ScalingBoundary} below. It turns out that \eqref{eq:PsijScaling} is helpful in translating arguments from the continuous theory into our discrete framework.
Let us once more stress the fact that $\phi_{\je}$ is defined on all of $\RR^d$, and therefore \eqref{eq:BlockAsConvolution} actually makes sense for all $x\in\RR^d$. With the $\phi_{\je}$ from Lemma \ref{lem:ScalingBoundary} this ``extension'' coincides with $\EE^\eps (\Delta^\GG_j f)$, where the extension operator $\EE^\eps$ is defined as in Lemma \ref{lem:ExtensionOperator} below. 

The following Lemma, a discrete weighted Young inequality, allows us to handle convolutions such as \eqref{eq:BlockAsConvolution}.
\begin{lemma}
\label{lem:DiscreteWeightedYoungInequality}
Given $\GG^\eps$ as in Definition~\ref{def:LatticeSequence} and $\Phi\in \Sw_\omega(\RR^d)$ for $\omega\in \ww$ we have for any $\delta\in (0,1]$ with $\delta\gtrsim \eps$ and $p\in [1,\infty]$, $\lambda>0$ for $\Phi^\delta:=\delta^{-d} \Phi(\delta^{-1}\cdot)$ the bound
\begin{align}
\label{eq:KjUniformlyIntegrableContinuousVersion}
	\sup_{x\in \RR^d} \| \Phi^\delta(\cdot+x)\|_{L^p(\GG^\eps,e^{\lambda\omega(\cdot+x)})}\lesssim \delta^{-d(1-1/p)}  \,.
\end{align}
where the implicit constant is independent of $\eps>0$. In particular, $\| \Phi^\delta\|_{L^p(\GG^\eps,e^{\lambda\omega})}\lesssim \delta^{-d(1-1/p)}$ and for $\rho\in \boldsymbol{\rho}(\omega)$
\begin{align}
\label{eq:DiscreteWeightedYoungInequality}
	\|\Phi^\delta\aste f\|_{L^p(\GG^\eps,\rho)}\lesssim \|f\|_{L^p(\GG^\eps,\rho)},\qquad \|\Phi^\delta \aste f\|_{L^p(\RR^d,\rho)}\lesssim \|f\|_{L^p(\GG^\eps,\rho)}\,,
\end{align} 
where we used in the second estimate that 
\begin{align*}
x\mapsto (\Phi^\delta \aste f)(x)= |\GG^\eps|\sum_{k\in \GG^\eps} \Phi^\delta(x-k)\,f(k)
\end{align*}
can be canonically extended to $\RR^d$. 
\end{lemma}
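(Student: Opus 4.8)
The plan is to first establish the summation bound \eqref{eq:KjUniformlyIntegrableContinuousVersion} and then to derive the two convolution inequalities in \eqref{eq:DiscreteWeightedYoungInequality} from it by a routine weighted Young argument; for \eqref{eq:KjUniformlyIntegrableContinuousVersion} I would absorb the weight into $\Phi$, rescale, and reduce everything to an $\eps$-uniform comparison of a lattice sum with an integral. \textbf{Removing the weight.} Since $\Phi\in\Sww(\RR^d)$ we have $|\Phi(z)|\le p^\omega_{0,\mu}(\Phi)\,e^{-\mu\omega(z)}$ for every $\mu>0$, and both members of $\ww$ are symmetric, subadditive ($\omega(x+y)\le\omega(x)+\omega(y)$, being subadditivity of $t\mapsto\log(1+t)$, resp.\ of $t\mapsto t^\sigma$ with $\sigma<1$) and satisfy $\omega(\delta z)\le\omega(z)$ for $\delta\le 1$. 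Writing $y=\delta(y/\delta)$ and using these facts gives
\[
|\Phi^\delta(y)|^p\,e^{p\lambda\omega(y)}\ \le\ \delta^{-dp}\,p^\omega_{0,\lambda+\nu}(\Phi)^p\,e^{-p\nu\omega(y/\delta)},
\]
where I pick $\nu>0$ large enough that $e^{-p\nu\omega}\in L^1(\RR^d)$ (any $\nu>0$ works for $\omega=\wexps$, and $\nu>d/p$ for $\omega=\wpol$). This reduces \eqref{eq:KjUniformlyIntegrableContinuousVersion} to bounding $|\GGe|\sum_{k\in\GGe}e^{-p\nu\omega((k+x)/\delta)}$ uniformly in $x\in\RR^d$ and $\eps$.

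\textbf{Lattice sum versus integral (the core estimate).} Substituting $u=(k+x)/\delta$ rewrites the last sum as $c\sum_{u\in L}e^{-p\nu\omega(u)}$, where $L:=(\eps/\delta)\GG+x/\delta$ is a shifted lattice of covolume $c:=(\eps/\delta)^d|\GG|$. Because $\delta\gtrsim\eps$, the fundamental cell of $L$ has diameter $\le D_0$ for a constant $D_0$ independent of $\eps,\delta,x$. Tiling $\RR^d$ by the translated cells $Q_u$ ($u\in L$), each of volume $c$ and diameter $\le D_0$ and containing $u$, and using $\omega(u)\ge\omega(u')-\omega(u-u')\ge\omega(u')-\omega_{D_0}$ for $u'\in Q_u$, with $\omega_{D_0}:=\sup_{|z|\le D_0}\omega(z)<\infty$, one gets $c\,e^{-p\nu\omega(u)}\le e^{p\nu\omega_{D_0}}\int_{Q_u}e^{-p\nu\omega(u')}\dd u'$ and hence $c\sum_{u\in L}e^{-p\nu\omega(u)}\le e^{p\nu\omega_{D_0}}\int_{\RR^d}e^{-p\nu\omega(u)}\dd u=:C<\infty$. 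Therefore $|\GGe|\sum_{k}e^{-p\nu\omega((k+x)/\delta)}=\eps^d|\GG|\cdot c^{-1}\cdot\big(c\sum_u e^{-p\nu\omega(u)}\big)\le\eps^d|\GG|\,(\delta/\eps)^d|\GG|^{-1}C=\delta^d C$, and combined with the previous display $\|\Phi^\delta(\cdot+x)\|_{L^p(\GGe,e^{\lambda\omega(\cdot+x)})}^p\lesssim\delta^{-dp}\delta^d=\delta^{d(1-p)}$, which is \eqref{eq:KjUniformlyIntegrableContinuousVersion}. For $p=\infty$ it is simply $\sup_{y\in\RR^d}|\Phi^\delta(y)|e^{\lambda\omega(y)}\le\delta^{-d}p^\omega_{0,\lambda}(\Phi)$.

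\textbf{Weighted Young.} For $\rho\in\rr(\omega)$ let $\lambda=\lambda(\rho)$ be as in \eqref{eq:WeightQuotient} and set $g(k):=\rho(k)|f(k)|$, $h(z):=|\Phi^\delta(z)|e^{\lambda\omega(z)}$. Then $\rho(x)\lesssim\rho(k)e^{\lambda\omega(x-k)}$ gives $\rho(x)\,|(\Phi^\delta\aste f)(x)|\lesssim|\GGe|\sum_{k}h(x-k)g(k)=:(h\aste g)(x)$ for all $x$. For $p\in[1,\infty)$, Hölder in $k$ (with the factor $|\GGe|$, splitting $h=h^{1/p'}h^{1/p}$) yields $|(h\aste g)(x)|^p\lesssim\big(|\GGe|\sum_k h(x-k)\big)^{p-1}|\GGe|\sum_k h(x-k)g(k)^p$, whose first factor is $\lesssim 1$ uniformly in $x$ by the $p=1$ case of \eqref{eq:KjUniformlyIntegrableContinuousVersion} together with the symmetry $\GGe=-\GGe$. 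Integrating in $x\in\RR^d$, using Fubini and $\|h\|_{L^1(\RR^d)}=\delta^{-d}\int_{\RR^d}|\Phi(z/\delta)|e^{\lambda\omega(z)}\dd z\le\int_{\RR^d}|\Phi(w)|e^{\lambda\omega(w)}\dd w<\infty$, one obtains $\|h\aste g\|_{L^p(\RR^d)}\lesssim\|g\|_{L^p(\GGe)}=\|f\|_{L^p(\GGe,\rho)}$, hence the second bound in \eqref{eq:DiscreteWeightedYoungInequality}. The first bound follows identically, with the outer $L^p$ norm taken over $\GGe$ and $|\GGe|\sum_{x\in\GGe}h(x-k)=\|\Phi^\delta\|_{L^1(\GGe,e^{\lambda\omega})}\lesssim 1$ in place of the continuous integral. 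For $p=\infty$ one estimates directly $(h\aste g)(x)\le\|g\|_{L^\infty(\GGe)}\,|\GGe|\sum_k h(x-k)\lesssim\|g\|_{L^\infty(\GGe)}$.

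\textbf{Main obstacle.} The one genuinely delicate point is the uniformity in $\eps$ in the core estimate: one must see that the volume factor $|\GGe|=\eps^d|\GG|$ coming from the normalized lattice $L^p$ norm cancels exactly the point count $c^{-1}=(\delta/\eps)^d|\GG|^{-1}$ produced when the lattice sum is replaced by an integral, leaving precisely $\delta^d$. The hypothesis $\delta\gtrsim\eps$ is what makes that replacement uniform, because it keeps the fundamental cell of $(\eps/\delta)\GG$ of bounded diameter and therefore controls the oscillation of $e^{-p\nu\omega}$ over a single cell; without it $\Phi^\delta$ would be concentrated on a scale finer than the lattice spacing and no such bound could hold. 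Everything else — subadditivity $\omega(x+y)\le\omega(x)+\omega(y)$, the contraction $\omega(\delta z)\le\omega(z)$ for $\delta\le1$, and the rapid decay of $\Phi\in\Sww(\RR^d)$ — is routine bookkeeping.
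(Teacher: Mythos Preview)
Your proof is correct and follows essentially the same route as the paper: absorb the weight into $\Phi$ using its $\Sww$-decay, compare the resulting lattice sum with an integral (the paper does this via its integral-test Lemma~\ref{lem:IntegralTest} after passing to the polynomial majorant $(1+|\cdot|^{d+1})^{-1}$, you do it by a direct tiling with the $e^{-p\nu\omega}$ majorant), and then run a weighted Young argument (the paper invokes its mixed Young inequality, Lemma~\ref{lem:MixedYoungInequality}, while you spell out the H\"older splitting $h=h^{1/p'}h^{1/p}$). The only cosmetic difference is that the paper handles the shift $x$ by noting that by lattice periodicity it suffices to take $|x|\lesssim\eps$, whereas you absorb $x$ into the translated lattice $L$ and observe that the tiling bound is shift-independent; both are equally valid.
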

\begin{remark}
\label{rem:ScalingPsij}
Using $\delta=2^{-j}$ for $j\in\{-1,\ldots,j_{\GG^\eps}\}$ this covers in particular the functions $\Psi^{\GG^\eps,j}=\FFge^{-1} \varphi_j^{\GG^\eps}$ via \eqref{eq:PsijScaling}.  
\end{remark}
\begin{proof}
The case $p = \infty$ follows from the definition of $\Sw_\omega(\RR^d)$ and $e^{\lambda \omega(k)}\leq e^{\lambda \omega(\delta^{-1} k)}$, so that we only have to show the statement for $p<\infty$.  And indeed we obtain 
\begin{align*}
			\|\Phi^\delta\|^p_{L^p(\GG^\eps,e^{\lambda\omega})} &=\sum_{k\in\GG^\eps} |\GG^\eps| |\Phi^\delta(k)|^p e^{ p\lambda\omega(k)} = \delta^{- d p}\eps^d  \sum_{k\in\GG} |\GG| |\Phi(\delta^{-1} \eps k)|^p e^{ p\lambda \omega(\eps k)} \\
			& \leq \delta^{- d p}\eps^{d } \sum_{k\in\GG} |\GG| |\Phi(\delta^{-1} \eps k)|^p e^{p\lambda\omega(\delta^{-1} \eps k)} \lesssim \delta^{-d(p-1)}  \sum_{k\in\GG} |\GG| \delta^{ -d}\eps^d  \frac{1}{1+|\delta^{-1} \eps k|^{d+1}} \\
			&\overset{\mbox{ \footnotesize Lemma~\ref{lem:IntegralTest}}}{\lesssim} \delta^{-d(p-1)} \int_{\RR^d} \dd z\, (\delta^{-1} \eps)^d \frac{1}{1+|\delta^{-1} \eps z|^{d+1}} \lesssim \delta^{-d(p-1)}\,,
	\end{align*}
where we used that $\Phi\in \Sw_\omega(\RR^d)$ and in the application of Lemma \ref{lem:IntegralTest} that for $|x-y|\lesssim 1 $ the quotient $\frac{1+|\delta^{-1} \eps x|}{1+|\delta^{-1} \eps y|}$ is uniformly bounded.
Inequality \eqref{eq:KjUniformlyIntegrableContinuousVersion} can be proved in the same way since it suffices to take the supremum over $|x|\lesssim \eps$. 

The estimates for $\Phi^\delta\aste f $ then follow by Young's inequality on $\GG^\eps$ and a mixed Young inequality, Lemma \ref{lem:MixedYoungInequality} below, applied to the right hand side of
\begin{align*}
 \rho(x)\,|\Phi^\delta\aste f (x)| &\leq \sum_{k\in \GGe} |\GGe|\,\rho(x)|\Phi^\delta(x - k)|\cdot |f(k)| \\
  &\overset{(\star)}{\lesssim} \sum_{k\in \GGe} |\GGe| \,e^{\lambda \omega(x-k) }\, |\Phi^\delta(x-k)|\cdot \rho(k)|f(k)|= |e^{\lambda \omega} \Phi| \aste |\rho f| (x)\,. 
 \end{align*}
In the step $(\star)$ we used that $\rho(x)\lesssim e^{\lambda \omega(x-k) }\, \rho(k)$ for some $\lambda>0$ due to \eqref{eq:WeightQuotient}. 
\end{proof}

From Lemma \ref{lem:DiscreteWeightedYoungInequality} ( and Remark \ref{rem:ScalingPsij}) we see in particular that the blocks $\varDelta_j^{\GG^\eps}$ map the space $L^p(\GG^\eps,\rho)$ into itself for any $p\in [1,\infty]$:
\begin{align}
\label{eq:LpLp}
\|\varDelta_j^{\GGe} f\|_{L^p(\GGe,\rho)}=\| \Psi^{\GGe,j}\aste f  \|_{L^p(\GGe,\rho)} \overset{\mbox{\tiny Lemma \ref{lem:DiscreteWeightedYoungInequality}}}{\lesssim} \|f\|_{L^p(\GGe,\rho)} \,,
\end{align}
where the involved constant is independent of $\eps$ and $j=-1,\ldots,j_{\GGe}$. This is the discrete analogue of the continuous version
\begin{align}
\label{eq:LpLpRRd}
\|\varDelta_j f\|_{L^p(\RR^d,\rho)} \lesssim \|f\|_{L^p(\RR^d,\rho)}
\end{align}
for $j\geq -1$ (which can be proved in essentially the same manner).

As in the continuous case we can state an embedding theorem for discrete Besov spaces. Since it can be shown exactly as its continuous (and unweighted) cousin (\cite[Proposition 2.71]{Bahouri} or \cite[Theorem 4.2.3]{EdmundsTriebel}) we will not give its proof here. 

\begin{lemma}
\label{lem:DiscreteEmbedding}
Given $\GG^\eps$ as in Definition \ref{def:LatticeSequence} for any $\alpha_1 \in \RR$, $1\leq p_1 \leq p_2 \leq \infty ,\, 1\leq q_1\leq q_2 \leq \infty$ and weights $\rho_1,\rho_2$ with $\rho_2\lesssim \rho_1$ we have the continuous embedding (with norm of the embedding operator independent of $\eps\in (0,1]$) 
\begin{align*}
	\mathcal{B}^{\alpha_1}_{p_1,q_1}(\GG^\eps,\rho_1) \subseteq \mathcal{B}^{\alpha_2}_{p_2,q_2}(\GG^\eps,\rho_2)
\end{align*}
for $\alpha_2-\frac{d}{p_2}\leq \alpha_1-\frac{d}{p_1}$. If $\alpha_2< \alpha_1-d(1/p_1-1/p_2)$ and $\lim_{|x|\rightarrow \infty} \rho_2(x)/\rho_1(x)=0$ the embedding is compact.
\end{lemma}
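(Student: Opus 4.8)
The plan is to mimic the classical proof of the Besov embedding theorem (as in Bahouri--Chemin--Danchin or Edmunds--Triebel), but carrying the weight $\rho$ and tracking uniformity in $\eps$ throughout. Since each discrete Besov norm only involves finitely many Littlewood-Paley blocks $\varDelta_j^{\GGe}$ with $j = -1, \dots, j_{\GGe}$, there are no convergence-at-infinity issues in the frequency variable, which actually simplifies matters; the content is entirely in getting constants that do not depend on $\eps$.

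First I would treat the regularity/integrability exchange $\B^{\alpha_1}_{p_1,q_1}(\GGe,\rho_1) \subseteq \B^{\alpha_2}_{p_2,q_2}(\GGe,\rho_2)$ when $\alpha_2 - d/p_2 \le \alpha_1 - d/p_1$. The key ingredient is a discrete, weighted Bernstein-type inequality: for a function $g$ on $\GGe$ whose spectral support is contained in a ball of radius $\sim 2^j$ (which holds for $g = \varDelta_j^{\GGe} f$ by construction, at least for $j < j_{\GGe}$; the boundary block $j = j_{\GGe}$ has to be handled separately but is covered by the scaling \eqref{eq:PsijScaling} and Remark~\ref{rem:ScalingPsij}), one has $\|g\|_{L^{p_2}(\GGe,\rho)} \lesssim 2^{jd(1/p_1 - 1/p_2)} \|g\|_{L^{p_1}(\GGe,\rho)}$ uniformly in $\eps$. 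This follows by writing $g = \Psi^{\GGe,j} \aste g = 2^{jd}\phi_{\je}(2^j \cdot) \aste g$ using \eqref{eq:PsijScaling}, and then applying the discrete weighted Young inequality Lemma~\ref{lem:DiscreteWeightedYoungInequality} with the Young exponent chosen so that $1 + 1/p_2 = 1/r + 1/p_1$, using the bound $\|\Phi^\delta\|_{L^r(\GGe, e^{\lambda\omega})} \lesssim \delta^{-d(1-1/r)}$ with $\delta = 2^{-j}$ (the weight $\rho \in \rr(\omega)$ being absorbed into the $e^{\lambda\omega}$ factor exactly as in the proof of that lemma, step $(\star)$). Given this, since $\rho_2 \lesssim \rho_1$ we estimate
\[
2^{j\alpha_2}\|\varDelta_j^{\GGe} f\|_{L^{p_2}(\GGe,\rho_2)} \lesssim 2^{j\alpha_2} 2^{jd(1/p_1-1/p_2)} \|\varDelta_j^{\GGe} f\|_{L^{p_1}(\GGe,\rho_1)} \le 2^{j\alpha_1}\|\varDelta_j^{\GGe} f\|_{L^{p_1}(\GGe,\rho_1)},
\]
using $\alpha_2 + d(1/p_1-1/p_2) \le \alpha_1$. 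The passage from $q_1$ to $q_2 \ge q_1$ is then just the elementary inclusion $\ell^{q_1} \hookrightarrow \ell^{q_2}$ applied to the (finite) sequence indexed by $j$.

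For the compactness statement, under the strict inequality $\alpha_2 < \alpha_1 - d(1/p_1 - 1/p_2)$ and $\rho_2/\rho_1 \to 0$, the standard argument is: given a bounded sequence $(f_n)$ in $\B^{\alpha_1}_{p_1,q_1}(\GGe,\rho_1)$, for a fixed large radius $R$ the restrictions to the ball $\{|x| \le R\}$ lie in a compact set (this uses local compactness of the unweighted low-regularity Besov embedding on a bounded region, i.e. Rellich-type compactness — one can first extract via the finitely many smooth blocks on a bounded spatial domain using Arzelà--Ascoli, since each $\varDelta_j^{\GGe} f$ is smooth with derivatives controlled), while the tail $\{|x| > R\}$ is small in $\B^{\alpha_2}_{p_2,q_2}(\GGe,\rho_2)$ because on that region $\rho_2 \lesssim \eta_R \rho_1$ with $\eta_R \to 0$, combined with the gap $\alpha_1 - \alpha_2 - d(1/p_1-1/p_2) > 0$ which gives a genuine (not just bounded) gain in the block estimate. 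A diagonal extraction then produces a convergent subsequence. I would explicitly note that this is proved exactly as in the cited continuous references and not reproduce the details.

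The main obstacle — and the only place where this differs from a purely routine transcription — is the boundary Littlewood-Paley block $\varDelta_{j_{\GGe}}^{\GGe}$, whose multiplier $\varphi^{\GGe}_{j_{\GGe}} = 1 - \sum_{j < j_{\GGe}} \varphi_j$ is not supported in a clean dyadic annulus and is adapted to the geometry of $\wGGe$. One has to check that the Bernstein inequality still holds for this block with a constant uniform in $\eps$; this is exactly what the scaling identity \eqref{eq:PsijScaling} with $\je = \infty$ (so $\Psi^{\GGe,j_{\GGe}} = 2^{j_{\GGe}d}\phi_\infty(2^{j_{\GGe}}\cdot)$ for a fixed Schwartz function $\phi_\infty$ with $\FFr\phi_\infty \in \Dww$, independent of $\eps$) is designed to provide, since then Lemma~\ref{lem:DiscreteWeightedYoungInequality} applies verbatim with $\delta = 2^{-j_{\GGe}} \sim \eps$, and the hypothesis $\delta \gtrsim \eps$ of that lemma is satisfied. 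The remaining care is simply to keep $\lambda$ (the exponential weight parameter coming from $\rho \in \rr(\omega)$) fixed throughout and to note that all implicit constants depend only on the fixed partition of unity, on $\rho_1,\rho_2$ through the ratio bound and the fixed $\lambda$, and on $\omega$, but never on $N$ where $\eps = 2^{-N}$.
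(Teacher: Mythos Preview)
Your proposal is correct and is exactly the approach the paper has in mind: the paper does not give a proof but simply states that it ``can be shown exactly as its continuous (and unweighted) cousin'' in \cite[Proposition~2.71]{Bahouri} or \cite[Theorem~4.2.3]{EdmundsTriebel}, and your sketch is precisely that transcription, with the $\eps$-uniformity coming from the scaling \eqref{eq:PsijScaling} and Lemma~\ref{lem:DiscreteWeightedYoungInequality}. One cosmetic point: to write $\varDelta_j^{\GGe} f$ as a convolution with a kernel that reproduces it, you need the slightly fattened $\overline{\Psi}^{\GGe,j}=\sum_{|i-j|\le 1}\Psi^{\GGe,i}$ rather than $\Psi^{\GGe,j}$ itself (as in the proof of Lemma~\ref{lem:LaplaceRegularity}), but this has the same scaling and changes nothing.
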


For later purposes we also recall the continuous version of this embedding.

\begin{lemma}[\cite{EdmundsTriebel}, Theorem 4.2.3]
\label{lem:ContinuousEmbedding}

For any $\alpha_1 \in \RR$, $1\leq p_1 \leq p_2 \leq \infty ,\, 1\leq q_1\leq q_2 \leq \infty$ and weights $\rho_1,\rho_2$ with $\rho_2\lesssim \rho_1$ we have the continuous embedding (with norm independent of $\varepsilon\in (0,1]$) 
\begin{align*}
	\mathcal{B}^{\alpha_1}_{p_1,q_1}(\RR^d,\rho_1) \subseteq \mathcal{B}^{\alpha_2}_{p_2,q_2}(\RR^d,\rho_2)
\end{align*}
for $\alpha_2\leq \alpha_1-d(1/p_1-1/p_2)$. If $\alpha_2< \alpha_1-d(1/p_1-1/p_2)$ and $\lim_{|x|\rightarrow \infty} \rho_2(x)/\rho_1(x)=0$ the embedding is compact. 
\end{lemma}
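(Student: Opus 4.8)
The plan is to reduce both claims to a weighted Bernstein inequality. First, since $\rr(\wpol)\subseteq\rr(\wexps)$, I would fix a single $\omega\in\ww$ with $\rho_1,\rho_2\in\rr(\omega)$, so that both spaces are carved out of the same $\Sww'(\RR^d)$ by the same Littlewood--Paley decomposition; in particular the set inclusion will follow from the norm bound, and the claimed uniformity in $\eps$ is vacuous because the continuous spaces $\mathcal{B}^\alpha_{p,q}(\RR^d,\rho)$ do not depend on $\eps$ at all (it is stated only to mirror Lemma~\ref{lem:DiscreteEmbedding}). Since $q_1\le q_2$ gives $\|\cdot\|_{\ell^{q_2}}\le\|\cdot\|_{\ell^{q_1}}$, the whole first claim reduces to the single frequency-block estimate
\begin{equation}\label{eq:wBern}
\|\rho_2\,\varDelta_j f\|_{L^{p_2}(\RR^d)}\lesssim 2^{jd(1/p_1-1/p_2)}\,\|\rho_1\,\varDelta_j f\|_{L^{p_1}(\RR^d)},\qquad j\ge -1,
\end{equation}
with constant independent of $j$ and $f$: multiplying by $2^{j\alpha_2}$, using $\alpha_2\le\alpha_1-d(1/p_1-1/p_2)$ and taking $\ell^{q_1}$-norms then bounds $\|f\|_{\mathcal{B}^{\alpha_2}_{p_2,q_2}(\RR^d,\rho_2)}$ by $\|f\|_{\mathcal{B}^{\alpha_1}_{p_1,q_1}(\RR^d,\rho_1)}$.

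To prove~\eqref{eq:wBern} I would argue via a weighted Bernstein inequality. By Lemma~\ref{lem:ResolutionOfUnity}, choose $\eta_j\in\Dww(\RR^d)$ with $\eta_j\equiv1$ on $\supp\varphi_j$, supported in a slightly enlarged annulus (resp.\ ball for $j=-1$) and with $\eta_j=\eta_0(2^{-j}\cdot)$ for $j\ge0$; then $\varDelta_j f=\FFr^{-1}\eta_j\rast\varDelta_j f$ and $\FFr^{-1}\eta_j=2^{jd}\Theta(2^j\cdot)$ for $j\ge0$ with $\Theta:=\FFr^{-1}\eta_0\in\Sww(\RR^d)$. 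Moving the weights through this convolution --- first $\rho_2\lesssim\rho_1$, then $\rho_1(x)\lesssim\rho_1(y)e^{\lambda\omega(x-y)}$ for the $\lambda$ attached to $\rho_1$ --- and applying Young's inequality with $1+1/p_2=1/r+1/p_1$ yields, exactly as in the computation of Lemma~\ref{lem:DiscreteWeightedYoungInequality} but now on $\RR^d$,
\[
\|\rho_2\,\varDelta_j f\|_{L^{p_2}}\lesssim \big\|\,|\FFr^{-1}\eta_j|\,e^{\lambda\omega}\big\|_{L^r}\,\|\rho_1\,\varDelta_j f\|_{L^{p_1}}.
\]
Since $\omega(2^jx)\ge\omega(x)$ for $2^j\ge1$ and $\Theta$ decays faster than any $e^{-\mu\omega}$, the scaling $\FFr^{-1}\eta_j=2^{jd}\Theta(2^j\cdot)$ gives $\big\||\FFr^{-1}\eta_j|e^{\lambda\omega}\big\|_{L^r}\lesssim 2^{jd(1-1/r)}\|\Theta e^{\lambda\omega}\|_{L^r}\lesssim 2^{jd(1/p_1-1/p_2)}$ (the $j=-1$ term contributing only a fixed constant), which is~\eqref{eq:wBern}, and hence the continuous embedding.

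For the compactness statement, assume $\theta:=\alpha_1-\alpha_2-d(1/p_1-1/p_2)>0$ and $\rho_2(x)/\rho_1(x)\to0$ as $|x|\to\infty$, and let $\mathcal{K}$ be the unit ball of $\mathcal{B}^{\alpha_1}_{p_1,q_1}(\RR^d,\rho_1)$. I would show $\mathcal{K}$ is totally bounded in $\mathcal{B}^{\alpha_2}_{p_2,q_2}(\RR^d,\rho_2)$ via the splitting $f=\sum_{j>J}\varDelta_j f+\1_{|x|>R}\,S_Jf+\1_{|x|\le R}\,S_Jf$ with $S_Jf:=\sum_{j\le J}\varDelta_j f$. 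First, \eqref{eq:wBern} and $\theta>0$ give $\|\sum_{j>J}\varDelta_j f\|_{\mathcal{B}^{\alpha_2}_{p_2,q_2}(\RR^d,\rho_2)}\lesssim 2^{-J\theta}\|f\|_{\mathcal{B}^{\alpha_1}_{p_1,q_1}(\RR^d,\rho_1)}$, small uniformly over $\mathcal{K}$ for $J$ large. Second, with $J$ fixed, $\rho_2(x)\le\eta(R)\rho_1(x)$ for $|x|>R$ with $\eta(R)\to0$, while $\|\rho_1 S_Jf\|_{L^{p_2}}\lesssim_J\|f\|_{\mathcal{B}^{\alpha_1}_{p_1,q_1}(\RR^d,\rho_1)}$ by \eqref{eq:wBern}, so $\|\1_{|x|>R}S_Jf\|_{\mathcal{B}^{\alpha_2}_{p_2,q_2}(\RR^d,\rho_2)}$ is small uniformly over $\mathcal{K}$ for $R$ large. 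Third, with $J,R$ fixed: differentiating $\varDelta_j f=\FFr^{-1}\eta_j\rast\varDelta_j f$ and applying the weighted Young inequality into $L^\infty$ gives $\|\rho_1\,\partial^\alpha\varDelta_j f\|_{L^\infty}\lesssim_{j,\alpha}\|f\|_{\mathcal{B}^{\alpha_1}_{p_1,q_1}(\RR^d,\rho_1)}$; since weights in $\rr(\omega)$ are bounded above and below on compacts, $\{S_Jf|_{\overline{B_R}}:f\in\mathcal{K}\}$ is bounded in $C^1(\overline{B_R})$, hence relatively compact in $C(\overline{B_R})$ by the Arzel\`a--Ascoli theorem and therefore in $L^{p_2}(B_R)$; since only finitely many blocks occur and $\rho_2$ is bounded on $B_R$, the family $\{\1_{|x|\le R}S_Jf:f\in\mathcal{K}\}$ lies in a relatively compact subset of $\mathcal{B}^{\alpha_2}_{p_2,q_2}(\RR^d,\rho_2)$. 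A finite net for the third piece combined with the smallness of the first two produces a finite $\delta$-net for $\mathcal{K}$.

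The main obstacle is the uniform weighted Bernstein estimate \eqref{eq:wBern}: one must check that the exponential loss $e^{\lambda\omega(\cdot)}$ incurred when pulling the weight across the convolution is absorbed by the $\Sww$-decay of $\FFr^{-1}\eta_j$, and --- crucially for $j$-uniformity --- that $\big\||\FFr^{-1}\eta_j|e^{\lambda\omega}\big\|_{L^r}$ scales exactly as $2^{jd(1-1/r)}$ across the whole dyadic range, which rests on $\FFr^{-1}\eta_j=2^{jd}\Theta(2^j\cdot)$ together with the monotonicity $\omega(2^jx)\ge\omega(x)$ for $2^j\ge1$. In the compactness part the only delicate point is the uniform local $C^1$-bound on the low-frequency band, which again reduces to the same weighted Young estimate applied to $\partial^\alpha\FFr^{-1}\eta_j$.
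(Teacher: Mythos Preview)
The paper does not prove this lemma at all: it is stated with a bare citation to \cite[Theorem~4.2.3]{EdmundsTriebel}, so there is no ``paper's own proof'' to compare against. Your argument is therefore a self-contained proof where the authors chose to quote a reference, and for the continuous-embedding half it is correct and is exactly the standard route (weighted Bernstein via $\varDelta_j f=\FFr^{-1}\eta_j\rast\varDelta_j f$, pull the weight through using~\eqref{eq:WeightQuotient}, Young with exponent $r$, then scale).

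There is, however, a genuine gap in your compactness argument. You split $f=\sum_{j>J}\varDelta_j f+\1_{|x|>R}S_Jf+\1_{|x|\le R}S_Jf$ and then try to estimate the last two pieces \emph{directly in the Besov norm} $\mathcal{B}^{\alpha_2}_{p_2,q_2}(\RR^d,\rho_2)$. But multiplication by the rough cutoff $\1_{|x|>R}$ (or $\1_{|x|\le R}$) destroys the spectral localisation of $S_Jf$, so neither $\1_{|x|>R}S_Jf$ nor $\1_{|x|\le R}S_Jf$ has Fourier support in a fixed ball, and you cannot pass from the $L^{p_2}(\rho_2)$ control you actually derive to the Besov norm unless $\alpha_2<0$. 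Concretely, your sentence ``so $\|\1_{|x|>R}S_Jf\|_{\mathcal{B}^{\alpha_2}_{p_2,q_2}(\RR^d,\rho_2)}$ is small'' is unjustified, and likewise the claim that $\{\1_{|x|\le R}S_Jf\}$ sits in a relatively compact set of the Besov space.

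The fix is cheap and uses exactly the ingredients you already have: keep your Step~1 (the high-frequency tail $\sum_{j>J}\varDelta_j f$ is small in the target norm), and for Step~2 observe that any difference $S_Jf-S_Jg$ is spectrally supported in a fixed ball of radius $\sim 2^J$, so by~\eqref{eq:LpLpRRd} one has $\|S_Jf-S_Jg\|_{\mathcal{B}^{\alpha_2}_{p_2,q_2}(\RR^d,\rho_2)}\lesssim_J\|\rho_2(S_Jf-S_Jg)\|_{L^{p_2}}$. Hence it suffices to show $\{S_Jf:f\in\mathcal{K}\}$ is totally bounded in $L^{p_2}(\RR^d,\rho_2)$, and \emph{there} your indicator splitting and Arzel\`a--Ascoli argument go through verbatim. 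Reorder the argument this way and the proof is complete.
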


\subsubsection*{The extension operator}
\label{subsec:ExtensionOperator}
Given a Bravais lattice $\GG$ and a dyadic partition of unity $(\varphi_j)_{j\geq -1}$ on $\RR^d$ such that $j_{\GG}$, as defined on page \pageref{DiscreteDyadicPartition}, is strictly greater than $0$ we construct a discrete dyadic partition of unity $(\varphi_j^{\GG})_{-1,\ldots,j_\GG}$ from $(\varphi_j)_{j\geq -1}$ as on page \pageref{DiscreteDyadicPartition}.

We choose a symmetric function $\psi\in \Dww(\RR^d)$ which we refer to as the \textit{smear function} and which satisfies the following properties:
\begin{enumerate}
\item $\sum_{k \in \mathscr{R}} \psi(\cdot-k)=1$,
\item $\psi=1$ on $\supp \varphi_j$ for $j<j_{\GG}$,
\item $
\left(\supp \psi\cap \supp \ext{\varphi^{\GG}_j}\right)\backslash \wGG \neq \varnothing \,\Rightarrow j=j_{\GG}\,.
$
\end{enumerate}
\glsadd{psi}
The last property looks slightly technical, but actually only states that the support of $\psi$ is small enough such that it only touches the support of the periodically extended $\varphi_j^{\GG}$ with $j<j_{\GG}$ \emph{inside} $\wGG$. Using $\dist(\partial \wGG,\bigcup_{j<j_{\GG}}\supp (\varphi^\GG_{j})_{\mathrm{ext}})>0$ it is not hard to construct a function $\psi$ as above: Indeed choose via Lemma \ref{lem:ResolutionOfUnity} some $\tilde{\psi}\in \Dww(\RR^d)$ that satisfies property 3 and $\tilde{\psi}\vert_{\wGG}=1$ and set $\psi:=\tilde{\psi}/\sum_{k\in \CR} \tilde{\psi}(\cdot-k)$.

The rescaled $\psi^\eps := \psi(\eps\cdot)$ satisfies the same properties on $\GG^\eps$ (remember that by convention we construct the sequence $(\varphi_j^{\GG^\eps})_{j=-1,\ldots,j_{\GG^\eps}}$ from the same $(\varphi_j)_{j\geq -1}$). 
This allows us to define an extension operator $\EE^\eps$ in the spirit of Lemma \ref{lem:ClassicalCaseExtensionOperator} as
\begin{align*}
	\EE^\eps f:= \mathcal{F}^{-1}_{\RR^d}(\psi^\eps \cdot  \ext{\FFge f}),\qquad f\in \Sw_\omega'(\GG^\eps),
\end{align*}
\glsadd{mathscrEeps}
and as in Lemma~\ref{lem:ClassicalCaseExtensionOperator} we can show that $\EE^\eps f\in \Eww(\RR^d)\cap\Sw_\omega'(\RR^d)$ and $\EE^\eps f\vert_{\GG^\eps}=f$. 

Using \eqref{eq:DiracComb} we can give a useful, alternative formulation of $\EE^\eps f$
\begin{align}
\EE^\eps  f &=\FFr^{-1}\psi^\eps\rast \FFr^{-1}\ext{\FFge f} =\FFr^{-1}\psi^\eps \rast f_{\mathrm{dir}} \nonumber\\
 &=\FFr^{-1}\psi^\eps \aste f=|\GG^\eps|\sum_{z\in \GG^\eps} \FFr^{-1}\psi^\eps(\cdot-z)\,\, f(z)\,,
\label{eq:ExtensionAsConvolution}
\end{align}
where as in \eqref{eq:BlockAsConvolution} we read  the convolution in the second line as a function on $\RR^d$ using that $\FFr^{-1}\psi^\eps\in \Sww(\RR^d)$ is defined on $\RR^d$.  By property 3 of $\psi$ we also have for $j<j_{\GG^\eps}$
\begin{align}
\label{eq:ExtensionLowBlocks}
\varDelta_j \EE^\eps f=\EE^\eps \varDelta_j^{\GG^\eps} f
\end{align}
Finally, let us study the interplay of $\EE^\eps$ with Besov spaces.

\begin{lemma}
\label{lem:ExtensionOperator}
For any $\alpha\in\RR,\,p,q\in [1,\infty]$ and $\rho\in \rr(\ww) $  the family of operators
\begin{align*}
 \EE^\eps \colon \mathcal{B}^{\alpha}_{p,q}(\GG^\eps, \rho )\longrightarrow \mathcal{B}^{\alpha}_{p,q}(\RR^d,\rho)\,,
\end{align*} 
defined above, is uniformly bounded in $\eps$.
\end{lemma}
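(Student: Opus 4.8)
The plan is to reduce the claim to the discrete weighted Young inequality (Lemma~\ref{lem:DiscreteWeightedYoungInequality}) by estimating each Littlewood--Paley block $\varDelta_j \EE^\eps f$ in $L^p(\RR^d,\rho)$ uniformly in $\eps$ and $j$. The key structural fact is the formula \eqref{eq:ExtensionAsConvolution}, $\EE^\eps f = \FFr^{-1}\psi^\eps \aste f$, which exhibits the extension as a discrete convolution (readable as a function on $\RR^d$) against the rescaled smear kernel $\FFr^{-1}\psi^\eps = \eps^{-d}(\FFr^{-1}\psi)(\eps^{-1}\cdot)$, and the fact that $\FFr^{-1}\psi \in \Sww(\RR^d)$ since $\psi \in \Dww(\RR^d)$.

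\textbf{Key steps.} First I would split the sum $\sum_{j=-1}^{j_{\GGe}} \varphi_j^{\GGe}$ on $\wGGe$ according to whether $j < j_{\GGe}$ or $j = j_{\GGe}$. For the low blocks $j < j_{\GGe}$ property 3 of the smear function gives the commutation relation \eqref{eq:ExtensionLowBlocks}, $\varDelta_j \EE^\eps f = \EE^\eps \varDelta_j^{\GGe} f$, so writing $\varDelta_j^{\GGe} f$ as the discrete convolution $\Psi^{\GGe,j}\aste f$ (see \eqref{eq:BlockAsConvolution}) and applying \eqref{eq:ExtensionAsConvolution} again, $\varDelta_j \EE^\eps f$ is a discrete convolution of $f$ against $\FFr^{-1}\psi^\eps \aste \Psi^{\GGe,j}$, which by \eqref{eq:PsijScaling} is of the form $\delta^{-d}\Phi(\delta^{-1}\cdot)$ with $\delta = 2^{-j} \gtrsim \eps$ and $\Phi \in \Sww(\RR^d)$ (using stability of $\Sww$ under convolution, Lemma part ii), and that $\FFr^{-1}\psi$ is Schwartz in the ultra sense). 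Then the second estimate in \eqref{eq:DiscreteWeightedYoungInequality} bounds $\|\varDelta_j \EE^\eps f\|_{L^p(\RR^d,\rho)} \lesssim \|\varDelta_j^{\GGe} f\|_{L^p(\GGe,\rho)}$ uniformly in $\eps,j$, so multiplying by $2^{j\alpha}$ and taking the $\ell^q$ norm over these finitely many indices already controls that part of the norm by $\|f\|_{\mathcal{B}^\alpha_{p,q}(\GGe,\rho)}$.

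\textbf{The top block and a technical subtlety.} The remaining contribution comes from frequencies near $j_{\GGe}$, where \eqref{eq:ExtensionLowBlocks} fails because $\psi^\eps$ is no longer identically $1$ on $\supp \varphi_{j_{\GGe}}^{\GGe}$ and the periodic wrap-around of $\wGGe$ matters. Here I would argue directly: on the Fourier side, $\FFr \EE^\eps f = \psi^\eps \cdot \ext{\FFge f}$ is supported in a fixed dilate of $\wGGe$, i.e. in a ball of radius $\lesssim 2^{j_{\GGe}} \approx 2^{j_{\GG}}\eps^{-1}$, so only boundedly many continuous blocks $\varDelta_j$ with $j \gsim j_{\GGe}$ act nontrivially on $\EE^\eps f$. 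For each such $j$ one writes $\varDelta_j \EE^\eps f = \Psi^j \rast (\FFr^{-1}\psi^\eps \aste f)$ and absorbs $\Psi^j \rast \FFr^{-1}\psi^\eps$ into a single kernel of the form $\eps^{-d}\Phi(\eps^{-1}\cdot)$ with $\Phi \in \Sww(\RR^d)$ (again by convolution stability and the scaling of $\Psi^j$), so that the mixed Young estimate \eqref{eq:DiscreteWeightedYoungInequality} yields $\|\varDelta_j \EE^\eps f\|_{L^p(\RR^d,\rho)} \lesssim \|f\|_{L^p(\GGe,\rho)}$; finally $\|f\|_{L^p(\GGe,\rho)} \lesssim \sum_{i=-1}^{j_{\GGe}} \|\varDelta_i^{\GGe}f\|_{L^p(\GGe,\rho)} \lesssim 2^{-j_{\GGe}\alpha}\cdot(\text{const})\cdot \|f\|_{\mathcal{B}^\alpha_{p,q}(\GGe,\rho)}$ after inserting the partition of unity and using that $\|\varDelta_i^{\GGe}\|_{L^p\to L^p}\lesssim 1$ by \eqref{eq:LpLp}, so the factors $2^{j\alpha}$ for $j\approx j_{\GGe}$ cancel the $2^{-j_{\GGe}\alpha}$ and summing the boundedly many such $j$ stays uniform in $\eps$. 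The main obstacle I anticipate is bookkeeping the top-block contribution carefully: one must check that the number of continuous blocks touching $\supp \FFr \EE^\eps f$ beyond index $j_{\GGe}$ is bounded independently of $\eps$, that the associated kernels really do scale like $\eps^{-d}\Phi(\eps^{-1}\cdot)$ with $\Phi\in\Sww$ so Lemma~\ref{lem:DiscreteWeightedYoungInequality} applies with $\delta\approx\eps$, and that the polynomial factor coming from counting the blocks $i=-1,\dots,j_{\GGe}$ is harmless because of the gain $2^{-j_{\GGe}\alpha}$ (this requires a small separate argument when $\alpha \le 0$, handled e.g. by distributing the $\ell^q$/$\ell^1$ summation or using that $j_{\GGe}-j_{\GG}\approx|\log\eps|$ grows only logarithmically while the Besov norm already controls a geometric series).
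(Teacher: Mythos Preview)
Your low-block argument ($j<j_{\GGe}$) matches the paper and is fine. The gap is in the top-block estimate. After bounding $\|\varDelta_j \EE^\eps f\|_{L^p(\RR^d,\rho)}\lesssim \|f\|_{L^p(\GGe,\rho)}$ for $j\sim j_{\GGe}$, you claim
\[
\|f\|_{L^p(\GGe,\rho)}\lesssim \sum_{i=-1}^{j_{\GGe}}\|\varDelta_i^{\GGe}f\|_{L^p(\GGe,\rho)}\lesssim 2^{-j_{\GGe}\alpha}\|f\|_{\mathcal{B}^\alpha_{p,q}(\GGe,\rho)}.
\]
This is false for $\alpha>0$ (not $\alpha\le 0$ as you write): take $f$ spectrally supported in the $(-1)$-block, so $\|f\|_{L^p}\approx\|f\|_{\mathcal{B}^\alpha_{p,q}}$, while $2^{-j_{\GGe}\alpha}\to 0$. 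Consequently your final bound $2^{j\alpha}\|\varDelta_j\EE^\eps f\|_{L^p}\lesssim 2^{j_{\GGe}\alpha}\|f\|_{L^p}$ blows up like $\eps^{-\alpha}$ and the uniformity in $\eps$ is lost. Neither of your proposed fixes helps: the logarithmic size of $j_{\GGe}$ is irrelevant against the genuinely exponential factor $2^{j_{\GGe}\alpha}$, and there is no geometric series to distribute once you have passed to the full $L^p$ norm of $f$.

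The missing idea is a further spectral localization. For $j\ge j_{\GGe}$ one has, by properties~2 and~3 of the smear function, that $\varphi_j\cdot\psi^\eps\cdot\ext{\varphi_i^{\GGe}\FFge f}=0$ whenever $i$ is not within a fixed distance of $j_{\GGe}$; equivalently
\[
\varDelta_j\EE^\eps f=\varDelta_j\Big(\EE^\eps\sum_{i\sim j_{\GGe}}\varDelta_i^{\GGe}f\Big).
\]
Now apply \eqref{eq:LpLpRRd} and then \eqref{eq:ExtensionAsConvolution} with Lemma~\ref{lem:DiscreteWeightedYoungInequality} to get $\|\varDelta_j\EE^\eps f\|_{L^p(\RR^d,\rho)}\lesssim \sum_{i\sim j_{\GGe}}\|\varDelta_i^{\GGe}f\|_{L^p(\GGe,\rho)}\lesssim 2^{-j_{\GGe}\alpha}\|f\|_{\mathcal{B}^\alpha_{p,q}(\GGe,\rho)}$, valid for every $\alpha\in\RR$. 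This is exactly how the paper proceeds.
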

\begin{proof}
We have to estimate $\varDelta_j \EE^\eps f$ for $j\geq -1$. For $j<j_{\GG^\eps}$ we can apply \eqref{eq:ExtensionLowBlocks} and \eqref{eq:ExtensionAsConvolution} together with Lemma  \ref{lem:DiscreteWeightedYoungInequality} to bound 
\begin{align*}
\|\varDelta_j \EE^\eps f\|_{L^p(\RR^d,\rho)}=\|\eps^{-d}(\FFr\psi)(\eps^{-1}\cdot)\aste \varDelta_j^{\GG^\eps} f\|_{L^p(\RR^d,\rho)}\lesssim \|\varDelta_j^{\GG^\eps} f\|_{L^p(\GG^\eps,\rho)}\lesssim 2^{-j\alpha} \|f\|_{\mathcal{B}^{\alpha}_{p,q}(\GG^\eps, \rho )}
\end{align*}
For $j\geq j_{\GG^\eps}$ only $j\sim  j_{\GG^\eps}$ contributes due to the compact support of $\psi^\eps$. By spectral support properties we have
\begin{align*}
  \varDelta_j \EE^\eps f=\varDelta_j (\EE^\eps \sum_{i\sim j_{\GG^\eps}} \varDelta_i^{\GG^\eps} f)
\end{align*}  
  From \eqref{eq:LpLpRRd} we know that $\varDelta_j$ maps $L^p(\RR^d,\rho)$ into itself and we thus obtain
\begin{align*} 
   \|\varDelta_j \EE^\eps f\|_{L^p(\RR^d,\rho)}\lesssim \|\EE^\eps \sum_{i\sim j_{\GG^\eps}} \varDelta_{i}^{\GG^\eps} f\|_{L^p(\GG^\eps,\rho)}\lesssim 2^{-j_{\GG^\eps}\alpha} \|f\|_{\mathcal{B}^{\alpha}_{p,q}(\GG^\eps,\rho)}\,,
 \end{align*} 
where we applied once more \eqref{eq:ExtensionAsConvolution} and Lemma \ref{lem:DiscreteWeightedYoungInequality} in the second step. 
\end{proof}

Below, we will often be given some functional $F(f_1,\ldots,f_n)$ on discrete Besov functions taking values in a discrete Besov space $X$ (or some space constructed from it) that satisfies a bound of the type
\begin{align}
\label{eq:EstimatePropertyE}
\|F(f_1,\ldots,f_n)\|_X \leq c(f_1,\ldots,f_n).
\end{align}
We then say that the estimate \eqref{eq:EstimatePropertyE} has the property $(\EE)$ (on $X$) if there is a ``continuous version'' $\overline{F}$ of $F$ and a continuous version $\overline{X}$ of $X$ and a sequence of constants $o_\eps\rightarrow 0$ 
such that 

\begin{align}
\label{eq:EProperty}
\tag{$\EE$}
\|\EE^\eps F(f_1,\ldots,f_n)-\overline{F}(\EE^\eps f_1,\ldots,\EE^\eps f_n )\|_{\overline X} \leq o_\eps \cdot c(f_1,\ldots,f_n)\,.
\end{align} 
In other words we can pull the operator $\EE^\eps$ inside $F$ without paying anything in the limit. 
With the smear function $\psi$ introduced above when can now also give the proof of the announced scaling property \eqref{eq:PsijScaling} of the functions $\Psi^{\GG^\eps,j}$. 

\begin{lemma}
\label{lem:ScalingBoundary}
Let $\GG^\eps$ be as in Definition \ref{def:LatticeSequence} and let $\omega\in \ww$. Let $(\varphi^{\GG^\eps}_j)_{j=-1,\ldots,j_{\GG^\eps}} \subseteq \Dww(\wGGe)$ be a partition of unity of $\widehat{\GG^\eps}$ as defined on page \pageref{DiscreteDyadicPartition} and take $\Psi^{\GG^\eps,j}=\FFge^{-1}\varphi^{\GG^\eps}_j$ and $\Psi^{\GG^\eps,<j}:=\sum_{i<j} \Psi^{\GG^\eps,i}$. The extensions
\begin{align*}
 \tilde{\Psi}^{\eps,j} &:=\EE^{\eps}\Psi^{\GG^\eps,j}=\FF^{-1}_{\RR^d} (\psi^\eps\cdot \ext{\varphi^{\GG^\eps}_j}) \\
  \tilde{\Psi}^{\eps,<j} &:=\EE^{\eps}\Psi^{\GG^\eps,<j}=\FF^{-1}_{\RR^d} \Big(\psi^\eps\cdot \Big(\sum_{i<j}\varphi^{\GG^\eps}_i\Big)_{\mathrm{ext}}\Big) 
\end{align*}
are elements of $\Sww(\RR^d)$. Moreover there are $\check{\phi}_{-1},\check{\phi}_0,\check{\phi}_\infty,\,\check{\phi}_{\Sigma}\in \Dww(\RR^d)$, independent of $\eps$, such that for for $j=-1,\ldots,j_{\GG^\eps}$ and $j'=0,\ldots,j_{\GG^\eps}$ with $\je$ as in \eqref{eq:je}
\begin{align}
\label{eq:ScalingBoundary1}
\psi^\eps\cdot \ext{\varphi^{\GG^\eps}_j}=\check{\phi}_{\je}(2^{-j}\cdot) \,, \\
\label{eq:ScalingBoundary2}
\psi^\eps\cdot \Big(\sum_{i<j'}\varphi^{\GG^\eps}_i\Big)_{\mathrm{ext}}=\check{\phi}_{\Sigma}(2^{-j'}\cdot)\,.
\end{align}
The functions $\check{\phi}_0$ and $\check{\phi}_\infty$ have support in an annulus $\rA\subseteq \RR^d$.

In particular we have for $j=-1,\ldots,j_{\GG^\eps}$ and  $j'=0,\ldots,j_{\GG^\eps}$. 
\begin{align*}
\tilde{\Psi}^{\eps,j}=2^{jd} \cdot \phi_{\je}(2^j \cdot)\,,\qquad \tilde{\Psi}^{\eps,<j'}=2^{j'd} \cdot \phi_{\Sigma}(2^{j'} \cdot)
\end{align*}
where $\phi_i:=\FF^{-1}_{\RR^d} \check{\phi}_i$ for $i\in \{-1,0,\infty,\,\Sigma\}$.
\end{lemma}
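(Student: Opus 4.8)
The plan is to reduce everything to the dyadic scaling of the underlying partition of unity $(\varphi_j)_{j\geq-1}$ on $\RR^d$, keeping careful track of where the periodic extension $\ext{\cdot}$ and the smear function $\psi^\eps$ interact with the boundary $\partial\wGGe$. First I would record that $\tilde\Psi^{\eps,j}$ and $\tilde\Psi^{\eps,<j}$ lie in $\Sww(\RR^d)$: by construction $\psi^\eps\in\Dww(\RR^d)$ and $\ext{\varphi^{\GGe}_j}$ is $\CR/\eps$-periodic and agrees locally with the $\Dww$-function $\varphi^{\GGe}_j$, so the product $\psi^\eps\cdot\ext{\varphi^{\GGe}_j}$ is in $\Dww(\RR^d)$ (compactly supported because $\psi^\eps$ is), and applying $\FFr^{-1}$, which is an isomorphism on $\Sww(\RR^d)$, lands us in $\Sww(\RR^d)$; the same argument works for the sum $\sum_{i<j}\varphi^{\GGe}_i$.

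The heart of the matter is the scaling identities \eqref{eq:ScalingBoundary1}--\eqref{eq:ScalingBoundary2}. I would split into the three regimes encoded by $\je$. For $-1<j<j_{\GGe}$: here $\supp\varphi_j$ is an annulus strictly inside $\wGGe$ by definition of $j_{\GGe}$, and property 2 of the smear function gives $\psi^\eps\equiv1$ on $\supp\varphi_j$, while property 3 guarantees $\ext{\varphi^{\GGe}_j}$ coincides with $\varphi_j$ near $\supp\psi^\eps$; hence $\psi^\eps\cdot\ext{\varphi^{\GGe}_j}=\varphi_j=\varphi_0(2^{-j}\cdot)$, so we may take $\check\phi_0:=\varphi_0$, which is independent of $\eps$ and supported in an annulus. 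The case $j=-1$ is identical except that $\varphi_{-1}$ replaces $\varphi_0$, giving $\check\phi_{-1}:=\varphi_{-1}$ (with the convention $2^{-(-1)}\cdot$). The genuinely new case is $j=j_{\GGe}$: by \eqref{eq:LatticePhi}, $\varphi^{\GGe}_{j_{\GGe}}=1-\sum_{i<j_{\GGe}}\varphi_i$ on $\wGGe$, so $\ext{\varphi^{\GGe}_{j_{\GGe}}}$ is the $\CR/\eps$-periodization of this; multiplying by $\psi^\eps$ and using properties 1 and 3 of $\psi$ one obtains $\psi^\eps\cdot\ext{\varphi^{\GGe}_{j_{\GGe}}}=\psi^\eps-\sum_{i<j_{\GGe}}\varphi_i$. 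Now I invoke Lemma~\ref{lem:ScalingBoundary}'s own construction of $\psi$ as $\psi(\eps\cdot)$ with $\psi$ built once and for all from $\GG$; since $j_{\GGe}$ is precisely the index at which $\supp\varphi_j$ first touches $\partial\wGGe=\eps^{-1}\partial\wGG$, one checks (using $\varphi_j=\varphi_0(2^{-j}\cdot)$ and $\wGGe=\eps^{-1}\wGG$) that $2^{j_{\GGe}}\asymp\eps^{-1}$ up to an $\eps$-independent factor, and rescaling by $2^{-j_{\GGe}}$ turns $\psi^\eps-\sum_{i<j_{\GGe}}\varphi_i$ into a fixed function $\check\phi_\infty\in\Dww(\RR^d)$, supported in an annulus, independent of $\eps$. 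The analogous computation for the partial sum gives $\psi^\eps\cdot(\sum_{i<j'}\varphi^{\GGe}_i)_{\mathrm{ext}}=\check\phi_\Sigma(2^{-j'}\cdot)$ with $\check\phi_\Sigma:=\sum_{i<0}\varphi_i\cdot(\text{scaled})$ when $j'<j_{\GGe}$, and $\check\phi_\Sigma=\psi^\eps$ rescaled when $j'=j_{\GGe}$; one must verify these two expressions agree as a single $\eps$-independent $\check\phi_\Sigma$, which again follows from $2^{j_{\GGe}}\asymp\eps^{-1}$.

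Finally, the ``in particular'' statement is immediate: applying $\FFr^{-1}$ to \eqref{eq:ScalingBoundary1}--\eqref{eq:ScalingBoundary2} and using that $\FFr^{-1}$ turns the dilation $g(2^{-j}\cdot)$ into $2^{jd}(\FFr^{-1}g)(2^j\cdot)$, together with $\tilde\Psi^{\eps,j}=\FFr^{-1}(\psi^\eps\cdot\ext{\varphi^{\GGe}_j})$ from \eqref{eq:ExtensionAsConvolution} and the definition $\phi_i:=\FFr^{-1}\check\phi_i$, yields $\tilde\Psi^{\eps,j}=2^{jd}\phi_{\je}(2^j\cdot)$ and likewise for the partial sums; the $\phi_i$ inherit $\Dww$-Fourier-transforms since $\check\phi_i\in\Dww(\RR^d)$, matching the requirement in \eqref{eq:PsijScaling}. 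I expect the main obstacle to be the bookkeeping at the boundary index $j=j_{\GGe}$: one must be genuinely careful that the combination $\psi^\eps-\sum_{i<j_{\GGe}}\varphi_i$ (resp. the periodized tail) really does produce the \emph{same} function for every $\eps=2^{-N}$ after rescaling, which is where the dyadic restriction of Definition~\ref{def:LatticeSequence} and the rigid relation $\wGGe=\eps^{-1}\wGG$ are essential; the other two regimes are routine once properties 1--3 of the smear function are in hand.
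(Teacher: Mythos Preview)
Your approach matches the paper's: split into the three regimes $\je\in\{-1,0,\infty\}$, use properties 2--3 of $\psi$ to reduce the interior cases to $\varphi_j=\varphi_0(2^{-j}\cdot)$, and at the boundary index exploit the dyadic relation $2^{-j_{\GGe}}=\eps\,2^k$ for a fixed $k$ to get an $\eps$-independent profile. Your treatment of $j=j_{\GGe}$ via the identity $\psi^\eps\cdot\ext{\varphi^{\GGe}_{j_{\GGe}}}=\psi^\eps-\sum_{i<j_{\GGe}}\varphi_i$ is a clean shortcut; the paper instead writes the periodic extension explicitly through $[\,\cdot\,]_{\wGG}$ and arrives at $\check\phi_\infty=(\psi\,\chi([\,\cdot\,]_{\wGG}))(2^{-k}\cdot)$, which is the same function.

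Two small bookkeeping points to fix. First, your choice $\check\phi_{-1}:=\varphi_{-1}$ does not satisfy $\check\phi_{-1}(2\cdot)=\varphi_{-1}$; you need $\check\phi_{-1}:=\varphi_{-1}(\cdot/2)$, as the paper takes. Second, there is no case split needed for $\check\phi_\Sigma$: for every $j'\in\{0,\dots,j_{\GGe}\}$ the sum $\sum_{i<j'}\varphi^{\GGe}_i$ only involves indices $i<j_{\GGe}$, so properties 2--3 give $\psi^\eps\cdot\big(\sum_{i<j'}\varphi^{\GGe}_i\big)_{\mathrm{ext}}=\sum_{i<j'}\varphi_i=\varphi_{-1}(2^{-j'}\cdot)$ uniformly, and one simply sets $\check\phi_\Sigma=\varphi_{-1}$. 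The concern you flag about reconciling two formulas for $\check\phi_\Sigma$ does not arise.
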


\begin{proof}
Denote by $(\varphi_j)_{j \geq -1}\subseteq \Dww(\RR^d)$ the partition of unity on $\RR^d$ from which the partitions $(\varphi_j^{\GGe})_{j=-1,\ldots,j_{\GGe}}$ are constructed. Let us recall the following facts about $(\varphi_j)_{j\geq -1}$
\begin{align}
\label{eq:ScalingVarphij}
\varphi_j &=\varphi_0(2^{-j}\cdot) \mbox{\qquad for $j\geq 0$,} \\
\sum_{i<j'} \varphi_i &= \varphi_{-1}(2^{-j'}\cdot) \mbox{\qquad for $j'\geq 0$.}
\label{eq:ScalingSumVarphij}
\end{align}
The second property can be seen by rewriting 
\begin{align*}
\sum_{i<j'} \varphi_i =1-\sum_{l\geq j'} \varphi_0(2^{-l}\cdot)=1-\sum_{l'\geq 0} \varphi_0( 2^{-(j'+l')}\cdot)=\Big( 1-\sum_{l'\geq 0} \varphi_{l'}\Big)(2^{-j'}\cdot )=\varphi_{-1}(2^{-j'}\cdot)\,.
\end{align*}
Recall further that $\varphi_0$ has support in an annulus around $0$.

To prove the claim we only have to show \eqref{eq:ScalingBoundary1} and \eqref{eq:ScalingBoundary2}. For $j<j_{\GG^\eps}$ and $0\leq j'\leq j_{\GG^\eps}$ we use that by construction of $\varphi^{\GG^\eps}_j$ out of  $(\varphi_j)_{j\geq -1}$ we have \emph{inside} $\wGGe$
\begin{align*}
\varphi^{\GG^\eps}_j=\varphi_j\,,\qquad \sum_{i<j'}\varphi^{\GG^\eps}_i=\sum_{i<j'}\varphi_i
\end{align*}
so that due to property 2 and 3 of the smear function $\psi^\eps$ and \eqref{eq:ScalingSumVarphij} it is enough to take 
\begin{align*}
\check{\phi}_{\Sigma}=\varphi_{-1}
\end{align*}
and for $j<j_{\GG^\eps}$ by the scaling property of $\varphi_j$ from \eqref{eq:ScalingVarphij}
\begin{align*}
\check{\phi}_{\je}:=\varphi_j(2^{j}\cdot)\in \left\{\varphi_{-1}(\cdot/2),\,\varphi_0\right\}\,.
\end{align*}
For the construction of $\phi_\infty$ a bit more work is required. 
Recall that by definition of our lattice sequence $\GG^\eps$ we took a dyadic scaling $\eps=2^{-N}$ which implies in particular 
\begin{align}
\label{eq:eps2j}
2^{-j_{\GG^\eps}}=\eps\cdot 2^k
\end{align}
for some fixed $k\in \ZZ$. Using once more \eqref{eq:ScalingSumVarphij} and relation \eqref{eq:eps2j} we can write for $x\in \wGGe$
\begin{align*}
\varphi^{\GG^\eps}_{j_{\GG^\eps}}(x)=1-\sum_{j<j_{\GG^\eps}} \varphi_j(x)=1-\varphi_{-1}(2^{-j_{\GG^\eps}}x)=\chi(\eps x)
\end{align*}
for some symmetric function $\chi\in \Eww(\RR^d)$. As in \eqref{eq:PeriodicExtension2} let us denote for $x\in \RR^d$ by $[x]_{\wGGe} \in \widehat{\GG}^\varepsilon$ the unique element of $\widehat{\GG}^\varepsilon$ for which $x-[x]_{\wGGe}\in \CR^\eps$. One then easily checks
\begin{align}
\label{eq:ScalingPeriodicity}
 \eps [x]_{\wGGe}= [\eps x]_{\wGG}\,.
 \end{align} 
 Applying \eqref{eq:PeriodicExtension2} and \eqref{eq:ScalingPeriodicity} we obtain for $x\in \RR^d$ that the periodic extension
\begin{align*}
\ext{\varphi^{\GG^\eps}_{j_{\GG^\eps}}}(x)=\varphi^{\GG^\eps}_{j_{\GG^\eps}}([x]_{\wGGe})=\chi(\eps [x]_{\wGGe})=\chi( [\eps x]_{\wGG})
\end{align*}
is the $\eps$ scaled version of the smooth, $\CR$-periodic function $\chi([\cdot]_{\wGG})\in \Eww(\wGG)$ (to see that the composition with $[\cdot]_{\wGG}$ does not change the smoothness, note that $\chi$ equals $1$ on a neighborhood of $\partial \wGG$). Consequently
\begin{align*}
\psi(\eps\cdot)\, \ext{\varphi^{\GG^\eps}_{j_{\GG^\eps}}}=\left(\psi \chi([\cdot]_{\wGG})\right)(\eps\cdot)\,,
\end{align*}
so that setting $\check{\phi}_\infty=\left(\psi \chi([\cdot]_{\wGG})\right)(2^{-k}\cdot)$ with $k$ as in \eqref{eq:eps2j} finishes the proof. 
\end{proof}

\section{Discrete diffusion operators}
\label{sec:DiffusionOperators}

Our aim is to analyze differential equations on Bravais lattice that are in a certain sense semilinear and ``parabolic'', i.e. there is a leading order linear difference operator, which here we will always take as the infinitesimal generator of a random walk on our Bravais lattice. In the following we analyze the regularization properties of the corresponding ``heat kernel''.

\subsection{Definitions}

Let us construct a symmetric random walk on a Bravais lattice $\GG^\eps$ with mesh size $\eps$ which can reach every point (our construction follows \cite{LawlerLimic}). First we choose a subset of ``jump directions'' $\{g_1,\ldots,g_l\}\subseteq \GG\backslash \{0\}$ such that $\ZZ g_1 + \ldots + \ZZ g_l =\GG$
and a map $\kappa\colon \{ g_1, \ldots,g_l \} \rightarrow (0,\infty)$. We then take as a rate for the jump from $z\in \GG^\eps$ to $z\pm \eps g_i\in \GG^\eps$ the value $\kappa(g_i)/{2\eps^2}$. In other words the generator of the random walk is
\begin{align}
\label{eq:GeneratorFiniteRangeRandomWalk}
	L^\eps u (y) &= \eps^{-2} \sum_{e\in \{\pm g_i\}} \frac{\kappa(e)}{2} (u(y+\eps e)-u(y)) \,,
\end{align} 
 which converges (for $u \in C^2(\RR^d)$) pointwise to $L u =\frac{1}{2}\sum_{i=1}^{l} \kappa(g_i)\, g_i\scl \nabla^2 u\,g_i$ as $\eps$ tends to 0. In the case $\GG=\ZZ^d$ and $\kappa(e_i)=1/d$ we obtain the simple random walk with limiting generator $L=\frac{1}{2d} \Delta$.  We can reformulate~\eqref{eq:GeneratorFiniteRangeRandomWalk} by introducing a signed measure 
\begin{align*}
	\mu = \kappa(g_1)\, \left(\frac{1}{2}\delta_{g_1}+\frac{1}{2} \delta_{-g_1}\right)+\ldots + \kappa(g_l) \, \left(\frac{1}{2}\delta_{g_l}+\frac{1}{2}\delta_{-g_l}\right)- \sum_{i=1}^l \kappa(g_i) \delta_0 \,,
\end{align*}	
which allows us to write $L^\eps u= \eps^{-2} \int_{\RR^d} u(x+\eps y) \, \dd \mu (y)$ and $L u= \frac{1}{2} \int_{\RR^d} y\scl  \nabla^2 u\, y \, \dd \mu(y)$. In fact we will also allow the random walk to have infinite range.

\begin{definition}
\label{def:Measure}
We write $\mu \in\mm(\omega)=\mm(\omega,\GG)$ for $\omega\in \ww$ if $\mu$ is a finite, signed measure on a Bravais lattice $\GG$ such that 
\begin{itemize}
	\item $\langle \supp \mu \rangle =\GG$,
	\item $\mu\vert_{ \{0\}^c} \geq 0$,
	\item for any $\lambda>0$ we have $\int_{\GG} e^{\lambda \omega(x)} \, \dd |\mu| (x)<\infty$, where $|\mu|$ is the total variation of $\mu$,
	\item $\mu(A)=\mu(-A)$ for $A\subseteq \GG$ and $\mu(\GG)=0$,
\end{itemize}
where $\langle \cdot \rangle$ denotes the  subgroup generated by $\cdot$ in $(\GG,+)$.
We associate a norm on $\RR^d$ to $\mu\in \mm(\omega)$ which is given by
\begin{align*}
	\|x\|_\mu^2 =\frac{1}{2} \int_{\GG} | x\scl y |^2 \dd\mu(y)\,.
\end{align*}
We also write $\mm(\ww):=\bigcup_{\omega\in \ww}\mm(\omega)$. \glsadd{mmomega}
\end{definition}

\begin{lemma}
\label{lem:MuNorm}
The function $\lVert \cdot \rVert_\mu$ of Definition \ref{def:Measure} is indeed a norm.  
\end{lemma}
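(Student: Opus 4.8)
The plan is to recognise $\|\cdot\|_\mu$ as the seminorm induced by the symmetric bilinear form
\begin{align*}
  B(x,x'):=\tfrac12\int_{\GG}(x\scl y)(x'\scl y)\,\dd\mu(y),\qquad x,x'\in\RR^d,
\end{align*}
so that $\|x\|_\mu^2=B(x,x)$, and then to deduce the three norm axioms, the only non-trivial one (definiteness) coming from the two structural properties $\mu\vert_{\{0\}^c}\geq 0$ and $\langle\supp\mu\rangle=\GG$ in Definition~\ref{def:Measure}.

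First I would check that $B$ is well defined, i.e. that the integral converges absolutely. Applying the third bullet of Definition~\ref{def:Measure} with $\lambda$ large enough gives $\int_{\GG}|y|^2\,\dd|\mu|(y)<\infty$: for $\omega=\wpol$ one has $e^{\lambda\omega(y)}=(1+|y|)^\lambda$, and for $\omega=\wexps$ one bounds $e^{\lambda\omega(y)}$ from below by a suitable power of $|y|$ using the exponential series, so in either case the moment condition dominates $\int|y|^2\,\dd|\mu|$. Hence $|B(x,x')|\leq|x|\,|x'|\int_{\GG}|y|^2\,\dd|\mu|(y)<\infty$ by Cauchy--Schwarz on $\RR^d$, and $B$ is a genuine symmetric bilinear form on $\RR^d$. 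Moreover the integrand of $B(x,x)$ vanishes at $y=0$, so $B(x,x)=\tfrac12\int_{\GG\setminus\{0\}}|x\scl y|^2\,\dd\mu(y)$, and since $\mu\vert_{\{0\}^c}\geq0$ this is a nonnegative number; thus $B$ is positive semidefinite and $\|x\|_\mu:=B(x,x)^{1/2}$ is well defined and nonnegative.

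Homogeneity $\|\lambda x\|_\mu=|\lambda|\,\|x\|_\mu$ follows by pulling $\lambda^2$ out of the integral. For the triangle inequality I would invoke the Cauchy--Schwarz inequality for the positive semidefinite form $B$, namely $|B(x,x')|\leq\|x\|_\mu\,\|x'\|_\mu$, and expand
\begin{align*}
  \|x+x'\|_\mu^2=\|x\|_\mu^2+2B(x,x')+\|x'\|_\mu^2\leq(\|x\|_\mu+\|x'\|_\mu)^2 .
\end{align*}

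It remains to prove definiteness, which is the only place the specific hypotheses on $\mu$ enter. Suppose $\|x\|_\mu=0$. Then $\int_{\GG\setminus\{0\}}|x\scl y|^2\,\dd\mu(y)=0$ with a nonnegative measure and a nonnegative integrand, so $x\scl y=0$ for every $y\in\supp\mu$ (the point $y=0$ being trivial, and $\GG$ being countable so that "$\mu$-a.e." means pointwise on $\supp\mu$). Consequently $x$ is orthogonal to every element of $\supp\mu$, hence, by bilinearity of the inner product, to every element of the subgroup $\langle\supp\mu\rangle$, which equals $\GG$ by assumption. Since $\GG=\ZZ a_1+\dots+\ZZ a_d$ with $a_1,\dots,a_d$ linearly independent, $\GG$ spans $\RR^d$, so $x\scl z=0$ for all $z\in\RR^d$ and therefore $x=0$. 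I do not expect a genuine obstacle here; the only mild care required is to use the moment assumption of Definition~\ref{def:Measure} to secure finiteness of $B$, and to restrict the integral to $\{0\}^c$ before invoking the sign of $\mu$.
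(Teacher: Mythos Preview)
Your proof is correct and follows essentially the same approach as the paper: homogeneity is immediate, the triangle inequality comes from the underlying quadratic structure (the paper phrases this as Minkowski's inequality rather than Cauchy--Schwarz for the bilinear form, but these are equivalent here), and definiteness uses exactly the argument that $\|x\|_\mu=0$ forces $x\scl y=0$ on $\supp\mu$, hence on $\langle\supp\mu\rangle=\GG$, hence $x=0$ since the $a_i$ are linearly independent. Your version is in fact more careful than the paper's, since you explicitly verify integrability of $|y|^2$ against $|\mu|$ and nonnegativity of $B(x,x)$ via $\mu\vert_{\{0\}^c}\geq 0$, points the paper leaves implicit.
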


\begin{proof}
	The homogeneity is obvious and the triangle inequality follows from Minkowski's inequality. If $\|x\|_\mu=0$ we have $x\scl g=0$ for all $g\in \supp \mu$. Since $\langle \supp \mu\rangle =\GG$ we also have $x\scl a_i=0$ for the linearly independent vectors $a_1,\ldots,a_d$ from \eqref{eq:Lattice}, which implies $x=0$. 
\end{proof}

Given $\mu\in \mm(\ww)$ as in Definition \ref{def:Measure} we can then generalize the formulas we found above. 

\begin{definition}
\label{def:DiffusionOperator}
For  $\omega\in \ww$, $\mu\in \mm(\omega)$ as in Definition \ref{def:Measure} and $\GG^\eps$ as in Definition \ref{def:LatticeSequence} we set
\begin{align*}
	L^\eps_{\mu} u (x)= \eps^{-2} \int_{\GG} u(x+\eps y)\, \dd \mu(y) 
\end{align*}
for $u\in \Sw_\omega'(\GG^\eps)$ and 
\begin{align*}
	(L_\mu u)\,(\varphi):= \frac{1}{2} \int_{\GG} y \scl \nabla^2 u \,y\, \dd \mu(y)\,(\varphi):= \frac{1}{2} \int_{\GG} y \scl \nabla^2 u(\varphi) \,y \,\dd \mu (y) 
\end{align*}
for $u\in \Sw'_\omega(\RR^d)$ and $\varphi\in \Sw_\omega(\RR^d)$. We write further $\mathscr{L}^\eps_\mu,\mathscr{L}_\mu$ for the parabolic operators $\mathscr{L}^\eps_\mu=\partial_t - L^\eps_\mu$ and $\mathscr{L}_\mu =\partial_t-L_\mu$. 
\glsadd{mathscrLepsmu}
\end{definition}

$L^\eps_\mu$ is nothing but the infinitesimal generator of a random walk with sub-exponential moments (Lemma \ref{lem:MomentsRandomWalk}). By direct computation it can be checked that for $\GG=\ZZ^d$ and with the extra condition $\int y_i y_j \dd \mu(y)=2\,\delta_{ij}$ we have the identities $\lVert\cdot\rVert_\mu=\lvert\cdot\rvert$ and $L_\mu=\Delta_{\RR^d}$. In general $L_\mu$ is an elliptic operator with constant coefficients,
\begin{align*}
	L_\mu  u =  \frac{1}{2} \int_{\GG} y \scl \nabla^2 u \,y \,\dd \mu(y)= \frac{1}{2}\sum_{i,j} \int_{\GG} \, y_i y_j \,\dd \mu(y) \cdot  \partial^{ij} u =:  \frac{1}{2} \sum_{i,j}  a^\mu_{ij} \cdot  \partial^{ij}u \,,
\end{align*}
where $(a^\mu_{ij})$ is a symmetric matrix. The ellipticity condition follows from the relation $x \scl (a^ \mu_{ij})x = 2 \|x\|_{\mu}^2$ and the equivalence of norms on $\RR^d$. In terms of regularity we expect therefore that $L^\eps_\mu$ behaves like the Laplacian when we work on discrete spaces.

\begin{lemma}
\label{lem:LaplaceRegularity}
We have for $\alpha\in\RR$, $p\in [1,\infty],\,\omega\in \ww$ and $\mu \in \mm(\omega),\,\rho\in \rr (\omega)$ 
\begin{align*}
	\|L^\eps_\mu u \|_{\mathcal{C}^{\alpha-2}_p(\GG^\eps,\rho)} \lesssim \|u\|_{\mathcal{C}^\alpha_p(\GG^\eps,\rho)}\,,
\end{align*}
where $\mathcal{C}^\alpha_p(\GG^\eps,\rho)=\B^\alpha_{p,\infty}(\GG^\eps,\rho)$ is as in Definition \ref{def:DiscreteBesov}, and where the implicit constant is independent of $\eps$. For $\delta\in [0,1]$ we further have
\begin{align*}
	\|(L^\eps_\mu-L_\mu )u\|_{\mathcal{C}^{\alpha-2-\delta}_p(\RR^d,\rho)} \lesssim \eps^\delta  \|u\|_{\mathcal{C}^{\alpha}_p(\RR^d,\rho)}\,,
\end{align*}
where the action of $L^\eps_\mu $ on $u\in \Sww'(\RR^d)$ should be read as 
\begin{align}
\label{eq:DiscretizedActionDiffusionOperator}
(L^\eps_\mu u)(\varphi)= u\left(\eps^{-2}\int_{\GG} \varphi(\cdot+\eps y)\dd\mu(y) \right) = u\left(\eps^{-2}\int_{\GG} \varphi(\cdot-\eps y)\dd\mu(y) \right) = u(L^\varepsilon_\mu \varphi) 
\end{align}
for $\varphi\in \Sww(\RR^d)$, where we used the symmetry of $\mu$ in the second step.
\end{lemma}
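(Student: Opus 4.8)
The plan is to reduce everything to the behavior of Fourier multipliers and then invoke the discrete weighted Young inequality (Lemma~\ref{lem:DiscreteWeightedYoungInequality}) exactly as one does in the continuous theory. The key observation is that $L^\eps_\mu$ acts on the Fourier side as multiplication by the symbol
\[
   l^\eps_\mu(x) = \eps^{-2} \int_{\GG} e^{2\pi \ii \eps y \scl x} \, \dd\mu(y) = \eps^{-2} \int_\GG \big( \cos(2\pi \eps y \scl x) - 1 \big) \, \dd\mu(y),
\]
where the second equality uses the symmetry $\mu(A) = \mu(-A)$ and $\mu(\GG) = 0$. Since $\mu$ has sub-exponential moments of all orders, this symbol is real-analytic (indeed $\widehat\GG$-periodic and ultra-differentiable) and, by a Taylor expansion, $|l^\eps_\mu(x)| \lesssim |x|^2$ uniformly in $\eps$ on the cell $\wGGe$, because $|\cos(t)-1| \lesssim t^2$ and $\|\cdot\|_\mu^2$ is equivalent to $|\cdot|^2$; more precisely $l^\eps_\mu(x) = -(2\pi)^2 \|x\|_\mu^2 + O(\eps^2|x|^4)$ on bounded sets. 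The continuous operator $L_\mu$ has symbol $-(2\pi)^2\|x\|_\mu^2 = l_\mu(x)$.

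For the first estimate, I would write, for $-1 \le j \le j_{\GGe}$,
\[
   \varDelta_j^{\GGe} L^\eps_\mu u = \FFge^{-1}\big( \varphi_j^{\GGe} \cdot l^\eps_\mu \cdot \FFge u \big) = \FFge^{-1}\big( \varphi_j^{\GGe}\cdot l^\eps_\mu \big) \aste \varDelta_{j-1,j,j+1}^{\GGe} u,
\]
where I use a fattened partition of unity $\widetilde\varphi_j^{\GGe}$ supported on the union of three neighbouring annuli with $\widetilde\varphi_j^{\GGe}\varphi_j^{\GGe} = \varphi_j^{\GGe}$, so that $\varphi_j^{\GGe}\cdot l^\eps_\mu = \varphi_j^{\GGe}\cdot l^\eps_\mu \cdot \widetilde\varphi_j^{\GGe}$. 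By the scaling property \eqref{eq:PsijScaling}, $\FFge^{-1}(\widetilde\varphi_j^{\GGe} l^\eps_\mu)$ is of the form $2^{jd}2^{2j}\Theta_{\je}(2^j\cdot)$ for Schwartz functions $\Theta_{-1},\Theta_0,\Theta_\infty$ with $\FFr\Theta_{\je}\in\Dww(\RR^d)$ independent of $\eps$ — the extra factor $2^{2j}$ comes precisely from the quadratic bound on the symbol on the dyadic annulus of scale $2^j$ — so Lemma~\ref{lem:DiscreteWeightedYoungInequality} (in the form of Remark~\ref{rem:ScalingPsij}, with $\delta = 2^{-j}$) gives $\|\varDelta_j^{\GGe} L^\eps_\mu u\|_{L^p(\GGe,\rho)} \lesssim 2^{2j} \|\varDelta_{j}^{\GGe}u\|_{L^p(\GGe,\rho)} + \cdots \lesssim 2^{2j}2^{-j\alpha}\|u\|_{\cC^\alpha_p(\GGe,\rho)}$, which is the claim after multiplying by $2^{j(\alpha-2)}$ and taking the supremum over $j$. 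The uniformity in $\eps$ is automatic since all constants in Lemma~\ref{lem:DiscreteWeightedYoungInequality} are $\eps$-independent.

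For the second, comparison estimate, I would split $(L^\eps_\mu - L_\mu)u$ according to Littlewood--Paley blocks on $\RR^d$ and use that its Fourier multiplier is $m^\eps(x) := l^\eps_\mu(x) - l_\mu(x)$. On frequencies of size $2^j$ one has two bounds: the trivial one $|m^\eps(x)| \lesssim 2^{2j}$ and, from the Taylor expansion $l^\eps_\mu(x) - l_\mu(x) = O(\eps^2 |x|^4)$ valid for $|\eps x| \lesssim 1$, the refined one $|m^\eps(x)| \lesssim \eps^2 2^{4j}$ when $2^j \lesssim \eps^{-1}$ — and since $L^\eps_\mu$ acting on a distribution only involves frequencies in $\wGGe$, i.e.\ $2^j \lesssim \eps^{-1}$ always contributes, the product of these two bounds to the powers $1-\delta$ and $\delta$ yields $|m^\eps(x)| \lesssim (\eps^2 2^{2j})^\delta 2^{2j(1-\delta)} = \eps^{2\delta} 2^{j(2+2\delta)}$; actually a cleaner route is to interpolate to get $|m^\eps(x)|\lesssim \eps^\delta 2^{j(2+\delta)}$ directly from $|m^\eps(x)|\lesssim\min(2^{2j}, \eps^2 2^{4j})\le (\eps 2^j)^\delta 2^{2j}\cdot\eps^{0}$... more carefully, $\min(2^{2j},\eps^2 2^{4j}) \le (2^{2j})^{1-\delta}(\eps^2 2^{4j})^{\delta} = \eps^{2\delta}2^{(2+2\delta)j}$, which still gives a bound of order $\eps^{2\delta}2^{(2+2\delta)j}$; replacing $\delta$ by $\delta/2$ recovers $\eps^\delta 2^{(2+\delta)j}$ and the corresponding multiplier estimate then gives $\|\varDelta_j(L^\eps_\mu-L_\mu)u\|_{L^p(\RR^d,\rho)}\lesssim \eps^\delta 2^{(2+\delta)j}2^{-j\alpha}\|u\|_{\cC^\alpha_p(\RR^d,\rho)}$, i.e.\ the claim. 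The technical point to be careful about is that $\varphi_j \cdot m^\eps$ must be turned into a convolution kernel to which the weighted Young inequality applies; this requires $\FFr^{-1}(\varphi_j m^\eps)$ to have suitable scaling and decay uniformly in $\eps$, which follows because $m^\eps$ is $\wGGe$-periodic, smooth, and enjoys the above symbol bounds whose $\eps$-dependence is explicit — one writes $\FFr^{-1}(\varphi_j m^\eps) = 2^{jd}2^{2j}\eps^\delta 2^{j\delta}\Xi^{\eps,j}(2^j\cdot)$ with the $\Xi^{\eps,j}$ bounded in $\Sww(\RR^d)$, using that derivatives of $m^\eps$ up to any order are controlled on the relevant range via the analyticity of the symbol coming from the sub-exponential moments of $\mu$.

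The main obstacle is the last point: making rigorous and uniform-in-$\eps$ the claim that the rescaled inverse Fourier transforms of the localized symbols $\varphi_j^{\GGe} l^\eps_\mu$ and $\varphi_j m^\eps$ form a bounded family in an appropriate Schwartz (ultra-)space, so that Lemma~\ref{lem:DiscreteWeightedYoungInequality} can be applied with $\eps$-independent constants. This amounts to quantifying all derivatives of the symbol $l^\eps_\mu$ on $\wGGe$, which is where the assumption $\mu \in \mm(\omega)$ with its sub-exponential moments $\int e^{\lambda\omega}\,\dd|\mu| < \infty$ is essential — it guarantees $l^\eps_\mu$ is not merely smooth but ultra-differentiable with bounds uniform in $\eps$, matching the regularity class $\Dww$ in which the partition of unity lives.
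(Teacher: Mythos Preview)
Your multiplier approach is workable but differs from the paper's and contains two inaccuracies. The paper argues in physical space: since $\mu$ integrates constants and linear functions to zero, one has (reading $\varDelta_j^{\GGe}u=\overline\Psi^{\GGe,j}\aste\varDelta_j^{\GGe}u$ as a smooth function on $\RR^d$ via \eqref{eq:PsijScaling})
\[
   \varDelta_j^{\GGe}L^\eps_\mu u(x) = \int_\GG\dd\mu(y)\int_0^1\int_0^1\dd\zeta_1\dd\zeta_2\; y\scl \nabla^2(\varDelta_j^{\GGe}u)(x+\eps\zeta_1\zeta_2 y)\,y,
\]
and the factor $2^{2j}$ falls out of $\|\nabla^2\overline\Psi^{\GGe,j}\|_{L^1(\GGe,e^{\lambda\omega})}\lesssim 2^{2j}$ (scaling \eqref{eq:PsijScaling} plus Lemma~\ref{lem:DiscreteWeightedYoungInequality}). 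For the comparison estimate the same Taylor trick expresses $\varDelta_j(L^\eps_\mu-L_\mu)u$ through the difference $\nabla^2\overline\Psi^j(\cdot+\eps\zeta_1\zeta_2 y-z)-\nabla^2\overline\Psi^j(\cdot-z)$, which one either bounds crudely ($\lesssim 2^{2j}$) or via a further derivative ($\lesssim\eps|y|2^{3j}$), and interpolates. This route needs nothing about $l^\eps_\mu$ beyond the raw moment bound $\int|y|^3\dd|\mu|<\infty$.

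By contrast, your approach requires uniform-in-$\eps$ ultra-differentiability bounds on the symbol, which the paper only establishes later (Lemma~\ref{lem:SchauderPrelude}); there is no circularity, but the paper's order is more economical. Two points need correction. First, the rescaled symbols satisfy $2^{-2j}l^\eps_\mu(2^j\cdot)=l^{\eps 2^j}_\mu$, which depends on the product $\eps 2^j\in(0,1]$: your $\Theta_{\je}$ cannot be literally three $\eps$-independent functions, only a bounded family in $\Dww$ --- and proving that boundedness is exactly Lemma~\ref{lem:SchauderPrelude}. Second, on $\Sww'(\RR^d)$ the operator $L^\eps_\mu$ has the $\CR^\eps$-periodic multiplier $l^\eps_\mu$ on all of $\RR^d$, so it is false that it ``only involves frequencies in $\wGGe$''; your interpolation $\min(2^{2j},\eps^2 2^{4j})\le \eps^\delta 2^{(2+\delta)j}$ still gives the right symbol bound at all scales, but turning it into a weighted-$L^1$ kernel estimate uniformly in $(\eps,j)$ is precisely the technical obstacle you flag and do not carry out.
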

\begin{proof}
We start with the first inequality. With $\overline{\Psi}^{\GG^\eps,j}:=\sum_{-1\leq i \leq j_{\GG^\eps}:\, |i-j|\leq 1} \Psi^{\GG^\eps,i} \in \Sw_\omega(\GG^\eps)$ we have by spectral support properties $\varDelta_j^{\GG^\eps} u=\overline{\Psi}^{\GG^\eps, j}\aste \varDelta_j^{\GG^\eps}  u $. Via \eqref{eq:PsijScaling} we can read $\Psi^{\GG^\eps,j}$ and thus $\overline{\Psi}^{j,\GG^\eps}$ as a smooth function in $\Sww(\RR^d)$ defined on all of $\RR^d$. In this sense we read
\begin{align}
\label{eq:DiffusionOperatorConvolution}
\varDelta_j^{\GG^\eps} u=|\GG^\eps|\sum_{z\in |\GG^\eps|} \overline{\Psi}^{\GG^\eps, j}(\cdot-z)\, \varDelta_j^{\GG^\eps} u(z)\,,
\end{align}
as a smooth function on $\RR^d$ in the following. Since $\mu$ integrates affine functions to zero we can rewrite
\begin{align*}
	\varDelta_j^{\GG^\eps} L_\mu^\eps u(x)  &=\eps^{-2} \int_{\GG}\, \dd \mu (y) \,[\varDelta_j^{\GG^\eps} u(x+\eps y)-\varDelta_j^{\GG^\eps} u(x)-\nabla(\varDelta_j^{\GG^\eps} u)(x)\cdot \eps y] \\
	&=\int_{\GG}\, \dd \mu(y) \int_0^1 \dd \zeta_1 \int_0^1 \dd \zeta_2\,\,\,y\scl \nabla^2(\varDelta_j^{\GG^\eps} u)(x+\eps  \zeta_1 \zeta_2 y) y.
\end{align*}
Using \eqref{eq:WeightQuotient} and the Minkowski inequality on the support of $\mu$ we then obtain
\begin{align*}
\|\rho\varDelta_j^{\GG^\eps} L_\mu^\eps u \|_{L^p(\GG^\eps)} \lesssim \int_{\GG}\, \dd \mu(y) \int_0^1 \dd \zeta_1\int_0^1 \dd \zeta_2 e^{\lambda \omega(\eps \zeta_1 \zeta_2 y)} |y|^2\, \Big\|\rho(\cdot+\eps \zeta_1 \zeta_2 y) |\nabla^2(\varDelta_j^{\GG^\eps} u)(\cdot +\eps  \zeta_1 \zeta_2 y)| \Big\|_{L^p(\GG^\eps)}\,,
\end{align*}
where $\lambda$ is as in \eqref{eq:WeightQuotient}. By definition of $\mm(\omega)$ and monotonicity of $\omega \in \ww$ we have
\begin{align*}
 \int_0^1 \dd \zeta_1 \int_0^1 \dd \zeta_2 \int_{\GG} \dd \mu(y)\, |y|^2 e^{\lambda \omega(\eps \zeta_1 \zeta_2 y)}\leq  \int_0^1 \dd \zeta_1 \int_0^1 \dd \zeta_2 \int_{\GG} \dd \mu(y)\, |y|^2 e^{\lambda \omega( y)}<\infty
\end{align*}
so that we are left with the task of estimating
\begin{align*}
\Big\|\rho(\cdot+\eps \zeta_1 \zeta_2 y) |\nabla^2(\varDelta_j^{\GG^\eps} u)(\cdot +\eps  \zeta_1 \zeta_2 y)| \Big\|_{L^p(\GG^\eps)}\lesssim \|\nabla^2\overline{\Psi}^{\GG^\eps,j}(\cdot+\eps \zeta_1\zeta_2)\|_{L^1(\GG^\eps,e^{\lambda \omega(\cdot+\eps \zeta_1\zeta_2)})}\,\|\varDelta_j^{\GG^\eps} u\|_{L^p(\GG^\eps,\rho)}\,,
\end{align*}
where we applied \eqref{eq:DiffusionOperatorConvolution} and Young's convolution inequality on $\GG^\eps$. Due to \eqref{eq:PsijScaling} and Lemma \ref{lem:DiscreteWeightedYoungInequality} we can estimate the first factor by $2^{j2}$ so that we obtain the total estimate
\begin{align*}
\|\varDelta_j^{\GG^\eps} L_\mu^\eps u \|_{L^p(\GG^\eps,\rho)} \lesssim 2^{-j(\alpha-2)} \|u\|_{\mathcal{C}^\alpha_p(\GG^\eps,\rho)}
\end{align*}
and the first estimate follows. 

To show the second inequality we proceed essentially the same but use instead $\overline{\Psi}^j=\sum_{i:\,|i-j|\leq 1} \Psi^i$, where $\Psi^j=\FF^{-1}_{\RR^d} \varphi_j$ now really denotes the inverse transform of the partition $(\varphi_j)_{j\geq -1}$ \emph{on all of} $\RR^d$. We then have $\varDelta_j=\overline{\Psi}^j \ast \varDelta_j$, so that 
\begin{align*}
	\varDelta_j(L^\eps_\mu -L_\mu)u=\int_0^1 \dd \zeta_1 \int_0^1 \dd \zeta_2\int_{\GG}\dd \mu(y)\,  \int_{\RR^d} \dd z\,  y \scl (\nabla^2\overline{\Psi}^j(\cdot +\eps \zeta_1\zeta_2 y-z)-\nabla^2  \overline{\Psi}^j(\cdot -z)) y \,\varDelta_j u(z)\,.
\end{align*}
As above we can then either get $2^{-j(\alpha-2)} \|u\|_{\mathcal{C}^\alpha_p(\GG^\eps,\rho)}$, by bounding each of the two second derivatives separately, or  $2^{-j(\alpha-3)} \eps \|u\|_{\mathcal{C}^\alpha_p(\GG^\eps,\rho)}$, by exploiting the difference to introduce the third derivative. We obtain the second estimate by interpolation.
\end{proof}
 
\subsection{Semigroup estimates}

In Fourier space $L^\eps_\mu$ can be represented by a Fourier multiplier $l^\eps_\mu:\, \wGGe \rightarrow \RR$:
\begin{align*}
	\FFge (L_\mu^\eps u)=-l^\eps_\mu  \cdot \FFge u \,,
\end{align*} 
\glsadd{lepsmu}
for $u\in \Sww'(\GG^\eps)$. The multiplier $l^\eps_\mu$ is given by 
\begin{align}
\label{eq:MultiplierOperator}
	l^\eps_\mu(x)=-\int_{\GG} \frac{e^{ \imath \eps 2\pi x \scl y}}{\eps^2}\, \dd \mu(y)=\int_{\GG} \frac{1-\cos(\eps 2\pi x \scl y)}{\eps^2}\, \dd \mu(y)=2\int_{\GG} \frac{\sin^2(\eps \pi x \scl y)}{\eps^2} \,\dd \mu(y)\,,
\end{align}
where we used that $\mu$ is symmetric with $\mu(\GG)=0$ and the trigonometric identity $1-\cos=2\sin^2$. 
The following lemma shows that $l^\eps_\mu$ is well defined as a multiplier (i.e. $l^\eps_\mu \in C^\infty_\omega(\widehat{\GG^\eps})$). It is moreover the backbone of the semigroup estimates shown below.

\begin{lemma}
\label{lem:SchauderPrelude}
Let $\omega \in \ww$ and $\mu\in \mm(\omega)$. The function $l^\eps_\mu$ defined in \eqref{eq:MultiplierOperator} is an element of $\Sw_\omega(\widehat{\GG^\eps})=C^\infty_\omega(\widehat{\GG^\eps})$ and
\begin{itemize}
	\item if $\omega=\wexps$ with $\sigma\in (0,1)$ it satisfies $|\partial^k l_\mu^\eps(x)|\lesssim_\delta \eps^{(|k|-2)\vee 0} (1+|x|^{2}) \delta^{|k|} (k!)^{1/\sigma} $ for any $\delta>0,\,k \in \NN^d$,
	\item for every compact set $K \subseteq \RR^d$ with $K\cap \mathscr{R}=\{0\}$, where $\mathscr{R}$ is the reciprocal lattice of the \emph{unscaled} lattice $\GG$, we have $l^\eps_\mu(x) \gtrsim_K \lvert x\rvert^2$ for all $x \in \eps^{-1} K$.
\end{itemize}
The implicit constants are independent of $\eps$.
\end{lemma}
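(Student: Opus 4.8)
The plan is to estimate the integral representation \eqref{eq:MultiplierOperator} directly, splitting the argument according to the order of the derivative, and to reduce the lower bound to an $\eps$-independent statement on the unscaled lattice.

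\textbf{Smoothness and the derivative bound.} First I would observe that the $y=0$ atom of $\mu$ does not contribute to \eqref{eq:MultiplierOperator} (since $1-\cos 0=\sin^2 0=0$), so that $l^\eps_\mu(x)=2\eps^{-2}\int_{\GG\setminus\{0\}}\sin^2(\eps\pi x\scl y)\,\dd\mu(y)$ with $\mu\ge 0$ on $\GG\setminus\{0\}$; in particular $l^\eps_\mu\ge 0$ and is real. Differentiation under the integral sign is legitimate for every $k\in\NN^d$, because $|y^k e^{\imath\eps 2\pi x\scl y}|\le |y|^{|k|}\in L^1(|\mu|)$ by Definition~\ref{def:Measure}, and gives $\partial^k l^\eps_\mu(x)=-(\imath 2\pi)^{|k|}\eps^{|k|-2}\int_\GG y^k e^{\imath\eps 2\pi x\scl y}\,\dd\mu(y)$ for $|k|\ge 1$; this already yields $l^\eps_\mu\in C^\infty(\wGGe)$. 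For $|k|\ge 2$ I bound $|y^k|\le|y|^{|k|}$ and $\eps^{|k|-2}=\eps^{(|k|-2)\vee 0}$, so the claim reduces to $\int_\GG|y|^n\,\dd|\mu|(y)\lesssim_\lambda(C/\lambda^{1/\sigma})^n\,(n!)^{1/\sigma}$ for all $\lambda>0$. This follows from the elementary inequality $a^m\le(m/(\lambda e))^m e^{\lambda a}$ applied with $a=|y|^\sigma$, $m=n/\sigma$, from the finiteness of $\int e^{\lambda|y|^\sigma}\,\dd|\mu|(y)$ in Definition~\ref{def:Measure}, and from Stirling together with $n!=|k|!\le d^{|k|}k!$ to pass from $n^{n/\sigma}$ to $(k!)^{1/\sigma}$; choosing $\lambda=\lambda(\delta)$ large makes the geometric factor smaller than $\delta^{|k|}$. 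For $|k|=0$ I use $\sin^2 t\le t^2$ to get $|l^\eps_\mu(x)|\le 2\pi^2|x|^2\int_\GG|y|^2\,\dd|\mu|(y)$, and for $|k|=1$ I use the symmetry $\int y_i\,\dd\mu(y)=0$ to write $\partial^i l^\eps_\mu(x)=-\imath 2\pi\eps^{-1}\int_\GG y_i\bigl(e^{\imath\eps 2\pi x\scl y}-1\bigr)\,\dd\mu(y)$ and the bound $|e^{\imath\eps2\pi x\scl y}-1|\le 2\pi\eps|x||y|$, which gives $|\partial^i l^\eps_\mu(x)|\lesssim|x|$; in both cases $(|k|-2)\vee 0=0$, and the estimates are absorbed into $\lesssim_\delta(1+|x|^2)\,\delta^{|k|}$. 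Restricting all of these bounds to the compact set $\wGGe$ shows $l^\eps_\mu\in C^\infty_\omega(\wGGe)$ via \eqref{eq:BeurlingUsefullCondition} when $\omega=\wexps$, while for $\omega=\wpol$ only smoothness is required; all constants depend on $\delta$ and on $\mu$ but not on $\eps$.

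\textbf{The lower bound.} Writing $x=\eps^{-1}\kappa$ with $\kappa\in K$, the identities $\eps\pi x\scl y=\pi\kappa\scl y$ and $|x|^2=\eps^{-2}|\kappa|^2$ cancel the powers of $\eps$ and reduce the claim to the $\eps$-free statement $h(\kappa):=2\int_{\GG\setminus\{0\}}\sin^2(\pi\kappa\scl y)\,\dd\mu(y)\gtrsim_K|\kappa|^2$ for $\kappa\in K$. Near the origin I cannot use a pointwise estimate $\sin^2(\pi\kappa\scl y)\gtrsim(\kappa\scl y)^2$ since $\supp\mu$ may be unbounded; instead I restrict to $\{y\ne 0:|\kappa\scl y|\le 1/2\}$, where $\sin^2(\pi t)\ge 4t^2$, and control the complement by $\{|\kappa\scl y|>1/2\}\subseteq\{|y|>1/(2|\kappa|)\}$ together with $\int_{\{|y|>s\}}|y|^2\,\dd|\mu|(y)\to 0$ as $s\to\infty$ (dominated convergence). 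Since $\int_{\GG\setminus\{0\}}(\kappa\scl y)^2\,\dd\mu(y)=2\|\kappa\|_\mu^2\gtrsim|\kappa|^2$ by Lemma~\ref{lem:MuNorm} and equivalence of norms on $\RR^d$, for $|\kappa|$ below some $\delta_0>0$ depending only on $\mu$ the complement contributes at most half, so $h(\kappa)\gtrsim|\kappa|^2$ there. On the remaining compact set $K\cap\{|\kappa|\ge\delta_0\}$, which is disjoint from $\CR$ because $K\cap\CR=\{0\}$, the continuous function $h$ is strictly positive: $h(\kappa)=0$ forces $\kappa\scl y\in\ZZ$ for all $y\in\supp\mu$, hence for all $y\in\GG$ since $\langle\supp\mu\rangle=\GG$, i.e. $\kappa\in\CR$. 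Therefore $h$ is bounded below by a positive constant there, and as $|\kappa|$ is bounded on $K$ this yields $h(\kappa)\gtrsim_K|\kappa|^2$; combining the two regions gives the claim with a constant depending on $K$ but not on $\eps$.

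\textbf{Main obstacle.} The delicate point is the lower bound: because infinite-range jumps are allowed, $\sin^2(\pi\kappa\scl y)$ degenerates along the directions $y$ for which $\kappa\scl y$ is near a nonzero integer, so a naive quadratic lower bound fails and one must localize the integral and absorb the tail into the quadratic form $\|\cdot\|_\mu^2$. The Gevrey estimate in the first part is comparatively mechanical once one has the optimization inequality $a^m\le(m/(\lambda e))^m e^{\lambda a}$, which is the same device already used in the sketch proof earlier in the paper.
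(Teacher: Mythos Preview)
Your proof is correct and for the derivative estimates essentially coincides with the paper's argument (the $|k|=1$ case is handled via $e^{\imath t}-1$ rather than $\sin t/t$, but this is cosmetic). The genuine difference is in the near-origin lower bound. You worry that $\supp\mu$ may be unbounded and therefore split the integral into $\{|\kappa\scl y|\le 1/2\}$ and its complement, using the tail estimate $\int_{|y|>s}|y|^2\,\dd|\mu|(y)\to 0$ to absorb the latter into the quadratic form $\|\kappa\|_\mu^2$. The paper instead observes that since $\GG$ is finitely generated one can pick a \emph{finite} subset $V\subseteq\supp\mu$ with $\langle V\rangle=\GG$ and simply drop all other $y$ from the integral (the integrand is nonnegative on $\GG\setminus\{0\}$); on the finite set $V$ the naive bound $\sin^2(\pi\kappa\scl y)\gtrsim(\kappa\scl y)^2$ is valid for small $|\kappa|$ and the resulting quadratic form is still a norm by the same argument as Lemma~\ref{lem:MuNorm}. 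The paper's device sidesteps the tail argument entirely and makes what you flag as the ``main obstacle'' almost trivial, whereas your route is slightly more analytic but has the advantage of not requiring the existence of a finite generating subset of $\supp\mu$ (automatic here, but perhaps useful in settings where the ambient group is not finitely generated).
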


\begin{proof}
We start by showing $|\partial^k l^\eps_\mu(x)|\lesssim_\delta \eps^{(|k|-2)\vee 0} (1+|x|^{2}) \delta^{|k|} (k!)^{1/\sigma}$ if $\omega = \wexps$, 
which implies in particular $l^\eps_\mu\in \Sw_\omega(\widehat{\GG^\eps})$ in that case. The proof that $l^\eps_\mu\in \Sw_\omega(\widehat{\GG^\eps})$ for $\mu\in \mm(\wpol)$ is again similar but easier and therefore omitted. We study derivatives with $|k|=0,1$ first. We have 
\begin{align*}
|l^\eps_\mu(x)|=2\left|\int_{\GG} \frac{\sin^2 (\eps\pi x\scl y)}{\eps^2} \dd \mu(y) \right| & \lesssim  \left|\int_{\GG} \frac{\sin^2 (\eps\pi x\scl y)}{|\eps \pi x\scl y|^2} |x\scl y|^2  \dd \mu(y) \right| \\
& \lesssim \int_{\GG} |y|^2 \dd |\mu| (y)\cdot  |x|^2 \lesssim |x|^2,
\end{align*}
and for $i=1,\ldots,d$
\begin{align*}
	|\partial^{i} l_\mu^\eps(x)|| \lesssim \int_{\GG} \frac{|\sin(\eps\pi x\scl y)|}{|\eps \pi x\scl y|} |x||y|^2 \dd |\mu| (y)  \lesssim |x|\,.
\end{align*}
For higher derivatives we use that $\partial_x^k e^{\imath 2\pi \eps x\scl y}=(\ii 2\pi\eps)^{|k|} y^k e^{\imath 2\pi \eps x\scl y}$
which gives (where $C>0$ denotes as usual a changing constant) 
\begin{align*}
	|\partial^k l_\mu^\eps(x)| &\le \eps^{|k|-2} C^{|k|}\int_{\GG} |y|^{|k|}   \dd |\mu|(y)\leq \eps^{|k|-2}   C^{|k|} \max_{t \geq 0} (t^{|k|} e^{-\lambda t^{\sigma}}) \int_{\GG} e^{\lambda |y|^{\sigma}} \dd \mu(y) 	
\end{align*}
for any $\lambda>0$. 
Using $\max_{t\geq 0} t^a e^{-\lambda t^{\sigma}}= \lambda^{-a/\sigma} (a/\sigma)^{a/\sigma} e^{-a/\sigma}$ for $a>0$ we end up with 
\begin{align*}
|\partial^k l_\mu^\eps(x)|\lesssim \eps^{|k|-2}  \frac{1}{\lambda^{|k|/\sigma}}   C^{|k|} |k|^{|k|/\sigma}\lesssim \eps^{|k|-2} \frac{1}{\lambda^{|k|/\sigma}}  C^{|k|}  (k!)^{1/\sigma}\,,
\end{align*}
and our first claim follows by choosing $\lambda^{1/\sigma} := C/\delta$. 

It remains to show that $l_\mu^\eps/\lvert\cdot \rvert^2\gtrsim 1$ on $\eps^{-1} K$, which is equivalent to $l_\mu^1/\lvert \cdot \rvert^2\gtrsim 1$ on $K$. We start by finding the zeros of $l^1_\mu$ which, by periodicity can be reduced to finding all $x\in \widehat{\GG}$ with $l^1_\mu(x)=0$. But if $l^1_\mu(x)=0$, then $y\scl x\in \ZZ$ for any $y\in \supp \mu$, which yields with $\langle  \supp \mu \rangle=\GG$ that we must have	$a_i \scl x\in \ZZ$ for $a_i$ as in \eqref{eq:Lattice}. But since $x \in \widehat{\GG}$ we have $x=x_1 \hat{a}_1+\ldots +x_d \hat{a}_d$ with $x_i\in [-1/2,1/2)$ and $\hat{a}_i$ as in \eqref{eq:ReciprocalLattice}. Consequently 
\begin{align*}
	x_i=x\scl a_i\in \ZZ\cap [-1/2,1/2)=\{0\}\,,
\end{align*} 
and hence $x = 0$. Since $l^1_\mu$ is periodic under translations in the reciprocal lattice $\mathscr R$, its zero set is thus precisely $\mathscr{R}$. By assumption $K\cap \mathscr{R}=\{0\}$ and it remains therefore to verify $l^1_\mu(x) \gtrsim |x|^2$ in an environment of $0$ to finish the proof.

Note that there is a finite subset $V\subseteq \supp \mu$ such that $0\in V$ and $\langle V \rangle =\GG$, since  only finitely many $y\in \supp \mu$ are needed to generate $a_1,\ldots,a_d$. We restrict ourselves to $V$:
\begin{align*}
	l^1_\mu(x)&=2 \int_{\GG} \sin^2 (\pi x\scl y) \dd \mu(y) \geq 2 \int_{V} \sin^2 (\pi x\scl y) \dd \mu(y)
\end{align*}
For $x\in \widehat{\GG}\backslash \{0\}$ small enough we can now bound $
	\int_{V} \sin^2 (\pi x\scl y) \dd \mu(y) \gtrsim \int_V |x\scl y|^2 \dd \mu(y)$. 
 The term on the right hand side defines (the square of) a norm by the same arguments as in Lemma \ref{lem:MuNorm}, and since it must be equivalent to $\lvert\cdot\rvert^2$ the proof is complete.  
\end{proof}

Using that $\Sw_\omega(\widehat{\GG^\eps})=C^\infty_\omega(\widehat{\GG^\eps})$ is stable under composition with functions in $\Eww(\RR^d)$ we see that $e^{-t l^\eps_\mu}\in C^\infty_\omega(\widehat{\GG^\eps})$ for $t\geq 0$ and can thus define the Fourier multiplier
\begin{align*}
	e^{t L^\eps_\mu} f:= \FFge^{-1} (e^{-t l^\eps_\mu} \FFge f)
\end{align*}
for $t\geq 0$ and $f\in \Sw_\omega'(\GG^\eps)$, which gives the (weak) solution to the problem $\mathscr{L}_\mu^\eps g=0$, $g(0)=f$. The regularizing effect of the semigroup is described in the following proposition.

\begin{proposition}
\label{lem:SemiGroup}
We have for $\alpha\in \RR,\beta \geq 0$, $p\in [1,\infty],\,\omega\in\ww,\,\mu\in \mm(\omega)$ and $\rho\in \rr(\omega)$ 
\begin{align}
	\| e^{t L^\eps_\mu} f	 \|_{\mathcal{C}_p^{\alpha+\beta}(\GG^\eps,\rho)} &\lesssim t^{-\beta/2} \| f \|_{\mathcal{C}^\alpha_{p}(\GG^\eps,\rho)} \,, 
\label{eq:SemigroupHoelder}	
	\\ 
	\|e^{t L^\eps_\mu} f\|_{\mathcal{C}^\beta_p(\GG^\eps,\rho)} &\lesssim t^{-\beta/2} \|f\|_{L^p(\GG^\eps,\rho)} \,,
\label{eq:SemigroupLp}
\end{align}
and for $\alpha\in (0,2)$
\begin{align}
	\|(e^{t L^\eps_\mu}-\mathrm{Id}) f\|_{L^p(\GG^\eps,\rho)} \lesssim t^{\alpha/2} \|f\|_{\mathcal{C}^\alpha_p(\GG^\eps,\rho)}\,,
\label{eq:SemigroupContinuity}
\end{align}
uniformly on compact intervals $t\in [0,T]$. The involved constants are independent of $\eps$.
\end{proposition}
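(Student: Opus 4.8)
Both the smoothing estimates \eqref{eq:SemigroupHoelder}, \eqref{eq:SemigroupLp} and the continuity estimate \eqref{eq:SemigroupContinuity} will be reduced to uniform bounds on the Littlewood--Paley blocks of the semigroup. Since $\varDelta_j^{\GG^\eps}$ and $e^{tL^\eps_\mu}$ are Fourier multipliers they commute, and writing $\overline\varphi^{\GG^\eps}_j:=\sum_{i:\,|i-j|\le 1}\varphi^{\GG^\eps}_i$, which is $\equiv 1$ on $\supp\varphi^{\GG^\eps}_j$, the convolution theorem \eqref{eq:ConvolutionLattice} gives
\[
  \varDelta_j^{\GG^\eps}e^{tL^\eps_\mu}f=K^{\eps,t}_j\aste\varDelta_j^{\GG^\eps}f,\qquad K^{\eps,t}_j:=\FFge^{-1}\big(\overline\varphi^{\GG^\eps}_j\,e^{-tl^\eps_\mu}\big).
\]
The core claim is that, for every $\lambda>0$, $\|K^{\eps,t}_j\|_{L^1(\GG^\eps,e^{\lambda\omega})}\lesssim e^{-ct2^{2j}}$ for $j\ge 1$ and $\lesssim 1$ for $j\in\{-1,0\}$, uniformly in $\eps$, $j$ and $t\in[0,T]$ for a fixed $c>0$. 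Granting this, absorbing $\rho\in\rr(\omega)$ into $e^{\lambda\omega}$ via \eqref{eq:WeightQuotient} and applying Lemma \ref{lem:DiscreteWeightedYoungInequality} yields $\|\varDelta_j^{\GG^\eps}e^{tL^\eps_\mu}f\|_{L^p(\GG^\eps,\rho)}\lesssim e^{-ct2^{2j}}\|\varDelta_j^{\GG^\eps}f\|_{L^p(\GG^\eps,\rho)}$ (with $e^{-ct2^{2j}}$ read as $1$ for $j\in\{-1,0\}$). Then \eqref{eq:SemigroupHoelder} follows from $\|\varDelta_j^{\GG^\eps}f\|_{L^p(\GG^\eps,\rho)}\le 2^{-j\alpha}\|f\|_{\mathcal{C}^\alpha_p(\GG^\eps,\rho)}$ and the elementary bound $\sup_{j\ge -1}2^{j\beta}e^{-ct2^{2j}}\lesssim_T t^{-\beta/2}$ (for $\beta\ge 0$, $t\in(0,T]$; the finitely many low blocks contribute $\lesssim\|f\|_{\mathcal{C}^\alpha_p(\GG^\eps,\rho)}\lesssim t^{-\beta/2}\|f\|_{\mathcal{C}^\alpha_p(\GG^\eps,\rho)}$), and \eqref{eq:SemigroupLp} is the same argument starting from $\|\varDelta_j^{\GG^\eps}f\|_{L^p(\GG^\eps,\rho)}\lesssim\|f\|_{L^p(\GG^\eps,\rho)}$.

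\noindent\textbf{The kernel bound.} Here the two properties of $l^\eps_\mu$ from Lemma \ref{lem:SchauderPrelude} enter. For $j\ge 1$ the set $\supp\overline\varphi^{\GG^\eps}_j$ is a dyadic annulus $|\xi|\sim 2^j$ sitting inside $\wGGe$ when $j<j_{\GG^\eps}$ (and the outer shell of $\wGGe$, with $2^{2j_{\GG^\eps}}\sim\eps^{-2}$, when $j=j_{\GG^\eps}$); on it the lower bound gives $l^\eps_\mu(\xi)\gtrsim|\xi|^2\gtrsim 2^{2j}$. We therefore split $e^{-tl^\eps_\mu}=e^{-ct2^{2j}/2}\,e^{-t(l^\eps_\mu-c2^{2j}/2)}$ with $l^\eps_\mu-c2^{2j}/2\ge c2^{2j}/2\ge 0$ on $\supp\overline\varphi^{\GG^\eps}_j$. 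Using the dyadic scaling of the blocks (Lemma \ref{lem:ScalingBoundary}, cf. \eqref{eq:PsijScaling}) together with $l^\eps_\mu(2^j\cdot)=2^{2j}l^{\eps 2^j}_\mu(\cdot)$, one rewrites $\overline\varphi^{\GG^\eps}_j(\xi)\,e^{-t(l^\eps_\mu(\xi)-c2^{2j}/2)}=H_{s,\delta}(2^{-j}\xi)$ with $s:=t2^{2j}\ge 0$ and $\delta:=\eps 2^j\in(0,1]$, where $H_{s,\delta}$ is supported in a fixed annulus. The key point is that $\{H_{s,\delta}: s\ge 0,\ \delta\in(0,1]\}$ is a bounded subset of $\Dww(\RR^d)$: by Lemma \ref{lem:SchauderPrelude} applied to the lattice of mesh $\delta\le 1$ the multipliers $l^{\delta}_\mu$ carry Gevrey-$1/\sigma$ bounds on the annulus that are uniform in $\delta$, and composing with $\exp$ through the Faà di Bruno formula produces, for each partial derivative and each partition into $r$ blocks, a prefactor $e^{-t(l^\eps_\mu-c2^{2j}/2)}(ts\,)^{r}\le e^{-s/2}(Cs)^{r}\le (C'r)^{r}$ in front of the uniformly bounded Gevrey data, so that the composition stability of $\Eww$ (cf.\ the remark after Definition~\ref{def:Gevrey} and~\cite{RainerSchindl}) gives the desired uniform Gevrey-$1/\sigma$ bound. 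Since $\FFr^{-1}$ is an isomorphism of $\Sw_\omega(\RR^d)$, $\FFr^{-1}H_{s,\delta}$ lies in a bounded subset of $\Sw_\omega(\RR^d)$; as $H_{s,\delta}$ is supported inside $\wGGe$ one may extend the integral defining $\FFge^{-1}$ to all of $\RR^d$ and obtains $K^{\eps,t}_j=e^{-s/2}\,(\FFr^{-1}H_{s,\delta})^{2^{-j}}$ on $\GG^\eps$, with $(\Phi)^{2^{-j}}=2^{jd}\Phi(2^j\cdot)$. Lemma \ref{lem:DiscreteWeightedYoungInequality} with $2^{-j}\gtrsim\eps$ then gives $\|K^{\eps,t}_j\|_{L^1(\GG^\eps,e^{\lambda\omega})}\lesssim e^{-s/2}=e^{-ct2^{2j}/2}$. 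For $j\in\{-1,0\}$ the cutoff is supported in a fixed ball independent of $\eps$ and $t\in[0,T]$ is bounded, so the identical argument without extracting the exponential gives the uniform bound $\lesssim 1$.

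\noindent\textbf{The continuity estimate.} We run the same scheme with $\widetilde K^{\eps,t}_j:=\FFge^{-1}\big(\overline\varphi^{\GG^\eps}_j(1-e^{-tl^\eps_\mu})\big)$, using $1-e^{-tl^\eps_\mu}=tl^\eps_\mu\int_0^1 e^{-rtl^\eps_\mu}\,\dd r$ and $l^\eps_\mu\sim 2^{2j}$ on $\supp\overline\varphi^{\GG^\eps}_j$. Applying the boundedness-in-$\Dww$ argument above to the family $\overline\varphi^{\GG^\eps}_j\,(2^{-2j}l^\eps_\mu)\,e^{-rtl^\eps_\mu}$ (uniformly in $r\in[0,1]$) yields $\|\widetilde K^{\eps,t}_j\|_{L^1(\GG^\eps,e^{\lambda\omega})}\lesssim t\int_0^1 2^{2j}e^{-crt2^{2j}}\,\dd r\lesssim 1-e^{-ct2^{2j}}\lesssim\min(1,t2^{2j})$, hence $\|\varDelta_j^{\GG^\eps}(e^{tL^\eps_\mu}-\mathrm{Id})f\|_{L^p(\GG^\eps,\rho)}\lesssim\min(1,t2^{2j})\,2^{-j\alpha}\|f\|_{\mathcal{C}^\alpha_p(\GG^\eps,\rho)}$. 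Summing over $-1\le j\le j_{\GG^\eps}$ and splitting the geometric series at $2^{2j}\sim t^{-1}$ gives $\sum_j\min(1,t2^{2j})2^{-j\alpha}\lesssim t^{\alpha/2}$ for $\alpha\in(0,2)$, which is \eqref{eq:SemigroupContinuity}.

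\noindent\textbf{Main obstacle.} The delicate step is the uniform $\Dww(\RR^d)$-boundedness of the rescaled kernels $H_{s,\delta}$: keeping the composition of the Gevrey-$1/\sigma$ multiplier with the non-polynomial exponential in the class $1/\sigma$ (so that Faà di Bruno costs no extra power of $k!$) while simultaneously absorbing the \emph{unbounded} parameter $s=t2^{2j}$ into the prefactor $e^{-s/2}$. This is considerably easier for $\omega=\wpol$, where only finitely many derivatives of $\overline\varphi^{\GG^\eps}_j e^{-tl^\eps_\mu}$ need to be controlled. The remaining ingredients are routine: the boundary block $j=j_{\GG^\eps}$ (use $l^\eps_\mu\gtrsim\eps^{-2}$ there and the scaling of $\varphi^{\GG^\eps}_{j_{\GG^\eps}}$ from Lemma \ref{lem:ScalingBoundary}; alternatively factor $e^{-tl^\eps_\mu}=e^{-(t/2)l^\eps_\mu}e^{-(t/2)l^\eps_\mu}$ and bound $\|\FFge^{-1}(e^{-(t/2)l^\eps_\mu})\|_{L^1(\GG^\eps,e^{\lambda\omega})}$ by the sub-exponential moments of the random walk, Lemma~\ref{lem:MomentsRandomWalk}), and the low blocks $j\in\{-1,0\}$ where no smoothing is needed.
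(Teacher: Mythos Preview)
Your approach is correct and matches the paper's proof in all essential respects: both reduce to bounding the convolution kernel $\FFge^{-1}(\overline\varphi^{\GG^\eps}_j e^{-tl^\eps_\mu})$ after dyadic rescaling to a fixed annulus, invoke the Gevrey-$1/\sigma$ bounds on $l^{\delta}_\mu$ from Lemma~\ref{lem:SchauderPrelude} together with Fa\`a di Bruno for the exponential, and conclude via Lemma~\ref{lem:DiscreteWeightedYoungInequality}. The only packaging differences are that the paper aims directly at the pointwise bound $|\mathscr{K}_{(j)}(t,x)|\lesssim e^{-\lambda\omega(x)}2^{-j\beta}t^{-\beta/2}$ (absorbing $(t2^{2j})^{\beta/2}$ into the outer-function estimate $s^{\beta/2+k}e^{-cs}\lesssim_\delta k^{k/\sigma}\delta^k$) rather than first extracting the full exponential factor $e^{-ct2^{2j}}$ as you do, and that for \eqref{eq:SemigroupContinuity} the paper simply refers to \cite[Lemma~6.6]{Gubinelli2017KPZ} combined with Lemma~\ref{lem:SchauderLp}, whereas you give the explicit $1-e^{-tl^\eps_\mu}=tl^\eps_\mu\int_0^1 e^{-rtl^\eps_\mu}\,\dd r$ computation; both routes are equivalent. (A small typo: your ``$(ts)^r$'' should read $s^r$ with $s=t2^{2j}$.)
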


\begin{proof}
We show the claim for $\omega=\wexps=|x|^\sigma,\,\sigma\in (0,1)$ , the arguments for $\omega=\wpol$ are similar but easier. 
Using spectral support properties we can rewrite for $j= -1,\ldots,j_{\GGe}$
\begin{align}
	\Delta_j^{\GGe} e^{tL^\eps_\mu} f
=\FFge^{-1}  \left(\sum_{i:\,|i-j|\leq 1} \varphi^{\GG^\eps}_i e^{-t l^\eps_\mu} \cdot  \FFge \varDelta_j^{\GG^\eps} f \right)=\mathscr{K}_j(t,\cdot)\aste \varDelta_j^{\GG^\eps} f\,,
\label{eq:SemiGroupAsConvolution}
\end{align}
where we set for $z\in \GG^\eps$
\begin{align*}
\mathscr{K}_j(t,z):=\int_{\wGGe} \dd y \, e^{2\pi \ii z\scl y }\, \sum_{i:\,|i-j|\leq 1}\varphi^{\GG^\eps}_i(y) e^{-t l^\eps_\mu(y)}.
\end{align*}

Using the smear function $\psi^\eps=\psi(\eps\cdot)$ from Subsection \ref{subsec:ExtensionOperator}	we can rewrite this as an expression that is well-defined \emph{for all} $x\in \RR^d$
\begin{align*}
\mathscr{K}_j(t,x):=\int_{\RR^d} \dd y \, e^{2\pi \ii x \scl y }\,\psi^\eps(y) \,\sum_{i:\,|i-j|\leq 1}\ext{\varphi^{\GG^\eps}_i}(y) \cdot e^{-t l^\eps_\mu(y)}\,,
\end{align*}
where $\ext{\cdot}$ is given as in \eqref{eq:PeriodicExtension2} and where we extended $l^\eps_\mu$ (periodically) to all of $\RR^d$ by relation \eqref{eq:MultiplierOperator}. 
Consequently, we can apply Lemma \ref{lem:ScalingBoundary} to give an expression for the scaled kernel
\begin{align*}
\mathscr{K}_{(j)}(t,x):=2^{-jd}\mathscr{K}_j(t,2^{-j}x)=\int_{\RR^d} \dd y \, e^{2\pi \ii x \scl y } \varphi_{(j)}(y) \cdot e^{-t l^\eps_\mu(2^j y)}\,,
\end{align*}
where we wrote $\varphi_{(j)}=\sum_{i:\,|i-j|\leq 1} \check{\phi}_{\langle i \rangle_\eps}(2^{-(i-j)}\cdot)$ with $\check{\phi}_{\langle i \rangle_\eps}$ as in Lemma \ref{lem:ScalingBoundary}. Suppose we already know that for any $\lambda>0$ and $x\in\GG^\eps$ the estimate
\begin{align}
\label{eq:SchauderKeyBound}
	|\mathscr{K}_{(j)}(t,x)|\lesssim_\lambda  e^{-\lambda |x|^{\sigma}} 2^{-j \beta} t^{-\beta/2}=: 2^{-j\beta} t^{-\beta/2} \Phi(x) \,
\end{align}
holds. We then obtain from \eqref{eq:SemiGroupAsConvolution} with $\Phi^{2^{-j}}(x):=2^{jd}\Phi(2^j x)=2^{jd}e^{-\lambda |2^j x|^{\sigma}}$ the bound
\begin{align*}
\|\Delta_j^{\GGe} e^{tL^\eps_\mu} f\|_{L^p(\GG^\eps,\rho)}\lesssim 2^{-j\beta} t^{-\beta/2} \|\Phi^{2^{-j}}\aste |\Delta_j^{\GGe} e^{tL^\eps_\mu} f|\|_{L^p(\GG^\eps,\rho)}
\end{align*}
and an application of Lemma \ref{lem:DiscreteWeightedYoungInequality} shows \eqref{eq:SemigroupHoelder} and \eqref{eq:SemigroupLp} (for \eqref{eq:SemigroupLp} we also need \eqref{eq:LpLp}). Note that we cheated a little bit as Lemma \ref{lem:DiscreteWeightedYoungInequality} actually requires $\Phi\in \Sww(\RR^d)$ which is not true, inspecting however the proof of Lemma \ref{lem:DiscreteWeightedYoungInequality} we see that all we used was a suitable decay behavior which is still given. 

We will now show \eqref{eq:SchauderKeyBound}. 
Using Lemma \ref{lem:PolynomialBoundGivesSubexponential} below we can reduce this task to the simpler problem of proving the polynomial bound for $i=1,\ldots,d$ and $n\in \NN$
\begin{align}
\label{eq:SchauderKeyBoundPolynomial}
t^{\beta/2} |x_i|^n |\mathscr{K}_{(j)}(t,x)|\lesssim_\delta \delta^n C^n (n!)^{1/\sigma} 2^{-j\beta },\qquad \delta > 0,
\end{align}
with a constant $C>0$ that does not depend on $\delta$. To show~\eqref{eq:SchauderKeyBoundPolynomial} we assume that $2^j\eps\leq 1$. Otherwise we are dealing with the scale $2^j\approx \eps^{-1}$ and the arguments below can be easily modified. Integration by parts gives
\begin{align*}
	|x_i|^n |\mathscr{K}_{(j)}(t,x)|& = C^n \left| \int_{\RR^d}  \dd y \, e^{2\pi \imath x \scl y} \,\partial^{n\cdot e_i}\left(\varphi_{(j)}\, e^{-t l^\eps_\mu(2^{j} \cdot )} \right)(y) \right| \\
	& \leq C^n \int_{\RR^d} \dd y \left| \partial^{n\cdot e_i}\left(\varphi_{(j)} e^{-t 2^{2j} l_\mu^{2^j\eps}   } \right)(y) \right|  \,,
\end{align*}
where we used that $l^\eps_\mu(2^{j} y )=2^{2j} l_\mu^{2^j\eps}(y)$ by \eqref{eq:MultiplierOperator}. Now we have the following estimates for $k\in \NN$
\begin{align*}
	|\partial^{k\cdot e_i} \varphi_{(j)}(y)| \lesssim_\delta \delta^k (k!)^{1/\sigma} ,\,\,
	|\partial^{k\cdot e_i} l^{\eps 2^j}_\mu(y)| \lesssim_\delta \delta^{k} \,(k!)^{1/\sigma},\,\,
	\left|(2^{2j} t)^{\beta/2} \partial^k \left(e^{t2^{2j} \cdot} \right)\big(l^{2^j \eps}_\mu(y)\big)  \right| \lesssim_\delta k^{k/\sigma} \delta^k\,,
\end{align*}
where we used that $\varphi_{(j)}\in \Dww(\RR^d)$ (with bounds that can be chosen independently of $j$ by definition) and we applied Lemma \ref{lem:SchauderPrelude} with the assumption $2^j\eps\leq 1$ (which we need because we only defined $l^{\eps'}_\mu$ for $\eps' \leq 1$). Together with Leibniz's and Fa\`a-di Bruno's formula and a lengthy but elementary calculation \eqref{eq:SchauderKeyBoundPolynomial} follows, which finishes the proof of \eqref{eq:SemigroupHoelder} and \eqref{eq:SemigroupLp}.

The last estimate~\eqref{eq:SemigroupContinuity} can be obtained as in the proof of Lemma \cite[Lemma 6.6]{Gubinelli2017KPZ} by using Lemma \ref{lem:SchauderLp} below. 
\end{proof}

\begin{lemma}
\label{lem:PolynomialBoundGivesSubexponential}
Let $g\colon\RR^d\rightarrow \RR$, $\sigma>0 $ and $B>0$. Suppose for any $\delta>0$ there is a $C_\delta>0$ such that for all $z\in \RR^d$, $l\geq 0$ and $i=1,\ldots,d$ 
\begin{align*}
	|z_i^l  g(z)|\lesssim_\delta \delta^l C_\delta^l (l!)^{1/\sigma}  B\,.
\end{align*}
It then holds for any $\lambda>0$ and $z\in \RR^d$ 
\begin{align*}
|g(z)|\lesssim_\lambda B  e^{-\lambda |z|^\sigma}\,.
\end{align*}
\end{lemma}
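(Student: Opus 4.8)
The plan is to fix $\lambda>0$ once and for all and to prove the equivalent bound $e^{\lambda|z|^\sigma}|g(z)|\lesssim_\lambda B$ uniformly in $z\in\RR^d$, by expanding the exponential into its power series and controlling each term with the hypothesis applied for a suitable integer exponent $l$. This is the same mechanism used earlier in the paper to show $\Dww(\RR^d)\subseteq\Sww(\RR^d)$.

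First I would write $e^{\lambda|z|^\sigma}|g(z)|=\sum_{k\ge 0}\frac{\lambda^k}{k!}|z|^{\sigma k}|g(z)|$ and turn the fractional powers $|z|^{\sigma k}$ into integer powers of the coordinates. Using $|z|\le\sqrt d\max_i|z_i|$, the elementary inequality $t^{\sigma k}\le 1+t^{\lceil\sigma k\rceil}$ for $t\ge 0$, and $(\max_i|z_i|)^{\lceil\sigma k\rceil}\le\sum_{i=1}^d|z_i|^{\lceil\sigma k\rceil}$, this gives
\[
 e^{\lambda|z|^\sigma}|g(z)|\le\sum_{k\ge 0}\frac{\lambda^k d^{\sigma k/2}}{k!}\Big(|g(z)|+\sum_{i=1}^d|z_i|^{\lceil\sigma k\rceil}|g(z)|\Big).
\]
To the first term in the bracket I would apply the hypothesis with $l=0$, bounding $|g(z)|\lesssim_\delta B$; the corresponding sum over $k$ is then $\lesssim_\delta e^{\lambda d^{\sigma/2}}B$, hence harmless. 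To the second term I apply the hypothesis with $l=\lceil\sigma k\rceil$, which yields $|z_i|^{\lceil\sigma k\rceil}|g(z)|\lesssim_\delta \delta^{\lceil\sigma k\rceil}C_\delta^{\lceil\sigma k\rceil}(\lceil\sigma k\rceil!)^{1/\sigma}B$, with $\delta>0$ still free.

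It remains to show that $\sum_{k\ge 0}\frac{\lambda^k d^{\sigma k/2}}{k!}\delta^{\lceil\sigma k\rceil}C_\delta^{\lceil\sigma k\rceil}(\lceil\sigma k\rceil!)^{1/\sigma}<\infty$ for $\delta$ small enough. Here I would use $\sigma k\le\lceil\sigma k\rceil\le\sigma k+1$, the bound $m!\le m^m$, and Stirling in the form $k^k\lesssim e^k k!$, to get $(\lceil\sigma k\rceil!)^{1/\sigma}\lesssim_\sigma k^{1/\sigma}C_\sigma^{k}k!$ for a constant $C_\sigma$ depending only on $\sigma$; and, assuming without loss of generality $\delta C_\delta\le 1$, one has $\delta^{\lceil\sigma k\rceil}C_\delta^{\lceil\sigma k\rceil}\le\big((\delta C_\delta)^{\sigma}\big)^k$. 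After cancelling $k!$, the $k$-th term is bounded by a polynomial in $k$ times $r^k$ with $r:=\lambda\,d^{\sigma/2}C_\sigma(\delta C_\delta)^{\sigma}$, so the series converges as soon as $r<1$; this is achieved by choosing $\delta$ small, which is legitimate because in the situations where this lemma is applied the constant in the role of $C_\delta$ does not depend on $\delta$ (see \eqref{eq:SchauderKeyBoundPolynomial}), so $(\delta C_\delta)^{\sigma}\to 0$ as $\delta\to 0$. Fixing such a $\delta$, all remaining constants are finite and depend only on $\lambda$ (and the fixed $\sigma,d$), whence $e^{\lambda|z|^\sigma}|g(z)|\lesssim_\lambda B$ uniformly in $z$, which is the claim.

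The only real work is the bookkeeping in the last step — balancing the super-factorial growth of $(\lceil\sigma k\rceil!)^{1/\sigma}$ against the factor $1/k!$ and the gain $(\delta C_\delta)^{\sigma k}$ so that a convergent geometric series emerges — together with the minor point that $\delta$ must be chosen small relative to $\lambda$; there is no genuine analytic difficulty beyond Stirling's formula.
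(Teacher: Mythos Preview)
Your proof is correct and follows essentially the same route as the paper's: expand $e^{\lambda|z|^\sigma}$ as a power series, replace the fractional power $|z|^{\sigma k}$ by integer powers of the coordinates, apply the hypothesis with $l=\lceil\sigma k\rceil$, balance $(\lceil\sigma k\rceil!)^{1/\sigma}$ against $1/k!$ via Stirling, and close with a geometric series by taking $\delta$ small. The only cosmetic difference is that the paper disposes of small $|z|$ by assuming $|z|>1$ from the start (so that $|z|^{\sigma k}\le |z|^{\lceil\sigma k\rceil}$ directly), whereas you use $t^{\sigma k}\le 1+t^{\lceil\sigma k\rceil}$ and treat the extra ``$1$'' with the $l=0$ bound; your explicit remark that the argument actually needs $\delta C_\delta\to 0$ --- which holds in the application \eqref{eq:SchauderKeyBoundPolynomial} because there $C_\delta=C$ is independent of $\delta$ --- is a point the paper's proof leaves implicit when it absorbs $C_\delta$ into a generic constant.
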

\begin{proof}
This follows ideas from \cite[Proposition A.2]{WeberMourrat}. Without loss of generality we can assume $|z|>1$ (otherwise we get the required estimate by taking $l=0$). Recall that we have $|z|^l \leq C^l \sum_{i=1}^d |z_i|^l$, where $C>0$ denotes a constant that changes from line to line and is independent of $l$. Consequently,  Stirling's formula gives 
\begin{align*}
|e^{\lambda |z|^\sigma} g(z)| &=\left|\sum_{k=0}^\infty \frac{\lambda^k}{k!} |z|^{\sigma k} g(z) \right|  
\lesssim \sum_{k=0}^\infty \frac{\lambda^k C^k}{k^k} |z|^{\lceil k\sigma \rceil} \, |g(z)|
\lesssim \sum_{k=0}^\infty \frac{\lambda^k C^k}{k^k} \sum_{i=1}^d |z_i^{\lceil k\sigma \rceil} g(z)| \\
&\lesssim B \sum_{k=0}^\infty \frac{\lambda^k C^k \delta^{k\sigma}}{k^k} \lceil k\sigma \rceil^{\lceil k\sigma\rceil /\sigma }\lesssim B \sum_{k=0}^\infty \frac{\lambda^k C^k \delta^{k\sigma}}{k^k}  k^{k} = B\,\sum_{k=0}^\infty \lambda^k C^k \delta^{k\sigma}\lesssim_\lambda B\,,
\end{align*}
where we used $\lceil k\sigma \rceil\leq k\lceil \sigma \rceil$ so that $\lceil k\sigma \rceil^{\lceil k\sigma\rceil /\sigma }\leq (\lceil\sigma\rceil k)^{\frac{k\sigma+1}{\sigma}} \lesssim C^k k^k$ and where we chose $\delta<(C\,\lambda)^{-\frac{1}{\sigma}}$ in the last step. 
\end{proof}

\subsection{Schauder estimates}
We will follow here closely \cite{Gubinelli2017KPZ} and introduce time-weighted parabolic spaces $\mathscr{L}^{\gamma,\alpha}_{p,T}$ that interplay nicely with the semigroup $e^{t L^\eps_\mu}$.
\begin{definition}
\label{def:WeightedSpaces}
Given $\gamma\geq  0$, $T>0$ and an increasing family of normed spaces $X=(X(s))_{s\in [0,T]}$ we define the space
\begin{align*}
	\mathcal{M}^\gamma_T X:=\left\{ f\colon[0,T]\rightarrow X(T) \,\middle\vert\, \|f\|_{\mathcal{M}^\gamma_T X}=\sup_{t\in [0,T]} \|t^\gamma f(t)\|_{X(t)}<\infty  \right\}\,,
\end{align*}
and for $\alpha>0$ 
\begin{align*}
	C^\alpha_T X:=\left\{f\in C([0,T],X(T))\,\middle\vert \, \|f\|_{C^\alpha_T X}<\infty  \right\}\,,
\end{align*}
where
\[
   \|f\|_{C^\alpha_T X} :=\sup_{t\in [0,T]} \|f(t)\|_{X(t)} + \sup_{0\leq s\leq t\leq T}\frac{\|f(s)-f(t)\|_{X(t)}}{|s-t|^\alpha}\,.
\]
For a lattice $\GG$, parameters $\gamma\geq 0,T>0,\alpha \geq  0,\,p\in [1,\infty]$ and a pointwise decreasing map $\rho\colon[0,T]\ni t \mapsto \rho(t) \in \boldsymbol{\rho}(\omega)$ we set
\begin{align*}
	\mathscr{L}^{\gamma,\alpha}_{p,T}(\GG, \rho) := \left\{f\colon[0,T] \rightarrow \mathcal{S}_\omega'(\GG)\, \middle\vert \, \|f\|_{\mathscr{L}^{\gamma,\alpha}_{p,T}(\GG,\rho)}<\infty \right\}\,,
\end{align*}
where
\[
   \|f\|_{\mathscr{L}^{\gamma,\alpha}_{p,T}(\GG,\rho)} := \|t\mapsto t^\gamma f(t)\|_{C^{\alpha/2}_T L^p(\GG,\rho)}+\|f\|_{\mathcal{M}^\gamma_T \mathcal{C}^\alpha_p(\GG,\rho)}\,.
\]
\glsadd{mathscrLgammaalphapT}
\glsadd{mathcalMgammaTX}
\end{definition}

\begin{remark}
\label{rem:ContinousSpacesParabolic}
As in Remark \ref{rem:ContinousSpaces} the definition of the continuous version $\mathscr{L}^{\gamma,\alpha}_{p,T}(\RR^d, \rho)$ is analogous. 
\end{remark}
Standard arguments show that if $X$ is a sequence of increasing Banach spaces with decreasing norms, all the spaces in the previous definition are in fact complete in their (semi-)norms.

The Schauder estimates for the operator 
\begin{align}
	I^\eps_\mu f(t)=\int_0^t e^{(t-s) L^\eps_\mu}f(s) \,\dd s
\end{align}
and the semigroup $(e^{tL^\eps_\mu})$ in the time-weighted setup are  summarized in the following lemma, for which we introduce the weights
\glsadd{pkappa} 
\glsadd{esigmal}
\begin{align}
\label{eq:parabolicweights}
p^\kappa(x) &=(1+|x|)^{-\kappa} \\
e^\sigma_{l+t}(x)&=e^{-(l+t)(1+|x|)^\sigma}
\end{align}
with $\kappa>0$ and $l,t\in \RR$. The parameter $t$ should be thought of as \emph{time}. The notation $\mathscr{L}^{\gamma,\alpha}_{p,T}(\GG,e^\sigma_l)$ means therefore that we take the time-dependent weight $(e^\sigma_{l+t})_{t \in [0,T]}$, while $e^\sigma_l p^\kappa$ stands for the time-dependent weight $(e^\sigma_{l+t} p^\kappa)_{t \in [0,T]}$.

\begin{lemma}
\label{lem:Schauder}
Let $\GG^\eps$ be as in Definition \ref{def:LatticeSequence}, $\alpha\in (0,2),\gamma\in [0,1), p\in [1,\infty]$, $\sigma \in (0,1)$ and $T>0$. If $\beta\in \RR$ is such that $(\alpha+\beta)/2\in [0,1)$, then we have uniformly in $\eps$
\begin{align}
	\|s\mapsto e^{s{L}^\eps_\mu }f_0\|_{\mathscr{L}^{(\alpha+\beta)/2,\alpha}_{p,T}(\GG^\eps,e^\sigma_l)} \lesssim \|f_0\|_{\mathcal{C}^{-\beta}_p(\GG^\eps,e^\sigma_l)}\,,
	\label{eq:Schauder1}
\end{align}
and if $\kappa\geq 0$ is such that $\gamma+\kappa/\sigma\in [0,1)$, $\alpha+2\kappa/\sigma\in (0,2)$ also
\begin{align}
\label{eq:Schauder2}
	\|I^\eps_\mu f\|_{\mathscr{L}^{\gamma,\alpha}_{p,T}(\GG^\eps,e^\sigma_l)} 
	\lesssim
	\|f\|_{\mathcal{M}^{\gamma}_T\mathcal{C}_p^{\alpha+2\kappa/\sigma-2}(\GG^\eps,e^\sigma_l p^\kappa)}\,.
\end{align}
\end{lemma}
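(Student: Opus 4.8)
The strategy is to transcribe the continuous argument of \cite{Gubinelli2017KPZ} into the present discrete, weighted framework, replacing each use of a continuous semigroup estimate by the corresponding bound of Proposition~\ref{lem:SemiGroup}, and to keep careful track of how the time-dependent sub-exponential weight $e^\sigma_{l+t}$ and the auxiliary polynomial weight $p^\kappa$ interact with the time integral defining $I^\eps_\mu$. Recall that $\|g\|_{\mathscr{L}^{\gamma,\alpha}_{p,T}(\GGe,e^\sigma_l)}$ is the sum of a time-regularity seminorm $\|t\mapsto t^\gamma g(t)\|_{C^{\alpha/2}_T L^p(\GGe,e^\sigma_l)}$ and a parabolically weighted seminorm $\|g\|_{\mathcal{M}^\gamma_T\mathcal{C}^\alpha_p(\GGe,e^\sigma_l)}$; I will estimate these two pieces separately. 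Since Proposition~\ref{lem:SemiGroup}, Lemma~\ref{lem:DiscreteWeightedYoungInequality} and Lemma~\ref{lem:DiscreteEmbedding} all hold with $\eps$-uniform constants, and the only $j$-sums that occur are geometric and converge uniformly over the finite range $j=-1,\dots,j_{\GGe}$, uniformity in $\eps$ is automatic.

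For the homogeneous bound \eqref{eq:Schauder1} the $\mathcal{M}$-part is immediate: since $e^\sigma_{l+s}\le e^\sigma_l$ pointwise for $s\ge0$ and $\alpha+\beta\ge0$, estimate \eqref{eq:SemigroupHoelder} gives $\|e^{sL^\eps_\mu}f_0\|_{\mathcal{C}^\alpha_p(\GGe,e^\sigma_{l+s})}\le\|e^{sL^\eps_\mu}f_0\|_{\mathcal{C}^\alpha_p(\GGe,e^\sigma_l)}\lesssim s^{-(\alpha+\beta)/2}\|f_0\|_{\mathcal{C}^{-\beta}_p(\GGe,e^\sigma_l)}$, and multiplication by $s^{(\alpha+\beta)/2}$ closes it. For the $C^{\alpha/2}_T L^p$-part one writes, for $s\le t$,
\[
 t^{(\alpha+\beta)/2}e^{tL^\eps_\mu}f_0-s^{(\alpha+\beta)/2}e^{sL^\eps_\mu}f_0=\big(t^{(\alpha+\beta)/2}-s^{(\alpha+\beta)/2}\big)e^{tL^\eps_\mu}f_0+s^{(\alpha+\beta)/2}\big(e^{(t-s)L^\eps_\mu}-\mathrm{Id}\big)e^{sL^\eps_\mu}f_0,
\]
bounds the first summand with the elementary estimates for $|t^a-s^a|$, with \eqref{eq:SemigroupHoelder}, and with the embedding $\mathcal{C}^\alpha_p\hookrightarrow L^p$ from Lemma~\ref{lem:DiscreteEmbedding} (valid with $\eps$-uniform constant since $\alpha>0$), and bounds the second summand by first raising $f_0$ to regularity $\mathcal{C}^\alpha_p$ through $e^{sL^\eps_\mu}$ at cost $s^{-(\alpha+\beta)/2}$ and then invoking the semigroup continuity \eqref{eq:SemigroupContinuity} (with parameter $\alpha\in(0,2)$) to produce the factor $|t-s|^{\alpha/2}$; throughout one may keep the fixed weight $e^\sigma_l$, using $e^\sigma_{l+t}\le e^\sigma_{l+s}\le e^\sigma_l$, and splitting the modulus estimate according to whether $|t-s|\lesssim s$ or not.

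The inhomogeneous bound \eqref{eq:Schauder2} is the core of the lemma and I would prove it working block by block, starting from $\varDelta_j^{\GGe}I^\eps_\mu f(t)=\int_0^t\mathscr{K}_j(t-s)\aste\varDelta_j^{\GGe}f(s)\,\dd s$. The purpose of the polynomial weight is a \emph{weight-conversion} step: from the identity $e^\sigma_{l+t}(x)=e^\sigma_{l+s}(x)\,p^\kappa(x)\cdot(1+|x|)^\kappa e^{-(t-s)(1+|x|)^\sigma}$ and the elementary bound $\sup_x(1+|x|)^\kappa e^{-\tau(1+|x|)^\sigma}\lesssim\tau^{-\kappa/\sigma}$, one passes at the price of a factor $(t-s)^{-\kappa/\sigma}$ from the weight $e^\sigma_{l+s}p^\kappa$ carried by $f(s)$ to the target weight $e^\sigma_{l+t}$, the residual ratio $e^\sigma_{l+t}(x)/e^\sigma_{l+t}(k)\lesssim e^{\lambda_0|x-k|^\sigma}$ being absorbed into the kernel by taking $\lambda$ large in \eqref{eq:SchauderKeyBound}. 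Feeding this, together with \eqref{eq:SchauderKeyBound}, Lemma~\ref{lem:DiscreteWeightedYoungInequality} and $\|\varDelta_j^{\GGe}f(s)\|_{L^p(\GGe,e^\sigma_{l+s}p^\kappa)}\lesssim 2^{-j(\alpha+2\kappa/\sigma-2)}s^{-\gamma}\|f\|_{\mathcal{M}^\gamma_T\mathcal{C}^{\alpha+2\kappa/\sigma-2}_p(\GGe,e^\sigma_l p^\kappa)}$, into the block representation reduces the $\mathcal{M}$-part to a scalar estimate of the form $2^{j\alpha}\,2^{-j(\alpha+2\kappa/\sigma-2)}\int_0^t\!\big(\text{\,$\varDelta_j$-kernel factor\,}\big)(t-s)^{-\kappa/\sigma}s^{-\gamma}\,\dd s\lesssim t^{-\gamma}$, which is obtained by splitting the integral at $t-s=2^{-2j}$ and applying \eqref{eq:SchauderKeyBound} with the free exponent chosen as $0$ on the short-time part and larger than $2-2\kappa/\sigma$ on the long-time part (equivalently, exploiting the exponential decay of $\mathscr{K}_j(\tau)$ in $\tau2^{2j}$ coming from the lower bound $l^\eps_\mu\gtrsim2^{2j}$ on $\supp\varphi_j^{\GGe}$ of Lemma~\ref{lem:SchauderPrelude}); the powers of $2^j$ then cancel the regularity deficit $2-2\kappa/\sigma$ exactly, the block $j=-1$ is handled by the trivial bound $\int_0^t(t-s)^{-\kappa/\sigma}s^{-\gamma}\,\dd s\lesssim t^{1-\kappa/\sigma-\gamma}\lesssim t^{-\gamma}$, and the hypotheses $\kappa/\sigma<1$, $\gamma<1$ and $\alpha+2\kappa/\sigma\in(0,2)$ are precisely what make every such integral converge with the correct power of $t$.

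For the time-regularity part of $I^\eps_\mu f$ one splits $I^\eps_\mu f(t)-I^\eps_\mu f(s)=\int_s^t e^{(t-r)L^\eps_\mu}f(r)\,\dd r+\int_0^s\big(e^{(t-s)L^\eps_\mu}-\mathrm{Id}\big)e^{(s-r)L^\eps_\mu}f(r)\,\dd r$ (plus the $t^\gamma-s^\gamma$ contribution, handled as in \eqref{eq:Schauder1}), bounds the first integral directly via \eqref{eq:SemigroupHoelder} and the weight conversion, and the second via the semigroup continuity \eqref{eq:SemigroupContinuity} (regularity parameter $\alpha\in(0,2)$) after using \eqref{eq:SemigroupHoelder} to place $e^{(s-r)L^\eps_\mu}f(r)$ in $\mathcal{C}^\alpha_p$; in each case one is again reduced to scalar integrals of the type above. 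I expect the main obstacle to lie precisely in this final bookkeeping: arranging that the time-Hölder exponent $\alpha/2$, the parabolic weight exponent $\gamma$, the conversion exponent $\kappa/\sigma$ and the exponents generated by \eqref{eq:SemigroupHoelder}/\eqref{eq:SemigroupContinuity} combine so that every resulting Beta-type integral converges with the right homogeneity in $t$ (and so that the $2^j$-powers cancel uniformly over $j$) — this is routine but lengthy, and it is here that the restrictive ranges of $\gamma,\kappa,\alpha,\beta$ are consumed.
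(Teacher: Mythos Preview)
Your proposal is correct and follows essentially the same route as the paper: transcribe the argument of \cite[Lemma~6.6]{Gubinelli2017KPZ} using Proposition~\ref{lem:SemiGroup}, with the only new ingredient being the weight-conversion inequality $e^\sigma_{l+t}\lesssim (t-s)^{-\kappa/\sigma}\,p^\kappa e^\sigma_{l+s}$, which you derive exactly as intended (note that the display in the paper's proof has the inequality written in the reversed direction, but your version is the one actually needed and used). Your identification of where the hypotheses $\gamma+\kappa/\sigma<1$ and $\alpha+2\kappa/\sigma\in(0,2)$ enter matches the paper's remark on integrability of $|t-s|^{-\gamma-\kappa/\sigma}$.
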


\begin{proof}
The proof is along the lines of Lemma 6.6 in \cite{Gubinelli2017KPZ} with the use of the simple estimate
\begin{align*}
	  p^\kappa e^\sigma_{l+s}\lesssim \frac{e^\sigma_{l+t}}{|t-s|^{\kappa/\sigma}} ,\qquad t \ge s,
\end{align*} 
which is similar to an inequality from the proof of Proposition 4.2 in \cite{HairerLabbeR2} and the reason for the appearance of the term $2\kappa/\sigma$ in~\eqref{eq:Schauder2} (the factor 2 comes from parabolic scaling). We need $\gamma+\kappa/\sigma\in [0,1)$ so that the singularity $|t-s|^{-\gamma - \kappa/\sigma}$ is integrable on $[0,t]$.
\end{proof}

For the comparison of the parabolic spaces $\mathscr{L}^{\gamma,\alpha}_{p,T}$ the following lemma will be convenient. 

\begin{lemma}
\label{lem:ComparisonParabolicSpaces}
Let $\GG^\eps$ be as in Definition \ref{def:LatticeSequence}.
For $\alpha\in (0,2),\,\gamma\in (0,1),\,\eps\in [0,\alpha \wedge 2\gamma),\,p\in [1,\infty],\,T>0$ and a pointwise decreasing $\RR_+\ni  s \mapsto \rho(s)\in  \boldsymbol{\rho}(\omega)$ we have 
\begin{align*}
	\|f\|_{\mathscr{L}^{\gamma-\eps/2,\alpha-\eps}_{p,T}(\GG^\eps,\rho)} \lesssim 	\|f\|_{\mathscr{L}^{\gamma,\alpha}_{p,T}(\GG^\eps,\rho)}\,,
\end{align*}
and for $\gamma\in [0,1)$ and $\eps \in (0,\alpha)$
\begin{align*}
	\|f\|_{\mathscr{L}^{\gamma,\alpha-\eps}_{p,T}(\GG^\eps,\rho)}\lesssim\mathbf{1}_{\gamma=0} \|f(0)\|_{\mathcal{C}^{\alpha-\eps}_p(\GG^\eps,\rho)}+ T^{\eps/2} \|f\|_{\mathscr{L}^{\gamma,\alpha}_{p,T}(\GG^\eps,\rho)}\,.
\end{align*} 
All involved constants are independent of $\eps$.
\end{lemma}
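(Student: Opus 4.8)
The plan is to reduce both estimates to one spatial interpolation inequality together with a single structural observation about the $\mathscr{L}^{\gamma,\alpha}_{p,T}$-norm. Write $M:=\|f\|_{\mathscr{L}^{\gamma,\alpha}_{p,T}(\GG^\eps,\rho)}$ throughout, and keep in mind that $\eps$ denotes the regularity loss while the lattice mesh will always be handled uniformly. The interpolation inequality is
\[
\|u\|_{\mathcal{C}^{\alpha-\eps}_p(\GG^\eps,\rho)}\lesssim \|u\|_{\mathcal{C}^{\alpha}_p(\GG^\eps,\rho)}^{1-\eps/\alpha}\,\|u\|_{L^p(\GG^\eps,\rho)}^{\eps/\alpha},
\]
which holds uniformly in the mesh: for each Littlewood--Paley block one factors $2^{j(\alpha-\eps)}\|\rho\varDelta^{\GG^\eps}_j u\|_{L^p(\GG^\eps)} = \big(2^{j\alpha}\|\rho\varDelta^{\GG^\eps}_j u\|_{L^p(\GG^\eps)}\big)^{1-\eps/\alpha}\big(\|\rho\varDelta^{\GG^\eps}_j u\|_{L^p(\GG^\eps)}\big)^{\eps/\alpha}$, bounds the last factor by $\|u\|_{L^p(\GG^\eps,\rho)}$ using the uniform boundedness of $\varDelta^{\GG^\eps}_j$ on $L^p(\GG^\eps,\rho)$ from \eqref{eq:LpLp}, and takes the supremum over $j$. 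The structural observation is that for $\gamma>0$ the curve $t\mapsto t^\gamma f(t)$ starts at the origin (since $0^\gamma=0$), so the time-Hölder-$\alpha/2$ part of the $C^{\alpha/2}_T L^p$-component upgrades the trivial bound $\|f(t)\|_{L^p(\GG^\eps,\rho(t))}\le t^{-\gamma}M$ to the gain
\[
\|f(t)\|_{L^p(\GG^\eps,\rho(t))}\;=\;t^{-\gamma}\,\|t^\gamma f(t)\|_{L^p(\GG^\eps,\rho(t))}\;\le\; t^{\alpha/2-\gamma}\,M,\qquad t\in(0,T],
\]
while $\|f(t)\|_{\mathcal{C}^\alpha_p(\GG^\eps,\rho(t))}\le t^{-\gamma}M$ always holds.

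For the first estimate I feed these two bounds into the interpolation inequality: $\|f(t)\|_{\mathcal{C}^{\alpha-\eps}_p(\GG^\eps,\rho(t))}\lesssim (t^{-\gamma}M)^{1-\eps/\alpha}(t^{\alpha/2-\gamma}M)^{\eps/\alpha}=M\,t^{-\gamma+\eps/2}$, hence $t^{\gamma-\eps/2}\|f(t)\|_{\mathcal{C}^{\alpha-\eps}_p(\GG^\eps,\rho(t))}\lesssim M$, which settles the $\mathcal{M}^{\gamma-\eps/2}_T\mathcal{C}^{\alpha-\eps}_p$-part. For the $C^{(\alpha-\eps)/2}_T L^p$-part I work with $h(t):=t^{\gamma-\eps/2}f(t)=t^{-\eps/2}\big(t^\gamma f(t)\big)$; the supremum-in-time piece is immediate from $\|h(t)\|_{L^p(\GG^\eps,\rho(t))}\le t^{(\alpha-\eps)/2}M$, and for the time-Hölder seminorm I split the regimes $t\le 2s$ and $t>2s$ (with $s\le t$). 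In the far regime $|s-t|\approx t$, so $\|h(s)-h(t)\|_{L^p(\GG^\eps,\rho(t))}\le\|h(s)\|_{L^p(\GG^\eps,\rho(s))}+\|h(t)\|_{L^p(\GG^\eps,\rho(t))}\lesssim(s^{(\alpha-\eps)/2}+t^{(\alpha-\eps)/2})M\lesssim|s-t|^{(\alpha-\eps)/2}M$ (here $\rho$ decreasing is used). In the near regime one writes $h(s)-h(t)=s^{-\eps/2}(g(s)-g(t))+(s^{-\eps/2}-t^{-\eps/2})g(t)$ with $g(t)=t^\gamma f(t)$, inserts $\|g(s)-g(t)\|_{L^p(\GG^\eps,\rho(t))}\le|s-t|^{\alpha/2}M$, $\|g(t)\|_{L^p(\GG^\eps,\rho(t))}\le t^{\alpha/2}M$ and $|s^{-\eps/2}-t^{-\eps/2}|\lesssim s^{-\eps/2-1}|s-t|$, and absorbs the negative powers of $s$ via $|s-t|\le s$ together with $|s-t|^{\alpha/2}=|s-t|^{(\alpha-\eps)/2}|s-t|^{\eps/2}\le|s-t|^{(\alpha-\eps)/2}s^{\eps/2}$. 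Collecting the pieces gives $\|f\|_{\mathscr{L}^{\gamma-\eps/2,\alpha-\eps}_{p,T}(\GG^\eps,\rho)}\lesssim M$.

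The second estimate follows the same scheme. When $\gamma>0$ the same two input bounds yield $t^{\gamma}\|f(t)\|_{\mathcal{C}^{\alpha-\eps}_p(\GG^\eps,\rho(t))}\lesssim t^{\gamma}(t^{-\gamma}M)^{1-\eps/\alpha}(t^{\alpha/2-\gamma}M)^{\eps/\alpha}=M\,t^{\eps/2}\le T^{\eps/2}M$, and in the time-Hölder part one uses the crude estimate $\|g(s)-g(t)\|_{L^p(\GG^\eps,\rho(t))}\le|s-t|^{\alpha/2}M=|s-t|^{(\alpha-\eps)/2}|s-t|^{\eps/2}M\le T^{\eps/2}|s-t|^{(\alpha-\eps)/2}M$; thus the $\mathbf{1}_{\gamma=0}$-term does not appear. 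When $\gamma=0$ I split $f=f(0)+\tilde f$ with $\tilde f(t):=f(t)-f(0)$, treating $f(0)$ as time-independent. The constant part contributes $\|f(0)\|_{\mathscr{L}^{0,\alpha-\eps}_{p,T}(\GG^\eps,\rho)}\lesssim\|f(0)\|_{\mathcal{C}^{\alpha-\eps}_p(\GG^\eps,\rho(0))}$, which is the $\mathbf{1}_{\gamma=0}$-term, while $\tilde f$ satisfies $\tilde f(0)=0$ and $\|\tilde f\|_{\mathscr{L}^{0,\alpha}_{p,T}(\GG^\eps,\rho)}\lesssim M$ (using that $\|f(0)\|_{\mathcal{C}^\alpha_p(\GG^\eps,\rho(0))}\le M$ by the $\mathcal{M}^0$-part at $t=0$), so $\tilde f$ is handled exactly as in the $\gamma>0$ case, with its vanishing at $t=0$ playing the role of the structural observation and producing the factor $T^{\eps/2}$.

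The part I expect to cost the most care is the bookkeeping of the powers of $t$ and $|s-t|$ in the time-Hölder seminorm of the $C^{(\alpha-\eps)/2}_T L^p$-component; keeping every constant independent of the lattice mesh is automatic, since mesh-dependence enters only through \eqref{eq:LpLp}. Conceptually the point worth emphasising is that the trade-off "lose $\eps$ derivatives in space, gain a power $t^{\eps/2}$ (equivalently a factor $T^{\eps/2}$) in time" is \emph{not} a consequence of spatial interpolation alone — that would only produce a bound with a $t^{-\eps/2}$ singularity at $t=0$ — but of combining it with the parabolic time regularity carried by the $C^{\alpha/2}_T L^p$-component and with the identity $t^\gamma f(t)\big|_{t=0}=0$; this is exactly why the first estimate needs $\gamma>0$ and the second one carries the boundary term when $\gamma=0$.
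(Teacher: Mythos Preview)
Your proposal is correct and follows essentially the same approach as the paper. The paper gives no details and merely cites \cite[Lemma~6.8 and Lemma~2.11]{Gubinelli2017KPZ}, together with the observation that $f\in\mathscr{L}^{\gamma,\alpha}_{p,T}$ if and only if $t\mapsto t^\gamma f(t)\in\mathscr{L}^{0,\alpha}_{p,T}$; your argument spells out exactly what these references contain, namely the spatial interpolation $\|u\|_{\mathcal{C}^{\alpha-\eps}_p}\lesssim\|u\|_{\mathcal{C}^{\alpha}_p}^{1-\eps/\alpha}\|u\|_{L^p}^{\eps/\alpha}$ combined with the parabolic gain $\|t^\gamma f(t)\|_{L^p}\le t^{\alpha/2}M$ coming from $t^\gamma f(t)|_{t=0}=0$, and the near/far splitting for the time-H\"older seminorm is the standard way to handle the $C^{(\alpha-\eps)/2}_T L^p$ component.
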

\begin{proof}
The first estimate is proved as in \cite[Lemma 6.8]{Gubinelli2017KPZ}. For $\gamma=0$ the proof of the second inequality works as in Lemma 2.11 of \cite{Gubinelli2017KPZ}. The general case follows from the fact that $f\in\mathscr{L}^{\gamma,\alpha}_{p,T}$ if and only if $t\mapsto t^\gamma f\in \mathscr{L}^{0,\alpha}_{p,T}$.
\end{proof}

\section{Paracontrolled analysis on Bravais lattices}
\label{sec:ParacontrolledAnalysisonBravaisLattices}

\subsection{Discrete Paracontrolled Calculus}

Given two distributions $f_1,f_2\in \Sw'(\RR^d)$, Bony \cite{Bony} defines their \emph{paraproduct} as
\begin{align*}
	f_1\para f_2 := \sum_{1\leq j_2}\,\, \sum_{-1\leq j_1<j_2-1} \varDelta_{j_1} f_1 \cdot \varDelta_{j_2} f_2
	= \sum_{1\leq j_2} S_{j_2-1}  f_1 \cdot \varDelta_{j_2} f_2\,,
\end{align*}
which turns out to always be a well-defined expression. However, to make sense of the product $f_1 f_2$ it is not sufficient to consider $f_1 \para f_2$ and $f_1 \lpara f_2 := f_2 \para f_1$, we also have to take into account the \emph{resonant term}~\cite{GIP}
\begin{align*}
	f_1\reso f_2 := \sum_{-1\leq j_1,\,j_2:\, |j_1-j_2|\leq 1} \varDelta_{j_1} f_1 \cdot \varDelta_{j_2} f_2\,,
\end{align*}
which can in general only be defined under compatible regularity conditions such as $f_1\in \mathcal{C}^{\alpha}_\infty(\RR^d)$, $f_2\in \mathcal{C}_\infty^{\beta}(\RR^d)$ with $\alpha+\beta>0$ (see e.g. \cite{Bahouri} or \cite[Lemma 2.1]{GIP}). If these conditions are satisfied we decompose
$f_1 f_2=f_1 \para f_2+ f_1 \lpara f_2 +f_1 \reso f_2$. Bony's construction can easily be adapted to a discrete and weighted setup, where of course we have no problem in making sense of pointwise products but we are interested in uniform estimates.
\begin{definition}
Let $\GGe$ be a Bravais lattice, $\omega\in \ww$ and $f_1, \,f_2 \in \Sw_\omega'(\RR^d)$. We define the \emph{discrete paraproduct}
\begin{align}
\label{eq:Gparaproduct}
	f_1 \para^\GG f_2 := \sum_{1\leq j_2\leq j_{\GG}}\,\, \sum_{-1\leq j_1<j_2-1} \varDelta_{j_1}^\GG f_1 \cdot \varDelta_{j_2}^\GG f_2 =\sum_{1\leq j \leq j_{\GG}} S_{j-1}^{\GG} f_1 \cdot \varDelta_j f_2 \,,
\end{align}
where the discrete Littlewood-Paley blocks $\varDelta_j^\GG$ are constructed as in Section \ref{sec:BravaisLattices}. 
We also write $f_1 \lpara^\GG f_2 : = f_2 \para^\GG f_1$. The \emph{discrete resonant term} is given by
\begin{align}
\label{eq:Gresonanceproduct}
	f_1 \reso^\GG f_2 := \sum_{1\leq j_1,j_2 \leq j_{\GG},\,|j_1-j_2|\leq 1} \varDelta_{j_1}^\GG f_1 \cdot \varDelta_{j_2}^\GG f_2 \,.
\end{align}
If there is no risk for confusion we may drop the index $\GG$ on $\para$, $\lpara$, and $\reso$. 
\end{definition}
\glsadd{para} \glsadd{reso}
In contrast to the continuous theory $\reso^\GG$ is well defined without any further restrictions since it only involves a finite sum. All the estimates that are known from the continuous theory carry over.

\begin{lemma}
\label{lem:ParaproductEstimates}
Given $\GG^\eps$ as in Definition \ref{def:LatticeSequence}, $\rho_1,\rho_2\in \boldsymbol{\rho}(\ww)$ and $p\in [1,\infty]$ we have the bounds:
\begin{enumerate}[(i.)]
	\item  For any $\alpha_2\in\RR$ 
\begin{align*}
	\|f_1 \para f_2\|_{\mathcal{C}_p^{\alpha_2}(\GG^\eps,\rho_1\cdot\rho_2)} \lesssim \|f_1\|_{L^\infty(\GG^\eps,\rho_1)} \, \|f_2\|_{\mathcal{C}_p^{\alpha_2}(\GG^\eps,\rho_2)} \wedge \|f_1\|_{L^p(\GG^\eps,\rho_1)}\,\|f_2\|_{\mathcal{C}^{\alpha_2}_\infty(\GG^\eps,\rho_2)}\,,
\end{align*}
\item for any $\alpha_1<0$, $\alpha_2\in\RR$
\begin{align*}
	\|f_1 \para f_2\|_{\mathcal{C}_p^{\alpha_1+\alpha_2}(\GG^\eps,\rho_1\cdot\rho_2)} \lesssim \|f_1\|_{\mathcal{C}_p^{\alpha_1}(\GG^\eps,\rho_1)} \,\|f_2\|_{\mathcal{C}_\infty^{\alpha_2}(\GG^\eps,\rho_2)}\wedge\|f_1\|_{\mathcal{C}_\infty^{\alpha_1}(\GG^\eps,\rho_1)} \,\|f_2\|_{\mathcal{C}_p^{\alpha_2}(\GG^\eps,\rho_2)}\,,
\end{align*}
\item for any $\alpha_1,\alpha_2\in\RR$ with $\alpha_1+\alpha_2>0$
\begin{align*}
		\|f_1 \reso f_2\|_{\mathcal{C}_p^{\alpha_1+\alpha_2}(\GG^\eps,\rho_1\cdot\rho_2)} \lesssim \|f_1\|_{\mathcal{C}_p^{\alpha_1}(\GG^\eps,\rho_1)} \, \|f_2\|_{\mathcal{C}_\infty^{\alpha_2}(\GG^\eps,\rho_2)}\wedge \|f_1\|_{\mathcal{C}_p^{\alpha_1}(\GG^\eps,\rho_1)} \, \|f_2\|_{\mathcal{C}_\infty^{\alpha_2}(\GG^\eps,\rho_2)}\,,
\end{align*}
\end{enumerate} 
where all involved constants only depend on $\GG$ but not on $\eps$. All estimates have the property \eqref{eq:EProperty} if the regularity on the left hand side is lowered by an arbitrary $\kappa>0$.
\end{lemma}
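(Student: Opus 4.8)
The plan is to transcribe Bony's paraproduct calculus to the weighted lattice setting and to keep track of the cut-off scale $j_{\GGe}$. Fix $\omega\in\ww$ with $\rho_1,\rho_2\in\rr(\omega)$ (possible since $\rr(\wpol)\subseteq\rr(\wexps)$ and $\rr(\wexps)\subseteq\rr(\omega^{\mathrm{exp}}_{\sigma'})$ for $\sigma\le\sigma'$), so that also $\rho_1\rho_2\in\rr(\omega)$. The three ingredients, all $\eps$-independent, are: (a) the pointwise weighted Hölder bound $\|\rho_1\rho_2\cdot gh\|_{L^p(\GGe)}\le\|\rho_1 g\|_{L^\infty(\GGe)}\|\rho_2 h\|_{L^p(\GGe)}$ and its $L^p\times L^\infty$ variant; (b) the boundedness of $\varDelta_j^{\GGe}$ and $S_j^{\GGe}=\sum_{i<j}\varDelta_i^{\GGe}$ on $L^p(\GGe,\rho)$, uniformly in $\eps$ and $j$, which is \eqref{eq:LpLp}; and (c) two discrete weighted spectral summation statements, proved by transcribing the continuous arguments (\cite{Bahouri}) and carrying the weight through \eqref{eq:LpLp} and Lemma~\ref{lem:DiscreteWeightedYoungInequality}: if $u=\sum_k u_k$ with $\FFge u_k$ supported in $2^k\rA$ for a fixed annulus $\rA$ (resp.\ in $2^k\B$ for a fixed ball $\B$, and $\alpha>0$) and $\|\rho\,u_k\|_{L^p(\GGe)}\lesssim 2^{-k\alpha}M$, then $\|u\|_{\mathcal{C}^\alpha_p(\GGe,\rho)}\lesssim M$, with a constant depending only on $\GG$.

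For (i) and (ii) one uses that $S_{j-1}^{\GGe}f_1\cdot\varDelta_j^{\GGe}f_2$ has spectral support in an annulus $2^j\rA$. By (a), $\|\rho_1\rho_2\cdot S_{j-1}^{\GGe}f_1\,\varDelta_j^{\GGe}f_2\|_{L^p(\GGe)}\le\|\rho_1 S_{j-1}^{\GGe}f_1\|_{L^\infty(\GGe)}\|\rho_2\varDelta_j^{\GGe}f_2\|_{L^p(\GGe)}$, where the first factor is $\lesssim\|f_1\|_{L^\infty(\GGe,\rho_1)}$ by (b) in case (i), and $\lesssim\sum_{i<j-1}2^{-i\alpha_1}\|f_1\|_{\mathcal{C}^{\alpha_1}_\infty(\GGe,\rho_1)}\lesssim 2^{-j\alpha_1}\|f_1\|_{\mathcal{C}^{\alpha_1}_\infty(\GGe,\rho_1)}$ in case (ii) (here $\alpha_1<0$ is exactly what makes the geometric series converge at the top scale), while the second factor is $\lesssim 2^{-j\alpha_2}\|f_2\|_{\mathcal{C}^{\alpha_2}_p(\GGe,\rho_2)}$; summation via the annulus version of (c) gives the claimed estimate, and the two sides of each $\wedge$ come from distributing the $L^p$ and $L^\infty$ norms over the two factors in the two possible ways. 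For (iii), each summand $\varDelta_{j_1}^{\GGe}f_1\cdot\varDelta_{j_2}^{\GGe}f_2$ with $|j_1-j_2|\le 1$ has spectral support in a ball $2^{j_2}\B$, and by (a) is bounded in $\rho_1\rho_2$-weighted $L^p$ by $\lesssim 2^{-j_1\alpha_1-j_2\alpha_2}\|f_1\|_{\mathcal{C}^{\alpha_1}_p(\GGe,\rho_1)}\|f_2\|_{\mathcal{C}^{\alpha_2}_\infty(\GGe,\rho_2)}\lesssim 2^{-j_2(\alpha_1+\alpha_2)}(\cdots)$ since $j_1\sim j_2$; summation via the ball version of (c), which needs $\alpha_1+\alpha_2>0$, finishes the proof (finiteness of the lattice sum is automatic, but (c) is what yields the $\eps$-uniform constant).

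For the property \eqref{eq:EProperty} the continuous counterparts $\bar F$ are the usual $\para$ and $\reso$ on $\RR^d$, with target spaces $\mathcal{C}^{\alpha_2-\kappa}_p(\RR^d,\rho_1\rho_2)$ resp.\ $\mathcal{C}^{\alpha_1+\alpha_2-\kappa}_p(\RR^d,\rho_1\rho_2)$. By \eqref{eq:ExtensionLowBlocks} we have $\varDelta_j\EE^\eps=\EE^\eps\varDelta_j^{\GGe}$, hence $S_j\EE^\eps=\EE^\eps S_j^{\GGe}$, for $j<j_{\GGe}$, and by properties 2--3 of the smear function $\EE^\eps$ commutes with a product of building blocks whenever the non-periodic sum of their spectral supports stays inside $\wGGe$, i.e.\ whenever the block scales are $\le j_{\GGe}-N$ for a suitable $N=N(\GG)$. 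Thus the summands of $\EE^\eps(f_1\para^{\GGe}f_2)$ and of $(\EE^\eps f_1)\para(\EE^\eps f_2)$ at scales $\le j_{\GGe}-N$ agree term by term and cancel; moreover $\EE^\eps f_2$ has spectral support in $\supp\psi^\eps$, of diameter $\lesssim\eps^{-1}\approx 2^{j_{\GGe}}$, so only finitely many terms of the continuous paraproduct survive beyond scale $j_{\GGe}-N$. The difference is therefore a sum over boundedly many scales $j\sim j_{\GGe}$. The continuous ``boundary chunk'' $\sum_{j\sim j_{\GGe}}S_{j-1}(\EE^\eps f_1)\varDelta_j(\EE^\eps f_2)$ has spectral support in $\{|\xi|\approx 2^{j_{\GGe}}\}$, so measured in $\mathcal{C}^{\alpha_2-\kappa}_p(\RR^d,\rho_1\rho_2)$ it costs a factor $2^{j_{\GGe}(\alpha_2-\kappa)}$ against an $L^p$ bound of order $2^{-j_{\GGe}\alpha_2}c(f_1,f_2)$ (from the per-block estimates above together with the $\eps$-uniform bound of $\EE^\eps$ in Lemma~\ref{lem:ExtensionOperator}), hence is $\lesssim 2^{-\kappa j_{\GGe}}c(f_1,f_2)\approx\eps^\kappa c(f_1,f_2)=:o_\eps\,c(f_1,f_2)$. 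The discrete boundary chunk is treated by extending it with $\EE^\eps$ and invoking Lemma~\ref{lem:ExtensionOperator} together with the scaling identities of Lemma~\ref{lem:ScalingBoundary} to localize it near scale $2^{j_{\GGe}}$ (aliasing included), so that the same gain $2^{-\kappa j_{\GGe}}$ applies. The resonant term is identical in spirit, with the simplification that $\reso^{\GGe}$ and $\reso$ already involve only scales $\le j_{\GGe}$, so only $j\sim j_{\GGe}$ contributes to the difference.

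I expect the last step to be the main obstacle. A product of two lattice Littlewood--Paley blocks at scale $\sim j_{\GGe}$ is a lattice function whose Fourier transform is the \emph{periodic} convolution on $\wGGe$, which can fold frequencies from $|\xi|\sim\eps^{-1}$ back into the interior of $\wGGe$; one has to verify that, once this aliased content is extended by $\EE^\eps$ and measured in the lowered-regularity norm $\mathcal{C}^{\alpha_2-\kappa}_p$ (resp.\ $\mathcal{C}^{\alpha_1+\alpha_2-\kappa}_p$), it is still $O(\eps^\kappa c(f_1,f_2))$ — i.e.\ that lowering the target regularity by $\kappa$ genuinely pays for the factor $2^{-\kappa j_{\GGe}}$ despite the folded low-frequency part. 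Everything else is a routine transcription of Bony's calculus to the weighted lattice setting.
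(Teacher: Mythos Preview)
Your proposal is correct and follows the paper's approach essentially line by line: the basic estimates rest on the spectral support Lemma~\ref{lem:SupportParaproduct} (your unnamed annulus/ball claims), and the \eqref{eq:EProperty} property rests on the exact commutation $\EE^\eps(\varDelta_i^{\GGe}f_1\cdot\varDelta_j^{\GGe}f_2)=\varDelta_i\EE^\eps f_1\cdot\varDelta_j\EE^\eps f_2$ for $i\le j\le j_{\GGe}-N$ (the paper's Lemma~\ref{lem:ExtensionDecay}), leaving only boundedly many boundary scales $j\sim j_{\GGe}$.

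The obstacle you flag in your last paragraph is not actually an obstacle, and you already hold the key. For the \emph{paraproduct} boundary chunk, the same annulus support you invoked for (i)--(ii) (Lemma~\ref{lem:SupportParaproduct}) says that $S_{j-1}^{\GGe}f_1\cdot\varDelta_j^{\GGe}f_2$ has \emph{lattice} spectral support in $2^j\rA\cap\wGGe$, i.e.\ it is bounded away from $0$ by $\gtrsim 2^{j_{\GGe}}$; after extension, the periodic copies lie outside $\wGGe$, hence also at distance $\gtrsim 2^{j_{\GGe}}$ from $0$ (since $\wGGe$ contains a ball of that radius). So no low-frequency aliasing occurs and one gets the clean $\mathbf{1}_{i\sim j_{\GGe}}$ localization, exactly as in the paper. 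For the \emph{resonant} boundary chunk aliasing \emph{does} reach low frequencies, but this is harmless: the $L^p(\rho_1\rho_2)$ norm of each boundary summand is $\lesssim 2^{-j_{\GGe}(\alpha_1+\alpha_2)}c(f_1,f_2)$, and since $\alpha_1+\alpha_2-\kappa>0$ one has $2^{i(\alpha_1+\alpha_2-\kappa)}\le 2^{j_{\GGe}(\alpha_1+\alpha_2-\kappa)}$ for all contributing $i\lsim j_{\GGe}$, so the target norm is $\lesssim 2^{-\kappa j_{\GGe}}c(f_1,f_2)\approx\eps^\kappa c(f_1,f_2)$. This is precisely the paper's computation.
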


\begin{proof}
Similarly as in the continuous case $S_{j-1}^{\GGe} f_1 \cdot \varDelta_j^{\GGe} f_2$ is spectrally supported on a set of the form $2^j\rA\cap \wGGe$, where $\rA$ is an annulus around $0$. Similarly, we have for $i,j$ with $i\sim j$ that the function $\varDelta_i^{\GGe}f_1\cdot \varDelta^{\GGe}_{j} f_2$ is spectrally supported in a set of the form $2^j \B\cap \wGGe$, where $\B$ is a ball around $0$. We give a proof of these two facts in the appendix (Lemma \ref{lem:SupportParaproduct}). Using these two observations the proof of the estimates in \textit{(i.)}-\textit{(iii.)} follows along the lines of \cite[Lemma 2.1]{GIP}) (which in turn is taken from~\cite[Theorem~2.82, Theorem 2.85]{Bahouri}).

We are left with the task of proving the property \eqref{eq:EProperty}. We show in Lemma \ref{lem:ExtensionDecay} below that there is an $N\in \NN$ (independent of $\eps$ and $j$) such that for $-1\leq i\leq j \leq j_{\GGe}-N$
\begin{align}
\label{eq:precExtensionDecay}
\EE^\eps(\varDelta_i^{\GGe} f_1 \cdot \varDelta_j^{\GGe} f_2) = \varDelta_i \EE^\eps f_1\cdot \varDelta_j \EE^\eps f_2\,.
\end{align}

Consequently we can write
\begin{align*}
\EE^\eps(f_1 \para^{\GGe} f_2)=\short \sum_{1\leq j\leq j_{\GGe}} \short  \EE^\eps\big( S_{j-1}^{\GG^\eps} f_1\cdot \varDelta_j^{\GGe} f_2 \big)= \short \short \sum_{1\leq j\leq j_{\GGe}-N}  \short \short S_{j-1} \EE^\eps f_1 \cdot \varDelta_j \EE^\eps f_2+ \xshort \sum_{j_{\GGe}-N< j\leq j_{\GGe}} \short \short  \EE^\eps\big( S_{j-1}^{\GG^\eps} f_1\cdot \varDelta_j^{\GGe} f_2 \big)\,,
\end{align*}
where we used \eqref{eq:precExtensionDecay} and $S_{j-1}^{\GGe}=\sum_{-1\leq i <j-1} \varDelta_i^{\GGe},\,S_{j-1}=\sum_{-1\leq i< j-1} \varDelta_i$. On the other hand we can write
\begin{align*}
\EE^\eps f_1 \para \EE^\eps f_2=\sum_{ 1\leq j} S_{j-1} \EE^\eps f_1 \cdot \varDelta_j  \EE^\eps f=  \short \short \sum_{1\leq j\leq j_{\GGe}-N}  \short \short S_{j-1} \EE^\eps f_1 \cdot \varDelta_j \EE^\eps f_2  + \sum_{j\sim j_{\GGe}} S_{j-1}\EE^\eps f_1 \cdot \varDelta_j \EE^\eps f_2\,,
\end{align*}
where we used in the second step that $\EE^\eps f_2=\FFr(\psi(\eps\cdot)\,(\FFge f_2)_{\mathrm{ext}})$ is spectrally supported in a ball of size $\eps^{-1}\approx 2^{j_{\GG^\eps}}$ to drop all $j$ with $j\gsim j_{\GGe}$. In total we obtain
\begin{align*}
\EE^\eps(f_1 \para^{\GGe} f_2)-\EE^\eps f_1 \para \EE^\eps f_2=\sum_{j\sim j_{\GGe}}    \EE^\eps\big( S_{j-1}^{\GG^\eps} f_1\cdot \varDelta_j^{\GGe} f_2 \big) - \sum_{j\sim j_{\GGe}} S_{j-1}\EE^\eps f_1 \cdot \varDelta_j \EE^\eps f_2\,.
\end{align*}

Note that the two sums on the right hand side are spectrally supported in an annulus of size $2^{j_{\GGe}}$. 
Using Lemma \ref{lem:ExtensionOperator}, the fact $\varDelta_i\colon L^p(\RR^d,\rho)\rightarrow L^p(\RR^d,\rho)$ (by \eqref{eq:LpLpRRd}) and that $\EE^\eps\colon L^p(\GG^\eps,\rho)\rightarrow L^p(\RR^d,\rho)$ (due to \eqref{eq:ExtensionAsConvolution} and Lemma \ref{lem:DiscreteWeightedYoungInequality}), with uniform bounds, we can thus estimate
\begin{align*}
\|\varDelta_i\left(\EE^\eps(f_1 \para^{\GG^\eps} f_2)-\EE^\eps f_1 \para \EE^\eps f_2\right)\|_{L^p(\RR^d,\rho)} &\lesssim
\mathbf{1}_{i\sim j_{\GG^\eps}} \Big( \sum_{j\sim j_{\GGe} }{\| S_{j-1}^{\GG^\eps} f_1 \cdot\varDelta_j^{\GG^\eps} f_2\|_{L^p(\GG^\eps,\rho)}} \\
 &\quad+ \sum_{j\sim j_{\GGe} }{\|S_{j-1} \EE^\eps f_1 \cdot \varDelta_j \EE^\eps  f_2\|_{L^p(\RR^d,\rho)}} \Big).
\end{align*} 
Assume without loss of generality that the right hand side of estimate \textit{(i.)} is bounded by $1$. We then have using $S_{j-1}^{\GGe}\colon L^q(\GGe,\rho)\rightarrow L^q(\GGe,\rho)$ (by Lemma \ref{lem:ScalingBoundary} and Lemma \ref{lem:DiscreteWeightedYoungInequality}) and $S_{j-1}\colon L^q(\RR^d,\rho)\rightarrow L^q(\RR^d,\rho)$ (by \eqref{eq:ScalingSumVarphij} and Young's inequality) for $q\in [1,\infty]$, both with uniform bounds,
\begin{align*}
\|\varDelta_i\left(\EE^\eps(f_1 \para^{\GG^\eps} f_2)-\EE^\eps f_1 \para \EE^\eps f_2\right)\|_{L^p(\RR^d,\rho)} \lesssim \mathbf{1}_{i\sim j_{\GG^\eps}} \sum_{j\sim j_{\GGe} } 2^{-j\alpha_2}\lesssim \mathbf{1}_{i\sim j_{\GG^\eps}} 2^{-j_{\GGe} \alpha_2}\lesssim   2^{-i(\alpha_2-\kappa)} \eps^\kappa\,.
\end{align*}
In the last step we used that $2^{-j_{\GGe}}\approx \eps$ by definition of $j_{\GGe}$. This shows the property \eqref{eq:EProperty} for estimate \textit{(i.)}. If the right hand side of estimate \textit{(ii.)} is uniformly bounded by $1$ we obtain the bound
\begin{align*}
\|\varDelta_i\left(\EE^\eps(f_1 \para^{\GG^\eps} f_2)-\EE^\eps f_1 \para \EE^\eps f_2\right)\|_{L^p(\RR^d,\rho)} & \lesssim \mathbf{1}_{i\sim j_{\GG^\eps}} \sum_{j\sim j_{\GGe} }\,\, \sum_{-1\leq j'< j-1} 2^{-j'\alpha_1} 2^{-j\alpha_2} \\
& \lesssim \mathbf{1}_{i\sim j_{\GG^\eps}} 2^{-j_{\GG^\eps}(\alpha_1+\alpha_2)}\lesssim 2^{-i(\alpha_1+\alpha_2-\kappa)} \eps^\kappa
\end{align*}
 and the property \eqref{eq:EProperty} for \textit{(ii.)} follows. Considering case \textit{(iii.)} assume once more that the right hand side is bounded by 1. We get, by once more applying \eqref{eq:precExtensionDecay},
 \begin{align*}
 \EE^\eps (f_1 \reso^{\GG^\eps} f_2)-\EE^\eps f_1 \reso \EE^\eps f_2 & = \xshort  \sum_{j,\,j'\sim j_{\GG^\eps}:\,|j-j'|\leq 1} \xshort  \EE^{\eps}(\varDelta_{j}^{\GG^\eps} f_1 \cdot  \varDelta_{j'}^{\GG^\eps} f_2) - \xshort \sum_{j,\,j'\gsim j_{\GG^\eps}:\,|j-j'|\leq 1} \xshort  \varDelta_j \EE^\eps f_1 \cdot   \varDelta_{j'} \EE^\eps f_2 \\
  &= \xshort  \sum_{j,\,j'\sim j_{\GG^\eps}:\,|j-j'|\leq 1} \xshort  \Big( \EE^{\eps}(\varDelta_{j}^{\GG^\eps} f_1 \cdot  \varDelta_{j'}^{\GG^\eps} f_2) -  \varDelta_j \EE^\eps f_1 \cdot   \varDelta_{j'} \EE^\eps f_2 \Big)\,,
 \end{align*}
 where we used in the second line that the spectral support of $\EE^\eps f_1$ and of $\EE^\eps f_2$ is contained in a ball of size $\eps^{-1} \approx 2^{j_{\GGe}}$ to reduce the sum in the second term to $j,\,j'\sim j_{\GG^\eps}$. Using then that the terms on the right hand side are spectrally supported in a ball of size $2^j$ we get for $i\geq -1$
\begin{align*}
 \varDelta_i( \EE^\eps (f_1 \reso^{\GG^\eps} f_2)-\EE^\eps f_1 \reso \EE^\eps f_2)= \xshort  \sum_{j,\,j'\sim j_{\GG^\eps}:\,|j-j'|\leq 1} \xshort \mathbf{1}_{i\lsim j}  \Big( \EE^{\eps}(\varDelta_{j}^{\GG^\eps} f_1 \cdot  \varDelta_{j'}^{\GG^\eps} f_2) -  \varDelta_j \EE^\eps f_1 \cdot   \varDelta_{j'} \EE^\eps f_2 \Big)\,,
\end{align*}
so that we obtain, using once more $\EE^\eps\colon L^p(\GG^\eps,\rho)\rightarrow L^p(\RR^d,\rho)$ and $\varDelta_i \colon L^p(\RR^d,\rho)\rightarrow L^p(\RR^d,\rho)$,
\begin{align*}
\|\varDelta_i( \EE^\eps (f_1 \reso^{\GG^\eps} f_2)-\EE^\eps f_1 \reso \EE^\eps f_2)\|_{L^p(\RR^d,\rho)} &\lesssim \xshort  \sum_{j,j'\sim j_{\GG^\eps}:\,|j-j'|\leq 1}  \xshort\mathbf{1}_{i\lsim j} \cdot  2^{-(j \alpha_1+j'\alpha_2)}   \\
 &\lesssim \mathbf{1}_{i\lsim j_{\GGe}} \cdot  2^{-j_{\GGe} (\alpha_1+\alpha_2-\kappa)} \eps^\kappa
 \lesssim  2^{-i(\alpha_1+\alpha_2-\kappa)} \eps^\kappa \,,
\end{align*}
where we chose $\kappa>0$ in the second line small enough so that $\alpha_1+\alpha_2-\kappa>0$. 
\end{proof}

\begin{lemma}
\label{lem:ExtensionDecay}
Let $\GG^\eps$ be as in Definition \ref{def:LatticeSequence}, $\omega\in \ww$ and construct Littlewood-Paley blocks as in Subsection \ref{subsec:discreteBesovspaces}. Let $\psi,\,\psi^\eps$ and $\EE^\eps$ be as in Subsection \ref{subsec:ExtensionOperator}. There is a $N=N(\GG,\psi)\in \NN$ such that for all $\eps$ and $-1\leq i\leq  j\leq j_{\GGe}-N$ and $f_1,\,f_2\in \Sww'(\GG^\eps)$
\begin{align*}
\EE^\eps(\varDelta_i^{\GGe} f_1 \cdot \varDelta_j^{\GGe} f_2) = \varDelta_i \EE^\eps f_1\cdot \varDelta_j \EE^\eps f_2\,.
\end{align*}
\end{lemma}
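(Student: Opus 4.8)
Proof proposal.

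The plan is to reduce the claim, via~\eqref{eq:ExtensionLowBlocks} (which applies since $i\le j\le j_{\GGe}-N<j_{\GGe}$), to the statement that $\EE^\eps$ is multiplicative on the pair $g_i:=\varDelta_i^{\GGe}f_1$, $g_j:=\varDelta_j^{\GGe}f_2$, i.e. $\EE^\eps(g_ig_j)=\EE^\eps g_i\cdot\EE^\eps g_j$; the right-hand side of the desired identity equals $\varDelta_i\EE^\eps f_1\cdot\varDelta_j\EE^\eps f_2$ again by~\eqref{eq:ExtensionLowBlocks}, and all objects involved are smooth functions in $\Eww(\RR^d)\cap\Sw_\omega'(\RR^d)$ with compact spectral support, so their products and the relevant convolutions are well defined. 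I would prove the multiplicativity by comparing Fourier transforms on $\RR^d$. Set $h_k:=\EE^\eps f_k$ and $G_k:=\FFr(\EE^\eps g_k)$; by~\eqref{eq:ExtensionLowBlocks} one has $\EE^\eps g_i=\varDelta_i h_1$, $\EE^\eps g_j=\varDelta_j h_2$, hence $G_i=\varphi_i\cdot\FFr h_1$ and $G_j=\varphi_j\cdot\FFr h_2$ are compactly supported with $\supp G_i\subseteq\supp\varphi_i$, $\supp G_j\subseteq\supp\varphi_j$, so $\FFr(\EE^\eps g_i\cdot\EE^\eps g_j)=G_i\rast G_j$ is supported in the ball $K:=\supp\varphi_i+\supp\varphi_j$ around the origin of radius $\lesssim 2^{j+1}\le 2^{j_{\GGe}-N+1}$ (here I used $i\le j$). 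On the other hand $\FFr\big(\EE^\eps(g_ig_j)\big)=\psi^\eps\cdot\ext{\FFge(g_ig_j)}$, and by the convolution identity on the Fourier cell~\eqref{eq:ConvolutionBandwidth}, $\FFge(g_ig_j)=\FFge g_i\ast_{\wGGe}\FFge g_j$, where $\FFge g_i=\varphi_i^{\GGe}\cdot\FFge f_1$ is supported (inside $\wGGe$) in $\supp\varphi_i$ and coincides there with $G_i$, since $\varphi_i^{\GGe}=\varphi_i$ on $\wGGe$ and $\psi^\eps\equiv1$ on $\supp\varphi_i$ by property~2 of the smear function.

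The whole argument hinges on choosing $N=N(\GG,\psi)$ large enough that $K$ lies deep inside the cell $\wGGe$ and is translated by every nonzero $r\in\CR^\eps$ entirely out of $\supp\psi^\eps$, and I would obtain this uniformly in $\eps$ from the dyadic scaling relation~\eqref{eq:eps2j}, $2^{-j_{\GGe}}=\eps\,2^{k}$ with $k=k(\GG)$ fixed: after rescaling by $\eps$, $K$ becomes a ball $\eps K$ of fixed radius $\rho_0=\rho_0(\GG,N)$ with $\rho_0\to 0$ as $N\to\infty$, while $\eps r$ ranges over $\CR\setminus\{0\}$. Property~3 of $\psi$ forces $\supp\psi$ to be disjoint from $\supp\varphi_{j'}+r$ for all $j'<j_\GG$ and $r\in\CR\setminus\{0\}$ (such sets lie in $\wGG+r$, hence outside $\wGG$), hence from $B_{\varrho}+r$ for all $r\ne0$, where $B_\varrho\subseteq\supp\varphi_{-1}\subseteq\bigcup_{j'<j_\GG}\supp\varphi_{j'}$ is a fixed ball around the origin on which $\psi\equiv1$ (property~2); thus $\dist(\supp\psi,\CR\setminus\{0\})\ge\varrho>0$. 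Choosing $N$ with $\rho_0<\varrho$, and also large enough that $K\subseteq\bigcup_{j'<j_{\GGe}}\supp\varphi_{j'}$ — a ball of radius comparable to $2^{j_{\GGe}}$, on which $\psi^\eps\equiv1$ by property~2 — and that $K$ is contained in $\wGGe$ with pairwise disjoint $\CR^\eps$-translates, one gets: (a) $\FFge(g_ig_j)$, as a function on $\wGGe$, is supported in $K$, on which the periodic convolution $\ast_{\wGGe}$ agrees with the convolution $\rast$ on $\RR^d$ (because $[x-y]_{\wGGe}=x-y$ for $x\in K$, $y\in\supp\varphi_i$), so $\FFge(g_ig_j)=(G_i\rast G_j)|_{\wGGe}$ and $\ext{\FFge(g_ig_j)}=\sum_{r\in\CR^\eps}(G_i\rast G_j)(\cdot-r)$; (b) in the product with $\psi^\eps$ the $r=0$ summand is unchanged (since $\psi^\eps\equiv1$ on $K\supseteq\supp(G_i\rast G_j)$) and each $r\ne0$ summand vanishes (since $\supp\psi^\eps\cap(K+r)=\varnothing$ by the distance estimate). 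Hence $\psi^\eps\cdot\ext{\FFge(g_ig_j)}=G_i\rast G_j=\FFr(\EE^\eps g_i\cdot\EE^\eps g_j)$, and applying $\FFr^{-1}$ finishes the proof.

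The part demanding the most care is not any individual estimate but the bookkeeping between the three incarnations of these objects — as ultra-distributions on the lattice $\GGe$, on the Fourier cell $\wGGe\simeq\RR^d/\CR^\eps$, and on $\RR^d$ — and in particular the verification in (a) that the periodic convolution of two functions supported near the origin in a large cell coincides there with the Euclidean one, together with the fact that $\ext{\cdot}$ of a function supported in a fundamental domain is literally its $\CR^\eps$-periodization. Once the geometry (a small frequency ball $K$ versus the large cell $\wGGe$ of inradius $\approx\eps^{-1}$ versus the lattice $\CR^\eps$ of minimal covector length $\approx\eps^{-1}$) is drawn, these are routine; the uniformity in $\eps$ is exactly what~\eqref{eq:eps2j} buys, and the remainder is an unwinding of~\eqref{eq:ConvolutionBandwidth}, \eqref{eq:ExtensionLowBlocks}, and the three defining properties of $\psi$.
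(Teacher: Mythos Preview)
Your proposal is correct and follows essentially the same route as the paper's proof: both work on the Fourier side, use that the spectral supports $\supp\varphi_i+\supp\varphi_j$ sit in a ball of radius $\lesssim 2^j$ which for $j\le j_{\GGe}-N$ is deep inside $\wGGe$, verify that the periodic convolution $\ast_{\wGGe}$ agrees with the Euclidean one there, and then use the smear-function properties to kill the $r\neq 0$ translates. The paper introduces an auxiliary cutoff $\chi$ with $\chi|_{B(0,r_\eps/4)}=1$ and $\chi=0$ outside $B(0,r_\eps/2)$ as an intermediate step, whereas you work directly with $\psi^\eps$ and frame the argument as multiplicativity of $\EE^\eps$ on the pair $(g_i,g_j)$ via~\eqref{eq:ExtensionLowBlocks}; these are equivalent bookkeepings of the same computation.
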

\begin{proof}
Let us fix $r_\eps:=\dist(\partial \wGGe,0)$ so that $B(0,r_\eps)\subseteq \wGGe$. From Lemma \ref{lem:SupportParaproduct} and the construction of our discrete partition of unity on page \pageref{DiscreteDyadicPartition} we know that the spectral support of $\varDelta_i^{\GGe} f_1 \cdot \varDelta_j^{\GGe} f_2$ and the support of $\varphi_i^{\GG^\eps} \cdot \FFge f_1 $ and $\varphi_j^{\GG^\eps} \cdot  \FFge f_2$ are contained in a set of the form $2^j \B\cap \wGGe$, where $\B$ is a ball around $0$. Choose $N\in \NN$ such that for $j$ with $-1\leq j \leq j_{\GGe}-N$ (if any) we have $2^j \B \subseteq 2^{j_{\GGe}-N} \B \subseteq B(0,r_\eps/4)$ (note that $N$ is independent of $\eps$ since $r_\eps=c\cdot 2^{j_{\GGe}}$ by the dyadic scaling of our lattice). In particular we have $2^j \B\subseteq \wGGe$, $2^j \B\cap \wGGe=2^j \B$. Choose $N$ further so big that we have for the smear function $\psi^\varepsilon$
\begin{align*}
\psi^\eps\vert_{2^j \B}=\psi(\eps\cdot )\vert_{2^j \B}=1\,,\qquad \supp \psi^\eps \cap (2^j \B+\CR^\eps\backslash\{0\}) =\varnothing
\end{align*}
for $-1\leq j\leq j_{\GGe}-N$ (independently of $\eps$). Choose a $\chi\in \Dww(\RR^d)$ such that $\chi\vert_{B(0,r_\eps/4)}=1$ and $\chi=0$ outside $B(0,r_\eps/2)$. We can then reshape
\begin{align*}
\FFr\EE^\eps(\varDelta_i^{\GGe} f_1 \cdot \varDelta_j^{\GGe} f_2)=  \psi^\eps \cdot  (\varphi_i^{\GG \eps} \FFge f_1 \ast_{\wGGe} \varphi_j^{\GGe} \FFge f_2 )_{\mathrm{ext}} = \chi \cdot  (\varphi_i^{\GG \eps} \FFge f_1 \ast_{\wGGe} \varphi_j^{\GGe} \FFge f_2 )_{\mathrm{ext}} \,,
\end{align*}
where we used the support properties above to replace $\psi^\eps$ by $\chi$. Now, note that (using formal notation to clarify the argument)
\begin{align}
\label{eq:ExtensionDecay1}
\chi(x) \cdot  (\varphi_i^{\GG \eps} \FFge f_1 \ast_{\wGGe} \varphi_j^{\GGe} \FFge f_2 )_{\mathrm{ext}}(x)=
\chi(x) \cdot  \int_{\wGGe} (\varphi_i^{\GG \eps} \FFge f_1)(z)\cdot (\varphi_j^{\GGe} \FFge f_2)([x -z])  \dd z\,.
\end{align}
Since only $x\in B(0,r_\eps/2)$ and $z\in B(0,r_\eps/4)$ contribute we have $x-z\in B(0,3/4 r)\subseteq \wGGe$ so that $[x-z]=x-z$ in \eqref{eq:ExtensionDecay1}. Using that $\supp \varphi_i^{\GGe}\cup\supp \varphi^{\GGe}_j\subseteq \wGGe$ we can replace $\varphi_i^{\GGe}$ and $\varphi_j^{\GGe}$ in \eqref{eq:ExtensionDecay1} by $\varphi_i,\,\varphi_j$ (the dyadic partition of unity on $\RR^d$ from which $\varphi_j^{\GGe}$ is constructed as on page \pageref{DiscreteDyadicPartition}), replace $\FFge f_1,\,\FFge f_2$ by their periodic extension and extend the integral to $\RR^d$ so that in total
\begin{align*}
 \FFr\EE^\eps(\varDelta_i^{\GGe} f_1 &\cdot \varDelta_j^{\GGe} f_2)(x) =\chi(x) \cdot  \int_{\RR^d} (\varphi_i (\FFge f_1)_{\mathrm{ext}})(z)\cdot (\varphi_j (\FFge f_2)_{\mathrm{ext}})(x -z)  \dd z\\
 &=\int_{\RR^d} (\varphi_i \psi^\eps (\FFge f_1)_{\mathrm{ext}})(z)\cdot (\varphi_j \psi^\eps (\FFge f_2)_{\mathrm{ext}})(x -z) \, \dd z \\
 &=\FFr(\varDelta_i\EE^\eps f_1\, \varDelta_j \EE^\eps f_2)(x) \,,
 \end{align*} 
 where we used in the second line that the support of the convolution is once more contained in $B(0,r_\eps/4)$ to drop $\chi$ and that $\psi^\eps\vert_{2^j \B}=1$ to introduce smear functions in the integral. The claim follows.
 \end{proof}

The main observation of~\cite{GIP} is that if the regularity condition $\alpha_1 + \alpha_2 > 0$ is not satisfied, then it may still be possible to make sense of $f_1 \reso f_2$ as long as $f_1$ can be written as a paraproduct plus a smoother remainder. The main lemma which makes this possible is an estimate for a certain ``commutator''. The discrete version of the commutator is defined as
\begin{align*}
C^{\GG}(f_1,f_2,f_3):=(f_1 \para^{\GG} f_2) \reso^{\GG} f_3 - f_1 (f_2\reso^{\GG} f_3)\,.
\end{align*}
If there is no risk for confusion we may drop the index $\GG$ on $C$.

\begin{lemma}(\cite[Lemma 14]{Gubinelli2015EBP})
\label{lem:CommutatorLemma}
Given $\rho_1,\,\rho_2,\,\rho_3\in \boldsymbol{\rho}(\omega)$, $p\in [1,\infty]$ and $\alpha_1,\alpha_2,\alpha_3\in \RR$ with $\alpha_1+\alpha_2+\alpha_3>0$ and $\alpha_2 + \alpha_3 \neq 0$ we have
	\begin{align*}
		\|C^\GG(f_1,f_2,f_3)\|_{\mathcal{C}_p^{\alpha_2+\alpha_3}(\GG^\eps,\rho_1 \rho_2 \rho_3)}\lesssim \|f_1\|_{\mathcal{C}^{\alpha_1}_p(\GG^\eps,\rho_1)} \|f_2\|_{\mathcal{C}^{\alpha_2}_\infty(\GG^\eps,\rho_2)} \|f_3\|_{\mathcal{C}_\infty^{\alpha_3}(\GG^\eps,\rho_3)} \,.
	\end{align*}
Further, property \eqref{eq:EProperty} holds for $C$ if the regularity on the left hand side is reduced by an arbitrary $\kappa>0$.
\end{lemma}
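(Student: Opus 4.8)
The estimate is the discrete, weighted analogue of the classical commutator lemma of Gubinelli--Imkeller--Perkowski (\cite[Lemma 2.4]{GIP}, \cite[Lemma 14]{Gubinelli2015EBP}), and the plan is to transport that proof to the present setting. The only genuinely new ingredients are the spectral support properties of Lemma \ref{lem:SupportParaproduct}, the discrete weighted Young inequality of Lemma \ref{lem:DiscreteWeightedYoungInequality} (together with the scaling \eqref{eq:PsijScaling} and Remark \ref{rem:ScalingPsij}), and the weight compatibility \eqref{eq:WeightQuotient}; since each of these is uniform in $\eps$, the $\eps$-uniformity of the final bound will be automatic, exactly as in Lemma \ref{lem:ParaproductEstimates}.

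First I would reduce everything to frequency-localized pieces. Writing $f_1\para^\GG f_2=\sum_{1\le j\le j_\GG}S^\GG_{j-1}f_1\cdot\varDelta^\GG_j f_2$ and expanding $\reso^\GG$, Lemma \ref{lem:SupportParaproduct} shows that $S^\GG_{j-1}f_1\cdot\varDelta^\GG_j f_2$ is spectrally supported in a dyadic annulus $2^j\rA\cap\wGGe$ and each $\varDelta^\GG_j f_2\cdot\varDelta^\GG_k f_3$ with $|j-k|\le 1$ in a dyadic ball $2^j\B\cap\wGGe$; hence in $(f_1\para^\GG f_2)\reso^\GG f_3$ only the frequencies with $i\sim j\sim k$ (where $j,k$ are the scales of $f_2,f_3$) interact, and after reindexing the commutator decomposes as
\begin{align*}
 C^\GG(f_1,f_2,f_3) &= \sum_{j}\Big(\sum_{i\sim j}\varDelta^\GG_i\big(S^\GG_{j-1}f_1\cdot\varDelta^\GG_j f_2\big)-S^\GG_{j-1}f_1\cdot\varDelta^\GG_j f_2\Big)\cdot\widetilde\varDelta^\GG_j f_3 \\
 &\quad -\sum_j\big(f_1-S^\GG_{j-1}f_1\big)\cdot\varDelta^\GG_j f_2\cdot\widetilde\varDelta^\GG_j f_3+(\text{terms reconciling the fattenings})\,,
\end{align*}
where $\widetilde\varDelta^\GG_j:=\sum_{|l-j|\le N_0}\varDelta^\GG_l$ is a suitably fattened block. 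Denote the first sum by $C_1$ and the second by $C_2$.

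For $C_1$ -- the true commutator between multiplication by the slowly varying factor $S^\GG_{j-1}f_1$ and the Littlewood--Paley localizations -- I would write $\varDelta^\GG_i(ab)=\Psi^{\GG,i}\astg(ab)$ and insert a first-order discrete Taylor expansion of $a=S^\GG_{j-1}f_1$ around the base point: the zeroth-order term telescopes (via $\sum_i\varDelta^\GG_i=\mathrm{Id}$ on the fattened range) against the subtracted term, while the remainder produces a discrete gradient $\nabla S^\GG_{j-1}f_1$ weighted against the first moment $\|\,|\cdot|\,\Psi^{\GG,i}\|_{L^1(\GG^\eps,e^{\lambda\omega})}\lesssim 2^{-i}$ of the kernel (available from \eqref{eq:PsijScaling} and Lemma \ref{lem:DiscreteWeightedYoungInequality}). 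Using $\|\nabla S^\GG_{j-1}f_1\|_{L^p(\GG^\eps,\rho_1)}\lesssim 2^{j(1-\alpha_1)}\|f_1\|_{\mathcal{C}^{\alpha_1}_p(\GG^\eps,\rho_1)}$ (for $\alpha_1<1$; otherwise a higher-order expansion), $\|\varDelta^\GG_j f_2\|_{L^\infty(\GG^\eps,\rho_2)}\lesssim 2^{-j\alpha_2}\|f_2\|_{\mathcal{C}^{\alpha_2}_\infty(\GG^\eps,\rho_2)}$, the analogous bound for $\widetilde\varDelta^\GG_j f_3$, and the weight compatibility \eqref{eq:WeightQuotient} exactly as in step $(\star)$ of the proof of Lemma \ref{lem:DiscreteWeightedYoungInequality}, the $j$-th summand of $C_1$ is bounded in $L^p(\GG^\eps,\rho_1\rho_2\rho_3)$ by $2^{-j(\alpha_1+\alpha_2+\alpha_3)}\|f_1\|_{\mathcal{C}^{\alpha_1}_p}\|f_2\|_{\mathcal{C}^{\alpha_2}_\infty}\|f_3\|_{\mathcal{C}^{\alpha_3}_\infty}$; since it is spectrally localized in $2^j\B\cap\wGGe$, the hypothesis $\alpha_1+\alpha_2+\alpha_3>0$ lets one sum over $j$ into the Besov norm. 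For $C_2=-\sum_j\big(\sum_{i\gtrsim j}\varDelta^\GG_i f_1\big)\cdot\varDelta^\GG_j f_2\cdot\widetilde\varDelta^\GG_j f_3$ I would split into the diagonal part $i\sim j$ (estimated just like $C_1$) and the off-diagonal part $i\gg j$, which has the shape of a paraproduct of $f_1$ with the low-frequency part of the resonant product of $f_2,f_3$ and is controlled directly by Lemma \ref{lem:ParaproductEstimates}; the hypothesis $\alpha_2+\alpha_3\ne 0$ is precisely what makes the ensuing geometric series in the common scale of $f_2,f_3$ converge. The careful dyadic bookkeeping in these two estimates -- the only laborious point -- then yields the claimed $\mathcal{C}^{\alpha_2+\alpha_3}_p(\GG^\eps,\rho_1\rho_2\rho_3)$ bound.

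Finally, property \eqref{eq:EProperty} is obtained exactly as for the paraproduct and the resonant term in Lemma \ref{lem:ParaproductEstimates}: by Lemma \ref{lem:ExtensionDecay} the operator $\EE^\eps$ commutes with all the interactions above at scales $j\le j_{\GG^\eps}-N$, so $\EE^\eps C^\GG(f_1,f_2,f_3)-\overline C(\EE^\eps f_1,\EE^\eps f_2,\EE^\eps f_3)$ only involves the $O(1)$ boundary scales $j\sim j_{\GG^\eps}$, which contribute a factor $\lesssim 2^{-j_{\GG^\eps}\kappa}\approx\eps^\kappa$ once one trades an arbitrary $\kappa>0$ of regularity. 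The main obstacle throughout is the frequency bookkeeping in the estimates of $C_1$ and $C_2$ together with the verification that every constant entering it is independent of $\eps$; the remainder is a routine transcription of the continuous argument.
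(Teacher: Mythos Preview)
Your proposal is correct and follows essentially the same approach as the paper: the paper's own proof simply states that the estimate works line-by-line as in \cite[Lemma 14]{Gubinelli2015EBP} and that the \eqref{eq:EProperty}-property follows as in Lemma~\ref{lem:ParaproductEstimates} via a three-factor modification of Lemma~\ref{lem:ExtensionDecay}. Your outline unpacks precisely this strategy; the only point the paper makes explicit that you leave implicit is that Lemma~\ref{lem:ExtensionDecay} must be extended from two to three factors to handle the commutator's \eqref{eq:EProperty}-property.
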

\begin{proof}
The proof of the estimates works line-by-line as in \cite[Lemma 14]{Gubinelli2015EBP} and the \eqref{eq:EProperty}-property follows as in Lemma~\ref{lem:ParaproductEstimates} via a modification of Lemma \ref{lem:ExtensionDecay} to three factors.
\end{proof}

\subsection{The Modified Paraproduct}

It will be useful to define a lattice version of the \emph{modified paraproduct} $\mpara$ that was introduced in \cite{GIP} and also used in \cite{Gubinelli2017KPZ,Chouk2017}.
\begin{definition}
\label{def:ModifiedParaproduct}
Fix a function $\varphi\in C_c^\infty((0,\infty);\RR_+)$ such that $\int_{\RR} \varphi(s) \dd s=1$ and define
\[
   Q_i f(t):=\int_{-\infty}^t 2^{2id} \varphi(2^{2i}(t-s)) f(s\vee 0) \dd s, \qquad  i\geq -1\,.
\]
We then set
\begin{align*}
 f_1 \mpara^\GG f_2  :=\sum_{-1\leq j_1,j_2\leq j_{\GG}:\, j_1< j_2-1} Q_{j_2}\varDelta_{j_1}^\GG f_1 \cdot \varDelta_{j_2}^\GG f_2
\end{align*}
for $f_1,\, f_2\colon \RR_+ \rightarrow \Sw_\omega'(\GG)$ where this is well defined. We may drop the index $\GG$ if there is no risk for confusion.
\glsadd{mpara}
\end{definition}

\begin{convention}
\label{con:ConventionModifiedParaproduct}
As in \cite{Gubinelli2017KPZ} we silently identify $f_1$ in $f_1 \mpara f_2$ with $t\mapsto f(t) \mathbf{1}_{t>0}$ if $f_1\in \mathcal{M}^\gamma_T \mathcal{C}^\alpha_p(\GG,\rho)$ with $\gamma>0$.
\end{convention}
 Once more the translation to the continuous case $f_1,f_2 \colon \RR_+\rightarrow  \Sw_\omega'(\RR^d)$ is analogous. The modified paraproduct allows for similar estimates  as in Lemma \ref{lem:ParaproductEstimates}.
\begin{lemma}
\label{lem:ModifiedParaproductEstimates}
Let $\beta \in \RR,\, p\in [1,\infty],\,\gamma\in [0,1),\,t>0,\alpha<0$ and let $ \rho_1,\rho_2\colon \RR_+ \to \boldsymbol{\rho}(\omega)$ with $\rho_1$ pointwise decreasing. Then
\begin{align*}
	& t^\gamma \|f \mpara g (t)\|_{\mathcal{C}^{\alpha+\beta}_p(\GG^\eps, \rho_1(t) \rho_2(t))}\lesssim \|f\|_{\mathcal{M}^\gamma_t \mathcal{C}^\alpha_p (\GG^\eps,\rho_1)} \|g(t)\|_{\mathcal{C}^\beta_\infty(\GG^\eps,\rho_2(t))} \wedge \|f\|_{\mathcal{M}^\gamma_t \mathcal{C}^\alpha_\infty(\GG^\eps,\rho_1)} \|g(t)\|_{\mathcal{C}^\beta_p(\GG^\eps,\rho_2(t))} 
\end{align*}
and 
\begin{align*}
	&t^\gamma \|f \mpara g (t)\|_{\mathcal{C}^{\beta}_p(\GG^\eps,\rho_1(t) \rho_2(t))}  \lesssim  \|f\|_{\mathcal{M}^\gamma_t L^p (\GG^\eps,\rho_1)} \|g(t)\|_{\mathcal{C}^\beta_\infty(\GG^\eps,\rho)} \wedge \|f\|_{\mathcal{M}^\gamma_t L^\infty(\GG^\eps,\rho_1)} \|g(t)\|_{\mathcal{C}^\beta_p(\GG^\eps,\rho_2(t))} \,.
\end{align*}
Both estimates have the property \eqref{eq:EProperty} if the regularity on the left hand side is decreased by an arbitrary $\kappa>0$.
\end{lemma}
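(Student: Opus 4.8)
The plan is to follow the blueprint of Lemma~\ref{lem:ParaproductEstimates}, transferring the known continuous estimates for the modified paraproduct (as in \cite[Lemma~6.7]{Gubinelli2017KPZ}) to the discrete, weighted setting, and then to verify the $(\EE)$-property by a boundary-layer argument as in the proof of Lemma~\ref{lem:ParaproductEstimates}. First I would record the spectral support properties: by Lemma~\ref{lem:SupportParaproduct} (cited in the proof of Lemma~\ref{lem:ParaproductEstimates}), for $j_1 < j_2 - 1$ the summand $Q_{j_2}\varDelta_{j_1}^{\GGe} f \cdot \varDelta_{j_2}^{\GGe} g$ is spectrally supported in an annulus $2^{j_2}\rA \cap \wGGe$, so the Besov norm decouples into a sum over $j_2$ of $L^p(\GGe,\rho_1(t)\rho_2(t))$ norms with prefactor $2^{j_2(\alpha+\beta)}$ (resp.\ $2^{j_2\beta}$). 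The only genuinely new ingredient compared with the static paraproduct is the operator $Q_i$, so I would first prove the uniform bound
\begin{align*}
 \sup_{t \in [0,T]} t^\gamma \|Q_i h(t)\|_{L^p(\GGe,\rho)} \lesssim \sup_{s \in [0,T]} s^\gamma \|h(s)\|_{L^p(\GGe,\rho)}
\end{align*}
for $\gamma \in [0,1)$ and any weight $\rho \in \rr(\omega)$, which follows because $Q_i$ is an average of $h(s\vee 0)$ against the probability density $s \mapsto 2^{2id}\varphi(2^{2i}(t-s))$ supported in $s < t$, using $\int_0^t (t-s)^{-\gamma} 2^{2id}\varphi(2^{2i}(t-s))\,\dd s \lesssim t^{-\gamma}$ on the time-dependent part; the weight is untouched because $Q_i$ acts only in $t$. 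Granting this, summing the telescoped estimates $\|Q_{j_2}\varDelta_{j_1}^{\GGe} f(t)\|_{L^p} \lesssim t^{-\gamma} 2^{-j_1\alpha}\|f\|_{\mathcal M^\gamma_t\mathcal C^\alpha_p}$ over $j_1 < j_2-1$ (which converges since $\alpha < 0$) against $\|\varDelta_{j_2}^{\GGe} g(t)\|_{L^\infty} \lesssim 2^{-j_2\beta}\|g(t)\|_{\mathcal C^\beta_\infty}$ and taking the $\ell^\infty$ norm in $j_2$ gives the first displayed estimate; the roles of $L^p$ and $L^\infty$ in the two factors may be swapped by the same Bernstein/Young arguments, giving the ``$\wedge$''; and taking $\alpha = 0$, i.e.\ summing $\|Q_{j_2} S^{\GGe}_{j_2-1} f(t)\|_{L^p} \lesssim t^{-\gamma}\|f\|_{\mathcal M^\gamma_t L^p}$, yields the second displayed estimate — here using $Q_{j_2} S_{j_2-1}^{\GGe} = \sum_{j_1<j_2-1} Q_{j_2}\varDelta_{j_1}^{\GGe}$ and uniform $L^p$-boundedness of $S^{\GGe}_{j_2-1}$ from Lemma~\ref{lem:ScalingBoundary} and Lemma~\ref{lem:DiscreteWeightedYoungInequality}.

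For the $(\EE)$-property I would argue exactly as in Lemma~\ref{lem:ParaproductEstimates}: by Lemma~\ref{lem:ExtensionDecay} there is an $N$, independent of $\eps$, such that $\EE^\eps(\varDelta_i^{\GGe} f_1 \cdot \varDelta_j^{\GGe} f_2) = \varDelta_i \EE^\eps f_1 \cdot \varDelta_j \EE^\eps f_2$ for $-1 \le i \le j \le j_{\GGe}-N$, and since $Q_i$ commutes with $\EE^\eps$ (it acts purely in time) the same identity holds with $\varDelta_i^{\GGe} f_1$ replaced by $Q_j\varDelta_i^{\GGe} f_1$. Hence $\EE^\eps(f_1 \mpara^{\GGe} f_2) - \EE^\eps f_1 \mpara \EE^\eps f_2$ collapses to the boundary block $j_2 \sim j_{\GGe}$ (using that $\EE^\eps f_2$ has spectral support in a ball of radius $\approx 2^{j_{\GGe}}$ to discard the high-frequency tail of the continuous paraproduct), and each such term is spectrally supported in an annulus of size $2^{j_{\GGe}}$. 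Applying Lemma~\ref{lem:ExtensionOperator}, uniform $L^p(\RR^d,\rho)$-boundedness of $\varDelta_i$ and of $S_{j-1}$, $Q_j$, together with $\EE^\eps\colon L^p(\GGe,\rho)\to L^p(\RR^d,\rho)$, exactly as there, gives a bound $\lesssim \mathbf 1_{i \lsim j_{\GGe}} 2^{-j_{\GGe}(\alpha+\beta-\kappa)}\lesssim 2^{-i(\alpha+\beta-\kappa)}\eps^\kappa$ (and analogously with exponent $\beta$), which is the claimed $o_\eps$-loss at the cost of lowering the target regularity by $\kappa$; Convention~\ref{con:ConventionModifiedParaproduct} handles the identification of $f_1$ with $t\mapsto f_1(t)\mathbf 1_{t>0}$ throughout.

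The main obstacle I anticipate is purely bookkeeping rather than conceptual: one must check that the time-weighting $t^\gamma$ passes correctly through $Q_i$ — i.e.\ that $\sup_t t^\gamma\|Q_i h(t)\|_{X(t)} \lesssim \|h\|_{\mathcal M^\gamma_t X}$ uniformly in $i \ge -1$ and uniformly in $\eps$ — and that the spatial weight $\rho_1(t)$, being only pointwise decreasing in $t$, interacts correctly with the averaging of $h(s\vee 0)$ over $s < t$ (since $\rho_1(t) \le \rho_1(s)$ for $s \le t$, the weight is controlled in the favourable direction). These are precisely the estimates worked out in \cite[Lemma~6.7]{Gubinelli2017KPZ} in the continuous case, and since the discrete Littlewood--Paley blocks satisfy the same support and Bernstein properties (Lemma~\ref{lem:SupportParaproduct}, Lemma~\ref{lem:DiscreteWeightedYoungInequality}) the argument transfers verbatim; no random operators or genuinely new phenomena appear.
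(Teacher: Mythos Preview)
Your approach is correct and matches the paper's own proof, which simply says ``the proof is the same as for \cite[Lemma 6.4]{Gubinelli2017KPZ}'' and that property~\eqref{eq:EProperty} ``is shown as in Lemma~\ref{lem:ParaproductEstimates}''; you have fleshed out precisely these two ingredients. Two small corrections: the relevant reference is Lemma~6.4 (not~6.7) of \cite{Gubinelli2017KPZ}, and in the boundary-layer bound the indicator should be $\mathbf 1_{i \sim j_{\GGe}}$ rather than $\mathbf 1_{i \lsim j_{\GGe}}$, since the difference is spectrally supported in an \emph{annulus} of size $2^{j_{\GGe}}$---this matters because $\alpha+\beta$ may be negative, so $\mathbf 1_{i\lsim j_{\GGe}}2^{-j_{\GGe}(\alpha+\beta-\kappa)}\lesssim 2^{-i(\alpha+\beta-\kappa)}$ would fail otherwise.
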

\begin{proof}
The proof is the same as for \cite[Lemma 6.4]{Gubinelli2017KPZ}. Property \eqref{eq:EProperty} is shown as in Lemma~\ref{lem:ParaproductEstimates}.
\end{proof}

We further have an estimate in terms of the parabolic spaces $\mathscr{L}^{\gamma,\alpha}_{p,T}(\GG,\rho)$ that were introduced in Definition \ref{def:WeightedSpaces}.

\begin{lemma}
\label{lem:ParaproductEstimateParabolicSpace}
We have for $\alpha\in (0,2), \,p\in [1,\infty],\,\gamma\in [0,1)$ and $\rho_1,\rho_2\colon \RR_+ \to \boldsymbol{\rho}(\omega)$, pointwise decreasing in $s$, the estimate
\begin{align*}
	\|f\mpara g\|_{\mathscr{L}^{\gamma,\alpha}_{p,T}(\GG^\eps,\rho_1\rho_2)} \lesssim \|f\|_{\mathscr{L}^{\gamma,\delta}_{p,T}(\GG^\eps,\rho_1)} \,(\|g\|_{C_T\mathcal{C}_\infty^\alpha(\GG^\eps,\rho_2)}+\|\mathscr{L}^\eps g\|_{C_T\mathcal{C}^{\alpha-2}_\infty(\GG^\eps,\rho_2)})
\end{align*}
for any $\delta>0$ and any diffusion operator $\mathscr{L}_\mu^\eps$ as in Definition \ref{def:DiffusionOperator}. This estimate has the property \eqref{eq:EProperty} if the regularity $\alpha$ on the left hand side is lowered by an arbitrary $\kappa>0$.
\end{lemma}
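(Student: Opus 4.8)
The plan is to reduce the estimate to the already-proven bounds for $\mpara$ in Lemma \ref{lem:ModifiedParaproductEstimates} together with the Schauder-type smoothing that is built into the parabolic norm $\mathscr{L}^{\gamma,\alpha}_{p,T}$. Recall that $\|f\mpara g\|_{\mathscr{L}^{\gamma,\alpha}_{p,T}(\GG^\eps,\rho_1\rho_2)}$ has two contributions: the time-regularity part $\|t\mapsto t^\gamma (f\mpara g)(t)\|_{C^{\alpha/2}_T L^p(\GG^\eps,\rho_1\rho_2)}$ and the spatial part $\|f\mpara g\|_{\mathcal{M}^\gamma_T\mathcal{C}^\alpha_p(\GG^\eps,\rho_1\rho_2)}$. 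For the spatial part one simply applies the first estimate of Lemma \ref{lem:ModifiedParaproductEstimates} with $\beta=\alpha$ and $\alpha$ there replaced by a tiny negative exponent $-\kappa'$: since the $\mpara$ estimate only needs $f\in\mathcal{M}^\gamma_T\mathcal{C}^{-\kappa'}_\infty$, and $\mathscr{L}^{\gamma,\delta}_{p,T}(\GG^\eps,\rho_1)\hookrightarrow \mathcal{M}^\gamma_T\mathcal{C}^{-\kappa'}_\infty$ as soon as $\delta>-\kappa'$ (using the Besov embedding Lemma \ref{lem:DiscreteEmbedding} to trade integrability for regularity when $p<\infty$), we get exactly $\|f\|_{\mathscr{L}^{\gamma,\delta}_{p,T}(\GG^\eps,\rho_1)}\|g(t)\|_{\mathcal{C}^\alpha_\infty(\GG^\eps,\rho_2(t))}$, hence the $\|g\|_{C_T\mathcal{C}^\alpha_\infty}$ term. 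The time-regularity part is where $\mathscr{L}^\eps g$ enters.

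For the time increments, the key structural observation — exactly as in \cite[Lemma 6.4/6.6]{Gubinelli2017KPZ} — is that the low-frequency factor $Q_{j_2}\varDelta^\GG_{j_1}f$ carries its own time-regularization: $Q_i$ is an average against $2^{2id}\varphi(2^{2i}\cdot)$, so $\|Q_i\varDelta^\GG_{j_1}f(t)-Q_i\varDelta^\GG_{j_1}f(s)\|_{L^p}\lesssim 2^{2i}|t-s|\,\sup_r\|\varDelta^\GG_{j_1}f(r)\|_{L^p}$, and this $2^{2i}|t-s|$ gain combined with the annular spectral support $2^{j_2}\mathcal A\cap\wGGe$ of the product produces the factor $(2^{2j_2}|t-s|)\wedge 1 \lesssim (2^{j_2}|t-s|^{1/2})^\alpha$ after interpolating with the crude bound; summing the geometric series in $j_2$ against $2^{-j_2\alpha}\|g\|_{\mathcal{C}^\alpha}$ then gives the $C^{\alpha/2}_T$ bound. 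The high-frequency factor $\varDelta^\GG_{j_2}g$ contributes its increment $g(t)-g(s)$, and here one writes $g(t)-g(s)=\int_s^t \partial_r g(r)\,\dd r=\int_s^t(L^\eps_\mu g(r)-\mathscr{L}^\eps g(r))\,\dd r$; the $L^\eps_\mu g$ piece is absorbed by the parabolic smoothing (Lemma \ref{lem:LaplaceRegularity} plus the $Q$-regularization) into $\|g\|_{C_T\mathcal{C}^\alpha_\infty}$, and the $\mathscr{L}^\eps g$ piece is bounded directly by $|t-s|\,\|\mathscr{L}^\eps g\|_{C_T\mathcal{C}^{\alpha-2}_\infty}$, again interpolated down to the $|t-s|^{\alpha/2}$ scale. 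The weights are handled throughout by the quotient bound \eqref{eq:WeightQuotient}, the pointwise monotonicity of $\rho_1,\rho_2$ in time (so $\rho_1(t)\rho_2(t)\lesssim\rho_1(s)\rho_2(s)$ for $s\le t$), and the weighted Young inequality Lemma \ref{lem:DiscreteWeightedYoungInequality}, exactly as in the proof of Lemma \ref{lem:ModifiedParaproductEstimates}.

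Finally, the property \eqref{eq:EProperty}: as in Lemma \ref{lem:ParaproductEstimates} and Lemma \ref{lem:ModifiedParaproductEstimates}, the difference $\EE^\eps(f\mpara^{\GGe}g)-\EE^\eps f\mpara\EE^\eps g$ is, thanks to Lemma \ref{lem:ExtensionDecay} (which commutes $\EE^\eps$ with products of Littlewood--Paley blocks of index $\le j_{\GGe}-N$), spectrally supported in an annulus of size $2^{j_{\GGe}}\approx\eps^{-1}$, and the finitely many boundary terms $j\sim j_{\GGe}$ are estimated by the same arguments and produce a factor $2^{-j_{\GGe}\kappa}\approx\eps^\kappa$ once the regularity on the left is lowered by $\kappa>0$; the only addition here is to carry the extra time-derivative bookkeeping, which causes no new difficulty. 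The main obstacle I expect is purely technical: organizing the interpolation between the crude $O(1)$ bound on the product and the $O(2^{2j_2}|t-s|)$ bound so that the resulting powers of $2^{j_2}$ sum (this forces the hypothesis $\alpha<2$, equivalently $\alpha/2<1$), and making sure the time-regularization $Q_{j_2}$ on $f$ rather than on $g$ is what supplies this gain — i.e. that one never needs time-regularity of $f$ beyond what $\mathscr{L}^{\gamma,\delta}_{p,T}$ gives for arbitrarily small $\delta>0$. Everything else is a line-by-line transcription of \cite[Lemma 6.4, Lemma 6.6]{Gubinelli2017KPZ} into the present weighted, discrete setting.
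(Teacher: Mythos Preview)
Your route differs from the paper's in a structural way. The paper (following \cite[Lemma~6.7]{Gubinelli2017KPZ}, not Lemmas~6.4 or 6.6) does \emph{not} estimate the time increments of $f\mpara g$ directly. Instead it regards $f\mpara g$ as the solution of a parabolic equation, applies the Schauder estimate of Lemma~\ref{lem:Schauder} to control the full $\mathscr{L}^{\gamma,\alpha}_{p,T}$-norm by $\|\mathscr{L}^\eps_\mu(f\mpara g)\|_{\mathcal{M}^\gamma_T\mathcal{C}^{\alpha-2}_p}$ plus the initial datum, and then splits $\mathscr{L}^\eps_\mu(f\mpara g)=\big[\mathscr{L}^\eps_\mu(f\mpara g)-f\mpara\mathscr{L}^\eps_\mu g\big]+f\mpara\mathscr{L}^\eps_\mu g$. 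The bracket is bounded by the commutator Lemma~\ref{lem:CommuteModifiedParaproduct} (with parameters $\alpha_{\mathrm{there}}=\delta$, $\beta_{\mathrm{there}}=\alpha-\delta$), and the second term by Lemma~\ref{lem:ModifiedParaproductEstimates}. This is why the paper explicitly says the proof ``uses Lemma~\ref{lem:CommuteModifiedParaproduct}'', which your plan never invokes; the $C^{\alpha/2}_T L^p$ time regularity comes for free from Schauder rather than from a hands-on increment computation.

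Your direct approach can be pushed through, but two steps are not correct as written. For the spatial part, the embedding $\mathscr{L}^{\gamma,\delta}_{p,T}\hookrightarrow\mathcal{M}^\gamma_T\mathcal{C}^{-\kappa'}_\infty$ does \emph{not} follow from Lemma~\ref{lem:DiscreteEmbedding} for general $p$ (that embedding needs $\delta\ge d/p-\kappa'$, not just $\delta>-\kappa'$); you should simply use the second estimate of Lemma~\ref{lem:ModifiedParaproductEstimates}, which only requires $f\in\mathcal{M}^\gamma_T L^p$. For the time increments, the bound $\|Q_i\varDelta_{j_1}^\GG f(t)-Q_i\varDelta_{j_1}^\GG f(s)\|_{L^p}\lesssim 2^{2i}|t-s|\sup_r\|\varDelta_{j_1}^\GG f(r)\|_{L^p}$ is vacuous for $\gamma>0$ because the supremum on the right is infinite; one must instead prove a weighted version $\|t^\gamma Q_i S_{j-1}^\GG f(t)-s^\gamma Q_i S_{j-1}^\GG f(s)\|_{L^p}\lesssim\min(1,2^{2i}|t-s|)\,\|f\|_{\mathcal{M}^\gamma_T L^p}$, which needs a separate argument near $r=0$. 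The Schauder route of the paper avoids this bookkeeping entirely.
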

\begin{proof}
The proof is as in \cite[Lemma 6.7]{Gubinelli2017KPZ} and uses Lemma \ref{lem:CommuteModifiedParaproduct} below. The proof of the property~\eqref{eq:EProperty} is as in Lemma~\ref{lem:ParaproductEstimates}.
\end{proof}

The main advantage of the modified paraproduct $\mpara$ on $\RR^d$ is its commutation property with the heat kernel $\partial_t -\Delta$ (or $\mathscr{L}_\mu=\partial_t -L_\mu$) which is essential for the Schauder estimates for paracontrolled distributions, compare also Subsection \ref{subsec:ConvergenceOfTheLatticeModel} below. In the following we state the corresponding results for Bravais lattices. 

\begin{lemma}
\label{lem:CommuteModifiedParaproduct}
For $\alpha\in (0,2)$, $\beta\in \RR$, $p\in [1,\infty]$, $\gamma\in [0,1)$ and $\rho_1,\rho_2\colon \RR_+ \to \boldsymbol{\rho}(\omega)$, with $\rho_1$ pointwise decreasing, we have for $t>0$
\begin{align*}
	t^\gamma \|(f\mpara g - f\para g)(t) \|_{\mathcal{C}^{\alpha+\beta}_p(\GG^\eps,\rho_1(t)\rho_2(t))} \lesssim \|f\|_{\mathscr{L}^{\gamma,\alpha}_{p,t}(\GG^\eps,\rho_1)}  \|g(t)\|_{\mathcal{C}^\beta_\infty(\GG^\eps,\rho_2(t))} 
\end{align*}
and 
\begin{align*}
	t^\gamma \|(\mathscr{L}_\mu^\eps(f\mpara g) - f\mpara \mathscr{L}_\mu^\eps g)(t) \|_{\mathcal{C}^{\alpha+\beta-2}_p(\GG^\eps,\rho_1(t)\rho_2(t))} \lesssim \|f\|_{\mathscr{L}^{\gamma,\alpha}_{p,t}(\GG^\eps,\rho_1)}  \|g(t)\|_{\mathcal{C}^\beta_\infty(\GG^\eps,\rho_2(t))}\,. 
\end{align*}
where $\mathscr{L}_\mu^\eps=\partial_t-L^\eps_\mu$ is a discrete diffusion operator as in 
Definition \ref{def:DiffusionOperator}. These estimates have the property \eqref{eq:EProperty} if the regularity on the left hand side is lowered by an arbitrary $\kappa>0$.
\end{lemma}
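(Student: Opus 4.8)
The plan is to transcribe the proof of \cite[Lemma~6.7]{Gubinelli2017KPZ} to the lattice setting, replacing everywhere the continuous Littlewood--Paley blocks, paraproducts and the Laplacian by $\varDelta_j^{\GGe}$, $\para$, $\mpara$ and $L^\eps_\mu$, while keeping track of the fact that on $\wGGe$ there are only finitely many blocks. The two points that are genuinely new relative to the continuous argument are: that the spectral--support bookkeeping must be combined with the uniform-in-$\eps$ estimates for discrete blocks and convolutions already established (Lemma~\ref{lem:DiscreteWeightedYoungInequality}, Lemma~\ref{lem:ScalingBoundary}, Lemma~\ref{lem:LaplaceRegularity}, Lemma~\ref{lem:SupportParaproduct}); and the property~\eqref{eq:EProperty}, obtained exactly as in the proof of Lemma~\ref{lem:ParaproductEstimates}.

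\emph{First estimate.} Since $Q_j$ acts only in the time variable it commutes with the spatial projections, so
\[
 (f\mpara g - f\para g)(t)=\sum_{1\leq j\leq j_{\GGe}}\bigl((Q_j-\mathrm{Id})S_{j-1}^{\GGe}f\bigr)(t)\cdot\varDelta_j^{\GGe}g(t),
\]
and by Lemma~\ref{lem:SupportParaproduct} each summand is spectrally supported in $2^j\rA\cap\wGGe$ for a fixed annulus $\rA$. It thus suffices to prove, uniformly in $\eps$ and $j$, that $t^\gamma\|(Q_j-\mathrm{Id})S_{j-1}^{\GGe}f(t)\|_{L^p(\GGe,\rho_1(t))}\lesssim 2^{-j\alpha}\|f\|_{\mathscr{L}^{\gamma,\alpha}_{p,t}(\GGe,\rho_1)}$, to combine this with $\|\varDelta_j^{\GGe}g(t)\|_{L^\infty(\GGe,\rho_2(t))}\lesssim 2^{-j\beta}\|g(t)\|_{\mathcal{C}^\beta_\infty(\GGe,\rho_2(t))}$, and to sum the geometrically localised annuli. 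Because $\varphi$ is supported in a compact subset of $(0,\infty)$ with $\int\varphi=1$, one writes $(Q_j-\mathrm{Id})h(t)$ as a time average of $h(s\vee0)-h(t)$ against a kernel concentrated on $|t-s|\lesssim 2^{-2j}$; plugging in $h=S_{j-1}^{\GGe}f$, using that $S_{j-1}^{\GGe}$ is bounded on $L^p(\GGe,\rho)$ uniformly in $\eps$ (Lemma~\ref{lem:ScalingBoundary}, Lemma~\ref{lem:DiscreteWeightedYoungInequality}), and exploiting that $s\mapsto s^\gamma f(s)$ is $\alpha/2$--Hölder into $L^p(\GGe,\rho_1)$ as encoded in $\|\cdot\|_{\mathscr{L}^{\gamma,\alpha}_{p,t}}$, the bound follows. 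The bookkeeping of $\rho_1(s)$ versus $\rho_1(t)$ (using that $\rho_1$ is pointwise decreasing), of the prefactor $t^\gamma$ against $s^\gamma$ and of the truncation $s\vee0$ near $t=0$ (using Convention~\ref{con:ConventionModifiedParaproduct}, $\gamma\in[0,1)$ and $\|s^\gamma f(s)\|_{L^p}\to0$ as $s\downarrow0$) is carried out verbatim as in \cite[Lemma~6.6]{Gubinelli2017KPZ}.

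\emph{Second estimate.} Write $\mathscr{L}^\eps_\mu=\partial_t-L^\eps_\mu$ and expand $f\mpara g=\sum_{1\leq j\leq j_{\GGe}}Q_jS_{j-1}^{\GGe}f\cdot\varDelta_j^{\GGe}g$. The terms in which $\partial_t$, resp.\ the Fourier multiplier $L^\eps_\mu$, hits the factor $\varDelta_j^{\GGe}g$ reassemble, using $\varDelta_j^{\GGe}L^\eps_\mu=L^\eps_\mu\varDelta_j^{\GGe}$, into $f\mpara\partial_t g$, resp.\ $f\mpara L^\eps_\mu g$, so that
\[
 \bigl(\mathscr{L}^\eps_\mu(f\mpara g)-f\mpara\mathscr{L}^\eps_\mu g\bigr)(t)=\sum_{1\leq j\leq j_{\GGe}}\Bigl[\bigl(\partial_t Q_jS_{j-1}^{\GGe}f-L^\eps_\mu Q_jS_{j-1}^{\GGe}f\bigr)\cdot\varDelta_j^{\GGe}g-\Gamma^\eps\bigl(Q_jS_{j-1}^{\GGe}f,\varDelta_j^{\GGe}g\bigr)\Bigr](t),
\]
where $\Gamma^\eps(u,v)(x):=\eps^{-2}\int_{\GG}(u(x+\eps y)-u(x))(v(x+\eps y)-v(x))\,\dd\mu(y)$ is the discrete ``carré du champ'' measuring the nonlocal Leibniz defect of $L^\eps_\mu$ (so that $L^\eps_\mu(uv)=(L^\eps_\mu u)v+u(L^\eps_\mu v)+\Gamma^\eps(u,v)$). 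In particular no $\mathscr{L}^\eps_\mu g$ survives, so the right-hand side is meaningful under the stated hypotheses; since each of the three pieces is spectrally supported in $2^j\rA\cap\wGGe$ we estimate block by block. For the first piece, $\varphi$ vanishes at the origin, hence $\partial_t Q_i=2^{2i}\widetilde Q_i$ with $\widetilde Q_i$ built from $\varphi'$ (which has vanishing integral), and the Hölder argument of the first estimate gives $t^\gamma\|\partial_t Q_jS_{j-1}^{\GGe}f(t)\|_{L^p}\lesssim 2^{j(2-\alpha)}\|f\|_{\mathscr{L}^{\gamma,\alpha}_{p,t}}$. For the second, $L^\eps_\mu$ lowers regularity by $2$ uniformly in $\eps$ (Lemma~\ref{lem:LaplaceRegularity}) and $Q_j$ is bounded on $L^p(\GGe,\rho)$, so (using $\alpha<2$) $t^\gamma\|L^\eps_\mu Q_jS_{j-1}^{\GGe}f(t)\|_{L^p}\lesssim 2^{j(2-\alpha)}\|f\|_{\mathscr{L}^{\gamma,\alpha}_{p,t}}$. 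For the third one writes the finite differences of $u$ and $v$ through the convolution representations of the Littlewood--Paley blocks, exactly as in the proof of Lemma~\ref{lem:LaplaceRegularity}, so that the factor $\eps^{-2}$ is absorbed; together with the sub-exponential moments of $\mu\in\mm(\omega)$, the weight inequality~\eqref{eq:WeightQuotient} and Bernstein-type bounds for the two spectrally localised factors (Lemma~\ref{lem:DiscreteWeightedYoungInequality}), this gives the correct power of $2^j$. Multiplying by $\|\varDelta_j^{\GGe}g(t)\|_{L^\infty}\lesssim2^{-j\beta}\|g(t)\|_{\mathcal{C}^\beta_\infty}$ where needed and summing over annuli yields the claimed $\mathcal{C}^{\alpha+\beta-2}_p$ bound, again by the same block counting as in \cite[Lemma~6.7]{Gubinelli2017KPZ}.

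\emph{The property~\eqref{eq:EProperty}, and the main obstacle.} Since $Q_j$ acts only in time it does not alter spectral supports, so Lemma~\ref{lem:ExtensionDecay} applies unchanged to the products $Q_jS_{j-1}^{\GGe}f\cdot\varDelta_j^{\GGe}g$; as in the proof of Lemma~\ref{lem:ParaproductEstimates}, the difference between $\EE^\eps$ of the discrete expressions above and the corresponding continuous expressions built from $\EE^\eps f,\EE^\eps g$ is spectrally supported on annuli $2^j\rA$ with $j\sim j_{\GGe}$, and hence, using Lemma~\ref{lem:ExtensionOperator},~\eqref{eq:LpLpRRd} and $2^{-j_{\GGe}}\approx\eps$, is bounded in the corresponding continuous Besov space with the regularity lowered by an arbitrary $\kappa>0$ by $o_\eps\cdot(\text{right-hand side})$ with $o_\eps=\eps^\kappa$. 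The main obstacle is the discrete carré du champ $\Gamma^\eps$: this is the single place where the nonlocality of $L^\eps_\mu$ genuinely enters and where no clean Leibniz rule is available, so the commutator computation must be redone by hand through the convolution representations of the blocks with constants uniform in $\eps$; the time-weight and truncation bookkeeping for the operators $Q_j$ is routine but delicate and must be handled carefully to stay literally within the scheme of \cite{Gubinelli2017KPZ}.
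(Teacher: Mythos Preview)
Your proposal is correct and follows essentially the same route as the paper. Two minor points are worth noting. First, the paper cites \cite[Lemma~6.5]{Gubinelli2017KPZ} (not 6.6/6.7) as the continuous template. Second, for the cross term in the second estimate the paper does not stay at the block level as you do: it observes that $D^\eps_y$ commutes with $Q_j$, $S^{\GGe}_{j-1}$ and $\varDelta^{\GGe}_j$, so the Leibniz defect re-sums globally to $\int_{\GG}\eps^{-2}\,D^\eps_y f\mpara D^\eps_y g\,\dd\mu(y)$, and then invokes Lemma~\ref{lem:ModifiedParaproductEstimates} together with the single clean Bernstein-type bound $\|D^\eps_y\varphi\|_{\mathcal{C}^{\gamma-1}_p(\GGe,\rho)}\lesssim |y|\,\eps\,\|\varphi\|_{\mathcal{C}^\gamma_p(\GGe,\rho)}$ (proved via Lemma~\ref{lem:ScalingBoundary} and Lemma~\ref{lem:DiscreteWeightedYoungInequality}). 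Your block-by-block handling of $\Gamma^\eps$ amounts to the same computation unwound, so there is no gap; the paper's packaging is just a bit more economical.
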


\begin{proof}
Again we can almost follow along the lines of the proof in \cite[Lemma 6.5]{Gubinelli2017KPZ} with the only difference that in the derivation of the second estimate the application of the ``product rule'' of $\mathscr{L}^\varepsilon_\mu$ does not yield a term $-2\nabla f \mpara \nabla g$ but a more complex object, namely 
\begin{align}
\label{eq:NablaModifiedParaproduct}
\int_{\RR^d}\frac{\dd \mu(y)}{\varepsilon^2} \,\, D^\eps_y f \mpara D^\eps_y g \,,
\end{align}
where $D^\eps_y f(t,x)=f(t,x+\varepsilon y)-f(t,x)$ and similarly for $g$. The bound for \eqref{eq:NablaModifiedParaproduct} follows from Lemma \ref{lem:ModifiedParaproductEstimates} once we show
\begin{align}
\label{eq:ProofDerivativeModifiedParaproductCore}
	\|D^\eps_y \varphi \|_{\mathcal{C}^{\gamma-1}_p(\GG^\varepsilon,\rho_1)}\lesssim 	\|\varphi\|_{\mathcal{C}^\gamma_p(\GG^\varepsilon,\rho_1)}\, |y|\cdot  \varepsilon
\end{align}
for any $\gamma\in \RR$. Note that due to Lemma \ref{lem:ScalingBoundary} we can write
\begin{align*}
	\varDelta_j D^y_\eps \varphi=\left(\tilde{\Psi}^{\eps,j}(\cdot+\varepsilon y) - \tilde{\Psi}^{\eps,j} \right)\aste \varphi \,,
\end{align*}
where $\tilde{\Psi}^{\eps,j}=\mathcal{E}^\eps \Psi^{\GG^\eps,j}=2^{jd}\phi_{\je}(2^j\cdot )$ with $\phi_{\je}\in \Sw_\omega(\RR^d)$.
With
\begin{align*}
	\tilde{\Psi}^{\eps,j}(x+\varepsilon y) - \tilde{\Psi}^{\eps,j}(x)=2^{j} \int_0^1  2^{jd} \phi_{\je}(2^j (x+\zeta \varepsilon y)) \,\dd \zeta\cdot y \varepsilon
\end{align*}
we get \eqref{eq:ProofDerivativeModifiedParaproductCore} by applying Lemma \ref{lem:DiscreteWeightedYoungInequality}. The proof of the property~\eqref{eq:EProperty} is as in Lemma~\ref{lem:ParaproductEstimates} and it uses Lemma~\ref{lem:LaplaceRegularity}.
\end{proof}

\section{Weak universality of PAM on $\RR^2$} 
\label{sec:PAM}

With the theory from the previous sections at hand we can analyze stochastic models on unbounded lattices using paracontrolled techniques. As an example, we prove the weak universality result for the linear parabolic Anderson model that we discussed in the introduction.
 For $F\in C^2(\RR;\RR)$ with $F(0)=0$ and bounded second derivative we consider the equation
\begin{align}
\label{eq:PAMonNonScaledLattice}
	\mathscr{L}^1_\mu v^\eps=F(v^\eps)\cdot \eta^\eps,\qquad v^\eps(0)=|\GG|^{-1}\mathbf{1}_{\cdot=0}
\end{align}
on $\RR_+ \times \GG$, where $\GG\subseteq \RR^2$ is a two-dimensional Bravais lattice, $\mathscr{L}^1_\mu=\partial_t-L^1_\mu$ is a discrete diffusion operator on the lattice $\GG$ as described in Definition \ref{def:DiffusionOperator}, induced by $\mu\in \mm(\omega)$ with $\omega=\wexps$ for $\sigma\in(0,1)$. The upper index ``$1$'' indicates that we \emph{did not scale} the lattice $\GG$ yet. The family $(\eta^\eps(z))_{z\in \GG}\in \Sw_\omega'(\GG)$ consists of independent (not necessarily identically distributed) random variables satisfying for $z\in\GG$
\begin{align*}
\mathbb{E}[\eta^\eps(z)]=-F'(0) c^\eps_\mu\eps^2\,,\qquad \mathrm{Var}\big(\eta^\eps(z)\big)=\frac{1}{|\GG^\eps|}= \frac{1}{|\GG|} \,\eps^2\,,
\end{align*}
where $c^\eps_\mu>0$ is a constant of order $O(|\log \eps|)$ which we will fix in equation~\eqref{eq:renorm-const} below. We further assume that for every $\eps$ and $z\in \GG$ the variable $\eta^\eps(z)$ has moments of order $p_\xi>14$ such that
\begin{align*}
	\sup_{z \in \GG^\varepsilon} \mathbb{E}\big[|\eta^\eps(z)-\mathbb{E}[\eta^\eps(z)]|^{p_\xi}\big]\lesssim \eps^{p_\xi}\,.
\end{align*}
The lower bound $14$ for $p_\xi$ might seem quite arbitrary at the moment, we will explain this choice in Remark \ref{rem:fourteen} below. Note that $\eta^\eps$ is of order $O(\eps)$ while its expectation is of order $O(\eps^2 |\log \eps|)$, so we are considering a small shift away from the ``critical'' expectation $0$. 

We are interested in the behavior of \eqref{eq:PAMonNonScaledLattice} for large scales in time and space. Setting $u^\eps(t,x):=\eps^{-2} v^\eps(\eps^{-2} t,\eps^{-1} x)$ and $\xi^\eps(x):=\eps^{-2} (\eta^\eps(\eps^{-1} x )+F'(0) c^\eps_\mu\eps^2   )$ modifies the problem to
\begin{align}
\label{eq:DiscretePAM}
	\mathscr{L}^\eps_\mu u^\eps=F^\eps(u^\eps)(\xi^\eps -F'(0) c^\eps_\mu),\qquad u^\eps(0)=|\GG^\eps|^{-1}\mathbf{1}_{\cdot=0}\,,
\end{align}
where $u^\eps \colon \RR_+ \times \GG^\eps \to \RR$ is defined on refining lattices $\GG^\eps$ in $d=2$ as in Definition \ref{def:LatticeSequence} and where $F^\eps:=\eps^{-2} F(\eps^2\cdot )$. The potential $(\xi^\eps(x))_{x\in \GG^\eps}$ is scaled so that it satisfies for $z\in \GG^\eps$
\begin{itemize}
	\item $\mathbb{E}[\xi^\eps(z)]=0$,
	\item $ \mathbb{E}\left[ |\xi^\eps(z)|^2  \right]= |\GG^\eps|^{-1} = |\GG|^{-1} \eps^{-2}$,
	\item $\sup_{z \in \GG^\eps} \mathbb{E}\left[ |\xi^\eps(z)|^{p_\xi} \right]\lesssim \eps^{- p_\xi}$ for some $p_\xi > 14$.
\end{itemize}
We consider $\xi^\eps$ as a discrete approximation to white noise in dimension $2$.  In particular, we expect $\EE^\eps \xi^\eps$ to converge in distribution to white noise on $\RR^2$, and we will see in Lemma \ref{lem:LimitArea} below that this is indeed the case. In Theorem \ref{thm:ConvergencePAM} we show that $\EE^\eps u^\eps$ converges in distribution to the solution $u$ of the linear parabolic Anderson model on $\RR^2$,
\begin{align}
\label{eq:GlobalPAM}
	\mathscr{L}_\mu u=F'(0) u (\xi - F'(0)\infty),\qquad u(0) = \delta,
\end{align}
where $\xi$ is white noise on $\RR^2$, $\delta$ is the Dirac delta distribution, ``$-\infty$'' denotes a renormalization and $\mathscr{L}_\mu$ is the limiting operator from Definition \ref{def:DiffusionOperator}. The existence and uniqueness of a solution to \eqref{eq:GlobalPAM} were first established in~\cite{HairerLabbeR2} (for more regular initial conditions) by using a ``partial Cole-Hopf transformation'' which turns the equation into a well-posed PDE. Using the continuous versions of the objects defined in the Sections above we can modify the arguments of~\cite{GIP} to give an alternative proof of their result, see Corollary~\ref{cor:ContinuousSolution} below. The limit of \eqref{eq:DiscretePAM} only sees $F'(0)$ and forgets the structure of the non-linearity $F$, so in that sense the linear parabolic Anderson model arises as a universal scaling limit.

Let us illustrate this result with a (far too simple) model: Suppose $F$ is of the form $F(v)=v(1-v)$ and let us first consider the following ordinary differential equation on $[0,T]$:
\begin{align*}
\partial_t v=\eta \cdot F(v),\qquad v(0)\in (0,1)\,,
\end{align*}
for some $\eta\in \RR$. If $\eta>0$, then $v$ describes the evolution of the concentration of a growing population in a pleasant environment, which however shows some saturation effects represented by the factor $(1-v)$ in the definition of $F$. For $\eta<0$ the individuals live in unfavorable conditions,  say in competition with a rival species. From this perspective equation \eqref{eq:PAMonNonScaledLattice} describes the dynamics of a population that migrates between diverse habitats. The meaning of our universality result is that if we tune down the random potential $\eta^\eps$ and counterbalance the growth of the population with some renormalization (think of a death rate), then from far away we can still observe its growth (or extinction) without feeling any saturation effects.

The analysis of~\eqref{eq:DiscretePAM} and the study of its convergence are based on the lattice version of paracontrolled distributions that we developed in the previous sections and it will be given in Subsection~\ref{sec:convergence} below. In that analysis it will be important to understand the limit of $\EE^\eps \xi^\eps$ and a certain bilinear functional built from it, and we will also need uniform bounds in suitable Besov spaces for these objects. In the following subsection we discuss this convergence.

\subsection{Discrete Wick calculus and convergence of the enhanced noise}
\label{subsec:ConvergenceArea}

We develop here a general machinery for the use of discrete Wick contractions in the renormalization of discrete, singular SPDEs with i.i.d. noise which is completely analogous to the continuous Gaussian setting. Moreover, we build on the techniques of~\cite{Caravenna} to provide a criterion that identifies the scaling limits of discrete Wick products as multiple Wiener-It\^o integrals. Our results are summarized in Lemma~\ref{lem:DiscreteItoIsometry} and Lemma~\ref{lem:ChaosExpansionConvergence} below and although the use of these results is illustrated only on the discrete parabolic Anderson model, the approach extends in principle to any discrete formulation of popular singular SPDEs such as the KPZ equation or the $\Phi^4_d$ models. In order to underline the general applicability of these methods we work in this subsection in a general dimension $d$.

Take a sequence of scaled Bravais lattices $\GG^\eps$ in dimension $d$ as in Definition \ref{def:LatticeSequence}. As a \emph{discrete approximation to white noise} we take independent (but \emph{not necessarily identically distributed}) random variables $\big(\xi^\eps(z)\big)_{z\in \GG^\eps}$ that satisfy
\begin{itemize}
	\item $\mathbb{E}[\xi^\eps(x)]=0$,
	\item $ \mathbb{E}\left[ |\xi^\eps(x)|^2  \right]= |\GG^\eps|^{-1} = |\GG|^{-1} \eps^{-d}$,
	\item $\sup_{z \in \GG^\eps} \mathbb{E}\left[ |\xi^\eps(z)|^{p_\xi} \right]\lesssim \eps^{- d/2\cdot  p_\xi}$ for some $p_\xi \geq 2$.
\end{itemize}
\label{ApproximationToWhiteNoise}
Note that the family $(\xi^\eps(z))_{z\in \GG^\eps}$ we defined in the introduction of this Section  fits into this framework (with $d=2$ and $p_\xi>14$). 

Let us fix a symmetric $\chi\in \Dw_\omega(\RR^d)$, independent of $\eps$, which is $0$ on $\frac{1}{4}\cdot \widehat{\GG}$ and $1$ outside of $\frac{1}{2} \cdot \widehat{\GG}$ and define
\begin{align*}
	X_\mu^\eps := \frac{\chi}{l^\eps_\mu}(D_{\GG^\eps})  \xi^\eps:=\FFge^{-1} \left( \frac{\chi}{l^\eps_\mu} \cdot \FFge \xi^\eps \right)\,.
\end{align*}   
Let us point out that the $\chi$ used in the construction of $X_\mu^\eps$ does \emph{not} depend on $\eps$ and only serves to erase the ``zero-modes'' of $\xi^\eps$ to avoid integrability issues. Note that $\mathscr{L}_\mu^\eps X_\mu^\eps = -L^\eps_\mu X_\mu^\eps=\chi(D_{\GG^\eps})\xi^\eps=\FFge^{-1}(\chi \cdot \FFge \xi^\eps)$ so that $X_\mu^\eps$ is a time independent solution to the heat equation on $\GG^\eps$ driven by $\chi(D_{\GG^\eps})\xi^\eps$. Our first task will be to measure the regularity of the sequences $(\xi^\eps)$, $(X_\mu^\eps)$ in terms of the discrete Besov spaces introduced in Subsection~\ref{subsec:discreteBesovspaces}. For that purpose we need to estimate moments of sufficiently high order. For  discrete multiple stochastic integrals with respect to the variables $(\xi^\eps(z))_{z \in \GG^\eps}$, that is for sums $\sum_{z_1,\ldots,z_n \in \GG^\eps}  f(z_1,\ldots,z_n)\, \xi^\eps(z_1)  \ldots  \xi^\eps(z_n)$ with $f(z_1,\dots, z_n) = 0$ whenever $z_i = z_j$ for some $i \neq j$ it was shown in~\cite[Proposition~4.3]{Chouk2017} that all moments can be bounded in terms of the $\ell^2$ norm of $f$ and the corresponding moments of the $(\xi^\eps(z))_{z\in \GG^\eps}$. However, typically we will have to bound such expressions for more general $f$ (which do not vanish on the diagonals) and in that case we first have to arrange our random variable into a finite sum of discrete multiple stochastic integrals, so that then we can apply~\cite[Proposition~4.3]{Chouk2017} for each of them. This arrangement can be done in several ways, here we follow~\cite{Hairer2015Central} and regroup in terms of Wick polynomials.

Given random variables $(Y(j))_{j \in J}$ over some index set $J$ and $I = (j_1,\dots,j_n) \in J^n$ we set
\[
   Y^I = Y(j_1) \dots Y(j_n) = \prod_{k=1}^n Y(j_k)
\]
as well as $Y^\varnothing=1$. According to Definition~3.1 and Proposition~3.4 of \cite{LukkarinenMarcozzi}, the \emph{Wick product} $Y^{\diamond I}$ 
 can be defined recursively by $Y^{\diamond \varnothing} := 1$ and
\begin{equation}\label{eq:wick-def}
   Y^{\diamond I} := Y^I - \sum_{\varnothing \neq E \subset I} \mathbb{E}[Y^E] \cdot  Y^{\diamond \,I \setminus E}\,.
\end{equation}
For $I = (j_1,\dots,j_n) \in J^n$ we also write 
\begin{align*}
Y(j_1)\diamond \dots \diamond Y(j_n):= Y^{\diamond I}\,.
 \end{align*} 
 By induction one easily sees that this product is commutative. In the case $j_1=\ldots=j_n$ we may write instead
 \begin{align*}
 Y(j_1)^{\diamond n}\,.
 \end{align*}

\glsadd{diamond}

\begin{lemma}[see also Proposition~4.3 in~\cite{Chouk2017}]
\label{lem:DiscreteItoIsometry}
Let $\GGe$ be as in Definition \ref{def:LatticeSequence} and let $\big(\xi^\eps(z)\big)_{z\in \GG^\eps}$ be a discrete approximation to white noise as above, $n\geq 1$ and assume $p_\xi\geq 2 n$.
For $f \in L^2((\GG^\varepsilon)^n)$ define the \emph{discrete multiple stochastic integral} w.r.t $\big(\xi^\eps(z)\big)$ by
\begin{align*}
\mathscr{I}_nf:=\sum_{z_1,\ldots,z_n \in \GG^\varepsilon} |\GG^\varepsilon|^n \, f(z_1,\ldots,z_n) \, \xi^\varepsilon(z_1)\diamond  \ldots  \diamond \xi^\varepsilon(z_n)\,.
\end{align*}
It then holds for $2\leq p\leq p_\xi/n$

\[
	\left\|  \mathscr{I}_n f\right\|_{L^{p}(\mathbb{P})}   \lesssim \|f\|_{L^2((\GG^\varepsilon)^n)}\,.
\]
\end{lemma}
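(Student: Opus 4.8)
The plan is to reduce the $L^p$ bound for the discrete multiple stochastic integral $\mathscr I_n f$ to a hypercontractivity-type estimate obtained via the martingale structure hidden in the i.i.d.\ variables $(\xi^\eps(z))_{z\in\GG^\eps}$. First I would symmetrize: since the Wick product is commutative, $\mathscr I_n f = \mathscr I_n(\mathrm{sym}\, f)$ and $\|\mathrm{sym}\, f\|_{L^2((\GG^\eps)^n)}\le \|f\|_{L^2((\GG^\eps)^n)}$, so I may assume $f$ symmetric. Next, by the definition of the Wick product \eqref{eq:wick-def} and an inclusion--exclusion argument, the Wick monomial $\xi^\eps(z_1)\diamond\cdots\diamond\xi^\eps(z_n)$ coincides, after summing against $f$, with the \emph{off-diagonal} sum: one shows that $\mathscr I_n f = \sum_{z_1,\dots,z_n} |\GG^\eps|^n\, \tilde f(z_1,\dots,z_n)\,\xi^\eps(z_1)\cdots\xi^\eps(z_n)$ where $\tilde f$ agrees with $f$ off the diagonals and vanishes whenever $z_i=z_j$ for some $i\ne j$; this is where the assumption that the $\xi^\eps(z)$ are centered (and independent) enters, exactly as in the passage from Hermite polynomials to off-diagonal sums in the Gaussian case. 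Actually it is cleaner to take this off-diagonal representation as the working definition and note $\|\tilde f\|_{L^2}\le C_n\|f\|_{L^2}$.

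Then I would enumerate the lattice points of $\GG^\eps$ as $z^{(1)},z^{(2)},\dots$ (within the relevant finite support of $f$, or via an exhaustion) and introduce the filtration $\mathcal F_k = \sigma(\xi^\eps(z^{(1)}),\dots,\xi^\eps(z^{(k)}))$. Because $\tilde f$ vanishes on diagonals, the partial sums of $\mathscr I_n f$ obtained by restricting the largest index among $z_1,\dots,z_n$ to be at most $k$ form a martingale with respect to $(\mathcal F_k)$: the increment at step $k$ is $\xi^\eps(z^{(k)})$ times a sum over $(n-1)$-tuples of strictly smaller indices, which is $\mathcal F_{k-1}$-measurable, and $\mathbb E[\xi^\eps(z^{(k)})\mid\mathcal F_{k-1}]=0$. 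Now apply the Burkholder--Davis--Gundy inequality (or Burkholder's discrete martingale inequality) to get $\|\mathscr I_n f\|_{L^p(\mathbb P)}\lesssim \big\| (\sum_k D_k^2)^{1/2}\big\|_{L^p(\mathbb P)}$ where $D_k$ is the $k$-th martingale increment, and then use Minkowski's inequality in $L^{p/2}$ to bound $\big\|\sum_k D_k^2\big\|_{L^{p/2}} \le \sum_k \|D_k^2\|_{L^{p/2}} = \sum_k \|D_k\|_{L^p}^2$. Iterating this argument $n$ times — at each stage peeling off one more variable and one more layer of martingale structure — reduces the bound to a fully "diagonal" expression in which all $n$ summation indices are tied together by repeated application of BDG, and the moment assumption $p_\xi\ge 2n$ with $p\le p_\xi/n$ guarantees that at every one of the $n$ stages the relevant $L^{q}(\mathbb P)$ norm of a single variable $\xi^\eps(z)$ (with $q\le p_\xi$) is finite and controlled by $\mathbb E[|\xi^\eps(z)|^q]^{1/q}\lesssim \eps^{-d/2}$. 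Collecting the constants one is left with $\|\mathscr I_n f\|_{L^p(\mathbb P)}\lesssim \big(|\GG^\eps|^2 \sum_{z} \eps^{-d}\,(\cdots)\big)^{1/2}$-type factors that telescope to $\big(|\GG^\eps|^n \sum_{z_1,\dots,z_n}|\tilde f(z_1,\dots,z_n)|^2 \cdot (\eps^{-d}|\GG^\eps|)^{?}\big)^{1/2}$; since $|\GG^\eps| = |\GG|\eps^d$ the factors $\eps^{-d}|\GG^\eps|$ are $O(1)$ and one reads off exactly $\|\mathscr I_n f\|_{L^p(\mathbb P)}\lesssim \|\tilde f\|_{L^2((\GG^\eps)^n)}\lesssim \|f\|_{L^2((\GG^\eps)^n)}$, with constant depending only on $n$, $p$, and the constant in the moment bound, uniformly in $\eps$.

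The main obstacle is bookkeeping the $n$-fold iteration cleanly: one must organize the induction so that the martingale/BDG step is applied to the "outermost" free index while the inner sums are frozen, and keep track of the symmetry factors and of the precise power of $|\GG^\eps|$ versus $\eps^{-d}$ at each stage so that they cancel to give a bound with no $\eps$-dependent prefactor. A convenient way to handle this is to set up an inductive claim of the form: for $0\le m\le n$ and $g\in L^2((\GG^\eps)^n)$ supported off all diagonals, $\big\|\sum |\GG^\eps|^n g\,\xi^\eps(z_1)\cdots\xi^\eps(z_n)\big\|_{L^p(\mathbb P)}^2 \lesssim |\GG^\eps|^{2m}\sum_{z_1,\dots,z_m}\big\|\sum_{z_{m+1},\dots,z_n}|\GG^\eps|^{n-m} g\,\xi^\eps(z_{m+1})\cdots\xi^\eps(z_n)\big\|_{L^p(\mathbb P)}^2$, proved by one application of BDG plus Minkowski, and then iterate from $m=0$ to $m=n$. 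Since this is precisely the strategy used in \cite{Mourrat2017} and \cite[Proposition~4.3]{Chouk2017} for $n=2$, the remaining work is to check that nothing in the argument used identical distribution (it did not — only independence, centeredness, and uniform moment bounds are needed) and that the constants stay uniform in $\eps$, which they do because $|\GG^\eps|\eps^{-d}=|\GG|$ is a fixed constant.
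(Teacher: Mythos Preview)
Your martingale/BDG/Minkowski iteration is exactly the engine the paper uses, but your reduction step is wrong and this is a genuine gap. You claim that the Wick integral $\mathscr I_n f$ coincides with the \emph{off-diagonal} product sum $\sum |\GG^\eps|^n \tilde f(z_1,\dots,z_n)\,\xi^\eps(z_1)\cdots\xi^\eps(z_n)$. This identity holds in the Gaussian case because all cumulants of order $\ge 3$ vanish, but it fails for general independent centered variables: on a diagonal such as $z_1=z_2=z$ one has $\xi^\eps(z)\diamond\xi^\eps(z)=\xi^\eps(z)^2-\mathbb E[\xi^\eps(z)^2]\ne 0$, so the diagonal contribution to $\mathscr I_n f$ is genuinely present and cannot be absorbed into an off-diagonal representation with a modified $\tilde f$. (Taking the off-diagonal sum as a new working definition is not an option either, since the lemma is applied later to kernels that do not vanish on diagonals.)

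The fix, which is what the paper does, is to stratify by the coincidence pattern: enumerate $\GG^\eps$ and write
\[
\mathscr I_n f=\sum_{1\le r\le n}\ \sum_{a\in A^n_r} r!\sum_{z_1<\dots<z_r}|\GG^\eps|^n\,\tilde f_a(z_1,\dots,z_r)\,\xi^\eps(z_1)^{\diamond a_1}\cdots\xi^\eps(z_r)^{\diamond a_r},
\]
where $A^n_r=\{a\in\NN^r:\sum a_i=n\}$ and $\tilde f_a$ is the symmetrized restriction of $f$ to the diagonal of type $a$. For each fixed $(r,a)$ the summands have pairwise distinct $z_i$'s and the factors $\xi^\eps(z_j)^{\diamond a_j}$ are independent and centered, so your nested BDG + Minkowski argument now applies verbatim and yields a bound by $\sum_{z_1<\dots<z_r}|\GG^\eps|^{2n}|\tilde f_a|^2\prod_j\|\xi^\eps(z_j)^{\diamond a_j}\|_{L^p(\mathbb P)}^2$. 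The moment assumption $p\le p_\xi/n$ gives $\|\xi^\eps(z)^{\diamond a_j}\|_{L^p(\mathbb P)}^2\lesssim|\GG^\eps|^{-a_j}$, the product collapses to $|\GG^\eps|^{-n}$, and summing over $(r,a)$ recovers $\|f\|_{L^2((\GG^\eps)^n)}^2$. So the only missing idea in your proposal is this diagonal decomposition; once it is in place, everything you wrote from the enumeration onward goes through.
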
 
\begin{proof}
In the following we identify $\GG^\varepsilon$ with an enumeration by $\NN$ so that we can write
\begin{align*}
\mathscr{I}_n f =\sum_{1\leq r\leq n,\,a\in A_r^n} r! \sum_{z_1<\ldots<z_r}  |\GG^\varepsilon|^n \tilde{f}_a(z_1,\ldots,z_r)\cdot\xi^\varepsilon(z_1)^{\diamond {a_1}} \times \ldots \times \xi^\varepsilon(z_r)^{\diamond {a_r}}\,,
\end{align*}
where $A_r^n:=\{a\in\mathbb{N}^r \vert \sum_i a_i=n  \}$, $\tilde{f}_a$ denotes the symmetrized version of 
\begin{align*}
f_a(z_1,\ldots,z_r):=f(\overbrace{z_1,\ldots,z_1}^{a_1 \times},\ldots,\overbrace{z_r,\ldots,z_r}^{a_r \times})\cdot \mathbf{1}_{z_i\neq z_j \, \forall i\neq j}\,,
\end{align*}
and where we used the independence of $\xi^\varepsilon(z_1),\ldots,\xi^\varepsilon(z_r)$ to decompose the Wick product (we did not show this property, but it is not hard to derive it from the definition of $\diamond$ we gave above). The independence and the zero mean of the Wick products allow us to see this as a sum of nested martingale transforms so that an iterated application of the Burkholder-Davis-Gundy inequality and Minkowski's inequality as in \cite[Proposition 4.3]{Chouk2017} gives the desired estimate
\begin{align*}
\left\|\mathscr{I}_n f \right\|^2_{L^{p}(\mathbb{P})}  &\lesssim \sum_{1\leq r\leq n,\,a\in A_r^n} \left\| \sum_{z_1<\ldots<z_r} |\GG^\varepsilon|^n \cdot \tilde{f}_a(z_1,\ldots,z_r)\cdot \xi^\varepsilon(z_1)^{\diamond{a_1}}\times \ldots \times \xi^\varepsilon(z_r)^{\diamond{a_r}}\right\|_{L^p(\mathbb{P})}^2 \\
&\lesssim \sum_{1\leq r\leq n,\,a\in A_r^n}  \sum_{z_1<\ldots<z_r}  |\GG^\varepsilon|^{2n}\cdot 
 |\tilde{f}_a(z_1,\ldots,z_r)|^2 \cdot \prod_{j=1}^r \|\xi^\varepsilon(z_j)^{\diamond{a_j}}\|_{L^p(\mathbb{P})}^2 \\
 &\lesssim \sum_{1\leq r\leq n,\,a\in A_r^n}  \sum_{z_1,\ldots,z_r} |\GG^\varepsilon|^n |\tilde{f}_a(z_1,\ldots,z_r)|^2 \leq \|f\|_{L^2((\GG^\varepsilon)^{n})}^2\,,
\end{align*}
where we used the bound $\|\xi^\varepsilon(z_r)^{\diamond{a_j}}\|_{L^p(\mathbb{P})}^2\lesssim |\GG^\varepsilon|^{-a_j}$ which follows from~\eqref{eq:wick-def} and our assumption on $\xi^\eps$.
\end{proof}

As a direct application we can bound the moments of $\xi^\eps$ and $X_\mu^\eps$ in Besov spaces. We also need to control the resonant term $X_\mu^\eps \reso \xi^\eps$, for which we introduce the renormalization constant
\begin{equation}\label{eq:renorm-const}
	c^\eps_\mu:=\int_{\widehat{\GG^\eps}} \frac{\chi(x)}{l^\eps_\mu(x)} \,\dd x\,,
\end{equation}
which is finite for all $\eps > 0$ because $\widehat{\GG^\eps}$ is compact and $\chi$ is supported away from $0$. We define a \emph{renormalized} resonant product by
\glsadd{bullet}
\begin{align*}
X^\eps_\mu\bullet \xi^\varepsilon:=X^\eps_\mu \reso \xi^\varepsilon-c^\varepsilon_\mu\,.
\end{align*}

\begin{remark}
Since $l^\varepsilon_\mu \approx \lvert\cdot\rvert^2$ (Lemma \ref{lem:SchauderPrelude} together with the easy estimate $l^\varepsilon_\mu \lesssim \lvert\cdot\rvert^2$) we have $c^\varepsilon_\mu \approx -\log \varepsilon$ in dimension 2.  
\end{remark}

Using Lemma \ref{lem:DiscreteItoIsometry} we can derive the following bounds. 

\begin{lemma}
\label{lem:RegularityNoise}
Let $\xi^\eps,\,X^\eps$ and $X^\eps_\mu\bullet \xi^\varepsilon$ be defined on $\GG^\eps$ as above with $p_\xi\geq 4$ (where $p^\xi$ is as on page \pageref{ApproximationToWhiteNoise}) and let $d<4$. For $\mu\in \mm(\ww),\,\zeta<2-d/2-d/p_\xi$ and $\kappa>d/p_\xi$ we have
\begin{align}
\label{eq:RegularityNoise}
	\mathbb{E} \left[ \,\|\xi^\varepsilon\|^{p_\xi}_{\mathcal{C}^{\zeta-2}(\GG^\varepsilon,\w^{\kappa})} \right] +	\mathbb{E} \left[ \,\|X^\eps_\mu\|^{p_\xi}_{\mathcal{C}^{\zeta}(\GG^\varepsilon,\w^{\kappa})} \right] + \mathbb{E}\left[ \| X^\eps_\mu \bullet \xi^\varepsilon\|^{p_\xi/2}_{\mathcal{C}^{2\zeta -2}(\GG^\varepsilon,\w^{2\kappa})}  \right] &\lesssim 1 \,.
\end{align}
The implicit constant is independent of $\eps$.
\end{lemma}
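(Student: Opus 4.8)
The plan is to derive all three bounds from the discrete moment estimate of Lemma~\ref{lem:DiscreteItoIsometry} via a Besov embedding. For a lattice function $g$, an exponent $r\in[1,\infty)$, $s\in\RR$ and a weight $\rho$, Tonelli gives
\[
   \mathbb{E}\big[\,\|g\|^{r}_{\mathcal{B}^{s}_{r,r}(\GG^\eps,\rho)}\,\big]
   =\sum_{j=-1}^{j_{\GG^\eps}} 2^{jsr}\,|\GG^\eps|\sum_{x\in\GG^\eps}\rho(x)^{r}\,\mathbb{E}\big[\,|\varDelta_j^{\GG^\eps}g(x)|^{r}\,\big],
\]
while by Lemma~\ref{lem:DiscreteEmbedding} the embedding $\mathcal{B}^{s}_{r,r}(\GG^\eps,\rho)\hookrightarrow\mathcal{C}^{s-d/r}(\GG^\eps,\rho)$ holds uniformly in $\eps$. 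Hence it is enough to bound $\mathbb{E}[|\varDelta_j^{\GG^\eps}g(x)|^{r}]$ uniformly in $x$ and $\eps$, with $r=p_\xi$ for $g\in\{\xi^\eps,X^\eps_\mu\}$ and $r=p_\xi/2$ for $g=X^\eps_\mu\bullet\xi^\eps$; the ensuing weighted sum over $x$ is finite uniformly in $\eps$ because $|\GG^\eps|\sum_{x\in\GG^\eps}(1+|x|)^{-\kappa p_\xi}<\infty$ uniformly by the integral comparison of Lemma~\ref{lem:IntegralTest}, using $\kappa>d/p_\xi$. (This also explains the squared weight in the third term: after raising $\w^{2\kappa}$ to the power $p_\xi/2$ one lands again on $(1+|\cdot|)^{-\kappa p_\xi}$.)

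For $\xi^\eps$ and $X^\eps_\mu$ the blocks are first-chaos quantities: by \eqref{eq:BlockAsConvolution}, $\varDelta_j^{\GG^\eps}\xi^\eps(x)=\mathscr{I}_1\big(k\mapsto\Psi^{\GG^\eps,j}(x-k)\big)$ and $\varDelta_j^{\GG^\eps}X^\eps_\mu(x)=\mathscr{I}_1\big(k\mapsto K_j^\eps(x-k)\big)$ with $K_j^\eps:=\FFge^{-1}(\varphi_j^{\GG^\eps}\,\chi/l^\eps_\mu)$, which is a smooth multiplier because $\chi$ vanishes near $0$ and, by Lemma~\ref{lem:SchauderPrelude} applied with the fixed compact set $\overline{\widehat\GG}$, $l^\eps_\mu\gtrsim|\cdot|^2$ on $\widehat{\GG^\eps}$ uniformly in $\eps$. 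Since $p_\xi\geq2$, Lemma~\ref{lem:DiscreteItoIsometry} together with translation invariance of $\GG^\eps$ gives $\mathbb{E}[|\varDelta_j^{\GG^\eps}\xi^\eps(x)|^{p_\xi}]\lesssim\|\Psi^{\GG^\eps,j}\|^{p_\xi}_{L^2(\GG^\eps)}$ and $\mathbb{E}[|\varDelta_j^{\GG^\eps}X^\eps_\mu(x)|^{p_\xi}]\lesssim\|K_j^\eps\|^{p_\xi}_{L^2(\GG^\eps)}$, both independent of $x$. Here $\|\Psi^{\GG^\eps,j}\|^2_{L^2(\GG^\eps)}\lesssim2^{jd}$ by the scaling identity \eqref{eq:PsijScaling} and Lemma~\ref{lem:DiscreteWeightedYoungInequality}, and Parseval's identity \eqref{eq:ParsevalsIdentity} together with $l^\eps_\mu\gtrsim|\cdot|^2$ on $\supp\varphi_j^{\GG^\eps}$ gives $\|K_j^\eps\|^2_{L^2(\GG^\eps)}\lesssim2^{-4j}\int_{\widehat{\GG^\eps}}(\varphi_j^{\GG^\eps})^2\,\dd\xi\lesssim2^{j(d-4)}$. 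Choosing $s=\zeta-2+d/p_\xi$ for $\xi^\eps$ and $s=\zeta+d/p_\xi$ for $X^\eps_\mu$, so that the embedding lands in $\mathcal{C}^{\zeta-2}$, resp. $\mathcal{C}^{\zeta}$, the geometric series $\sum_{j}2^{jsp_\xi}2^{jdp_\xi/2}$, resp. $\sum_{j}2^{jsp_\xi}2^{jp_\xi(d-4)/2}$, converges uniformly in $\eps$ exactly under the assumption $\zeta<2-d/2-d/p_\xi$.

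The delicate term is $X^\eps_\mu\bullet\xi^\eps$. First I would perform its chaos decomposition: expanding both blocks in $X^\eps_\mu\reso\xi^\eps$ as discrete convolutions and writing $\xi^\eps(k)\xi^\eps(l)=\xi^\eps(k)\diamond\xi^\eps(l)+\mathbf{1}_{k=l}|\GG^\eps|^{-1}$ (independence and zero mean), the diagonal contribution equals $\sum_{|i-j|\leq1}|\GG^\eps|\sum_{y\in\GG^\eps}K_i^\eps(y)\Psi^{\GG^\eps,j}(y)=\int_{\widehat{\GG^\eps}}\tfrac{\chi}{l^\eps_\mu}\big(\sum_{|i-j|\leq1}\varphi_i^{\GG^\eps}\varphi_j^{\GG^\eps}\big)\dd\xi$, and since $\sum_{|i-j|\leq1}\varphi_i^{\GG^\eps}\varphi_j^{\GG^\eps}\equiv1$ on $\supp\chi$ this is precisely the renormalization constant $c^\eps_\mu$ of \eqref{eq:renorm-const}. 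Thus $X^\eps_\mu\bullet\xi^\eps$ is a genuine second chaos, $X^\eps_\mu\bullet\xi^\eps=\mathscr{I}_2 G$ with $G(x;k,l)=\sum_{|i-j|\leq1}K_i^\eps(x-k)\Psi^{\GG^\eps,j}(x-l)$, hence $\varDelta_n^{\GG^\eps}(X^\eps_\mu\bullet\xi^\eps)(x)=\mathscr{I}_2 H_{n,x}$ with $H_{n,x}(k,l)=|\GG^\eps|\sum_{y}\Psi^{\GG^\eps,n}(x-y)G(y;k,l)$, and by the spectral-support description of Lemma~\ref{lem:SupportParaproduct} only scales $j\gtrsim n$ contribute. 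Since $p_\xi\geq4=2\cdot2$, Lemma~\ref{lem:DiscreteItoIsometry} gives $\mathbb{E}[|\varDelta_n^{\GG^\eps}(X^\eps_\mu\bullet\xi^\eps)(x)|^{p_\xi/2}]\lesssim\|H_{n,x}\|^{p_\xi/2}_{L^2((\GG^\eps)^2)}$, and a Parseval computation on $(\GG^\eps)^2$ (the $x$-dependent phase has modulus one and drops out) bounds $\|H_{n,x}\|^2_{L^2}$ by $\sum_{j\gtrsim n}2^{-4j}\cdot2^{nd}\cdot2^{jd}$, with $2^{nd}$ the volume of $\{\xi_1:[\xi_1+\xi_2]\in\supp\varphi_n^{\GG^\eps}\}$ and $2^{jd}$ the volume of $\supp\varphi_j^{\GG^\eps}$. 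This is where $d<4$ enters: then $\sum_{j\gtrsim n}2^{j(d-4)}\lesssim2^{n(d-4)}$, so $\|H_{n,x}\|^2_{L^2}\lesssim2^{n(2d-4)}$ uniformly in $\eps$, and with $s=2\zeta-2+2d/p_\xi$ the series $\sum_{n}2^{nsp_\xi/2}2^{n(d-2)p_\xi/2}$ converges uniformly in $\eps$ again exactly when $\zeta<2-d/2-d/p_\xi$. The two genuine obstacles here are recognizing that subtracting $c^\eps_\mu$ removes precisely the zeroth-chaos part — which rests on the partition-of-unity identity $\sum_{|i-j|\leq1}\varphi_i^{\GG^\eps}\varphi_j^{\GG^\eps}\equiv1$ on $\supp\chi$ — and carrying out the two-variable Parseval estimate of $H_{n,x}$ just sketched, where the frequency constraint $j\gtrsim n$ from the resonant product must be combined with the decay $l^\eps_\mu\gtrsim|\cdot|^2$ so that the double sum over scales is summable for $d<4$; everything else (the explicit chaos expansions, the Parseval identities, and the uniform summability of the weighted sums over lattice points) is routine once the scaling relation \eqref{eq:PsijScaling} and the discrete weighted Young inequality of Lemma~\ref{lem:DiscreteWeightedYoungInequality} are available.
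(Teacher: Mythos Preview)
Your proposal is correct and follows essentially the same approach as the paper: Besov embedding $\mathcal{B}^{s}_{r,r}\hookrightarrow\mathcal{C}^{s-d/r}$, pointwise moment bounds via Lemma~\ref{lem:DiscreteItoIsometry}, Parseval together with $l^\eps_\mu\gtrsim|\cdot|^2$ from Lemma~\ref{lem:SchauderPrelude}, and for the resonant term a two-variable Parseval computation exploiting the scale constraint $j\gtrsim n$. The only presentational difference is that you identify the zeroth-chaos part via the partition-of-unity identity $\sum_{|i-j|\le1}\varphi_i^{\GG^\eps}\varphi_j^{\GG^\eps}\equiv1$, whereas the paper observes directly that $\mathbb{E}[(X^\eps_\mu\reso\xi^\eps)(x)]=\mathbb{E}[(X^\eps_\mu\cdot\xi^\eps)(x)]=c^\eps_\mu$; both amount to the same computation.
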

\begin{proof}
Let us bound the regularity of $X^\eps_\mu$. Recall that by Lemma~\ref{lem:DiscreteEmbedding} we have the continuous embedding (with norm uniformly bounded in $\varepsilon$) $\mathcal{B}^{\zeta+d/p_\xi}_{p_\xi,p_\xi}(\GG^\varepsilon,\w^{\kappa})\subseteq \mathcal{C}^{\zeta}(\GG^\varepsilon,\w^{\kappa})$. To show \eqref{eq:RegularityNoise} it is therefore sufficient to bound for $\beta<2-d/2$
\begin{align*}
	\mathbb{E}\left[ \|X^\eps_\mu\|^{p_\xi}_{\mathcal{B}^\beta_{p_\xi,p_\xi}(\GG^\varepsilon, \w^{\kappa})} \right]=\sum_{-1\le j \le j_{\GG^\varepsilon}} 2^{j{p_\xi} \beta } \sum_{z \in \GG^\varepsilon} |\GG^\varepsilon| \,\mathbb{E}[|\varDelta_j^{\GG^\varepsilon} X^\eps_\mu(z)|^{p_\varepsilon} ] \frac{1}{(1+|z|)^{\kappa p_\xi}}\,.
\end{align*}
By assumption we have $\kappa p_\xi > d$ and can bound $\sum_{z \in \GG^\varepsilon} |\GG^\varepsilon| (1+|z|)^{-\kappa p_\xi}\lesssim 1$ uniformly in $\varepsilon$ (for example by Lemma \ref{lem:IntegralTest}). It thus suffices to derive a bound for $\mathbb{E}[|\varDelta_j^{\GG^\varepsilon} X^\eps_\mu(x)|^{p_\varepsilon} ]$, uniformly in $\varepsilon$ and $x$. Note that by \eqref{eq:ConvolutionLattice} $	\varDelta_j^{\GG^\varepsilon} X^\eps_\mu(x)=\sum_{z\in\GG^\varepsilon} |\GG^\varepsilon|\, \mathscr{K}^{\varepsilon}_j(x-z) \xi^\varepsilon(z)$ with $\mathscr{K}^\varepsilon_j=\FFge^{-1} (\varphi^{\GG^\varepsilon}_j \chi/l^\varepsilon_\mu)$ so that Lemma \ref{lem:DiscreteItoIsometry}, Parseval's identity  \eqref{eq:ParsevalsIdentity} and $l^\varepsilon_\mu\gtrsim \lvert\cdot\rvert^2$ on $\wGGe$ (from Lemma \ref{lem:SchauderPrelude}) imply
\begin{align*}
\mathbb{E}[ |\varDelta_j^{\GG^\eps} X^\eps_\mu (x)|^{p_\xi}] \lesssim \|\mathscr{K}_j^\varepsilon(x - \cdot) \|_{L^2(\GG^\varepsilon)}^{p_\xi} \lesssim 2^{jp_\xi (d/2-2)}\,,
\end{align*}
which proves the bound for $X^\eps_\mu$. The bound for $\xi^\varepsilon$ follows from the same arguments or with Lemma \ref{lem:LaplaceRegularity}.

Now let us turn to $X^\eps_\mu \bullet \xi^\varepsilon$. A short computation shows that
\[
   \mathbb{E}[(X^\eps_\mu\reso \xi^\varepsilon)(x)] = \mathbb{E}[(X^\eps_\mu \cdot  \xi^\varepsilon)(x)] = c^\varepsilon_\mu, \qquad x \in \GG^\varepsilon\,,
\]
and, by a similar argument as above, it suffices to bound $X^\eps_\mu \bullet \xi^\varepsilon$ in $\mathcal{B}^{\beta}_{p_\xi/2,p_\xi/2}(\RR^d,\w^{2\kappa})$ for $\beta<2-d$. We are therefore left with the task of bounding the $p_\xi/2$-th moment of 
\begin{align*}
	&\varDelta^{\GG^\varepsilon}_k \left( \sum_{|i-j|\le 1} \varDelta^{\GG^\eps}_i X^\eps_\mu \varDelta^{\GG^\eps}_j\xi^\varepsilon -\mathbb{E}[\varDelta^{\GG^\eps}_i X_\mu^\varepsilon \varDelta^{\GG^\eps}_j\xi^\varepsilon] \right)(x) \\
	&\hspace{15pt} =\sum_{z_1,z_2,y} |\GG^\varepsilon|^3\,\sum_{|i-j|\le 1} \Psi^{\GG^\varepsilon, k}(x-y) \mathscr{K}_i^\varepsilon(y-z_1) \Psi^{\GG^\varepsilon,j} (y-z_2) \left(\xi^\varepsilon(z_1)\xi^\varepsilon(z_2)-\mathbb{E}[\xi^\varepsilon(z_1)\xi^\varepsilon(z_2)]\right) \\
	&\hspace{15pt} = \sum_{z_1,z_2} |\GG^\varepsilon|^2 \left(  \sum_{|i-j|\le1}  \sum_y |\GG^\varepsilon|\, \Psi^{\GG^\varepsilon, k}(x-y) \mathscr{K}_i^\varepsilon(x-z_1) \Psi^{\GG^\varepsilon,j}(x-z_2) \right)\xi^\varepsilon(z_1)\diamond \xi^\varepsilon(z_2) \,,
\end{align*}
which with Lemma~\ref{lem:DiscreteItoIsometry} and Parseval's identity~\eqref{eq:ParsevalsIdentity} can be estimated by
\begin{align*}
	&\mathbb{E}\left[\left|\sum_{z_1,z_2} |\GG^\varepsilon|^2 \left( \sum_{|i-j|\le1} |\GG^\varepsilon|\, \Psi^{\GG^\varepsilon,k}(x-y) \mathscr{K}_i^\varepsilon(x-z_1) \Psi^{\GG^\varepsilon,j}(x-z_2) \right)\xi^\varepsilon(z_1)\diamond \xi^\varepsilon(z_2)\right|^{p_\xi/2}\right]^{2/p_\xi} \\
	&\hspace{20pt} \lesssim \left\|  \sum_{|i-j|\le1}  \sum_y|\GG^\varepsilon|\, \Psi^{\GG^\varepsilon,k}(x-y) \mathscr{K}_i^\varepsilon(x-z_1) \Psi^{\GG^\varepsilon,j}(x-z_2) \right\|_{L^2_{z_1,z_2}((\GG^\varepsilon)^2)} \\
	&\hspace{20pt} = \left\| \sum_{|i-j|\le1} \sum_y |\GG^\varepsilon|\, \Psi^{\GG^\varepsilon,k}(x-y)\FF_{(\GG^\varepsilon)^2} \big( \mathscr{K}_i^\varepsilon(x- \cdot)\otimes \Psi^{\GG^\varepsilon,j}(x-\cdot) \big)(\ell_1, \ell_2) \right\|_{L^2_{\ell_1,\ell_2}((\widehat{\GG}^\varepsilon)^2)} \\
	&\hspace{20pt} = \left\| e^{-2\pi \imath (\ell_1 + \ell_2) \cdot x} \sum_{|i-j|\le1} \FF_{\GG^\varepsilon} \Psi^{\GG^\varepsilon,k}(-(\ell_1+\ell_2)) \FF_{\GG^\varepsilon} \mathscr{K}_i^\varepsilon(-\ell_1) \FF_{\GG^\varepsilon}\Psi^{\GG^\varepsilon,j}(-\ell_2) \right\|_{L^2_{\ell_1,\ell_2}((\widehat{\GG}^\varepsilon)^2)} \\
	&\hspace{20pt} = \left\| \sum_{|i-j|\le1}  \varphi^{\GG^\varepsilon}_k (\ell_1+\ell_2) \frac{\varphi^{\GG^\varepsilon}_i(\ell_1) \chi(\ell_1)}{l^\varepsilon_\mu(\ell_1)} \varphi^{\GG^\varepsilon}_j(\ell_2) \right\|_{L^2_{\ell_1,\ell_2}((\widehat{\GG}^\varepsilon)^2)}\,,
\end{align*}
where in the last step we used that all considered functions are even. Since $\varphi^{\GG^\varepsilon}_k (\ell_1+\ell_2) = 0$ unless $|\ell_m| \gtrsim 2^k$ for $m=1$ or $m=2$ and since $\| \varphi^{\GG^\varepsilon}_m\|_{L^2(\GG^\varepsilon)}\lesssim 2^{m d/2} $, we get
\begin{align*}
	&\left\| \sum_{|i-j|\le1}  \varphi^{\GG^\varepsilon}_k (\ell_1+\ell_2) \frac{\varphi^{\GG^\varepsilon}_i(\ell_1) \chi(\ell_1)}{l^\varepsilon_\mu(\ell_1)} \varphi^{\GG^\varepsilon}_j(\ell_2) \right\|_{L^2_{\ell_1,\ell_2}((\widehat{\GG}^\varepsilon)^2)} \\
	&\hspace{40pt} \lesssim \sum_{|i-j|\ge 1, j \gtrsim k} 2^{-2i} \left\|  \varphi^{\GG^\varepsilon}_k (\ell_1+\ell_2)  \varphi^{\GG^\varepsilon}_j(\ell_2) \right\|_{L^2_{\ell_1,\ell_2}((\widehat{\GG}^\varepsilon)^2)}  \\
	&\hspace{40pt} \lesssim \sum_{|i-j|\ge 1, j \gtrsim k} 2^{-2i} 2^{kd/2} 2^{jd/2}   \lesssim 2^{k (d-2)},
\end{align*}
using $d/2-2 < 0$ in the last step.
%
%
%
%
%
\end{proof}

By the compact embedding result in Lemma \ref{lem:ContinuousEmbedding} together with Prohorov's theorem we see that the sequences $(\EE^\varepsilon\xi^\varepsilon)$, $(\EE^\varepsilon X^\eps_\mu)$, and $(\EE^\varepsilon(X^\eps_\mu\bullet \xi^\varepsilon))$ have convergent subsequences in distribution -- note that while the H\"older space $\mathcal{C}^\zeta(\RR^d, p^\kappa)$ is not separable, all the processes above are supported on the closure of $\mathcal{C}^{\zeta'}(\RR^d, p^{\kappa'})$ for $\zeta' > \zeta$ and $\kappa' < \kappa$, which is a separable subspace and therefore we can indeed apply Prohorov's theorem. We will see in Lemma~\ref{lem:LimitArea} below that $\EE^\varepsilon\xi^\varepsilon$ converges to the white noise $\xi$ on $\RR^d$. Consequently, the solution $X^\eps_\mu$ to $-L^\varepsilon_\mu X^\eps_\mu=\chi(D_{\GG^\eps})\xi^\varepsilon$ should approach the solution of $-L_\mu X_\mu=\chi(D_{\RR^d})\xi :=\FFr^{-1}\big( \chi\, \FFr \xi\big)$, i.e.
\begin{align}
\label{eq:X}
	X_\mu=\frac{\chi(D_{\RR^d})}{(2\pi)^2\|D_{\RR^d}\|_\mu^2}\xi=\FFr^{-1}\left(  \frac{\chi}{(2\pi)^2\|\cdot\|_\mu^2} \FFr \xi\right)     =\mathscr{K}^0_\mu\ast\xi,\qquad\mathscr{K}^0_\mu:=\FF^{-1}_{\RR^d}\frac{\chi}{(2\pi)^2\lVert\cdot\rVert_\mu^2}\,.
\end{align}
where $\|\cdot\|_\mu$ is defined as in Definition \ref{def:Measure}. The limit of $\EE^\varepsilon(X^\eps_\mu\bullet \xi^\varepsilon)$ will turn out to be the distribution
\begin{align}
\label{eq:Xxi}
	X_\mu \bullet \xi(\varphi):=\int_{\RR^d}\int_{\RR^2} \mathscr{K}^0_\mu(z_1-z_2) \varphi(z_1)\xi(\dd z_1)\diamond\xi(\dd z_2) -(X_\mu\para\xi +\xi\para X_\mu)(\varphi)
\end{align} 
for $\varphi\in \Sw_\omega(\RR^d)$, where the right hand side denotes the second order Wiener-It\^o integral with respect to the Gaussian stochastic measure $\xi(\dd z)$ induced by the white noise $\xi$, compare \cite[Section 7.2]{Janson}. Note that $X_\mu\bullet \xi$ is not a continuous functional of $\xi$, so the last convergence is not a trivial consequence of the convergence for $\EE^\eps \xi^\eps$. To identify the limit of $\EE^\eps(X_\mu^\eps\bullet \xi^\eps)$ we could use a diagonal sequence argument that first approximates the bilinear functional by a continuous bilinear functional as in~\cite{Mourrat2017, Hairer2015Central, Chouk2017}. Here prefer to go another route and instead we follow~\cite{Caravenna} who provide a general criterion for the convergence of discrete multiple stochastic integrals to multiple Wiener-It\^o integrals, and we adapt their results to the Wick product setting of Lemma~\ref{lem:DiscreteItoIsometry}.


\begin{lemma}[see also \cite{Caravenna}, Theorem~2.3]
\label{lem:ChaosExpansionConvergence}
  Let $\GG^\eps,n\in \NN$ and $\big(\xi^\eps(z)\big)_{z\in \GG^\eps}$ be as in Lemma \ref{lem:DiscreteItoIsometry}. For $k = 0, \dots, n$ let $f_k^\varepsilon \in L^2((\GG^\varepsilon)^k)$. We identify $(\GG^\varepsilon)^k$ with a Bravais lattice in $k \cdot d$ dimensions via the orthogonal sum $(\GG^\varepsilon)^k=\bigoplus_{i=1}^k \GG^\varepsilon \subseteq \bigoplus_{i=1}^k\RR^d= (\RR^d)^k$  to define the Fourier transform $\mathcal{F}_{(\GG^\varepsilon)^k}f^\varepsilon_k\in L^2((\widehat{\GG^\varepsilon})^k)$ of $f_k^\varepsilon$. Assume that there exist $g_k \in L^2((\RR^d)^k)$ with $|\1_{(\widehat{\GG^\varepsilon})^k}\mathcal{F}_{(\GG^\varepsilon)^k}f^\varepsilon_k  |\leq g_k$ for all $\varepsilon$ and $f_k\in L^2((\RR^d)^k)$ such that $\lim_{\varepsilon \to 0} \|\1_{(\widehat{\GG^\varepsilon})^k} \mathcal{F}_{(\GG^\varepsilon)^k}f^\varepsilon_k  - \Ff_{(\RR^d)^k} f_k\|_{L^2((R^d)^k)} = 0$ for all $k \leq n$. Then the following convergence holds in distribution
\begin{align*}
   \lim_{\varepsilon \to 0} \sum_{k=0}^n \mathscr{I}_k f_k^\varepsilon = \sum_{k=0}^n \int_{(\RR^d)^k} f_k(z_1, \dots, z_k) \, \xi(\dd z_1)\diamond \dots \diamond\xi(\dd z_k)\,,
\end{align*}
where $\xi(\dd z_1)\diamond \dots \diamond\xi(\dd z_k)$ denotes the Wiener-It\^o integral against the Gaussian stochastic measure induced by the white noise $\xi$ on $\RR^d$. 
\end{lemma}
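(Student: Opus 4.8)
The strategy is to reduce the convergence in distribution of the finite Wick chaos sum to the convergence of each homogeneous chaos term separately, and for that we follow the method of moments combined with the Fourier-side criterion of Caravenna, Sun and Zygouras. The key structural input is that Wick products have mean zero and, by the independence of the $\big(\xi^\eps(z)\big)_{z\in\GG^\eps}$, discrete multiple stochastic integrals of different orders are orthogonal (and, more strongly, jointly converge to a jointly Gaussian-chaos limit once each marginal does). So the bulk of the argument is to show, for each fixed $k\le n$, that $\mathscr{I}_k f_k^\varepsilon$ converges in distribution to the $k$-th order Wiener--It\^o integral $\int_{(\RR^d)^k} f_k\,\xi(\dd z_1)\diamond\dots\diamond\xi(\dd z_k)$, and then to upgrade the marginal convergences to joint convergence of the vector $(\mathscr{I}_0 f_0^\varepsilon,\dots,\mathscr{I}_n f_n^\varepsilon)$, whence the sum converges by the continuous mapping theorem.

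First I would set up the moment computation. Expand $\big(\mathscr{I}_k f_k^\varepsilon\big)^m$ and use the recursive definition \eqref{eq:wick-def} of the Wick product together with the independence and the prescribed moments of $\xi^\eps$ to organize the expectation as a sum over pairings (and, a priori, higher-order partitions) of the $km$ integration variables. The contributions of partitions containing a block of size $\ge 3$ carry an extra factor that vanishes as $\varepsilon\to 0$: this is exactly the place where the uniform moment bound $\sup_z\mathbb{E}[|\xi^\eps(z)|^{p_\xi}]\lesssim \eps^{-d/2\cdot p_\xi}$ and the $L^2$-normalization $\mathbb{E}[|\xi^\eps(z)|^2]=|\GG^\eps|^{-1}$ combine to produce a gain of a positive power of $|\GG^\eps|$ for every oversized block, while the $L^2$-bound $\|f_k^\varepsilon\|_{L^2((\GG^\varepsilon)^k)}\lesssim 1$ (which follows from $|\1_{(\widehat{\GG^\varepsilon})^k}\mathcal{F}_{(\GG^\varepsilon)^k}f^\varepsilon_k|\le g_k\in L^2$ by Parseval) controls the rest. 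Hence only pair partitions survive in the limit. For the pair partitions I would pass to Fourier variables on $(\GG^\eps)^k$: a pairing identifies the arguments of two copies of $f_k^\varepsilon$, turning the sum into an $L^2$ inner product of $\1_{(\widehat{\GG^\varepsilon})^k}\mathcal{F}_{(\GG^\varepsilon)^k}f^\varepsilon_k$ (symmetrized) with a permuted copy of itself, over $(\widehat{\GG^\varepsilon})^k$. The hypotheses $\|\1_{(\widehat{\GG^\varepsilon})^k}\mathcal{F}_{(\GG^\varepsilon)^k}f^\varepsilon_k-\Ff_{(\RR^d)^k}f_k\|_{L^2}\to 0$ plus the dominating function $g_k$ (for dominated convergence) then give that each such inner product converges to the corresponding pairing integral of $\Ff_{(\RR^d)^k}f_k$ over $(\RR^d)^k$, which is precisely the $m$-th moment of the Gaussian-chaos random variable $\int f_k\,\xi(\dd z_1)\diamond\cdots\diamond\xi(\dd z_k)$ by the Wick/diagram formula for multiple Wiener--It\^o integrals (see \cite[Section 7.2]{Janson}). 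Since the limiting law, living in a fixed finite sum of Wiener chaoses, is determined by its moments, marginal convergence in distribution follows.

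To get joint convergence I would run the same moment expansion on an arbitrary linear combination $\sum_k \lambda_k \mathscr{I}_k f_k^\varepsilon$, or equivalently compute mixed moments $\mathbb{E}\big[\prod_k (\mathscr{I}_k f_k^\varepsilon)^{m_k}\big]$; the identical pairing analysis applies, cross-pairings between integrals of different orders $k\ne k'$ are impossible at the level of pair partitions (a pairing must match the same number of variables on both sides of each block, forcing pairs to stay within integrals of equal order — more precisely one checks the surviving partitions respect the chaos grading exactly as in the Gaussian case), so the mixed moments converge to those of the jointly Gaussian-chaos limit. Then $\sum_{k=0}^n \mathscr{I}_k f_k^\varepsilon \Rightarrow \sum_{k=0}^n \int f_k\,\xi(\dd z_1)\diamond\cdots\diamond\xi(\dd z_k)$ by the continuous mapping theorem (addition is continuous) together with convergence of all moments and the moment-determinacy of the limit.

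The main obstacle is the combinatorial bookkeeping of the partitions in the moment expansion: one must carefully argue that every partition with at least one block of size $\ge 3$ contributes a quantity that is $o(1)$ uniformly, and that the surviving pair partitions exactly reproduce the diagram formula for the multiple Wiener--It\^o integral — in particular handling the symmetrization of $f_k^\varepsilon$ (the integrands need not be symmetric a priori, but $\mathscr{I}_k f_k^\varepsilon$ only depends on the symmetrization) and keeping track of the $|\GG^\eps|$ powers so that the $L^2$-normalization of the noise precisely cancels the counting factors. I expect this step to require the kind of estimate already carried out in the proof of Lemma~\ref{lem:DiscreteItoIsometry} (Burkholder--Davis--Gundy plus Minkowski along the enumeration of $\GG^\eps$), used here not just to bound moments but to identify which terms vanish in the limit; the Fourier-side convergence is then a routine application of dominated convergence given the dominating functions $g_k$.
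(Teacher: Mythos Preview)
Your method-of-moments route is viable but is genuinely different from the paper's proof. The paper does not compute any moments directly. Instead it (i) introduces the piecewise-constant extension $f_k^\varepsilon([\cdot]_\eps)$ to $(\RR^d)^k$, computes its $(\RR^d)^k$-Fourier transform as $(\widehat{f_k^\varepsilon})_{\mathrm{ext}}\cdot p_k^\varepsilon$ with $p_k^\varepsilon\to 1$ pointwise, and uses the hypothesis together with dominated convergence (via $g_k$) and Parseval to obtain $\|f_k^\varepsilon([\cdot]_\eps)-f_k\|_{L^2((\RR^d)^k)}\to 0$; (ii) splits $\mathscr{I}_k f_k^\varepsilon$ according to the coincidence pattern $a\in A_r^k$ of the $z_j$'s as in the proof of Lemma~\ref{lem:DiscreteItoIsometry}; (iii) for the fully off-diagonal piece $r=k$ invokes \cite[Theorem~2.3]{Caravenna} as a black box, which already contains all the partition combinatorics you propose to redo; (iv) shows the $r<k$ pieces vanish in $L^2$ by rescaling the noise $\overline{\xi}^\varepsilon_j=\xi^\eps(z)^{\diamond a_j}/r_j^\varepsilon(z)$ and using the second half of \eqref{eq:ProofChaosConvergenceDiagonals}. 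This buys a much shorter argument: the heavy lifting (vanishing of non-pair partitions, identification of the limiting moments) is entirely outsourced to \cite{Caravenna}.

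Your plan should work but two points need tightening. First, the sentence ``cross-pairings between integrals of different orders $k\neq k'$ are impossible at the level of pair partitions'' is not correct as stated: in a mixed moment such as $\mathbb{E}\big[\mathscr{I}_1(f_1)^2\,\mathscr{I}_2(f_2)\big]$ there are perfectly good pair partitions that pair a variable from an $\mathscr{I}_1$ factor with one from the $\mathscr{I}_2$ factor. What you actually need (and what is true) is simply that \emph{all} mixed moments converge to the corresponding mixed moments of the Gaussian-chaos limit, computed by the diagram formula; no orthogonality-type statement is required here. Second, your outline does not separate the diagonal contributions of $f_k^\varepsilon$ (where some $z_i$ coincide) before computing moments; these are exactly the $r<k$ pieces the paper isolates, and in your framework they show up as extra terms in the product-of-Wick-products expectation that must be shown to vanish by the same $|\GG^\eps|$-power counting you sketch for oversized blocks. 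None of this is fatal, but the combinatorics is more involved than your sketch suggests, whereas the paper sidesteps it entirely by citing \cite{Caravenna}.
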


\begin{proof}
The proof is contained in the appendix.
\end{proof}

The identification of the limits of the extensions of $\xi^\eps,X_\mu^\eps$ and $X_\mu^\eps\bullet\xi^\eps$ is then an application of Lemma \ref{lem:ChaosExpansionConvergence}. 

\begin{lemma}
\label{lem:LimitArea}
In the setup of Lemma \ref{lem:RegularityNoise}  with $\xi,\,X_\mu$ and $ X_\mu\bullet\xi$ defined as above and with $\zeta,\kappa$ as in Lemma \ref{lem:RegularityNoise} we have for $d<4$
\begin{align*}
	(\EE^\varepsilon\xi^\varepsilon,\EE^\varepsilon X^\eps_\mu,\EE^\varepsilon (X^\eps_\mu\bullet\xi^\varepsilon))\overset{\varepsilon\rightarrow 0}{\longrightarrow}(\xi,X_\mu,X_\mu\bullet\xi)
\end{align*}
in distribution in $\mathcal{C}^{\zeta-2}(\RR^d,\w^{\kappa})\times\mathcal{C}^{\zeta}(\RR^d,\w^{\kappa})\times\mathcal{C}^{2\zeta-2}(\RR^d,\w^{2\kappa} )$.
\end{lemma}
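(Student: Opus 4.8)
The plan is to combine the tightness that we already established in Lemma~\ref{lem:RegularityNoise} with the convergence criterion of Lemma~\ref{lem:ChaosExpansionConvergence}, using the Wick/chaos structure of the three objects to reduce everything to $L^2$-convergence of (symmetrized) Fourier kernels. First I would recall that by Lemma~\ref{lem:RegularityNoise} the family $(\EE^\varepsilon\xi^\varepsilon,\EE^\varepsilon X^\eps_\mu,\EE^\varepsilon(X^\eps_\mu\bullet\xi^\varepsilon))$ is bounded in $L^{p_\xi}$ (resp.\ $L^{p_\xi/2}$) of the relevant weighted H\"older norms, hence by the compact embedding $\mathcal{C}^{\zeta'}(\RR^d,p^{\kappa'})\hookrightarrow\mathcal{C}^{\zeta}(\RR^d,p^{\kappa})$ for $\zeta'>\zeta$, $\kappa'<\kappa$ (Lemma~\ref{lem:ContinuousEmbedding}) and Prohorov's theorem the triple is tight on the separable closure of $\mathcal{C}^{\zeta'}\times\mathcal{C}^{\zeta'-2}\times\mathcal{C}^{2\zeta'-2}$ inside the target space. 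So it suffices to identify every subsequential limit, and for that it is enough to identify the limit of the finite-dimensional distributions, i.e.\ of $(\EE^\varepsilon\xi^\varepsilon(\varphi_0),\EE^\varepsilon X^\eps_\mu(\varphi_1),\EE^\varepsilon(X^\eps_\mu\bullet\xi^\varepsilon)(\varphi_2))$ for test functions $\varphi_i\in\Dww(\RR^d)$, and even of arbitrary linear combinations thereof.

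Next I would rewrite each of these three pairings as a finite sum $\sum_{k=0}^2\mathscr{I}_k f_k^\varepsilon$ of discrete multiple stochastic (Wick) integrals against $(\xi^\varepsilon(z))$. For $\EE^\varepsilon\xi^\varepsilon(\varphi_0)$ only the first chaos appears, $f_1^\varepsilon(z)=|\GG^\varepsilon|^{-1}(\FFr^{-1}\psi^\varepsilon\rast\varphi_0)(z)$ by the convolution formula~\eqref{eq:ExtensionAsConvolution}; similarly $\EE^\varepsilon X^\eps_\mu(\varphi_1)$ lies in the first chaos with kernel obtained by additionally convolving with $\mathscr{K}^\varepsilon$, i.e.\ in Fourier variables multiplying by $\chi/l^\eps_\mu$; and $\EE^\varepsilon(X^\eps_\mu\bullet\xi^\varepsilon)(\varphi_2)$ is a second-chaos object (the Wick product kills the diagonal contribution that produced the divergent constant $c^\varepsilon_\mu$, exactly as in the computation inside the proof of Lemma~\ref{lem:RegularityNoise}), whose symmetrized Fourier kernel on $(\widehat{\GG^\varepsilon})^2$ is, up to the periodic extensions and the smear factors $\psi^\varepsilon$, the symmetrization of $(\ell_1,\ell_2)\mapsto\FFr\varphi_2(-(\ell_1+\ell_2))\,\chi(\ell_1)/l^\varepsilon_\mu(\ell_1)\cdot(1-$ paraproduct cutoff$)$. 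The main computation is then to verify the two hypotheses of Lemma~\ref{lem:ChaosExpansionConvergence}: (a) an $\varepsilon$-uniform domination $|\1_{(\widehat{\GG^\varepsilon})^k}\FF_{(\GG^\varepsilon)^k}f_k^\varepsilon|\le g_k\in L^2((\RR^d)^k)$, and (b) $L^2$-convergence of these Fourier kernels to the corresponding continuous kernels $f_k$. For (a) and (b) one uses $l^\varepsilon_\mu(x)\gtrsim|x|^2$ uniformly (Lemma~\ref{lem:SchauderPrelude}), the pointwise convergence $l^\varepsilon_\mu(x)\to(2\pi)^2\|x\|_\mu^2$ as $\varepsilon\to0$ together with the uniform bound $l^\varepsilon_\mu(x)\lesssim|x|^2$, the fact that $\psi^\varepsilon=\psi(\varepsilon\cdot)\to1$ pointwise while $\ext{\cdot}$ has no effect on frequencies inside $\widehat{\GG^\varepsilon}$ which exhausts $\RR^d$, and dominated convergence; the threshold $d<4$ is precisely what makes $\chi/\|\cdot\|_\mu^2\in L^2$ near $0$ in the first chaos, and $d<4$ also secures square-integrability of the second-chaos kernel once the low-frequency paraproduct part is subtracted (the same $d/2-2<0$ book-keeping as at the end of the proof of Lemma~\ref{lem:RegularityNoise}). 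Applying Lemma~\ref{lem:ChaosExpansionConvergence} then yields convergence in distribution of each pairing to the Wiener--It\^o integral against $\xi$ with kernel $f_k$, which by construction equals $\xi(\varphi_0)$, $X_\mu(\varphi_1)=(\mathscr{K}^0_\mu\rast\xi)(\varphi_1)$, and $X_\mu\bullet\xi(\varphi_2)$ respectively; doing this jointly for linear combinations and with the same $(\xi^\varepsilon(z))$ gives the joint convergence of the finite-dimensional distributions.

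Finally I would combine tightness with the identification of limits: any weak subsequential limit $(\tilde\xi,\tilde X,\tilde Y)$ of $(\EE^\varepsilon\xi^\varepsilon,\EE^\varepsilon X^\eps_\mu,\EE^\varepsilon(X^\eps_\mu\bullet\xi^\varepsilon))$ must, when tested against $\varphi_i\in\Dww(\RR^d)$, have the law of $(\xi(\varphi_0),X_\mu(\varphi_1),X_\mu\bullet\xi(\varphi_2))$ by the previous paragraph; since $\Dww(\RR^d)$ is dense and these pairings determine the law of a $\mathcal{C}^{\zeta-2}\times\mathcal{C}^\zeta\times\mathcal{C}^{2\zeta-2}$-valued random variable, the subsequential limit is unique and equal to $(\xi,X_\mu,X_\mu\bullet\xi)$, whence the whole sequence converges. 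The step I expect to be the main obstacle is hypothesis (a) of Lemma~\ref{lem:ChaosExpansionConvergence} for the second-chaos term: producing an honest, $\varepsilon$-independent $L^2((\RR^d)^2)$ dominating function for the symmetrized Fourier kernel of $\EE^\varepsilon(X^\eps_\mu\bullet\xi^\varepsilon)(\varphi_2)$, which requires carefully tracking how the periodic extensions, the smear functions $\psi^\varepsilon$, and the low-frequency paraproduct subtraction interact near the boundary $\partial\widehat{\GG^\varepsilon}$ and near the origin, and exploiting the decay of $\FFr\varphi_2$ (here $\varphi_2\in\Dww$, so $\FFr\varphi_2$ decays faster than any polynomial, which is more than enough) together with $\chi$ being supported away from $0$ to kill the would-be singularity of $1/l^\varepsilon_\mu$; this is essentially the same estimate carried out inside the proof of Lemma~\ref{lem:RegularityNoise}, now needed in its $\varepsilon$-uniform dominated-convergence form rather than just as a moment bound.
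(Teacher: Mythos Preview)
Your overall strategy---tightness from Lemma~\ref{lem:RegularityNoise} plus identification of finite-dimensional limits via Lemma~\ref{lem:ChaosExpansionConvergence}---is exactly the paper's, and your treatment of the first-chaos objects $\EE^\varepsilon\xi^\varepsilon(\varphi)$ and $\EE^\varepsilon X_\mu^\varepsilon(\varphi)$ is along the same lines. Two points of execution differ, and both concern precisely the obstacle you flagged.

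First, the paper does not test against $\varphi\in\Dww(\RR^d)$ but against $\varphi\in\Sww(\RR^d)$ with $\FFr\varphi\in\Dww(\RR^d)$, i.e.\ with \emph{compactly supported Fourier transform}. For $\varepsilon$ small this forces $\supp\FFr\varphi\subseteq\wGGe$, so $\psi^\varepsilon\equiv1$ on the relevant support and the periodic extension never wraps; in particular $\FFge(\varphi|_{\GGe})=(\FFr\varphi)|_{\wGGe}$, and the extension operator becomes transparent. This eliminates entirely the boundary-of-$\wGGe$ bookkeeping you anticipated.

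Second, and more substantially, the paper does \emph{not} attack the resonant-product kernel directly. Instead it observes that, once $\EE^\varepsilon\xi^\varepsilon\to\xi$ and $\EE^\varepsilon X_\mu^\varepsilon\to X_\mu$ are known, the \eqref{eq:EProperty}-property of the paraproduct (Lemma~\ref{lem:ParaproductEstimates}) gives $\EE^\varepsilon(X_\mu^\varepsilon\para\xi^\varepsilon)\to X_\mu\para\xi$ and $\EE^\varepsilon(\xi^\varepsilon\para X_\mu^\varepsilon)\to\xi\para X_\mu$ for free. Hence it suffices to show convergence of the \emph{full} Wick product $\EE^\varepsilon(X_\mu^\varepsilon\xi^\varepsilon-\mathbb{E}[X_\mu^\varepsilon\xi^\varepsilon])(\varphi)$, whose second-chaos kernel is simply $f_2^\varepsilon(z_1,z_2)=\varphi(z_1)\mathscr{K}_\mu^\varepsilon(z_1-z_2)$ with Fourier transform $\hat\varphi_{\mathrm{ext}}(x_1-x_2)\,\chi(x_2)/l_\mu^\varepsilon(x_2)$. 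The Littlewood--Paley cutoffs and the ``$(1-\text{paraproduct cutoff})$'' factor in your kernel disappear, and the $L^2$ domination reduces to the one-line observation that $(\chi/l_\mu^\varepsilon)^2\lesssim\mathbf{1}_{|x|\gtrsim1}|x|^{-4}$ is integrable on $\wGGe$ for $d<4$. Your direct route through the resonant kernel is not wrong in principle, but the paraproduct-peeling trick is what turns the ``main obstacle'' you identified into a non-issue.
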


\begin{proof}
Recall that the extension operator $\EE^\eps$ is constructed from $\psi^\eps=\psi(\eps\cdot )$ where the smear function $\psi \in \Dww(\RR^d)$ is symmetric and satisfies $\psi=1$ on some ball around 0. Since from Lemma~\ref{lem:RegularityNoise} we already know that the sequence $(\EE^\varepsilon\xi^\varepsilon,\EE^\varepsilon X^\eps_\mu,\EE^\varepsilon (X^\eps_\mu\bullet\xi^\varepsilon))$ is tight in $\mathcal{C}^{\zeta-2}(\RR^d,\w^{\kappa})\times\mathcal{C}^{\zeta}(\RR^d,\w^{\kappa})\times\mathcal{C}^{2\zeta-2}(\RR^d,\w^{2\kappa} )$, it suffices to prove the convergence after testing against $\varphi\in \Sw_\omega(\RR^d)$:
\begin{align*}
	&(\EE^\varepsilon\xi^\varepsilon(\varphi_1),\dots, \EE^\varepsilon\xi^\varepsilon(\varphi_n),\EE^\varepsilon X^\eps_\mu(\psi_1),\dots,\EE^\varepsilon X^\eps_\mu(\psi_n) ,\EE^\varepsilon (X^\eps_\mu \bullet\xi^\varepsilon)(f_1), \dots,\EE^\varepsilon (X^\eps_\mu \bullet\xi^\varepsilon)(f_n) ) \\
	&\hspace{20pt} \overset{\varepsilon\to0}{\rightarrow}	(\xi(\varphi_1), \dots, \xi(\varphi_n), X_\mu(\psi_1),\dots, X_\mu(\psi_n),X_\mu\bullet\xi(f_1),\dots, X_\mu\bullet\xi(f_n))\,,
\end{align*}
and by taking linear combinations and applying Lemma~\ref{lem:ChaosExpansionConvergence} we see that it suffices to establish each of the following convergences:
\begin{align}
\label{eq:ChaosConvergenceInDistribution}
	\EE^\varepsilon\xi^\varepsilon(\varphi) \overset{\varepsilon\to 0}{\longrightarrow} \xi(\varphi),\quad \EE^\varepsilon X^\eps_\mu(\varphi) \overset{\varepsilon\to0}{\longrightarrow} X_\mu(\varphi),\quad \EE^\varepsilon (X^\eps_\mu \bullet\xi^\varepsilon)(\varphi)) \overset{\varepsilon \to 0}{\rightarrow}	X_\mu\bullet\xi(\varphi)
\end{align}
for all $\varphi \in \Sw_\omega(\RR^d)$. We can even restrict ourselves to those $\varphi\in\Sw_\omega(\RR^d)$ with $\FFr \varphi \in \Dww(\RR^d)$, which implies  $\supp \FFr\varphi \subseteq \wGGe$ and $\FFr^{-1}(\psi^\eps  \FFr\varphi )=\varphi$ for $\varepsilon$ small enough, which we will assume from now on. Note that $\supp \FFr\varphi\subseteq \wGGe$ implies
\begin{align}
\label{eq:FiniteBandwithRelation}
 \FFge (\varphi\vert_{\GGe}) =(\FFr \varphi)\vert_{\wGGe}
 \end{align} 
since by definition of $\FFge^{-1}$
\begin{align*}
\FFge^{-1} ((\FFr \varphi)\vert_{\wGGe}  )=(\FFr^{-1} \FFr  \varphi ) \vert_{\GG^\eps}= \varphi \vert_{\GG^\eps}\,.
\end{align*}
 
To show the convergence of $\EE^\varepsilon\xi^\varepsilon(\varphi)$ to $\xi(\varphi)$ note that we have from \eqref{eq:ExtensionAsConvolution}
\begin{align*}
\EE^\varepsilon \xi^\varepsilon(\varphi)=\sum_{z\in \GG^\varepsilon} |\GG^\varepsilon| \, (\FFr^{-1}{\psi}^\eps\ast \varphi)(z)  \xi^\varepsilon(z)= \sum_{z\in \GG^\varepsilon} |\GG^\varepsilon| \, \FFr^{-1}({\psi}^\eps \,\FFr\varphi)(z)  \xi^\varepsilon(z)    =\sum_{z\in \GG^\varepsilon} |\GG^\varepsilon| \varphi(z) \xi^\varepsilon(z)
\end{align*}
where we used in the first step that $\psi^\eps$ is symmetric and in the last step that $\FFr^{-1}({\psi}^\eps  \FFr\varphi )=\varphi$ by our choice of $\varphi$ and $\eps$. Using Lemma \ref{lem:ChaosExpansionConvergence} and relation \eqref{eq:FiniteBandwithRelation} the convergence of $\EE^\varepsilon \xi^\varepsilon(\varphi)$ to $\xi(\varphi)$ follows. 

For the limit of $\EE^\varepsilon X^\eps_\mu$ we use the following formula, which is derived by the same argument as above:
\begin{align*}
\EE^\varepsilon X^\eps_\mu(\varphi)=\sum_{z_1,\,z_2\in\GG^\varepsilon}\,|\GG^\varepsilon|^2\,\varphi(z_1) \mathscr{K}^\varepsilon_\mu(z_2-z_1) \xi^\varepsilon(z_2)
\end{align*}
with $\mathscr{K}_\mu^\varepsilon=\FFge^{-1} (\chi/l^\varepsilon_\mu)$. In view of Lemma \ref{lem:ChaosExpansionConvergence} it then suffices to note that
\[
	\hat{f}^\eps:=\FFge(\varphi \aste \mathscr{K}_\mu^\varepsilon)=\FFge \varphi \cdot\frac{\chi}{l^\varepsilon_\mu} \overset{\eqref{eq:FiniteBandwithRelation}}{=}\FFr \varphi \cdot \frac{\chi}{l^\eps_\mu}
\]
is dominated by a multiple of $\chi/\lvert\cdot\rvert^2$ on $\widehat{\GG^\varepsilon}$ due to Lemma \ref{lem:SchauderPrelude}, and it converges to 
\begin{align*}
\FFr\varphi \cdot \frac{\chi}{(2\pi)^2\lVert\cdot\rVert_\mu^2}
\end{align*}
by the explicit formula for $l^\eps_\mu$ in \eqref{eq:MultiplierOperator}. 

We are left with the convergence of the third component. Since $\EE^\varepsilon\xi^\varepsilon\rightarrow \xi$ and $\EE^\varepsilon X^\eps_\mu \rightarrow X_\mu$ we obtain via the \eqref{eq:EProperty}-Property of the paraproduct
\[
	\lim_{\varepsilon\to0} \EE^\varepsilon (X^\eps_\mu \para^{\GGe} \xi^\varepsilon) = \lim_{\varepsilon\to0} \EE^\varepsilon X^\eps_\mu\para \EE^\varepsilon \xi^\varepsilon = X_\mu \para \xi
\]
and similarly one gets $\EE^\varepsilon (\xi^\varepsilon \para^{\GGe} X^\eps_\mu) \to \xi \para X_\mu$. We can therefore show instead 
\begin{align}
\label{eq:ProofConvergenceAreaWickConvergence}
	\EE^\varepsilon\left(X^\eps_\mu\xi^\varepsilon-\mathbb{E}[X^\eps_\mu \xi^\varepsilon]\right)(\varphi)\rightarrow (X_\mu\bullet \xi +\xi \para X_\mu+ X_\mu \para \xi)(\varphi)\,.
\end{align}
Note that we have the representations 
\begin{align*}
	\EE^\varepsilon\left(X^\eps_\mu\xi^\varepsilon-\mathbb{E}[X^\eps_\mu \xi^\varepsilon]\right)(\varphi)&=\sum_{z_1,z_2\in \GG^\varepsilon}|\GG^\varepsilon|^2 \varphi(z_1) \mathscr{K}^\varepsilon_\mu(z_1-z_2) \, \xi^\varepsilon(z_1)\diamond \xi^\varepsilon(z_2) \,, \\
	(X_\mu\bullet \xi +\xi \para X_\mu+ X_\mu \para \xi)(\varphi)&=\int_{\RR^2}\int_{\RR^2} \varphi(z_1) \mathscr{K}^0_\mu(z_1-z_2)\, \xi(\dd z_1)\diamond \xi(\dd z_2)
\end{align*}
with $\mathscr{K}^\eps_\mu$ as above and $\mathscr{K}^0_\mu$ as in \eqref{eq:X}.
The $(\GG^\varepsilon)^2$-Fourier transform of $\varphi(z_1) \mathscr{K}^\varepsilon_\mu(z_1-z_2)$ is $\hat{\varphi}_{\mathrm{ext}}(x_1-x_2) \chi(x_2)/l^\varepsilon_\mu(x_2)$ for $x_1,x_2\in\widehat{\GG^\varepsilon}$, where $\hat{\varphi}_{\mathrm{ext}}$ denotes the periodic extension from \eqref{eq:PeriodicExtension2} for $\FFr \varphi\vert_{\wGGe}\in\Dww(\wGGe)$ (recall again that $\supp \FFr\varphi\subseteq \wGGe$). We can therefore apply Lemma~\ref{lem:ChaosExpansionConvergence} since for $d<4$ the function $(\chi(x_2)/l^\varepsilon(x_2))^2\lesssim \mathbf{1}_{|x|\gtrsim 1} /|x|^{4} $ is integrable on $\widehat{\GG^\varepsilon}$ and thus we obtain \eqref{eq:ProofConvergenceAreaWickConvergence}.

We have shown the convergence in distribution of all the components in \eqref{eq:ChaosConvergenceInDistribution}. By Lemma~\ref{lem:ChaosExpansionConvergence} we can take any linear combination of these components and still get the convergence from the same estimates, so that~\eqref{eq:ChaosConvergenceInDistribution} follows from the Cram\'er-Wold Theorem. 
\end{proof}

\subsection{Convergence of the lattice model}\label{sec:convergence}

We are now ready to prove the convergence of $\EE^\eps u^\eps$ announced at the beginning of this section. The key statement will be the a priori estimate in Lemma \ref{lem:Picard}. The convergence of $\EE^\eps u^\eps$ to the continuous solution on $\RR^2$, constructed in Corollary \ref{cor:ContinuousSolution}, will be proven in Theorem \ref{thm:ConvergencePAM}. We first fix the relevant parameters. 
\label{subsec:ConvergenceOfTheLatticeModel}
\subsubsection*{Preliminaries} 
Throughout this subsection we use the same $p\in [1,\infty]$, $\sigma \in (0,1),\,\mu \in \mm(\wexps)$, a polynomial weight $p^\kappa$ for some $\kappa>2/p_\xi > 1/7$ and a time dependent sub-exponential weight $(e^\sigma_{l+t})_{t\in [0,T]}$. We further fix an arbitrarily large time horizon $T>0$ and require $l\le -T$ for the parameter in the weight $e^\sigma_l$. Then we have $1\leq e^\sigma_{l+t}\leq (e^\sigma_{l+t})^2$ for any $t\leq T$, which will be used to control a quadratic term that comes from the Taylor expansion of the non-linearity $F^\eps$. We take $\xi^\eps$ as in the beginning of this section with $p_\xi>14$ (see Remark \ref{rem:fourteen} below) and construct $X^\eps_\mu$ as in Subsection \ref{subsec:ConvergenceArea}. We further fix a parameter 
\begin{align}
\label{eq:ParameterCondition1}
	\alpha\in (2/3-2/3\cdot \kappa/\sigma, 1- 2 /p_\xi-2\kappa/\sigma)
\end{align}
with $\kappa/\sigma\in (2/p_\xi,1)$ small enough such that the interval is non-empty, which (as we will discuss in the following remark) is possible since $2/p_\xi<1/7$. 

\begin{remark}[Why $14+$ moments]
\label{rem:fourteen}
Let us sketch where the boundaries of the interval \eqref{eq:ParameterCondition1} come from. The parameter $\alpha$ will measure the regularity of $u^\eps$ below. The upper boundary, that is  $1- 2 /p_\xi-2\kappa/\sigma$, arises due to the fact that we cannot expect $u^\eps$ to be better than $X^\eps$, which has regularity below $1-2/p_\xi$ due to Lemma \ref{lem:RegularityNoise}. The correction $-2\kappa/\sigma$ is just the price one pays in the Schauder estimate in Lemma \ref{lem:Schauder} for the ``weight change''. The lower bound $2/3-2/3\cdot \kappa/\sigma$ is a criterion for our paracontrolled approach below to work. We increase below the regularity $\alpha$ of our solutions, by subtraction of a paraproduct, to $2\alpha$. By Lemma \ref{lem:ParaproductEstimates} this allows us to uniformly control products with $\xi^\eps$ provided
\begin{align*}
   2\alpha+(\alpha+2\kappa/\sigma-2)>0\,,
\end{align*}  
because $\xi^\eps \in \mathcal{C}^{\alpha+2\kappa/\sigma-2}_{p^\kappa}$. This condition can be reshaped to $\alpha>2/3-2/3\cdot \kappa/\sigma$, explaining the lower bound. The interval \eqref{eq:ParameterCondition1} can only be non-empty if
\begin{align*}
2/3-2/3\cdot \kappa/\sigma<1- 2/p_\xi-2\kappa/\sigma \,\Leftrightarrow \,2/3<1- 2/p_\xi-4/3\cdot \kappa/\sigma
\end{align*}
Lemma \ref{lem:RegularityNoise} forces us to take $\kappa/\sigma>2/p_\xi$ so that the the right hand side can only be true provided $2/3<1- 2/p_\xi-4/3 \cdot 2/p_\xi$, which is equivalent to
\begin{align*}
p_\xi>14\,.
\end{align*}
\end{remark}

Let us mention the simple facts $2\alpha+2\kappa/\sigma,2\alpha+4\kappa/\sigma\in (0,2)$, $\alpha+\kappa/\sigma,\alpha+2\kappa/\sigma \in (0,1)$ and $3\alpha+2\kappa/\sigma-2>0$ which we will use frequently below. 

We will assume that the initial conditions $u_0^\eps$ are uniformly bounded in $\mathcal{C}^{0}_p(\GG^\eps,e^\sigma_l)$ and are chosen such that $\EE^\eps u_0^\eps$ converges in $\Sw_\omega'(\RR^2)$ to some $u_0$. For $u^\eps_0 = |\GG^\eps|^{-1}\mathbf{1}_{\cdot = 0}$ it is easily verified that this is indeed the case and the limit is the Dirac delta, $u_0 = \delta$.

Recall that we aim at showing that (the extension of) the solution $u^\eps$ to 
\begin{align}
\label{eq:DiscretePAM2}
\mathscr{L}_\mu^\eps u^\eps=F(u^\eps)(\xi^\eps-F'(0)c^\eps_\mu),\qquad u^\eps(0)=u^\eps_0=|\GG^\eps|^{-1}\mathbf{1}_{\cdot=0}
\end{align}
converges to the solution of 
\begin{align}
\label{eq:ContinuousPAM2}
\mathscr{L}_\mu u =F'(0) u \blacklozeng  \xi,\qquad u(0)=u_0=\delta\,,
\end{align}
where $u\blacklozeng \xi$ is a suitably renormalized product defined in Corollary \ref{cor:ContinuousSolution} below.
 
Our solutions will be objects in the parabolic space $\mathscr{L}^{\alpha,\alpha}_{p,T}$ which does not require continuity at $t=0$. A priori there is thus no obvious meaning for the Cauchy problems \eqref{eq:DiscretePAM2}, \eqref{eq:ContinuousPAM2} (although of course for \eqref{eq:DiscretePAM2} we could use the pointwise interpretation). We use the common interpretation of (\ref{eq:DiscretePAM2}, \ref{eq:ContinuousPAM2}) as equations for distributions $u^\eps,u\in \Dw_\omega'(\RR^{1+2})$ (compare for example \cite[Definition 3.3.4]{TriebelHigherAnalysis}) by requiring $\supp u^\eps,\,\supp u \subseteq \RR_{+}\times \RR^2$ and
\begin{align*}
\mathscr{L}_\mu^\eps u^\eps &=F(u^\eps)(\xi^\eps-F'(0)c^\eps_\mu)+\delta \otimes u^\eps_0\,, \\
\mathscr{L}_\mu u &=F'(0) u\blacklozeng \xi +\delta\otimes u_0\,,
\end{align*}
in the distributional sense on $(-\infty,T) \times \RR^2$, where $\otimes$ denotes the tensor product between distributions. Since we mostly work with the mild formulation of these equations the distributional interpretation will not play a crucial role. Some care is needed to check that the only distributional solutions are mild solutions, since the distributional Cauchy problem for the heat equation is not uniquely solvable \cite{Tychonoff}. However, under generous growth conditions for $u,u^\eps$ for $x\rightarrow \infty$ (compare \cite{Friedman}) there is a unique solution. In our case this fact can be checked by considering the Fourier transform of $u,u^\eps$ in space.
\subsubsection*{A priori estimates} 

We will work with the following space of paracontrolled distributions.
\begin{definition}[Paracontrolled distribution for 2d PAM]
\label{def:Paracontrolled}
We identify a pair \[(u^{\eps,X},u^{\eps,\sharp})\colon [0,T]\rightarrow \Sw_\omega'(\GG^\eps)^2\] with $u^\eps:=u^{\eps,X}\mpara X_\mu^\eps+u^{\eps,\sharp}$ and introduce a norm  
\begin{align}
\label{eq:ParacontrolledNorm}
	\|u^\eps\|_{\mathscr{D}^{\gamma,\delta}_{p,T}(\GG^\eps,e^{\tilde{\sigma}}_l)}:=\|(u^{\eps,X},u^{\eps,\sharp})\|_{\mathscr{D}^{\gamma,\delta}_{p,T}(\GG^\eps,e^{\tilde{\sigma}}_l)}:=\|u^{\eps,X}\|_{\mathscr{L}^{\gamma/2,\delta}_{p,T}(\GG^\eps,e^{\tilde{\sigma}}_l)}+\|u^{\eps,\sharp}\|_{\mathscr{L}^{\gamma,\delta+\alpha}_{p,T}(\GG^\eps,e^{\tilde{\sigma}}_l)}
\end{align}
for $\alpha$ as above, $\tilde{\sigma}\in (0,1)$ and $\gamma\geq 0$, $\delta \in (0, 2-\alpha)$. We denote the corresponding space by $\mathscr{D}^{\gamma,\delta}_{p,T}(\GG^\eps,e^{\tilde{\sigma}}_l)$. If the norm \eqref{eq:ParacontrolledNorm} is bounded for a sequence $(u^\eps=u^{\eps,X}\mpara X_\mu^\eps+u^{\eps,\sharp})_\varepsilon$ we say that $u^\eps$ is paracontrolled by $X_\mu^\eps$. 
\end{definition}
\begin{remark}
\label{rem:ContinousSpacesParacontrolled}
In view of Remark \ref{rem:ContinousSpacesParabolic} we can also define a \emph{continuous} version $\mathscr{D}^{\gamma,\delta}_{p,T}(\RR^d,e^{\tilde{\sigma}}_l)$ of the space above. 
\end{remark}
\glsadd{mathscrDgammaalphapT}
As in \cite{Gubinelli2017KPZ} it will be useful to have a common bound on the stochastic data: Let  
\begin{align}
\label{eq:BoundData}
	M_\eps:=\|\xi^\eps\|_{\mathcal{C}^{\alpha+2\kappa/\sigma-2}_\infty(\GG^\eps,p^\kappa)}
	\vee \|X_\mu^\eps\|_{\mathcal{C}^{\alpha+2\kappa/\sigma}_\infty(\GG^\eps,p^\kappa)} 
	\vee \|X_\mu^\eps\bullet \xi^\eps\|_{\mathcal{C}^{2\alpha+4\kappa/\sigma-2}_\infty(\GG^\eps,p^{2\kappa})}
\end{align}
(compared to Lemma~\ref{lem:RegularityNoise} we have $\zeta=\alpha+2\kappa/\sigma$). The following a priori estimates will allow us to set up a Picard iteration below. 
\begin{lemma}[A priori estimates]
\label{lem:Picard}
In the setup above consider $\gamma\in \{0,\,\alpha\}$ and $u_0\in \cC^{0}_p(\GGe)$. If $\gamma=0$ we require further that $u_0\in \cC^\alpha_p(\GG^\eps,\rho)$ and $u_0^{\sharp}:=u_0-F'(0)u_0\para X_\mu^\eps \in \cC^{2\alpha}_p(\GG^\eps,e^\sigma_l)$. Define a map
\begin{align*}
\mathscr{M}^\eps_{\gamma,u_0} \colon \qquad \,\mathscr{D}_{p,T}^{\gamma,\alpha}(\GG^\eps,e^\sigma_l) \ni (u^{\eps,X}, u^{\eps,\sharp})\,\, \longmapsto\,\, (v^{\eps,X}, v^{\eps,\sharp}) \in \mathscr{D}_{p,T}^{\gamma,\alpha}(\GG^\eps,e^\sigma_l)
\end{align*}
for $u^\eps=u^{\eps,X}\mpara X_\mu^\eps+ u^{\eps,\sharp}$ with $u^\eps(0)=u_0$
via $v^{\eps,X}:=F'(0)u^\eps$ and $v^{\eps,\sharp} := v^\eps-v^{\eps,X}\mpara X_\mu^\eps$,
where $v^\eps$ is the solution to the problem
\begin{align}
\mathscr{L}^\eps_\mu v^\eps &:=F^\eps(u^\eps)\xi^\eps - F^\eps(u^{\eps,X}/F'(0)) F'(0) c^\eps_\mu, \qquad v^\eps(0)=u_0\,.
\label{eq:PicardProblem1}
\end{align}
The map $\mathscr{M}^\eps_{\gamma,u_0}$ is well defined for $\gamma\in\{0,\alpha\}$ and we have the bound 
\begin{align*}
\|(v^{\eps,X}, v^{\eps,\sharp})\|_{\mathscr{D}_{p,T}^{\gamma,\alpha}(\GG^\eps,e^\sigma_l)} &\leq C_{u_0} 
+C_{M_\eps} \cdot T^{(\alpha-\delta)/2}\,\left(\|u^\eps\|_{\mathscr{D}_{p,T}^{\gamma,\alpha}(\GG^\eps,e^\sigma_l)}+\eps^{\nu} \|u^\eps\|_{\mathscr{D}_{p,T}^{\gamma,\alpha}(\GG^\eps,e^\sigma_l)}^2   \right) 
\end{align*}
for $\delta\in \left(2-2\alpha-2\kappa/\sigma,\alpha\right)$ and some $\nu>0$, where $C_{M_\eps}=c_0\,(1+M_\eps^2)$ and 
\begin{align} \nonumber
C_{u_0} & = \mathbf{1}_{\gamma=\alpha} \,c_0\,\|u_0\|_{\mathcal{C}_{p}^{0}(\GG^\eps,e^\sigma_l)}  \\
	&\quad +\mathbf{1}_{\gamma=0}\, c_0\, \left(\|u_0^{\sharp}\|_{\mathcal{C}_{p}^{2\alpha}(\GG^\eps,e^\sigma_l)}  + \|u^{\eps,X}(0)\|_{\mathcal{C}_{p}^{\alpha}(\GG^\eps,e^\sigma_l)}+ \|u^{\eps,\sharp}(0)\|_{\mathcal{C}_{p}^{2\alpha}(\GG^\eps,e^\sigma_l)} \right)\,,
\label{eq:Cu0}
\end{align}
for some $c_0>0$ that does not depend on $\xi^\varepsilon$, $c^\varepsilon_\mu$ or $u_0$.
\end{lemma}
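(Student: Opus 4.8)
The plan is to run the estimate through the mild formulation. Writing $v^\eps$ as $v^\eps(t) = e^{tL^\eps_\mu} u_0 + I^\eps_\mu\big(F^\eps(u^\eps)\xi^\eps - F^\eps(u^{\eps,X}/F'(0))F'(0)c^\eps_\mu\big)(t)$, I would first bound the initial-condition contribution $e^{tL^\eps_\mu}u_0$ using the Schauder estimate \eqref{eq:Schauder1} from Lemma~\ref{lem:Schauder}: for $\gamma = \alpha$ this costs $\|u_0\|_{\mathcal{C}^0_p(\GG^\eps,e^\sigma_l)}$, and for $\gamma = 0$ one additionally has to split off the paracontrolled part, bounding the smooth remainder $e^{tL^\eps_\mu}u_0^\sharp$ and the paracontrolled part $F'(0)(e^{tL^\eps_\mu}u_0)\mpara X^\eps_\mu$ separately, which accounts for the two cases in \eqref{eq:Cu0}. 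The term $e^{tL^\eps_\mu}u_0$ is the first component $v^{\eps,X}$ of the initial data up to the constant $F'(0)$, so its $\mathscr{L}^{\gamma/2,\alpha}$-norm is directly controlled; the remainder $v^{\eps,\sharp} - v^{\eps,X}\mpara X^\eps_\mu$ needs the commutator Lemma~\ref{lem:CommuteModifiedParaproduct} to compare $\mathscr{L}^\eps_\mu(v^{\eps,X}\mpara X^\eps_\mu)$ with $v^{\eps,X}\mpara\mathscr{L}^\eps_\mu X^\eps_\mu = v^{\eps,X}\mpara \chi(D_{\GG^\eps})\xi^\eps$, which is where the ``$\bullet$'' structure enters.

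Next, the crux: decomposing the product $F^\eps(u^\eps)\xi^\eps$. Since $F^\eps = \eps^{-2}F(\eps^2\cdot)$ and $F\in C^2$ with $F(0)=0$, Taylor expansion gives $F^\eps(w) = F'(0)w + \eps^2 R^\eps(w)w^2$ where $R^\eps$ is uniformly bounded (this is where the $\eps^\nu$-gain in the quadratic term comes from, with $\nu$ essentially coming from the $\eps^2$ prefactor traded against the cost of measuring $w^2$ in a weighted space — here the inequality $1 \le e^\sigma_{l+t} \le (e^\sigma_{l+t})^2$ from the Preliminaries is used). Then $F^\eps(u^\eps)\xi^\eps = F'(0)u^\eps\xi^\eps + \eps^2(\text{quadratic})$, and $u^\eps\xi^\eps$ is split as $u^\eps\para\xi^\eps + u^\eps\lpara\xi^\eps + u^\eps\reso\xi^\eps$. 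The paraproduct and the ``wrong-way'' paraproduct are fine by Lemma~\ref{lem:ParaproductEstimates}. For the resonant term I substitute $u^\eps = u^{\eps,X}\mpara X^\eps_\mu + u^{\eps,\sharp}$: the $u^{\eps,\sharp}$ part works because $2\alpha + (\alpha + 2\kappa/\sigma - 2) > 0$ (recorded just before the lemma as $3\alpha+2\kappa/\sigma-2>0$), and the $u^{\eps,X}\mpara X^\eps_\mu$ part is handled by first replacing $\mpara$ by $\para$ via Lemma~\ref{lem:CommuteModifiedParaproduct} and then invoking the commutator Lemma~\ref{lem:CommutatorLemma}: $(u^{\eps,X}\para X^\eps_\mu)\reso\xi^\eps - u^{\eps,X}(X^\eps_\mu\reso\xi^\eps)$ is controlled, and $X^\eps_\mu\reso\xi^\eps = X^\eps_\mu\bullet\xi^\eps + c^\eps_\mu$ — the divergent constant $c^\eps_\mu$ is exactly cancelled by the counterterm $-F^\eps(u^{\eps,X}/F'(0))F'(0)c^\eps_\mu$ in \eqref{eq:PicardProblem1}, modulo again a Taylor remainder that carries an $\eps$-gain. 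After this renormalization the remaining data is controlled by $M_\eps$ through \eqref{eq:BoundData}, giving the $C_{M_\eps} = c_0(1+M_\eps^2)$ dependence (the square arising from the quadratic contributions and from products of two factors each controlled by $M_\eps$).

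Then I apply the Schauder estimate \eqref{eq:Schauder2} for $I^\eps_\mu$ to each piece, landing in $\mathscr{L}^{\gamma,\alpha}_{p,T}(\GG^\eps,e^\sigma_l)$, using the weight bound $p^\kappa e^\sigma_{l+s} \lesssim e^\sigma_{l+t}|t-s|^{-\kappa/\sigma}$ recorded in the proof of Lemma~\ref{lem:Schauder}; the conditions $\gamma + \kappa/\sigma \in [0,1)$ and $\alpha + 2\kappa/\sigma \in (0,2)$ hold by the parameter choices in the Preliminaries. To extract the factor $T^{(\alpha-\delta)/2}$, I drop to the slightly weaker space $\mathscr{L}^{\gamma,\alpha-(\alpha-\delta)}_{p,T} = \mathscr{L}^{\gamma,\delta}_{p,T}$ via the second inequality of Lemma~\ref{lem:ComparisonParabolicSpaces}, which produces exactly $T^{(\alpha-\delta)/2}$ (the $\mathbf{1}_{\gamma=0}$ boundary term there is absorbed into $C_{u_0}$), and I measure the nonlinearity in the weaker norm, which is legitimate since $\delta \in (2-2\alpha-2\kappa/\sigma,\alpha)$ keeps all the regularity exponents in the admissible ranges ($2\alpha + \delta + 2\kappa/\sigma - 2 > 0$ etc.). Finally I reassemble: $v^{\eps,X} := F'(0)u^\eps$ trivially satisfies $\|v^{\eps,X}\|_{\mathscr{L}^{\gamma/2,\alpha}} \lesssim \|u^\eps\|_{\mathscr{D}^{\gamma,\alpha}}$ by definition of the paracontrolled norm, and $v^{\eps,\sharp} = I^\eps_\mu(\cdots) + e^{tL^\eps_\mu}u_0 - v^{\eps,X}\mpara X^\eps_\mu$ is estimated in $\mathscr{L}^{\gamma,\delta+\alpha}_{p,T}$ using Lemma~\ref{lem:ParaproductEstimateParabolicSpace} for the paracontrolled subtraction together with the commutator lemma; well-posedness of \eqref{eq:PicardProblem1} itself follows from the linear Schauder theory since $\xi^\eps$ is a genuine (bounded) function on the lattice.

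\emph{Main obstacle.} The delicate point is the bookkeeping of regularities and weights in the resonant term after inserting the paracontrolled ansatz: one must verify that every exponent appearing — $3\alpha + 2\kappa/\sigma - 2$, $\alpha + 2\kappa/\sigma$, $2\alpha + 4\kappa/\sigma - 2$, and their shifts by $\delta - \alpha$ — stays strictly positive (or in $(0,2)$, or in $[0,1)$ as appropriate) under the constraints \eqref{eq:ParameterCondition1} and $\delta \in (2-2\alpha-2\kappa/\sigma,\alpha)$, and simultaneously that the divergent constant $c^\eps_\mu \sim |\log\eps|$ is cancelled \emph{exactly} (not just up to an $\eps$-gain) by the counterterm, with the residual Taylor remainders genuinely carrying a positive power of $\eps$ despite $F^\eps$ and $c^\eps_\mu$ both being singular. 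This is routine but long, and is precisely the part the authors reference as ``along the lines of'' the KPZ paper \cite{Gubinelli2017KPZ}.
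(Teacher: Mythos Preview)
Your proposal is correct and follows essentially the same route as the paper's proof: Taylor expand $F^\eps$, decompose $u^\eps\xi^\eps$ into para/resonant pieces, insert the paracontrolled ansatz into the resonant term and apply the commutator Lemma~\ref{lem:CommutatorLemma} so that the divergent $c^\eps_\mu$ is cancelled by the counterterm (leaving $X^\eps_\mu\bullet\xi^\eps$), control the quadratic Taylor remainders via the $\eps^2$ prefactor together with $e^\sigma_{l+t}\le (e^\sigma_{l+t})^2$, and finally apply the Schauder estimate for $I^\eps_\mu$ and Lemma~\ref{lem:ComparisonParabolicSpaces} to produce the factor $T^{(\alpha-\delta)/2}$. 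The paper organizes the decomposition into the labeled lines \eqref{eq:ParacontrolledSchauder1}--\eqref{eq:ParacontrolledSchauder5} and bounds each in $\mathcal{M}^\gamma_T\mathcal{C}^{2\alpha+2\kappa/\sigma-2}_p(\GG^\eps,e^\sigma_l p^\kappa)$ before invoking Schauder, but this is exactly your plan in slightly different packaging.
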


\begin{remark}
\label{rem:FixPoint}
The complicated formulation of~\eqref{eq:PicardProblem1} is necessary because when we expand the singular product on the right hand side we get
\[
   F^\eps(u^\eps)\xi^\eps = F'(0) (C(u^{\eps,X},X_\mu^\eps,\xi^\eps) + u^{\eps,X} (X_\mu^\eps\reso \xi^\eps)) + \dots\,,
\]
so to obtain the right renormalization we need to subtract $F'(0) u^{\eps,X} c^\eps_\mu$, which is exactly what we get if we Taylor expand the second addend on the right hand side of~\eqref{eq:PicardProblem1}. 

If $u^\varepsilon = v^\varepsilon = \mathscr{M}^\varepsilon_{\gamma,u_0} u^\varepsilon$ is a fixed point, 
then $u^{\eps,X} = v^{\eps,X} = F'(0) u^\eps$ and the ``renormalization term'' is just $F^\eps(u^\eps) F'(0) c^\eps_\mu$. Moreover we have in this case
\begin{align*}
\mathscr{L}^\eps_\mu u^\eps = F^\eps(u^\eps)(\xi^\eps - F'(0) c^\eps_\mu)\,,\qquad u^\eps(0)=u_0\,.
\end{align*}
\end{remark}

\begin{proof}
We assume for the sake of shorter formulas $(1+M_\eps^2)\lesssim 1$, the general case can be easily included in the reasoning below. 
The solution to \eqref{eq:PicardProblem1} can be constructed using the Green's function $\FFge^{-1} e^{-t l^\eps_\mu}$ and Duhamel's principle. To uncluster the notation a bit, we will drop the upper index $\eps$ on $u,\,v,\,X_\mu,\,\mathscr{L}_\mu,\ldots$  in this proof.  We show both estimates at once by denoting by $\gamma$ either $0$ or $\alpha$. 

Throughout the proof we will use the fact that 
\begin{align}
\label{eq:UseParacontrolledStructure}
	\|u\|_{\mathscr{L}^{\gamma/2,\alpha}_{p,T}(\GG^\eps,e^\sigma_l)}=\|u^{X}\mpara X_\mu +u^\sharp\|_{\mathscr{L}^{\gamma/2,\alpha}_{p,T}(\GG^\eps,e^\sigma_l)}\lesssim \|u\|_{\mathscr{D}_{p,T}^{\gamma,\beta}(\GG^\eps,e^\sigma_l)}
\end{align}
for all $\beta\in (0,\alpha]$ which follows from Lemma \ref{lem:ParaproductEstimateParabolicSpace}. In particular (with $\beta=\delta$) we have
\begin{align}
   \|v^{X}\|_{\mathscr{L}^{\gamma/2,\alpha}_{p,T}(\GG^\eps,e^\sigma_l)} &= \|F'(0) u \|_{\mathscr{L}^{\gamma/2,\alpha}_{p,T}(\GG^\eps,e^\sigma_l)} \overset{\eqref{eq:UseParacontrolledStructure}}{\lesssim} \|u\|_{\mathscr{D}^{\gamma,\delta}_{p,T}(\GG^\eps,e^\sigma_l)} \nonumber \\
    &\overset{\xshort \short \tiny \mbox{Lem. \ref{lem:ComparisonParabolicSpaces}}}{\lesssim} \mathbf{1}_{\gamma=0} (\|u^{X}(0)\|_{\mathcal{C}_{p}^{\alpha}(\GG^\eps,e^\sigma_l)}+\|u^{\sharp}(0)\|_{\mathcal{C}_{p}^{2\alpha}(\GG^\eps,e^\sigma_l)})+ T^{\frac{\alpha-\delta}{2}} \|u\|_{\mathscr{D}^{\gamma,\alpha}_{p,T}(\GG^\eps,e^\sigma_l)}\,.
\label{eq:APrioriVX}
\end{align}

This leaves us with the task of estimating $\|v^\sharp\|_{\mathscr{L}^{\gamma,2\alpha}_{p,T}(\GG^\eps,e^\sigma_l)}$.
We split 
\begin{align}
\mathscr{L}_\mu v^\sharp &=\mathscr{L}_\mu(v-F'(0)u\mpara X_\mu) \label{eq:ParacontrolledSchauderToDecompose}
 \\
 &=F^\eps(u)\xi-F^\varepsilon(u^Y/F'(0)) F'(0)c_\mu-F'(0)  \mathscr{L}_\mu(u\mpara Y \nonumber)
 \\ 
 &= F'(0)u\xi- F'(0)u^X c_\mu -F'(0)\mathscr{L}_\mu(u\mpara X_\mu)+R(u)u^2 \xi - R(u^{X}/F'(0)) \frac{(u^{X})^2 }{F'(0)} c_\mu \nonumber\\
&=F'(0)[ u\para (\xi-\bar{\xi})+u\para \bar{\xi} -u\mpara \bar{\xi} +u \mpara \bar{\xi}- \mathscr{L}_\mu (u\mpara  X_\mu)+ u \lpara \xi  \tag{$\naivepara$} \label{eq:ParacontrolledSchauder1} \\
&\hspace{45pt} +C(u^{X},X_\mu,\xi)+u^{X}(X_\mu\bullet \xi)  \tag{$\naivereso$} \label{eq:ParacontrolledSchauder2} \\
&\hspace{45pt} + u^\sharp\reso \xi ] \tag{$\sharp$}  \label{eq:ParacontrolledSchauder3}\\
&\quad +R(u)\cdot u^2 \xi\nonumber 
\label{eq:ParacontrolledSchauder4}
\tag{$R_u$}\\
&\quad -R(u^{X}/F'(0))\frac{(u^{X})^2}{F'(0)} c_\mu
\tag{$R_{u^{X}}$}
\label{eq:ParacontrolledSchauder5}
\,,
\end{align}
where $\overline{\xi}=\chi(D)\xi$ so that $\mathscr{L}_\mu X_\mu=\bar{\xi}$ with $\xi-\bar{\xi}\in \bigcap_{\beta\in \RR} \mathcal{C}^\beta_{\infty}(\GG^\eps,p^\kappa)$ and where $R(x)=\eps^2\int_0^1 (1-\lambda) F''(\lambda \eps^2 x)\dd \lambda$.  We have by Lemmas \ref{lem:ParaproductEstimates}, \ref{lem:CommuteModifiedParaproduct}
\[
	\|\eqref{eq:ParacontrolledSchauder1}\|_{\mathcal{M}^\gamma_T\mathcal{C}^{2\alpha+2\kappa/\sigma-2}_p(\GG^\eps,e^\sigma_l p^\kappa)} 
	\lesssim \|u\|_{\mathscr{L}^{\gamma/2,\alpha}_{p,T}(\GG^\eps,e^\sigma_l)}
	\overset{\mbox{\tiny{\eqref{eq:UseParacontrolledStructure}}}}{\lesssim} \|u\|_{\mathscr{D}^{\gamma,\delta}_{p,T}(\GG^\eps,e^\sigma_l)}
\]
and further with Lemma \ref{lem:CommutatorLemma} and Lemma \ref{lem:ParaproductEstimates}
\[
	\|\eqref{eq:ParacontrolledSchauder2}\|_{\mathcal{M}^\gamma_T \mathcal{C}^{2\alpha+4\kappa/\sigma-2}(\GG^\eps,e^\sigma_l p^{2\kappa})}\lesssim \|u\|_{\mathscr{D}^{\gamma,\delta}_{p,T}(\GG^\eps,e^\sigma_l)}\,,
\]
while the term \eqref{eq:ParacontrolledSchauder3} can be bounded with Lemma \ref{lem:ParaproductEstimates} by 
\[
	\|u^\sharp\reso \xi\|_{\mathcal{M}^\gamma_T\mathcal{C}^{2\alpha+2\kappa/\sigma-2}_p(\GG^\eps,e^\sigma_l p^\kappa)} \lesssim \|u^\sharp\|_{\mathscr{L}^{\gamma,\alpha+\delta}_{p,T}(\GG^\eps,e^\sigma_l)}\leq \|u\|_{\mathscr{D}^{\gamma,\delta}_{p,T}(\GG^\eps,e^\sigma_l)}\,.
\]
To estimate \eqref{eq:ParacontrolledSchauder4} we  use the simple bounds $\|\eps^{\beta'} f\|_{\mathcal{C}_q^{\beta+\beta'}(\GG^\eps,\rho)}\lesssim \|f\|_{\mathcal{C}^{\beta}_q(\GG^\eps,\rho)}$ for $\beta\in \RR$, $\beta'>0$, $q\in [1,\infty]$, $\rho\in \boldsymbol{\rho}(\omega)$ and
\[
	\|\eps^{-\beta} f\|_{L^q(\GG^\eps,\rho)}\lesssim \eps^{-\beta} \sum_{j\lesssim j_{\GG^\eps} } 2^{-j\beta } \|f\|_{\mathcal{C}^{\beta}_q(\GG^\eps,\rho)} \lesssim \|f\|_{\mathcal{C}^{\beta}_q(\GG^\eps,\rho)}
\]
for $\beta<0$, $q\in [1,\infty]$, $\rho\in\boldsymbol{\rho}(\omega)$, together with the assumption $F''\in L^\infty$, and obtain for $\nu' > 0$
\begin{align*}
\|\eqref{eq:ParacontrolledSchauder4}\|_{\mathcal{M}^\gamma_T\mathcal{C}^{2\alpha+2\kappa/\sigma-2}_p(\GG^\eps,e^\sigma_l p^\kappa)} &\lesssim \|F''\|_\infty \|\eps^{\alpha+2\kappa/\sigma} u^2\|_{\mathcal{M}^\gamma L^p(\GG^\eps,e^\sigma_l)} \, \|\eps^{2-(\alpha+2\kappa/\sigma)} \xi\|_{L^\infty(\GG^\eps,p^\kappa)} 
\\&\lesssim\|\eps^{\alpha+2\kappa/\sigma} u^2\|_{\mathcal{M}^\gamma_T L^p(\GG^\eps,(e^\sigma_l)^2)}\, \|\xi\|_{\mathcal{C}^{\alpha+2\kappa/\sigma-2}_\infty(\GG^\eps, p^\kappa)}
\\& \lesssim \|\eps^{\alpha/2+\kappa/\sigma} u\|_{\mathcal{M}^{\gamma/2}_T L^{2p}(\GG^\eps,e^\sigma_l)}^2 \lesssim \|\eps^{\alpha/2+\kappa/\sigma} u\|_{\mathcal{M}^{\gamma/2}_T \mathcal{C}_p^{d/2p+\nu'}(\GG^\eps,e^\sigma_l)}^2 
\\& \le \|\eps^{\alpha/2+\kappa/\sigma} u\|_{\mathcal{M}^{\gamma/2}_T \mathcal{C}_p^{1+\nu'}(\GG^\eps,e^\sigma_l)}^2 \lesssim \| \eps^{\alpha/2+\kappa/\sigma-(1+\nu'-\alpha)} u\|^2_{\mathcal{M}^{\gamma/2}_T \mathcal{C}^\alpha_p(\GG^\eps,e^\sigma_l)} 
\\& \lesssim \eps^{3\alpha+2\kappa/\sigma-2(1+\nu')} \|u\|_{\mathscr{D}^{\gamma,\delta}_{p,T}(\GG^\eps,e^\sigma_l)}^2 \le \varepsilon^\nu \|u\|_{\mathscr{D}^{\gamma,\delta}_{p,T}(\GG^\eps,e^\sigma_l)}^2 
\end{align*}	
for all $\nu\in (0,3\alpha+2\kappa/\sigma-2(1+\nu')]$ (which is nonempty if $\nu'$ is sufficiently small). Similarly we get for $\nu'\in (0,\delta)$
\begin{align*}
\|\eqref{eq:ParacontrolledSchauder5}\|_{\mathcal{M}^\gamma_T\mathcal{C}^{2\alpha+2\kappa/\sigma-2}_p(\GG^\eps,e^\sigma_l p^\kappa)} &\lesssim \|F''\|_{L^\infty(\RR)} \cdot c_\mu \| \eps u^{X}\|_{\mathcal{M}^{\gamma/2}_T L^{2p}(\GG^\eps,e^\sigma_l)}^2 \lesssim c_\mu \|\eps u^{X}\|_{\mathcal{M}^{\gamma/2}_T \mathcal{C}^{1+\nu'}_{p}(\GG^\eps,e^\sigma_l)}^2  \\
&\lesssim \eps^{2(\delta-\nu')} |\log(\eps)| \|u^{X}\|_{\mathcal{M}^{\gamma/2}_T \mathcal{C}^{\delta}_p(\GG^\eps,e^\sigma_l)}^2 \lesssim \eps^{\nu} \|u\|_{\mathscr{D}^{\gamma,\delta}_{p,T}(\GG^\eps,e^\sigma_l)}^2
\end{align*}
for all $\nu\in (0,\delta-\nu']$. 
In total we have 
\begin{align*}
\|\mathscr{L}_\mu v^{\sharp}\|_{\mathcal{M}^\gamma_T\mathcal{C}^{2\alpha+2\kappa/\sigma-2}_p(\GG^\eps,e^\sigma_l p^\kappa)}\lesssim \|u\|_{\mathscr{D}^{\gamma,\delta}_{p,T}(\GG^\eps,e^\sigma_l)} + \eps^{\nu} \|u\|_{\mathscr{D}^{\gamma,\delta}_{p,T}(\GG^\eps,e^\sigma_l)}^2\,,\quad v^{\eps,\sharp}(0)=\mathbf{1}_{\gamma=0} u_0^{\sharp}+\mathbf{1}_{\gamma=\alpha} u_0\,,
\end{align*}
where we used for the initial condition that by Definition \ref{def:ModifiedParaproduct} and Convention \ref{con:ConventionModifiedParaproduct} we have $(F'(0)u\mpara X_\mu)(0)=F'(0)u_0\para X$ for $\gamma=0$ and $(F'(0)u\mpara X_\mu)(0)=0$ for $\gamma=\alpha>0$.  The Schauder estimates of Lemma~\ref{lem:Schauder} yield on these grounds
\begin{align*}
	\|v^\sharp\|_{\mathscr{L}^{\gamma,2\alpha}_{p,T}(\GG^\eps,e^\sigma_l)}&\lesssim  \mathbf{1}_{\gamma=\alpha} \|u_0\|_{\mathcal{C}^0_p(\GG^\eps,e^\sigma_l)}
	+ \mathbf{1}_{\gamma=0} \|u_0^\sharp\|_{\mathcal{C}^{2\alpha}_p(\GG^\eps,e^\sigma_l)}+
	\|u\|_{\mathscr{D}^{\gamma,\delta}_{p,T}(\GG^\eps,e^\sigma_l)} + \eps^{\nu} \|u\|_{\mathscr{D}^{\gamma,\delta}_{p,T}(\GG^\eps,e^\sigma_l)}^2 \\
	&\lesssim  
	\mathbf{1}_{\gamma=\alpha} \|u_0\|_{\mathcal{C}^0_p(\GG^\eps,e^\sigma_l)}+\mathbf{1}_{\gamma=0}\left(\|u_0^\sharp\|_{\mathcal{C}^{2\alpha}_p(\GG^\eps,e^\sigma_l)}
+\|u^{\sharp}(0)\|_{\mathcal{C}_{p}^{2\alpha}(\GG^\eps,e^\sigma_l)}
+\|u^{X}(0)\|_{\mathcal{C}^{\alpha}_p(\GG^\eps,e^\sigma_l)}\right)  \\	
&\quad +	T^{(\alpha-\delta)/2} (\|u\|_{\mathscr{D}^{\gamma,\alpha}_{p,T}(\GG^\eps,e^\sigma_l)}	
	+ \eps^{\nu} \|u\|_{\mathscr{D}^{\gamma,\alpha}_{p,T}(\GG^\eps,e^\sigma_l)}^2) 
	\,,
\end{align*}
where in the last step we used Lemma \ref{lem:ComparisonParabolicSpaces}. Together with \eqref{eq:APrioriVX} the claim follows. 
\end{proof}

As we mentioned in Remark \ref{rem:FixPoint} we aim at finding fixed points of $\mathscr{M}^\eps_{\gamma,a_0}$ which is achieved by the following Corollary.

\begin{corollary}
\label{cor:FixPoint}
With the notation of Lemma \ref{lem:Picard} choose $T_\eps^{\mathrm{loc}}:=\frac{1}{2}\, (C_{M_\eps}+C_{M_\eps}\eps^{\nu} r( u_0))^{-2/(\alpha-\delta)} $ for a sufficiently large $r(u_0)>0$, depending on $u_0$. Then the map $\mathscr{M}^{\eps}_{\gamma,u_0}$ from Lemma \ref{lem:Picard} has a unique fixed point $u^\eps=u^{\eps,X}\mpara X_\mu^\eps+u^{\eps,\sharp}$ on $\mathscr{D}^{\gamma,\alpha}_{p,T_\eps^{\mathrm{loc}}}(\GGe,e^\sigma_l)$. This fixed point solves
\begin{align}
\label{eq:FixPointProblem}
\mathscr{L}^\eps_\mu u^\eps = F^\eps(u^\eps)(\xi^\eps - F'(0) c^\eps_\mu)\,,\qquad u^\eps(0)=u_0\,,
\end{align}
and $u^{\varepsilon,X} = F'(0) u^\varepsilon$. Moreover, we have
\begin{align*}
\|u^\eps\|_{\mathscr{D}^{\gamma,\alpha}_{p,T_\eps^{\mathrm{loc}}}(\GGe,e^\sigma_l)} \leq r(u_0)\,.
\end{align*}
\end{corollary}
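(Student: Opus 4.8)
The plan is to set up a standard Banach fixed point argument for the map $\mathscr{M}^\eps_{\gamma,u_0}$ on the (complete) metric space $\mathscr{D}^{\gamma,\alpha}_{p,T^{\mathrm{loc}}_\eps}(\GGe,e^\sigma_l)$, using the a priori estimate from Lemma \ref{lem:Picard} to obtain both an invariant ball and a contraction on a short enough time interval. First I would fix $\delta\in(2-2\alpha-2\kappa/\sigma,\alpha)$ as in Lemma \ref{lem:Picard}, and observe that the bound there reads
\begin{align*}
\|\mathscr{M}^\eps_{\gamma,u_0}(u^\eps)\|_{\mathscr{D}^{\gamma,\alpha}_{p,T}(\GGe,e^\sigma_l)} \le C_{u_0} + C_{M_\eps} T^{(\alpha-\delta)/2}\big(\|u^\eps\|_{\mathscr{D}^{\gamma,\alpha}_{p,T}} + \eps^\nu \|u^\eps\|_{\mathscr{D}^{\gamma,\alpha}_{p,T}}^2\big).
\end{align*}
Choosing $r(u_0)$ so that $r(u_0)\ge 2C_{u_0}$ and then defining $T^{\mathrm{loc}}_\eps$ by the stated formula $T^{\mathrm{loc}}_\eps=\tfrac12(C_{M_\eps}+C_{M_\eps}\eps^\nu r(u_0))^{-2/(\alpha-\delta)}$ forces $C_{M_\eps}(T^{\mathrm{loc}}_\eps)^{(\alpha-\delta)/2}(1+\eps^\nu r(u_0))\le \tfrac12$, so that the closed ball $B_{r(u_0)}=\{\|u^\eps\|\le r(u_0)\}$ in $\mathscr{D}^{\gamma,\alpha}_{p,T^{\mathrm{loc}}_\eps}$ is mapped into itself: indeed $C_{u_0}+\tfrac12 r(u_0)\le r(u_0)$. (One subtlety for $\gamma=\alpha$: the target-space norm controls $v^{\eps,X}$ in $\mathscr{L}^{\gamma/2,\alpha}$, which has no $t=0$ condition, and $v^{\eps,\sharp}(0)=u_0$ sits in $\mathcal C^0_p$, matching the hypotheses of Lemma \ref{lem:Picard}; for $\gamma=0$ one carries along the extra data $\|u_0^\sharp\|_{\mathcal C^{2\alpha}_p}$, $\|u^{\eps,X}(0)\|_{\mathcal C^\alpha_p}$, $\|u^{\eps,\sharp}(0)\|_{\mathcal C^{2\alpha}_p}$ inside $C_{u_0}$.)

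Next I would establish the contraction property. The map $\mathscr{M}^\eps_{\gamma,u_0}$ is affine-plus-quadratic in $u^\eps$: inspecting the decomposition (\ref{eq:ParacontrolledSchauder1})--(\ref{eq:ParacontrolledSchauder5}) in the proof of Lemma \ref{lem:Picard}, the right-hand side of \eqref{eq:PicardProblem1} depends on $u^\eps=u^{\eps,X}\mpara X_\mu^\eps+u^{\eps,\sharp}$ through bounded bilinear operations (paraproducts, resonant products, the commutator $C$, the operator $\mathscr L^\eps_\mu$ on $\mpara$) plus the genuinely quadratic remainder terms $R(u)u^2\xi^\eps$ and $R(u^{\eps,X}/F'(0))(u^{\eps,X})^2 c^\eps_\mu/F'(0)$, with $R$ having bounded $F''$. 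Therefore for two inputs $u^\eps_1,u^\eps_2\in B_{r(u_0)}$ sharing the same initial datum, $\mathscr{M}^\eps_{\gamma,u_0}(u^\eps_1)-\mathscr{M}^\eps_{\gamma,u_0}(u^\eps_2)$ solves a problem with zero initial condition and right-hand side linear in $u^\eps_1-u^\eps_2$ with coefficients bounded by $C_{M_\eps}(1+r(u_0))$, so rerunning exactly the estimates of Lemma \ref{lem:Picard} (the quadratic terms contribute a factor $\eps^\nu\|u^\eps_1\|+\eps^\nu\|u^\eps_2\|\le 2\eps^\nu r(u_0)$ after a telescoping $u_1^2-u_2^2=(u_1+u_2)(u_1-u_2)$) gives
\begin{align*}
\|\mathscr{M}^\eps_{\gamma,u_0}(u^\eps_1)-\mathscr{M}^\eps_{\gamma,u_0}(u^\eps_2)\|_{\mathscr{D}^{\gamma,\alpha}_{p,T^{\mathrm{loc}}_\eps}} \le C_{M_\eps}(T^{\mathrm{loc}}_\eps)^{(\alpha-\delta)/2}(1+2\eps^\nu r(u_0))\,\|u^\eps_1-u^\eps_2\|_{\mathscr{D}^{\gamma,\alpha}_{p,T^{\mathrm{loc}}_\eps}}.
\end{align*}
By the choice of $T^{\mathrm{loc}}_\eps$ (possibly enlarging $r(u_0)$ by a further harmless constant to absorb the factor $1+2\eps^\nu r(u_0)$ vs. $1+\eps^\nu r(u_0)$) the prefactor is $\le\tfrac12<1$, so $\mathscr{M}^\eps_{\gamma,u_0}$ is a contraction on $B_{r(u_0)}$ and Banach's fixed point theorem yields a unique fixed point $u^\eps=u^{\eps,X}\mpara X_\mu^\eps+u^{\eps,\sharp}$ with $\|u^\eps\|_{\mathscr{D}^{\gamma,\alpha}_{p,T^{\mathrm{loc}}_\eps}(\GGe,e^\sigma_l)}\le r(u_0)$.

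Finally I would identify the fixed point with a solution of \eqref{eq:FixPointProblem}. At a fixed point $v^\eps=u^\eps$ one has $v^{\eps,X}=F'(0)u^\eps=u^{\eps,X}$, and then the ``renormalization term'' $F^\eps(u^{\eps,X}/F'(0))F'(0)c^\eps_\mu$ in \eqref{eq:PicardProblem1} collapses to $F^\eps(u^\eps)F'(0)c^\eps_\mu$, so that \eqref{eq:PicardProblem1} becomes exactly $\mathscr L^\eps_\mu u^\eps=F^\eps(u^\eps)(\xi^\eps-F'(0)c^\eps_\mu)$ with $u^\eps(0)=u_0$ — this is the content of Remark \ref{rem:FixPoint}. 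I expect the main obstacle to be the bookkeeping in the contraction estimate: one must check that \emph{every} term in the decomposition (\ref{eq:ParacontrolledSchauder1})--(\ref{eq:ParacontrolledSchauder5}), including the $\mathscr L^\eps_\mu(u\mpara X_\mu^\eps)$ piece handled via Lemma \ref{lem:CommuteModifiedParaproduct} and the commutator via Lemma \ref{lem:CommutatorLemma}, is genuinely Lipschitz in $u^\eps$ with respect to the $\mathscr{D}^{\gamma,\alpha}_{p,T}$-norm with a constant that is uniform in $\eps$ and carries the small factor $T^{(\alpha-\delta)/2}$ (the $\eps$-uniformity being exactly what the \eqref{eq:EProperty}-properties and the uniform bounds of Lemmas \ref{lem:ParaproductEstimates}, \ref{lem:ModifiedParaproductEstimates}, \ref{lem:CommuteModifiedParaproduct}, \ref{lem:Schauder}, \ref{lem:ComparisonParabolicSpaces} are designed to give); the quadratic remainders need the additional $\eps^\nu$ gain to be controlled since they are a priori of lower regularity. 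Once that is in place the corollary follows immediately from the abstract fixed point theorem.
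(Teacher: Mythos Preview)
Your contraction-mapping approach is correct in spirit but takes a different route from the paper. The paper does \emph{not} establish a contraction to obtain existence: it runs the Picard iteration $u^\eps_n=\mathscr{M}^\eps_{\gamma,u_0}(u^\eps_{n-1})$, shows inductively that all iterates stay in the ball of radius $r(u_0)$ (using only the a~priori bound of Lemma~\ref{lem:Picard}), and then invokes the compactness Lemma~\ref{lem:Fatou} to extract a subsequence converging in the slightly weaker topology $\mathscr{D}^{\gamma,\alpha'}_{p,T^{\mathrm{loc}}_\eps}(\GG^\eps,e^{\sigma'}_l)$; the bound $\|u^\eps\|\le r(u_0)$ then comes from lower semicontinuity. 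This sidesteps the Lipschitz bookkeeping you flag as the main obstacle. For uniqueness the paper again avoids a contraction in $\mathscr{D}^{\gamma,\alpha}$: it writes $\mathscr{L}^\eps_\mu(u^\eps-v^\eps)=\mathscr{F}\cdot(u^\eps-v^\eps)(\xi^\eps-F'(0)c^\eps_\mu)$ with $\mathscr{F}=\int_0^1 F'(u^\eps+\lambda(v^\eps-u^\eps))\,d\lambda$, upgrades $\mathscr{F}$ to arbitrary Besov regularity by paying negative powers of $\eps$ via the trivial lattice estimate $\|f\|_{\mathcal{C}^{\zeta'}_p}\lesssim\eps^{-(\zeta'-\zeta)}\|f\|_{\mathcal{C}^{\zeta}_p}$, and obtains an $\eps$-dependent contraction on short subintervals that it iterates over $[0,T^{\mathrm{loc}}_\eps]$.

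Two points in your sketch need attention. First, the telescoping ``$u_1^2-u_2^2=(u_1+u_2)(u_1-u_2)$'' is not literal because $R(u)=\eps^2\int_0^1(1-\lambda)F''(\lambda\eps^2 u)\,d\lambda$ itself depends on $u$; however, writing $R(u_1)u_1^2-R(u_2)u_2^2=(F^\eps(u_1)-F^\eps(u_2))-F'(0)(u_1-u_2)$ and using the mean value theorem together with $F''\in L^\infty$ gives the pointwise bound $\eps^2\|F''\|_\infty(|u_1|+|u_2|)|u_1-u_2|$, after which the chain of embeddings from the proof of Lemma~\ref{lem:Picard} goes through verbatim and yields the claimed $\eps^\nu r(u_0)$ factor. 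Second, and more substantively, Banach's theorem only gives uniqueness \emph{within} the ball $B_{r(u_0)}$, whereas the corollary asserts uniqueness in all of $\mathscr{D}^{\gamma,\alpha}_{p,T^{\mathrm{loc}}_\eps}$. The a~priori estimate alone does not force an arbitrary fixed point into $B_{r(u_0)}$ (the quadratic inequality $x\le C_{u_0}+\lambda(x+\eps^\nu x^2)$ admits a second, large branch). To close this gap you must either reproduce the paper's $\eps$-dependent argument, or iterate your contraction over short subintervals whose length depends on the finite $\mathscr{D}^{\gamma,\alpha}$-norms of two given fixed points.
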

\begin{proof}
We construct the fixed point $u^\eps$ by a Picard type iteration. To avoid notational clashes with the initial condition $u_0$, we start the iteration with $n=-1$ for which we define $u_{-1}^{\eps}:=F'(0) u_0\mpara X_\mu^\eps+u^\sharp_0=u_0\para X_\mu^\eps+u^\sharp_0=u_0$ for $\gamma=0$ and $u_{-1}^\eps:=0\mpara X_\mu^\eps+e^{tL^\eps_\mu}u_0$ for $\gamma=\alpha$ (which is in $\mathscr{D}_{p,T}^{\gamma,\alpha}(\GG^\eps,e^\sigma_l)$ due to Lemma \ref{lem:Schauder}).  Define recursively for $n\geq 0$ the sequence $u_n^{\eps}:=\mathscr{M}^{\eps}_{\gamma,u_0} u^\eps_{n-1}$ (with $u^\eps_{n}=u^{\eps,X}_{n}\mpara X_\mu^\eps+u_{n}^{\eps,\sharp}$ to be read as a pair as in Definition \ref{def:Paracontrolled}). Choose now $r(u_0)$ so big that $\|u_{-1}^\eps\|_{\mathscr{D}_{p,1}^{\gamma,\alpha}(\GG^\eps,e^\sigma_l)}\leq r(u_0)$ and such that
\begin{align*}
C_{u_0}\leq \frac{1}{2} r(u_0)
\end{align*}
with $C_{u_0}$ as in Lemma \ref{lem:Picard}. Note that for $u_{n+1}^\varepsilon$ the constant $C_{u_0}$ in principle depends on $u^\eps_n(0)$, but in fact we can choose it independently of $n$ since $u_n^{\eps,X}(0)=F'(0)u_0$ for all $n \ge -1$ (by definition of $\mathscr{M}^\eps_{\gamma,u_0}$) and $u_n^{\eps,\sharp}(0)=\mathbf{1}_{\gamma=0} u_0^\sharp+\mathbf{1}_{\gamma=\alpha} u_0$ (by Definition \ref{def:ModifiedParaproduct} and Convention \ref{con:ConventionModifiedParaproduct}) in the second term of \eqref{eq:Cu0}.

 Since $T_\eps^{\mathrm{loc}}\leq 1$ we already know for $n=-1$ that
\begin{align}
\label{eq:PicardEstimate}
\|u_{n}^\eps\|_{\mathscr{D}_{p,T_\eps^{\mathrm{loc}}}^{\gamma,\alpha}(\GG^\eps,e^\sigma_l)}\leq r(u_0) \,.
\end{align}
We show recursively that \eqref{eq:PicardEstimate} is in fact true for any $n\geq -1$. Suppose we have already shown the statement for $n-1$, we then obtain by Lemma \ref{lem:Picard}
\begin{align*}
\|u_n^\eps\|_{\mathscr{D}^{\gamma,\alpha}_{p,T_\eps^{\mathrm{loc}}}(\GG^\eps,e^\sigma_l)} &\leq C_{u_0}+(T_\eps^{\mathrm{loc}})^{\frac{\alpha-\delta}{2}}\cdot C_{M_\eps} (r(u_0)+\eps^\nu\,(r(u_0))^2)  \\
&\leq \frac{r(u_0)}{2}  + (T_\eps^{\mathrm{loc}})^{\frac{\alpha-\delta}{2}}\,(C_{M_\eps}+C_{M_\eps} \eps^\nu\,r(u_0)) \cdot r(u_0) =\frac{r(u_0)}{2}+\frac{r(u_0)}{2}=r(u_0)\,.
\end{align*}
By Lemma \ref{lem:Fatou} in the appendix inequality~\eqref{eq:PicardEstimate} implies that for $\alpha'\in (0,\alpha)$ and $\sigma'\in (0,\sigma)$ there is a subsequence $(u_{n_k}^\varepsilon)_{k\geq 0}$, convergent in $\mathscr{D}^{\gamma,\alpha'}_{p,T_\eps^{\mathrm{loc}}}(\GG^\eps,e^{\sigma'}_l)$ to some $u^\varepsilon\in \mathscr{D}^{\gamma,\alpha}_{p,T_\eps^{\mathrm{loc}}}(\GG^\eps,e^\sigma_l)$, and 
\begin{align*}
\|u^\eps\|_{\mathscr{D}^{\gamma,\alpha}_{p,T_\eps^{\mathrm{loc}}}(\GG^\eps,e^\sigma_l)}\leq \liminf_{k\rightarrow	\infty} \|u^\eps_{n_k}\|_{\mathscr{D}^{\gamma,\alpha}_{p,T_\eps^{\mathrm{loc}}}(\GG^\eps,e^\sigma_l)}\leq r(u_0)\,.
\end{align*}
In particular $u^\varepsilon$ is a fixed point of $\mathscr{M}^\eps_{\gamma,u_0}$ that satisfies \eqref{eq:FixPointProblem}. It remains to check uniqueness. Choose two fixed points $u^\eps,\,v^\eps$, which then satisfy
\begin{align*}
	\mathscr{L}_\mu^\eps(u^\eps-v^\eps)& {=}(F^\eps(u^\eps)-F^\eps(v^\eps))(\xi^\eps-c^\eps_\mu F'(0)) {=}\underbrace{\int_0^1 F'(u^\eps+\lambda(v^\eps-u^\eps))\dd \lambda}_{=:\mathscr{F}}\cdot (v^\eps-u^\eps)(\xi^\eps-c^\eps_\mu F'(0))\,.
\end{align*}
\vspace{-0.5 cm}

We will use that for $\rho\in \rr(\ww)$ and $\zeta,\,\zeta'\in \RR$ with $\zeta'\geq \zeta$
\begin{align}
\label{eq:PayEps}
\|f\|_{\cC^{\zeta'}_p(\GG^\eps,\rho)} \lesssim \eps^{-(\zeta'-\zeta)} \|f\|_{\cC^{\zeta}_p(\GG^\eps,\rho)}\,,
\end{align}
which is an easy consequence of Definition \ref{def:DiscreteBesov} and which we essentially already used in the proof of Lemma~\ref{lem:Picard}. In other words, we can consider our objects as arbitrarily ``smooth'' if we are ready to accept negative powers of $\eps$. In particular, we can consider the initial condition $u_0$ as paracontrolled, that is $u_0\in \cC^\alpha_p(\GG^\eps,e^\sigma_l),\,u_0^\sharp\in \cC^{2\alpha}_p(\GG^\eps,e^\sigma_l)$ (and thus $u^{\eps,X}(0)=v^{\eps,X}(0)=F'(0)u_0\in \cC_p^{\alpha}(\GG^\eps,e^\sigma_l)$), so that with Lemma \ref{lem:Picard}  we obtain $u^\eps,v^\eps\in \mathscr{D}^{0,\alpha}_{p,T_\eps^{\mathrm{loc}}}(\GG^\eps,e^\sigma_l)$. Consequently, since also $e^\sigma_l \ge 1$, we get $u^\eps,\,v^\eps\in C_{T_\eps^{\mathrm{loc}}} L^\infty(\GG^\eps)$ which implies that the integral term $\mathscr{F}$ is in $C_{T_\eps^{\mathrm{loc}}} L^\infty(\GG^\eps)$ and, by using once more \eqref{eq:PayEps}, we can consider it as an element of $C_{T_\eps^{\mathrm{loc}}}\mathcal{C}^{\beta}_{\infty}(\GG^\eps)$ for any $\beta\in\RR$. The product $(v^\eps-u^\eps)(\xi^\eps-c^\eps_\mu F'(0))$ can then be estimated as in the proof of Lemma \ref{lem:Picard}. Since multiplication by $\mathscr{F}$ only contributes an ($\eps$-dependent) factor we obtain for $T'\leq T_\eps^{\mathrm{loc}}$ a bound of the form
\begin{align*}
	\|u^\eps-v^\eps\|_{\mathscr{D}^{0,\alpha}_{p,T'}(\GG^\eps,e^\sigma_l)}
	\lesssim_\eps (T')^{\frac{\alpha-\delta}{2}} \|u^\eps-v^\eps\|_{\mathscr{D}^{0,\alpha}_{p,T'}(\GG^\eps,e^\sigma_l)}\,,
\end{align*}
which shows $\|u^\eps-v^\eps\|_{\mathscr{D}^{0,\alpha}_{p,T'}(\GG^\eps,e^\sigma_l)}=0$ for $T'$ small enough. Iterating this argument gives $u^\eps=v^\eps$ on all of $[0,T_\eps^{\mathrm{loc}}]$.
\end{proof}

\subsubsection*{Convergence to the continuum}
It is straightforward to redo our computations in the continuous linear case (i.e. $F(x) = c x$), which leads to the existence of a solution to the continuous linear parabolic Anderson model on $\RR^2$, a result which was already established in \cite{HairerLabbeR2}. Since the continuous analogue of our approach is a one-to-one translation of the discrete statements and definitions above from $\GG^\varepsilon$ to $\RR^d$ we do not provide the details.

\begin{corollary}
\label{cor:ContinuousSolution}
Let $u_0\in \mathcal{C}^0_p(\RR^d,e^\sigma_l)$. Let $\xi$ be a white noise on $\RR^2$, and let $\mathscr{L}_\mu$ be defined as in Section~\ref{sec:DiffusionOperators}. Then there is a unique solution $u=F'(0)u\mpara X_\mu+u^\sharp\in \mathscr{D}^{\alpha,\alpha}_{p,T}(\RR^d,e^\sigma_l)$ to 
\begin{align}
\label{eq:LinearPAM}
	\mathscr{L}_\mu u=F'(0)u\blacklozeng \xi,\qquad u(0)=u_0\,,
\end{align} 
\glsadd{blacklozenge}
on $[0,T]$, where
\begin{align*}
	u\blacklozeng \xi:=  \xi \para u+ u\para\xi + F'(0) C(u,X_\mu,\xi)+ F'(0)u\,(X_\mu\bullet \xi)+ u^\sharp\reso \xi
\end{align*}
with $X_\mu,\,X_\mu\bullet \xi$ as in \eqref{eq:X}, \eqref{eq:Xxi}. 
\end{corollary}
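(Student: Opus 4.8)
The plan is to obtain Corollary~\ref{cor:ContinuousSolution} as the $\varepsilon$-independent, continuous counterpart of the analysis carried out in Lemma~\ref{lem:Picard} and Corollary~\ref{cor:FixPoint}, noting that in the continuous linear case $F(x)=F'(0)x$ the quadratic remainder terms $R_u$ and $R_{u^X}$ in the decomposition \eqref{eq:ParacontrolledSchauder4}--\eqref{eq:ParacontrolledSchauder5} simply vanish, which makes the argument strictly simpler. First I would set up the fixed-point map: given a candidate pair $(u^X,u^\sharp)$ with $u=u^X\mpara X_\mu+u^\sharp$, $u(0)=u_0$, define $v^X:=F'(0)u$ and let $v^\sharp:=v-v^X\mpara X_\mu$ where $v$ solves the mild equation $\mathscr{L}_\mu v=F'(0)u\blacklozeng\xi$, $v(0)=u_0$. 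The decomposition of $F'(0)u\blacklozeng\xi$ is exactly lines \eqref{eq:ParacontrolledSchauder1}--\eqref{eq:ParacontrolledSchauder3} (no $R$-terms), and each piece is estimated in $\mathcal M^\gamma_T\mathcal C^{2\alpha+2\kappa/\sigma-2}_p(\RR^d,e^\sigma_l p^\kappa)$ (resp.\ with weight $p^{2\kappa}$ for the resonant piece) by the continuous analogues of Lemmas~\ref{lem:ParaproductEstimates}, \ref{lem:CommutatorLemma}, \ref{lem:CommuteModifiedParaproduct}, using the a priori bound \eqref{eq:UseParacontrolledStructure} from Lemma~\ref{lem:ParaproductEstimateParabolicSpace} to control $\|u\|_{\mathscr L^{\gamma/2,\alpha}_{p,T}}$ by $\|u\|_{\mathscr D^{\gamma,\delta}_{p,T}}$ for $\delta\in(0,\alpha]$. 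Applying the continuous Schauder estimate (Lemma~\ref{lem:Schauder}, continuous version) to $\mathscr{L}_\mu v^\sharp$ and using Lemma~\ref{lem:ComparisonParabolicSpaces} to produce the small factor $T^{(\alpha-\delta)/2}$ yields, just as in Lemma~\ref{lem:Picard},
\[
   \|(v^X,v^\sharp)\|_{\mathscr D^{\gamma,\alpha}_{p,T}(\RR^d,e^\sigma_l)}\le C_{u_0}+C_M\,T^{(\alpha-\delta)/2}\,\|(u^X,u^\sharp)\|_{\mathscr D^{\gamma,\alpha}_{p,T}(\RR^d,e^\sigma_l)},
\]
where $M:=\|\xi\|_{\mathcal C^{\alpha+2\kappa/\sigma-2}_\infty(\RR^d,p^\kappa)}\vee\|X_\mu\|_{\mathcal C^{\alpha+2\kappa/\sigma}_\infty(\RR^d,p^\kappa)}\vee\|X_\mu\bullet\xi\|_{\mathcal C^{2\alpha+4\kappa/\sigma-2}_\infty(\RR^d,p^{2\kappa})}$, which is almost surely finite by the continuous version of Lemma~\ref{lem:RegularityNoise} together with Kolmogorov (or simply by Gaussian hypercontractivity, since here $\xi$ is genuinely Gaussian).

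Next I would run the Picard iteration on a short interval. Since the map is affine in $(u^X,u^\sharp)$ — the nonlinearity has collapsed to linearity — the difference of two images obeys $\|(v_1^X-v_2^X,v_1^\sharp-v_2^\sharp)\|_{\mathscr D^{\gamma,\alpha}_{p,T}}\le C_M T^{(\alpha-\delta)/2}\|(u_1^X-u_2^X,u_1^\sharp-u_2^\sharp)\|_{\mathscr D^{\gamma,\alpha}_{p,T}}$, so for $T^{\mathrm{loc}}:=(2C_M)^{-2/(\alpha-\delta)}$ the map is a contraction on the ball of radius $r(u_0):=2C_{u_0}$ in $\mathscr D^{\gamma,\alpha}_{p,T^{\mathrm{loc}}}(\RR^d,e^\sigma_l)$, giving a unique local fixed point $u=F'(0)u\mpara X_\mu+u^\sharp$. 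Because the equation is linear the local solution does not blow up: one can re-solve from time $T^{\mathrm{loc}}$ with initial datum $u(T^{\mathrm{loc}})$ (which lies in $\mathcal C^0_p$, indeed in a better space by the parabolic smoothing in $\mathscr L^{\gamma,\alpha}_{p,T}$), and the bound $C_{u(T^{\mathrm{loc}})}$ grows at most linearly in $\|u(T^{\mathrm{loc}})\|$; iterating, the step size $T^{\mathrm{loc}}$ stays bounded below on $[0,T]$ — here it is in fact simply constant, since $C_M$ is unchanged — so finitely many steps cover $[0,T]$ and patch into a solution $u\in\mathscr D^{\alpha,\alpha}_{p,T}(\RR^d,e^\sigma_l)$. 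Global-in-time uniqueness follows from local uniqueness by the same patching, or directly: if $u_1,u_2$ are two solutions with the same data then their difference solves a linear paracontrolled equation with zero initial condition, and the Schauder plus small-time estimate forces it to vanish on a sequence of intervals exhausting $[0,T]$.

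The main point to be careful about — and what I expect to be the only genuinely nontrivial ingredient — is that the product $u\blacklozeng\xi$ is well posed as a continuous function of the triple $(\xi,X_\mu,X_\mu\bullet\xi)$ on the closure of smooth data in $\mathcal C^{\alpha+2\kappa/\sigma-2}_\infty(\RR^d,p^\kappa)\times\mathcal C^{\alpha+2\kappa/\sigma}_\infty(\RR^d,p^\kappa)\times\mathcal C^{2\alpha+4\kappa/\sigma-2}_\infty(\RR^d,p^{2\kappa})$, so that the right-hand side of \eqref{eq:LinearPAM} genuinely makes sense; this is exactly the content of the paracontrolled ansatz and rests on the commutator Lemma~\ref{lem:CommutatorLemma} (to give meaning to $C(u,X_\mu,\xi)$) and on the a priori regularity $u^\sharp\in\mathcal C^{2\alpha}_p$ with $2\alpha+(\alpha+2\kappa/\sigma-2)>0$ (to give meaning to $u^\sharp\reso\xi$), both of which hold under the standing parameter constraints \eqref{eq:ParameterCondition1}. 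I would also record, for use in Theorem~\ref{thm:ConvergencePAM}, that the solution map $(u_0;\xi,X_\mu,X_\mu\bullet\xi)\mapsto u$ is continuous (indeed locally Lipschitz) in the relevant topologies — this is immediate from the contraction estimate above with $C_M$ replaced by a joint bound on the two sets of data — since it is precisely this continuity, combined with the convergence of the enhanced noise from Lemma~\ref{lem:LimitArea} and the $(\EE)$-property of all the bilinear operations, that will upgrade the a priori bounds of Corollary~\ref{cor:FixPoint} to convergence in law of $\EE^\varepsilon u^\varepsilon$ to $u$. As the excerpt itself notes, the continuous statements and their proofs are a verbatim transcription of the discrete ones from $\GG^\varepsilon$ to $\RR^d$, so beyond flagging these points I would not reproduce the estimates in detail.
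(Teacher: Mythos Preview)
Your proposal is correct and follows essentially the same route as the paper: set up the continuous analogue of the map $\mathscr{M}_{\gamma,u_0}$, observe that linearity kills the $R$-terms so that the a priori estimate becomes $\|v\|_{\mathscr D}\le C_{u_0}+C_M T^{(\alpha-\delta)/2}\|u\|_{\mathscr D}$, solve locally, then exploit that $T^{\mathrm{loc}}$ depends only on the noise data (not on $u_0$) to iterate with constant step size up to $T$; uniqueness by the same small-time contraction on the difference. The paper additionally makes explicit the switch from $\gamma=\alpha$ on the first interval (where $u_0\in\mathcal C^0_p$ only) to $\gamma=0$ on subsequent intervals (where the restart datum is genuinely paracontrolled), which you allude to via ``indeed in a better space by the parabolic smoothing'' but could state more sharply; and where Corollary~\ref{cor:FixPoint} extracts the fixed point by a compactness/Fatou argument, you use the affine structure to get a direct contraction, which is a legitimate simplification in the linear case.
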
	
	
\begin{proof}[Sketch of the proof]
As in Lemma \ref{lem:Picard} we can build a map $\mathscr{M}_{\alpha,u_0}\colon\,\mathscr{D}^{\alpha,\alpha}_{p,T}(\RR^d,e^\sigma_l)\rightarrow \mathscr{D}^{\alpha,\alpha}_{p,T}(\RR^d,e^\sigma_l):\,u=u^{X_\mu}\mpara X_\mu + u^{\sharp} \mapsto v= F'(0)u\mpara X_\mu + v^\sharp$ via
\begin{align}
\label{eq:LinearFixedPointEquation}
\mathscr{L}_\mu v:= F'(0) u \blacklozeng \xi\,,\qquad v(0)=u_0\,.
\end{align}
As in Corollary \ref{cor:FixPoint} there is a time $T^{\mathrm{loc}}$ such that $\mathscr{M}_{\alpha,u_0}$ has a (unique) fixed point $u^{(0)}=F'(0)u^{(0)}\mpara X_\mu+u^{(0),\sharp}$ in $\mathscr{D}^{\alpha,\alpha}_{p,T^{\mathrm{loc}}}(\RR^d,e^\sigma_l)$ that solves 
\begin{align*}
\mathscr{L}_\mu u^{(0)} = F'(0) u^{(0)}\blacklozeng \xi\,,\qquad u^{(0)}(0)=u_0 \,.
\end{align*}
on $[0,T^{\mathrm{loc}}]$. Since the right hand side of \eqref{eq:LinearFixedPointEquation} is linear, this time can be chosen of the form $T^{\mathrm{loc}}=\frac{1}{2}\, K^{-2/(\alpha-\delta)}$, where $K>0$ is a (random) constant that only depends on $\xi, X_\mu, X_\mu \bullet \xi$, but not on the initial condition. Proceeding as above but starting in $u^{(0)}(T^{\mathrm{loc}})$ we can construct  
a map $\mathscr{M}_{0,u^{(0)}(T^{\mathrm{loc}})} \colon \,\mathscr{D}^{0,\alpha}_{p,T^{\mathrm{loc}}}(\RR^d,e^\sigma_l) \to \mathscr{D}^{0,\alpha}_{p,T^{\mathrm{loc}}}(\RR^d,e^\sigma_l)$ by (the continuous version of) Lemma \ref{lem:Picard} and Lemma \ref{lem:CommuteModifiedParaproduct}. The map $\mathscr{M}_{0,u^{(0)}(T^{\mathrm{loc}})}$ has again a fixed point on $[0,T^{\mathrm{loc}}]$ which we call $u^{(1)}$. Starting now in $u^{(1)}(T^{\mathrm{loc}})$ we can construct $u^{(2)}$ as the fixed point of $\mathscr{M}_{0,u^{(1)}(T^{\mathrm{loc}})}$ on $[0,T^{\mathrm{loc}}]$ and so on. As in \cite[Theorem 6.12]{Gubinelli2017KPZ}) the sequence of local solutions $u^{(0)},\,u^{(1)},u^{(2)},\ldots$ can be concatenated to a paracontrolled solution $u=F'(0)u\mpara X_\mu+u^\sharp\in \mathscr{D}^{\alpha,\alpha}_{p,T}(\RR^d,e^\sigma_l)$ on $[0,T]$. 

To see uniqueness take two solutions $u,\,v$ in $\mathscr{D}^{\alpha,\alpha}_{p,T}(\RR^d,e^\sigma_l)$ and consider $h=u-v$. Using that $h(0)=0$ and $\mathscr{L}_\mu h = h\blacklozeng \xi$ one derives as in Lemma \ref{lem:Picard}
\begin{align*}
\|h\|_{\mathscr{D}_{p,T}^{\alpha,\alpha}(\RR^d,e^\sigma_l)}\leq  C \cdot T^{(\alpha-\delta)/2}\,\|h\|_{\mathscr{D}_{p,T}^{\alpha,\alpha}(\RR^d,e^\sigma_l)}
\end{align*}
so that choosing $T$ first small enough and then proceeding iteratively yields $h=0$. 
\end{proof}

We can now deduce the main theorem of this section. The parameters are as on page \pageref{eq:ParameterCondition1}. 

\begin{theorem}
\label{thm:ConvergencePAM}
Let $u^\eps_0$ be a uniformly bounded sequence in $\mathcal{C}^0_p(\GG^\eps,e^\sigma_l)$ such that $\EE^\eps u^\eps_0$ converges to some $u_0$ in $\Sw_\omega'(\RR^2)$. Then there are unique solutions $u^\eps\in \mathscr{D}^{\alpha,\alpha}_{p,T^\eps}(\GG^\eps,e^\sigma_l)$ to 
\begin{align}
	\label{eq:ConvergencePAMDiscreteEquation}
	\mathscr{L}_\mu^\eps u^\eps=F^\eps(u^\eps)(\xi^\eps - c^\eps_\mu F'(0)),\qquad u^\eps(0)=u^\eps_0,
\end{align}
on $[0,T^\eps]$ with random times $T^\eps\in (0,T]$ that satisfy $\mathbb{P}(T^\eps=T)\overset{\eps \rightarrow 0}{\longrightarrow} 1$. The sequence $u^\eps=F'(0)u^\eps\mpara X_\mu+u^{\eps,\sharp}\in\mathscr{D}^{\alpha,\alpha}_{p,T^\eps}(\GG^\eps,e^\sigma_l)$ is uniformly bounded and the extensions $\EE^\eps u^\eps$ converge in distribution in $\mathscr{D}^{\alpha,\alpha'}_{p,T}(\RR^d,e^{\sigma'}_l)$, $\alpha'<\alpha,\,\sigma'<\sigma$, to the solution $u$ of the linear equation in Corollary \ref{cor:ContinuousSolution}.
\end{theorem}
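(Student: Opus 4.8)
The plan is to combine the deterministic a priori estimates of Lemma~\ref{lem:Picard} and the fixed point construction of Corollary~\ref{cor:FixPoint} with the probabilistic convergence of the enhanced noise from Lemma~\ref{lem:LimitArea}, and then to pass everything through the extension operator $\EE^\eps$ using the $(\EE)$-property of all the operations involved. Concretely, the argument proceeds in four stages.

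\textbf{Step 1: Global existence on a random time interval.} By Lemma~\ref{lem:RegularityNoise} the stochastic data $M_\eps$ from~\eqref{eq:BoundData} has uniformly bounded moments, so $(M_\eps)_\eps$ is tight; in particular for every $\delta_0>0$ there is $R>0$ with $\sup_\eps \mathbb{P}(M_\eps > R) < \delta_0$. On the event $\{M_\eps \le R\}$ the local existence time $T^{\mathrm{loc}}_\eps$ from Corollary~\ref{cor:FixPoint} is bounded below by a deterministic constant $T^{\mathrm{loc}}(R)>0$ (since $C_{M_\eps} = c_0(1+M_\eps^2) \le c_0(1+R^2)$ and $\eps^\nu \to 0$). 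Starting the fixed point iteration at $u^\eps_0$ with $\gamma = \alpha$ and then re-iterating from $u^\eps(T^{\mathrm{loc}}_\eps)$ with $\gamma = 0$ as in Corollary~\ref{cor:ContinuousSolution}, one concatenates the local solutions; the number of steps needed to cover $[0,T]$ is $\lceil T / T^{\mathrm{loc}}(R)\rceil$. I would define $T^\eps := T$ on the event that all these concatenation steps succeed and that the norm stays bounded by a constant $r(R)$ depending only on $R$, and $T^\eps < T$ otherwise. One must check that the re-started Picard map with $\gamma=0$ requires control of $u^{\eps,\sharp}(T^{\mathrm{loc}}_\eps)$ in $\mathcal{C}^{2\alpha}_p$, which is provided by the bound $\|u^\eps\|_{\mathscr{D}^{\alpha,\alpha}_{p,T^{\mathrm{loc}}_\eps}} \le r$ together with the embedding $\mathscr{L}^{\alpha,2\alpha}_{p,T} \hookrightarrow \mathcal{M}^\alpha_T \mathcal{C}^{2\alpha}_p$ evaluated at a positive time. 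This gives $\mathbb{P}(T^\eps = T) \ge 1 - C(T/T^{\mathrm{loc}}(R)) \delta_0 \to 1$ as we first send $\eps \to 0$ and then $R \to \infty$, and simultaneously $\sup_\eps \|u^\eps \mathbf{1}_{T^\eps = T}\|_{\mathscr{D}^{\alpha,\alpha}_{p,T}(\GG^\eps,e^\sigma_l)} \le r(R)$ with probability at least $1-\delta_0$.

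\textbf{Step 2: Tightness of the extensions.} By Lemma~\ref{lem:ExtensionOperator}, $\EE^\eps$ is uniformly bounded $\mathcal{B}^{\alpha}_{p,q}(\GG^\eps,\rho) \to \mathcal{B}^\alpha_{p,q}(\RR^2,\rho)$; the analogous statement for the parabolic spaces $\mathscr{D}^{\alpha,\alpha}_{p,T}$ follows by applying it blockwise in time, so $\EE^\eps u^\eps$ (restricted to $\{T^\eps = T\}$) is uniformly bounded in $\mathscr{D}^{\alpha,\alpha}_{p,T}(\RR^2,e^\sigma_l)$. By the compact embedding of Lemma~\ref{lem:ContinuousEmbedding} (lowering both the regularity $\alpha \to \alpha'$ and the weight $\sigma \to \sigma'$) together with Prohorov's theorem applied on the separable closure of the smaller space, the laws of $(\EE^\eps u^\eps, \EE^\eps \xi^\eps, \EE^\eps X^\eps_\mu, \EE^\eps(X^\eps_\mu \bullet \xi^\eps))$ are tight in $\mathscr{D}^{\alpha,\alpha'}_{p,T}(\RR^2,e^{\sigma'}_l) \times \mathcal{C}^{\zeta-2} \times \mathcal{C}^\zeta \times \mathcal{C}^{2\zeta-2}$ (with polynomial weights), where $\zeta = \alpha + 2\kappa/\sigma$. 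Extract a weakly convergent subsequence and, by Skorokhod representation, realize it as an a.s.\ convergent sequence on a common probability space.

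\textbf{Step 3: Identifying the limit.} This is the main step. On the Skorokhod space, $\EE^\eps u^\eps \to \bar u$ a.s.\ in $\mathscr{D}^{\alpha,\alpha'}_{p,T}(\RR^2, e^{\sigma'}_l)$ and $(\EE^\eps \xi^\eps, \EE^\eps X^\eps_\mu, \EE^\eps(X^\eps_\mu \bullet \xi^\eps)) \to (\xi, X_\mu, X_\mu \bullet \xi)$ a.s.\ by Lemma~\ref{lem:LimitArea}. I would write the mild/paracontrolled fixed point equation $u^\eps = \mathscr{M}^\eps_{\gamma,u^\eps_0} u^\eps$ satisfied on each concatenation subinterval, apply $\EE^\eps$ to both sides, and use the $(\EE)$-property of the paraproduct (Lemma~\ref{lem:ParaproductEstimates}), the modified paraproduct (Lemmas~\ref{lem:ModifiedParaproductEstimates}, \ref{lem:ParaproductEstimateParabolicSpace}, \ref{lem:CommuteModifiedParaproduct}), and the commutator (Lemma~\ref{lem:CommutatorLemma}) to replace $\EE^\eps$ of each bilinear/trilinear operation by the continuous operation applied to the extensions, up to an error $o_\eps \cdot (\text{uniformly bounded norm}) \to 0$. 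The genuinely nonlinear terms $R(u^\eps) (u^\eps)^2 \xi^\eps$ and $R(u^{\eps,X}/F'(0)) (u^{\eps,X})^2 c^\eps_\mu / F'(0)$ carry an explicit factor $\eps^\nu$ in the a priori estimate (shown in the proof of Lemma~\ref{lem:Picard}), so their extensions vanish in the limit; this is precisely the mechanism of weak universality — $F^\eps(u) = F'(0) u + O(\eps^2 u^2)$ and only $F'(0)$ survives. One also needs that $\EE^\eps \xi^\eps$ and $\EE^\eps X^\eps_\mu$ converge jointly with $\EE^\eps u^\eps$, which is built into Step 2, and that the renormalized resonant term converges, which is the content of $X^\eps_\mu \bullet \xi^\eps \to X_\mu \bullet \xi$. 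The upshot is that $\bar u$ satisfies the continuous paracontrolled fixed point equation $\bar u = \mathscr{M}_{\gamma,u_0} \bar u$ on each subinterval, hence $\bar u = F'(0) \bar u \mpara X_\mu + \bar u^\sharp$ solves~\eqref{eq:LinearPAM}; here I use that $\EE^\eps u^\eps_0 \to u_0$ and that the paracontrolled structure passes to the limit because $u^{\eps,X} = F'(0) u^\eps$ is an exact identity (Corollary~\ref{cor:FixPoint}), so $\EE^\eps u^{\eps,X} = F'(0) \EE^\eps u^\eps \to F'(0)\bar u$, while $\EE^\eps u^{\eps,\sharp} = \EE^\eps u^\eps - \EE^\eps(F'(0) u^\eps \mpara X^\eps_\mu) \to \bar u - F'(0)\bar u \mpara X_\mu$ by the $(\EE)$-property of $\mpara$. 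By the uniqueness statement in Corollary~\ref{cor:ContinuousSolution}, $\bar u = u$ is deterministic as a function of $(\xi, X_\mu, X_\mu \bullet \xi)$.

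\textbf{Step 4: Conclusion.} Since every weakly convergent subsequence of $(\EE^\eps u^\eps)$ has the same limit $u$, the whole sequence converges in distribution in $\mathscr{D}^{\alpha,\alpha'}_{p,T}(\RR^2, e^{\sigma'}_l)$ to $u$. Uniqueness of $u^\eps$ on $[0,T^\eps]$ follows from the uniqueness part of Corollary~\ref{cor:FixPoint}, applied on each concatenation subinterval. The main obstacle I anticipate is the bookkeeping in Step~3: one must carefully track that all the error terms from the $(\EE)$-property are multiplied by norms that are uniformly bounded \emph{on the event $\{T^\eps = T\}$ with the constant $r(R)$}, and that the finitely many concatenation steps (whose number depends on $R$ but not on $\eps$) do not accumulate error — this works because at each step the $o_\eps$ errors are uniform and the norm bounds propagate. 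A secondary subtlety is that the distributional Cauchy problem is not uniquely solvable in general (Tychonoff), so one must restrict to solutions with controlled spatial growth, as already noted before the statement; the Fourier-in-space argument mentioned there handles this and shows the mild solution we construct is the unique distributional one.
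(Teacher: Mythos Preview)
Your proposal is correct and follows essentially the same four-stage strategy as the paper: concatenate local solutions from Corollary~\ref{cor:FixPoint}, show the existence time reaches $T$ with high probability via the moment bounds on $M_\eps$, apply Skorokhod and extract a convergent subsequence via compactness, then identify the limit term by term using the $(\EE)$-property and the vanishing of the $\eps^\nu$-weighted nonlinear remainder.

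Two minor points where the paper's execution differs from yours are worth noting. First, in Step~1 your definition of $T^\eps$ depends on the auxiliary level $R$ (and hence on $\delta_0$), so it is not an intrinsic random variable; also the stated bound $\mathbb{P}(T^\eps=T)\ge 1-C(T/T^{\mathrm{loc}}(R))\delta_0$ carries an extraneous factor---on $\{M_\eps\le R\}$ and for $\eps$ small enough (depending on $R$, $T$ and the uniform bound on $u_0^\eps$) all concatenation steps succeed deterministically, so the correct bound is simply $\limsup_\eps\mathbb P(T^\eps<T)\le\sup_\eps\mathbb P(M_\eps>R)$. The paper avoids this by defining $T^\eps:=T\wedge \tfrac12 T^{\mathrm{blow\text{-}up}}_\eps$ intrinsically and showing that the blow-up time dominates a deterministic function $T^{\mathrm{det}}_\eps(M_\eps)$ with $T^{\mathrm{det}}_\eps(K)\to\infty$ for each fixed $K$ thanks to the $\eps^\nu$ factor. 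Second, the paper applies Skorokhod only to the enhanced noise $(\xi^\eps,X^\eps_\mu,X^\eps_\mu\bullet\xi^\eps)$ (and to $T^{\mathrm{blow\text{-}up}}_\eps$), then extracts the convergent subsequence of $\EE^\eps u^\eps$ via the deterministic Fatou-type Lemma~\ref{lem:Fatou}; it also passes to the limit in the differential equation $\mathscr{L}^\eps_\mu\EE^\eps u^\eps=\EE^\eps\mathscr{L}^\eps_\mu u^\eps$ rather than in the mild fixed-point equation. These are bookkeeping choices, not different arguments.
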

\begin{remark}
\label{rem:ConvergenceInDistribution}
Since $T^\eps$ is a random time for which it might be true that $P(T^\eps<T)>0$ the convergence in distribution has to be defined with some care: We mean by $\EE^\eps u^\eps\rightarrow u$ in distribution that for any $f\in C_b (\mathscr{D}^{\alpha,\alpha'}_{p,T}(\GG^\eps,e^\sigma_l);\RR)$, we have $\mathbb{E}[f(\EE^\eps u^\eps)\mathbf{1}_{T^\eps = T}]\rightarrow \mathbb{E}[f(u)]$ and further $\mathbb{P}(T^\eps < T)\rightarrow 0$. 
\end{remark}

\begin{proof}
The \emph{local} existence of a solution to \eqref{eq:ConvergencePAMDiscreteEquation} is provided by Corollary \ref{cor:FixPoint}. Proceeding as in the proof of Corollary \ref{cor:ContinuousSolution} we can in fact construct a sequence of local solutions $(u^{\eps,(n)})_{n\geq 0}$ on intervals $[0,T_\eps^{\mathrm{loc},(n)}]$ with $u^{\eps,(n)}(0)=u^{\eps,(n-1)}(T_\eps^{\mathrm{loc},(n-1)})$, where we set $T_\eps^{\mathrm{loc},(-1)}:=0$ and $u^{\eps,(-1)}:=u_0$. Due to Corollary \ref{cor:FixPoint} the time $T_\eps^{\mathrm{loc},(n)}$ is given by 
\begin{align}
\label{eq:ConcatenationTime}
T_\eps^{\mathrm{loc},(n)}:=\frac{1}{2}\, \left(C_{M_\eps}+C_{M_\eps}\eps^{\nu} r\big( u^{\eps,(n-1)}(T_\eps^{\mathrm{loc},(n-1)})\big)\right)^{-2/(\alpha-\delta)} \,.
\end{align}
Note that, in contrast to the proof of Corollary \ref{cor:ContinuousSolution}, $T_\eps^{\mathrm{loc},(n)}$ now really depends on $n$ and we might have $\sum_{n\geq 0} T_\eps^{\mathrm{loc},(n)}<\infty$. As in \cite[Theorem 6.12]{Gubinelli2017KPZ} we can concatenate the sequence $u^{\eps,(0)},u^{\eps,(1)},\ldots$ to a solution $u^\eps$ to \eqref{eq:ConvergencePAMDiscreteEquation} which is defined up to its ``blow-up'' time
\begin{align*}
 T_\eps^{\mathrm{blow-up}}=\sum_{n\geq 0} T_\eps^{\mathrm{loc},(n)}
 \end{align*} 
(which might be larger than $T$ or even infinite). Let us set
\begin{align}
T^\eps:= T\wedge \frac{T_\eps^{\mathrm{blow-up}}}{2}\,.
\end{align}
To show $\mathbb{P}(T^\eps=T)\overset{\eps \rightarrow 0}{\longrightarrow} 1$ we prove that for any $t>0$ we have $\mathbb{P}(T_\eps^{\mathrm{blow-up}}<t)\rightarrow 0$. By inspecting the definition of $r(\ldots)$ in the proof of Corollary \ref{cor:FixPoint} we see that given the (bounded) sequence of initial condition $u_0^\eps$ the size of $T_\eps^{\mathrm{blow-up}}$ can be controlled by the quantity $M^\eps$. More precisely there is a deterministic, decreasing function $T_\eps^{\mathrm{det}}:\,\RR_+\rightarrow \RR_+$ such that
\begin{align*}
T_\eps^{\mathrm{blow-up}} \geq T_\eps^{\mathrm{det}}(M^\eps)
\end{align*}
and such that for any $K>0$ (due to the presence of the factor $\eps^\nu$ in \eqref{eq:ConcatenationTime})
\begin{align}
\label{eq:TdetExpl}
T_\eps^{\mathrm{det}}(K)\overset{\eps\rightarrow 0}{\longrightarrow} \infty\,.
\end{align}
Let $t>0$ and $K^\eps_{t}:=\sup\{K>0\,\vert\,T_\eps^{\mathrm{det}}(K) \geq  t\}$. Note that we must have $K^\eps_{t}\overset{\eps\rightarrow 0}{\longrightarrow} \infty$ since otherwise we contradict \eqref{eq:TdetExpl}. But this already implies the desired convergence:
\begin{align*}
\mathbb{P}(T_\eps^{\mathrm{blow-up}}< t)\leq \mathbb{P}(T_\eps^{\mathrm{det}}(M^\eps)< t)\leq \mathbb{P}(M^\eps\geq K^\eps_{t})\overset{K^\eps_{t} \rightarrow \infty}{\longrightarrow} 0\,,
\end{align*}
where we used in the last step the boundedness of the moments of $M^\eps$ due to Lemma \ref{lem:RegularityNoise}.

It remains to show that the extensions $\EE^\eps u^\eps$ converge to $u$. By Skohorod representation we know that $\EE^\eps \xi^\eps,\,\EE^\eps X_\mu^\eps,\,\EE^\eps (X_\mu^\eps\bullet \xi^\eps)$ in Lemma \ref{lem:LimitArea} converge almost surely on a suitable probability space. We will work on this space from now on. The application of the Skohorod representation theorem is indeed allowed since the limiting measure of these objects has support in the closure of smooth compactly supported functions and thus in a separable space. We can further assume by Skohorod representation that (a.s.) $T_\eps^{\mathrm{blow-up}}\rightarrow \infty$ so that almost surely we have $T^\eps=T$ for all $\varepsilon \le \varepsilon_0$ with some (random) $\varepsilon_0$. Having proved that the sequence $u^\eps$ is uniformly bounded in $\mathscr{D}^{\alpha,\alpha}_{p,T^\eps}(\GG^\eps,e^\sigma_l)$ we know, by Lemma \ref{lem:ExtensionOperator}, that $\EE^\eps u^\eps$ is uniformly bounded in $\mathscr{D}^{\alpha,\alpha}_{p,T^{\eps}}(\RR^d,e^\sigma_l)$. Due to (the continuous version of) Lemma \ref{lem:Fatou} there is at least a subsequence of $
 \EE^{\eps_n} u^{\eps_n}$ that converges to some $u\in \mathscr{D}^{\alpha,\alpha}_{p,T}(\RR^d,e^\sigma_l)(\RR^d)$ in the topology of $\mathscr{D}^{\alpha,\alpha'}_{p,T}(\RR^d,e^{\sigma'}_l)$. If we can show that this limit solves \eqref{eq:LinearPAM} we can argue by uniqueness that (the full sequence) $\EE^\eps u^\eps$ converges to $u$.  
We have
\begin{align}
\label{eq:ProofDiscreteEquationExtended}
\mathscr{L}^{\eps_n}_\mu \EE^{\eps_n} u^{\eps_n}=\EE^{\eps_n} \mathscr{L}^{\eps_n}_\mu u^{\eps_n} =\EE^{\eps_n}(F^{\eps_n}(u^{\eps_n})(\xi^{\eps_n} -c^{\eps_n}_\mu F'(0)))\,,
\end{align}
where $\mathscr{L}^\eps_\mu \EE^\eps u^\eps$ should be read as in \eqref{eq:DiscretizedActionDiffusionOperator}. Note that the left hand side of \eqref{eq:ProofDiscreteEquationExtended} converges as
\begin{align*}
\mathscr{L}^\eps_\mu \EE^{\eps_n} u^{\eps_n} = (\mathscr{L}^{\eps_n}_\mu - \mathscr{L}_\mu)\EE^{\eps_n} u^{\eps_n}+\mathscr{L}_\mu\EE^{\eps_n} u^{\eps_n} \,\, \overset{{\eps_n}\rightarrow 0}{\longrightarrow} \,\, 0+\mathscr{L}_\mu u=\mathscr{L}_\mu u
\end{align*}
due to Lemma \ref{lem:LaplaceRegularity}. For the right hand side of \eqref{eq:ProofDiscreteEquationExtended} we apply the same decomposition as in \eqref{eq:ParacontrolledSchauderToDecompose}=\eqref{eq:ParacontrolledSchauder1}+\eqref{eq:ParacontrolledSchauder2}+\eqref{eq:ParacontrolledSchauder3}+\eqref{eq:ParacontrolledSchauder4}+\eqref{eq:ParacontrolledSchauder5}. While (the extensions of) the terms \eqref{eq:ParacontrolledSchauder4},\eqref{eq:ParacontrolledSchauder5} of \eqref{eq:ParacontrolledSchauderToDecompose} vanish as $\eps$ tends to $0$, we can use the property \eqref{eq:EProperty} of the operators acting in the terms \eqref{eq:ParacontrolledSchauder1}, \eqref{eq:ParacontrolledSchauder2}, \eqref{eq:ParacontrolledSchauder3} to identify their limits. Consider for example the product $u^{\eps,X^\eps_\mu}(X_\mu^\eps \bullet \xi^\eps)= F'(0) u^\eps (X_\mu^\eps \bullet \xi^\eps)$ in \eqref{eq:ParacontrolledSchauder2} whose extension we can rewrite as 
\begin{align*}
& \EE^{\eps_n}\big( F'(0) u^{\eps_n} (X_\mu^{\eps_n} \bullet \xi^{\eps_n}) \big) = F'(0)\, \EE^{\eps_n} \big(u^{\eps_n} \para (X_\mu^{\eps_n} \bullet \xi^{\eps_n}) +u^{\eps_n} \lpara (X_\mu^{\eps_n} \bullet \xi^{\eps_n}) +u^{\eps_n} \reso (X_\mu^{\eps_n} \bullet \xi^{\eps_n}) \big) \\
   &\overset{\eqref{eq:EProperty}}{=}F'(0)\big[\EE^{\eps_n} u^{\eps_n} \para \EE^{\eps_n}(X_\mu^{\eps_n} \bullet \xi^{\eps_n}) +\EE^{\eps_n} u^{\eps_n} \lpara \EE^{\eps_n}(X_\mu^{\eps_n} \bullet \xi^{\eps_n}) + \EE^{\eps_n} u^{\eps_n} \reso \EE^{\eps_n}(X_\mu^{\eps_n} \bullet \xi^{\eps_n})\big]+o_{\eps_n}(1)\,,
\end{align*}
where we applied the property \eqref{eq:EProperty} of $ \para ,\,\lpara ,\,\reso $ (Lemma \ref{lem:ParaproductEstimates}) in the second step. By continuity of the involved operators and Lemma \ref{lem:LimitArea} we thus obtain 
\begin{align*}
\lim_{\eps_n\rightarrow 0}\EE^{\eps_n}\big( F'(0) u^{\eps_n} (X_\mu^{\eps_n} \bullet \xi^{\eps_n}) \big) = F'(0) \big[ u\para (X\bullet \xi)+  u\lpara (X\bullet \xi)+ u\reso (X\bullet \xi\big)]=F'(0)u (X\bullet \xi)\,.
\end{align*}
Proceeding similarly for all terms in the decomposition of the right hand side of \eqref{eq:ProofDiscreteEquationExtended} one arrives at 
\begin{align*}
\mathscr{L}_\mu u=\lim_{\eps_n\rightarrow 0}\EE^{\eps_n} \mathscr{L}^{\eps_n}_\mu u^{\eps_n} = \lim_{\eps_n\rightarrow 0}\EE^{\eps_n}(F^{\eps_n}(u^{\eps_n})(\xi^{\eps_n} -c^{\eps_n}_\mu F'(0)))=F'(0) u\blacklozeng\xi\,,
\end{align*}
which finishes the proof. 
\end{proof}

Since the weights we are working with are increasing, the solutions $u^\eps$ and the limit $u$ are actually classical tempered distributions. However, since we need the $\Sw_\omega$ spaces to handle convolutions in $e^\sigma_l$ weighted spaces it is natural to allow for solutions in $\Sw'_\omega$. 
In the linear case, $F=\mathrm{Id}$, we can allow for sub-exponentially growing initial conditions $u_0$ since the only reason for choosing the parameter $l$ in the weight $e_{l+t}^\sigma$ smaller than $-T$ was to be able to estimate $e^\sigma_{l+t}\leq (e^\sigma_{l+t})^2$ to handle the quadratic term. In this case the solution will be a genuine ultra-distribution.

\appendix
\section{Appendix}

\subsection*{Results related to Section \ref{sec:BravaisLattices}}

\begin{lemma}
\label{lem:FourierAnalysisUltraDistributions}
The mappings $(\FF_\GG,\FF^{-1}_\GG)$ defined in Subsection~\ref{subsec:LatticeUltraDistributions} map the spaces $(\Sw_\omega(\GG),\,\Sw_\omega(\widehat{\GG}))$ and $(\Sw'_\omega(\GG),\,\Sw'_\omega(\widehat{\GG}))$ to each other.  
\end{lemma}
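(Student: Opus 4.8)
The plan is to mimic, in the discrete ultra-distribution setting, the proof that $\FFg$ is an isomorphism between $\Sw(\GG)$ and $C^\infty(\wGG)$ that was sketched in Subsection~\ref{subsec:LatticeFourierTransform}, but now keeping track of the weighted seminorms built from $\omega$. First I would fix $\omega=\wexps$ with $\sigma\in(0,1)$ (the case $\omega=\wpol$ reduces to the classical statement recalled in Subsection~\ref{subsec:LatticeFourierTransform} and needs no separate argument). The main work is to show that $\FFg\colon\Sw_\omega(\GG)\to\Sw_\omega(\wGG):=C^\infty_\omega(\wGG)$ is well defined and continuous, and likewise for $\FFg^{-1}$; the statement for the duals then follows formally by transposition, using the defining pairings.

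For the direct implication, take $\varphi\in\Sw_\omega(\GG)$, so $\sup_{k\in\GG}e^{\lambda\omega(k)}|\varphi(k)|<\infty$ for every $\lambda>0$. Since $\FFg\varphi(x)=|\GG|\sum_{k\in\GG}\varphi(k)e^{-2\pi\ii k\scl x}$, the dominated convergence theorem (exactly as in the Schwartz case) gives $\FFg\varphi\in C^\infty(\wGG)$ with $\partial^\alpha\FFg\varphi(x)=|\GG|\sum_{k}\varphi(k)(-2\pi\ii k)^\alpha e^{-2\pi\ii k\scl x}$. To see that $\FFg\varphi\in C^\infty_\omega(\wGG)$ I would estimate, for any $\eps>0$,
\[
  |\partial^\alpha\FFg\varphi(x)|\le |\GG|\,(2\pi)^{|\alpha|}\sum_{k\in\GG}|k|^{|\alpha|}\,|\varphi(k)|
  \le (2\pi)^{|\alpha|}\Big(\sup_{k}e^{\lambda\omega(k)}|\varphi(k)|\Big)\,|\GG|\sum_{k\in\GG}|k|^{|\alpha|}e^{-\lambda|k|^\sigma},
\]
and then bound $|k|^{|\alpha|}e^{-\lambda|k|^\sigma/2}\lesssim \lambda^{-|\alpha|/\sigma}C^{|\alpha|}|\alpha|^{|\alpha|/\sigma}$ using $\max_{t\ge0}t^a e^{-\mu t^\sigma}=\mu^{-a/\sigma}(a/\sigma)^{a/\sigma}e^{-a/\sigma}$ (the same computation as in the proof that $\Sw_\omega(\RR^d)\subseteq\Eww(\RR^d)$), while $|\GG|\sum_k e^{-\lambda|k|^\sigma/2}<\infty$ is a fixed constant. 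By Stirling $|\alpha|^{|\alpha|/\sigma}\lesssim C^{|\alpha|}(\alpha!)^{1/\sigma}$, so choosing $\lambda$ large depending on $\eps$ yields $\sup_{\wGG}|\partial^\alpha\FFg\varphi|\le C_\eps\,\eps^{|\alpha|}(\alpha!)^{1/\sigma}$, which is precisely the defining bound~\eqref{eq:BeurlingUsefullCondition} for $C^\infty_\omega(\wGG)$ (with global constants, and periodicity is automatic). This also makes the map continuous in the relevant seminorms.

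For the converse, take $\psi\in C^\infty_\omega(\wGG)$ and set $\FFg^{-1}\psi(k)=\int_{\wGG}\psi(x)e^{2\pi\ii k\scl x}\,\dd x$. Here I would integrate by parts $|\alpha|$ times in the direction of a coordinate: since $\psi$ is smooth and $\wGG$ is a torus there are no boundary terms, so $k^\alpha\FFg^{-1}\psi(k)=c_\alpha\int_{\wGG}(\partial^\alpha\psi)(x)e^{2\pi\ii k\scl x}\,\dd x$, hence $|k^\alpha|\,|\FFg^{-1}\psi(k)|\lesssim \sup_{\wGG}|\partial^\alpha\psi|\le C_\eps\,\eps^{|\alpha|}(\alpha!)^{1/\sigma}$ for every $\eps>0$. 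Summing the Taylor series of $e^{\lambda\omega(k)}=e^{\lambda|k|^\sigma}$ exactly as in the proof that $\Dww(\RR^d)\subseteq\Sw_\omega(\RR^d)$ (replacing $|k|^{\sigma j}$ by a bounded number of coordinate powers $|k_i|^{\lceil\sigma j\rceil}$, applying the above bound, and invoking Stirling) gives $\sup_k e^{\lambda\omega(k)}|\FFg^{-1}\psi(k)|<\infty$ for every $\lambda>0$, i.e.\ $\FFg^{-1}\psi\in\Sw_\omega(\GG)$; continuity again follows by tracking constants. The identities $\FFg^{-1}\FFg=\mathrm{Id}$ on $\Sw_\omega(\GG)$ and $\FFg\FFg^{-1}=\mathrm{Id}$ on $C^\infty_\omega(\wGG)$ hold because $(|\GG|^{1/2}e^{2\pi\ii k\scl(\cdot)})_{k\in\GG}$ is an orthonormal basis of $L^2(\wGG)$ (Parseval, already used in~\eqref{eq:ParsevalsIdentity}), and everything in sight is now rapidly decreasing / smooth so the inversion is justified pointwise and not merely in $L^2$.

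Finally, for the dual spaces: given $f\in\Sw_\omega'(\GG)$, one defines $(\FFg f)(\psi)=f(\overline{\FFg^{-1}\overline\psi})$ for $\psi\in C^\infty_\omega(\wGG)$ as in the excerpt; since $\psi\mapsto\overline{\FFg^{-1}\overline\psi}$ is continuous from $C^\infty_\omega(\wGG)$ to $\Sw_\omega(\GG)$ by the previous step, $\FFg f$ is a continuous linear functional on $C^\infty_\omega(\wGG)$, i.e.\ an element of $\Sw_\omega'(\wGG)$, and the analogous computation with $\FFg^{-1}$ (using formula~\eqref{eq:InverseFourierTransformLattice}) shows $\FFg^{-1}$ maps $\Sw_\omega'(\wGG)$ into $\Sw_\omega'(\GG)$; that these are mutually inverse on the duals is immediate by transposing the identities on $\Sw_\omega$ and $C^\infty_\omega$. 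I expect the main obstacle to be purely bookkeeping rather than conceptual: carefully matching the two equivalent forms of the defining estimate for $C^\infty_\omega$ (the $\alpha!$ versus $|\alpha|!$ versus $|\alpha|^{|\alpha|}$ interchange, justified in the remark after Definition~\ref{def:Gevrey}) and making sure that the discrete sums $|\GG|\sum_{k\in\GG}(1+|k|)^m e^{-\lambda|k|^\sigma}$ are uniformly controlled — this is where one should cite the integral comparison (Lemma~\ref{lem:IntegralTest}) to replace the lattice sum by $\int_{\RR^d}(1+|z|)^m e^{-\lambda|z|^\sigma}\,\dd z$.
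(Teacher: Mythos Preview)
Your proposal is correct and follows essentially the same route as the paper's proof: for $\FFg\varphi$ you bound $|\GG|\sum_{k}|k|^{|\alpha|}e^{-\lambda|k|^\sigma}$ via the elementary maximum $\max_{t\ge0}t^a e^{-\mu t^\sigma}$ and Stirling (the paper does this after converting the sum to an integral with Lemma~\ref{lem:IntegralTest}, which is exactly what you anticipate at the end), and for $\FFg^{-1}\psi$ you integrate by parts and resum the exponential series, which is precisely the content of Lemma~\ref{lem:PolynomialBoundGivesSubexponential} that the paper invokes. The dual statement via continuity and transposition is also handled identically.
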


\begin{proof}
We only consider the non-standard case $\omega=|\cdot|^\sigma$. Given  $f \in \Sw_\omega(\widehat{\GG})$ the sequence
\begin{align*}
	\FF_\GG f(x)=|\GG|\sum_{k\in \GG} f(k) e^{2\pi \imath k x} 
\end{align*}
obviously converges to a smooth function that is periodic on $\widehat{\GG}$. We estimate on $\widehat{\GG}$ (and thus by periodicity uniformly on $\RR^d$)
\begin{align*}
	\left|\partial^\alpha \sum_{k\in \GG} |\GG| f(k) e^{2\pi \imath k x} \right| &\lesssim_\lambda  
	\sum_{k\in \GG} |\GG|  |k|^{|\alpha|} e^{-\lambda |k|^\sigma}	 
\end{align*}
We can use Lemma~\ref{lem:IntegralTest} for $|\cdot|^{|\alpha|} e^{-\lambda |\cdot|^{\sigma}}$ with $\Omega=\GG$ and $c>0$ of the form $c=C(\lambda) \cdot C^{|\alpha|}$ ($C$ denoting a positive constant that may change from line to line) which yields   
\begin{align*}
	\left|\partial^\alpha \sum_{k\in \GG} |\GG| f(k) e^{2\pi \imath k x} \right| &\lesssim_\lambda C^{|\alpha|}   \int_{\RR^d} |x|^{|\alpha|} e^{-\lambda|x|^{\sigma}} \dd x
\end{align*}
We now proceed as in \cite[Lemma 12.7.4]{HoermanderII} and estimate the integral by the $\Gamma-$function 
\begin{align*}
	&\int_{\RR^d} |x|^{|\alpha|} e^{-\lambda|x|^{\sigma}} \dd x \lesssim \int_0^\infty r^{|\alpha|+d-1} e^{-\lambda r^{\sigma}} \dd r \lesssim_\lambda \lambda^{- |\alpha|/\sigma} \int_0^\infty r^{|\alpha|+d-1} e^{-r^{\sigma}} \dd r \\
	&\lesssim \lambda^{- |\alpha|/\sigma} \Gamma((|\alpha|+d-1)/\sigma) \overset{\mbox{Stirling}}{\lesssim} \lambda^{-|\alpha|/\sigma} C^{|\alpha|} |\alpha|^{|\alpha|/\sigma}\,.
\end{align*}
Since we can choose $\lambda>0$ arbitrarily large we see that indeed $f\in C^\infty_\omega(\widehat{\GG})$.

For the opposite direction, $f\in \Sw_\omega(\widehat{\GG})$, we use that by integration by parts $|z_i^l \cdot  \FF^{-1}_\GG f(z) | \lesssim C^l \sup_{\widehat{\GG}} (\partial^i)^l f
	\lesssim C^l \eps^l l^{l/\sigma}$ for all $z\in \GG,\,l\geq 0,\,i=1,\ldots,d$. With Stirling's formula and Lemma~\ref{lem:PolynomialBoundGivesSubexponential} we then obtain $|\FF_\GG^{-1} f(z)|\lesssim  e^{\lambda |z|^\sigma}$.
This shows the statement for the pair $(\Sw_\omega(\GG),\,\Sw_\omega(\widehat{\GG}))$. The estimates above show that $\FF_\GG,\FF^{-1}_\GG$ are in fact continuous w.r.t to the corresponding topologies so that the statement for the dual spaces $(\Sw'_\omega(\GG),\,\Sw'_\omega(\widehat{\GG}))$ immediately follows.
\end{proof}

\begin{lemma}
\label{lem:IntegralTest}
Given a lattice $\GG$ as in \eqref{eq:Lattice} we denote the translations of the closed parallelotope $G:=[0,1]a_1 +\ldots + [0,1]a_d$ by $	\mathbb{G}:=\{g+G\,\vert \, g\in \GG\}$. Let $\Omega\subseteq \GG$ and set $
 \overline{\Omega}:=\bigcup_{G'\in \mathbb{G},\, G'\cap \Omega \neq \emptyset} G'\,.
$
If for a measurable function $f:\overline{\Omega} \rightarrow \RR_+$ there exists $c\geq 1$ such that for any $g\in \Omega$ there is a $G'(g)\in \mathbb{G},\, g\in G'(g)$ with $f(g)\leq c \cdot \mathrm{ess\, inf\,}_{x\in G'} f(x)$ 
then it also holds
\begin{align*} 
	\sum_{g\in \Omega} |\GG| f(g) \leq c\cdot 2^{d} \int_{\overline{\Omega}} f(x) \dd x\,.
\end{align*}
\end{lemma}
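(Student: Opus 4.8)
The statement is a lattice version of the classical integral comparison test: one wants to dominate a sum over lattice points by an integral over a thickened region. The natural approach is to localize each term $|\GG|\, f(g)$ against the integral of $f$ over a specific cell of the tiling $\mathbb{G}$. The key elementary fact is that $|\GG| = |\det(a_1,\ldots,a_d)|$ is exactly the Lebesgue volume of each parallelotope $G' \in \mathbb{G}$, so for any cell $G'$ and any nonnegative measurable $f$ we have
\begin{align*}
   |\GG| \cdot \operatorname{ess\,inf}_{x \in G'} f(x) \;\le\; \int_{G'} f(x)\, \dd x\,.
\end{align*}
First I would fix, for each $g \in \Omega$, the cell $G'(g) \in \mathbb{G}$ promised by the hypothesis, so that $f(g) \le c \cdot \operatorname{ess\,inf}_{x \in G'(g)} f(x)$. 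Combining this with the volume identity gives, for each single $g$, the bound $|\GG|\, f(g) \le c \int_{G'(g)} f(x)\, \dd x$.

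The second step is to sum over $g \in \Omega$ and control the overlap of the cells $G'(g)$. The point is that the map $g \mapsto G'(g)$ need not be injective, but it has bounded multiplicity: a fixed cell $G' \in \mathbb{G}$ can be assigned to at most $2^d$ different points $g$, because $G' = g + G$ forces $g$ to be one of the $2^d$ vertices of $G'$ (the corners of the parallelotope obtained by translating $G = [0,1]a_1 + \cdots + [0,1]a_d$), and only lattice points among these corners can lie in $\Omega \subseteq \GG$. Hence
\begin{align*}
   \sum_{g \in \Omega} |\GG|\, f(g) \;\le\; c \sum_{g \in \Omega} \int_{G'(g)} f(x)\, \dd x \;\le\; c \cdot 2^d \sum_{G' \in \mathbb{G},\, G' \cap \Omega \neq \emptyset} \int_{G'} f(x)\, \dd x\,,
\end{align*}
where the last inequality absorbs the bounded multiplicity: each cell in the range of $g \mapsto G'(g)$ is hit at most $2^d$ times, and every such cell intersects $\Omega$ (it contains some $g \in \Omega$). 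Finally, since $f \ge 0$ and the cells in $\mathbb{G}$ overlap only on their boundaries (a Lebesgue-null set), the sum of the integrals over the cells meeting $\Omega$ is at most $\int_{\overline{\Omega}} f(x)\, \dd x$ by definition of $\overline{\Omega}$ as the union of exactly those cells. This yields the claimed bound $\sum_{g \in \Omega} |\GG|\, f(g) \le c \cdot 2^d \int_{\overline{\Omega}} f(x)\, \dd x$.

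The only step that requires a moment's care is the multiplicity bound $2^d$: one must check that if $G'(g_1) = G'(g_2) = G'$ for lattice points $g_1, g_2 \in \Omega$, then $g_1$ and $g_2$ are both vertices of the single parallelotope $G'$, of which there are $2^d$; since at most these finitely many corners can be lattice points that map to $G'$, no cell is used more than $2^d$ times. Everything else is bookkeeping with the tiling and the elementary inequality relating the infimum of a nonnegative function on a set of volume $|\GG|$ to its integral. I would also remark that the constant $2^d$ is not optimal but entirely sufficient for the applications (e.g. the proof of Lemma~\ref{lem:DiscreteWeightedYoungInequality} and Lemma~\ref{lem:FourierAnalysisUltraDistributions}), so no effort to sharpen it is warranted.
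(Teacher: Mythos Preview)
Your proof is correct and follows essentially the same approach as the paper: both bound $|\GG|\,f(g)$ by $c\int_{G'(g)} f\,\dd x$ using the volume identity $|\GG|=\mathrm{vol}(G')$, then control the overcounting via the fact that the only lattice points in a cell $G'\in\mathbb{G}$ are its $2^d$ vertices. The paper does the bookkeeping by first enlarging $\int_{G'(g)}$ to $\sum_{G'\ni g}\int_{G'}$ and then swapping the order of summation, whereas you argue directly that the assignment $g\mapsto G'(g)$ is at most $2^d$-to-one; these are equivalent and your version is marginally more streamlined. One cosmetic point: the clause ``$G'=g+G$ forces $g$ to be one of the $2^d$ vertices'' is not quite the right justification (that equation pins down a single vertex); the operative fact is the one you state next, that any lattice point contained in $G'$ must be a vertex because the $a_i$ are linearly independent generators of $\GG$.
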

\begin{proof}
Indeed
	\begin{align*}
		&\sum_{g\in \Omega} |\GG| f(g)  \leq c \sum_{g\in \Omega} \int_{G'(g)} f(x) \dd x \leq c \sum_{g\in \Omega} \sum_{G'\in \mathbb{G}:\,g\in G'} \int_{G'(g)} f(x) \dd x \\
			 &\le  c \sum_{G' \subseteq \overline{\Omega}} \,\,\sum_{g\in \Omega: g\in G'} \int_{G'} f(x) \dd x \overset{(\triangle)}{=} 2^d c \sum_{G' \in \overline\Omega} \int_{G'} f(x) \dd x = 2^d c \int_{\overline{\Omega}} f(x) \dd x\,,
	\end{align*}
where we used in $(\triangle)$ that the $d$-dimensional parallelotope has $2^d$ vertices. 
\end{proof}


\begin{lemma}[Mixed Young inequality]
\label{lem:MixedYoungInequality}

For $f\colon\RR^d\rightarrow \mathbb{C}$ and $g\colon \GG\rightarrow \mathbb{C}$ we set for $x\in \RR^d$
\begin{align*}
	f\ast_\GG g(x):=\sum_{k\in \GG} |\GG| f(x-k) g(k)
\end{align*}
Then for $r,p,q\in [1,\infty]$ with $1+1/r=1/p+1/q$ 
\begin{align*}
\|f\ast_\GG g\|_{L^r(\RR^d)}\le \sup_{x\in \RR^d} \| f(x-\cdot) \|^{1-\frac{p}{r}}_{L^p(\GG)}\cdot \|f\|_{L^p(\RR^d)}^{\frac{p}{r}} \|g\|_{L^q(\GG)}
\end{align*}
(with the convention $1/\infty=0,\,\infty/\infty=1$). 
\end{lemma}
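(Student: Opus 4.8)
The plan is to reduce the mixed Young inequality to the classical Young inequality on $\RR^d$ by an interpolation/splitting argument on the kernel $f$, exactly in the spirit of the standard proof of Young's inequality via Hölder. First I would record the trivial endpoint cases: if $r=\infty$ (so $1/p+1/q=1$), then for each fixed $x$ we apply Hölder on $\GG$ to get
\[
   |f\ast_\GG g(x)| \le \sum_{k\in\GG}|\GG|\,|f(x-k)|\,|g(k)| \le \|f(x-\cdot)\|_{L^p(\GG)}\,\|g\|_{L^q(\GG)} \le \sup_{x}\|f(x-\cdot)\|_{L^p(\GG)}\,\|g\|_{L^q(\GG)},
\]
which is the asserted bound since then the exponent $1-p/r = 1$ and $p/r=0$. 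Similarly, if $p=r$ (so $q=1$) the claim is just the ordinary Young inequality $\|f\ast_\GG g\|_{L^r(\RR^d)}\le \|f\|_{L^r(\RR^d)}\|g\|_{L^1(\GG)}$, which follows from Minkowski's integral inequality applied to the (discrete) convolution, treating $f\ast_\GG g$ as a $\GG$-indexed superposition $\sum_k |\GG|\,g(k)\, f(\cdot-k)$ of translates of $f$.

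For the general case $1\le p<r<\infty$ with $1+1/r=1/p+1/q$, I would split $|f(x-k)|^p = |f(x-k)|^{p(1-p/r)}\cdot|f(x-k)|^{p^2/r}$ and distribute the three factors $|f(x-k)|^{p-p^2/r}$, $|f(x-k)|^{p^2/r}$, $|g(k)|$ against the exponents obtained from the identity
\[
   \frac{1}{r} + \Bigl(\frac1p-\frac1r\Bigr) + \Bigl(1-\frac1p\Bigr) \;=\; 1,
\]
i.e. apply the three-function Hölder inequality on $\GG$ with exponents $r$, $(1/p-1/r)^{-1}$ and $(1-1/p)^{-1}$ to $f\ast_\GG g(x)=\sum_k|\GG|\,|f(x-k)|\,|g(k)|$. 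This yields pointwise
\[
   |f\ast_\GG g(x)| \le \Bigl(\sum_k|\GG|\,|f(x-k)|^{?}|g(k)|^{?}\Bigr)^{1/r}\cdot \|f(x-\cdot)\|_{L^p(\GG)}^{p/p\,-\,p/r}\cdot \|g\|_{L^q(\GG)}^{\,1-q/q},
\]
where the middle and last factors come out as $\|f(x-\cdot)\|_{L^p(\GG)}^{1-p/r}$ (bounded by the supremum) and a constant involving $\|g\|_{L^q(\GG)}$ after one checks the bookkeeping of exponents; the remaining factor is an $L^1_x$-integrable convolution of $|f|^p$ with $|g|^q$. Then one raises to the $r$-th power, integrates in $x\in\RR^d$, uses Tonelli to exchange the $x$-integral and the $k$-sum, and applies $\int_{\RR^d}|f(x-k)|^p\,\dd x=\|f\|_{L^p(\RR^d)}^p$ together with $\sum_k|\GG|\,|g(k)|^q=\|g\|_{L^q(\GG)}^q$, landing on the claimed bound $\|f\|_{L^p(\RR^d)}^{p/r}\,\|g\|_{L^q(\GG)}\,\sup_x\|f(x-\cdot)\|_{L^p(\GG)}^{1-p/r}$.

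The main obstacle — really the only delicate point — is getting the three Hölder exponents and the powers to which $|f|$ and $|g|$ are raised to line up exactly so that, after integrating in $x$ and summing in $k$, one gets precisely $\|f\|_{L^p(\RR^d)}^{p/r}$ and $\|g\|_{L^q(\GG)}^{1}$ (not some other combination), while the "sup" factor carries the leftover fractional power $1-p/r$ of $\|f(x-\cdot)\|_{L^p(\GG)}$. The safe way to do this is to mimic verbatim the classical proof of Young's inequality on $\RR^d$ (e.g. as in \cite{Bahouri}), replacing one of the two Lebesgue convolutions by the discrete convolution on $\GG$; the mixed nature only affects where the $\sup_x\|f(x-\cdot)\|_{L^p(\GG)}$ term appears, since in the purely continuous case translation invariance of Lebesgue measure makes that factor equal to $\|f\|_{L^p}$ itself. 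I would therefore present the proof as this exponent computation, checking at the end that setting $\GG$-sums to integrals recovers the usual statement as a consistency check.
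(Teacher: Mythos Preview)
Your approach is essentially the same as the paper's: handle the endpoint cases separately, and for $p,q,r\in(1,\infty)$ apply the three-term H\"older inequality on $\GG$ with exponents $r$, $\frac{rp}{r-p}$, $\frac{rq}{r-q}$ to the splitting $|f(x-k)||g(k)| = \bigl(|f(x-k)|^p|g(k)|^q\bigr)^{1/r}\cdot|f(x-k)|^{(r-p)/r}\cdot|g(k)|^{(r-q)/r}$, then raise to the $r$-th power and integrate in $x$. Your write-up leaves the exponent bookkeeping a bit implicit (the ``?'' placeholders and the splitting of $|f|^p$ rather than $|f|$), but once you write it out explicitly as above it is exactly the paper's argument.
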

\begin{proof}
We assume $p,q,r\in (1,\infty)$. The remaining cases are easy to check. The proof is based on Hölder's inequality on $\GG$ with $\frac{1}{r}+\frac{1}{\frac{rp}{r-p}}+\frac{1}{\frac{rq}{r-q}}=1$
\begin{align*}
	|f\ast_\GG g(x)| &\leq \sum_{k\in \GG} |\GG| \left( |f(x-k)|^p |g(k)|^q \right)^{1/r} \cdot |f(x-k)|^{\frac{r-p}{r}} |g(k)|^{\frac{r-q}{r}} \\
	&\overset{\text{Hölder}}{\leq} \left\|  \left( |f(x-\cdot)|^p |g(\cdot)|^q  \right)^{1/r}  \right\|_{L^r(\GG)} \cdot \| |f(x-\cdot)|^{\frac{r-p}{r}}  \|_{L^{\frac{rp}{r-p}}(\GG)}\cdot \||g(\cdot)|^{\frac{r-q}{r}}\|_{L^{\frac{rq}{r-q}}(\GG)} \\
	&\le \left( \sum_{k\in\GG} |\GG| (|f(x-k)|^p |g(k)|^q \right)^{1/r} \sup_{x'\in \RR^d} \|f(x'-\cdot)\|^{\frac{r-p}{r}}_{L^p(\GG)} \|g\|^{\frac{r-q}{r}}_{L^q(\GG)}\,.
\end{align*}
Raising this expression to the $r$th power and integrating it shows the claim. 
\end{proof}

\subsection*{Results related to Section \ref{sec:DiffusionOperators}}

\begin{lemma}
\label{lem:SchauderLp}
For $T\geq 0$, $p\in [1,\infty]$, $\rho\in \boldsymbol{\rho}(\omega)$ we have uniformly in $t \in [0,T]$ and $\varepsilon \in (0,1]$
\begin{align*}
	\|e^{tL^\eps_\mu} f \|_{L^p(\GG^\eps,\rho)} \lesssim \|f\|_{L^p(\GG^\eps,\rho)}\,,
\end{align*}
and for $\beta>0$
\begin{align*}
	\|e^{t L^\eps_\mu} f\|_{L^p(\GG^\eps,\rho)} \lesssim t^{-\beta/2} \|f\|_{\mathcal{C}^{-\beta}_p(\GG^\eps,\rho)}\,.
\end{align*}
\end{lemma}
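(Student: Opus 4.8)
The plan is to derive both bounds from the Fourier-multiplier representation of the semigroup together with the kernel estimate already established in the proof of Proposition~\ref{lem:SemiGroup}. Recall that for $f\in\Sw_\omega'(\GG^\eps)$ one has $e^{tL^\eps_\mu}f=\FFge^{-1}(e^{-tl^\eps_\mu}\FFge f)$, and that decomposing $f=\sum_{j} \varDelta_j^{\GGe}f$ and using spectral support properties gives, exactly as in~\eqref{eq:SemiGroupAsConvolution},
\begin{align*}
\varDelta_j^{\GGe} e^{tL^\eps_\mu} f = \mathscr{K}_j(t,\cdot)\aste \varDelta_j^{\GGe} f,
\end{align*}
where $\mathscr{K}_j(t,\cdot)$ is the kernel analysed there. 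The key input is the bound~\eqref{eq:SchauderKeyBound}, which in the scaled form reads $|\mathscr{K}_{(j)}(t,x)|\lesssim_\lambda e^{-\lambda|x|^\sigma}2^{-j\beta}t^{-\beta/2}$ for any $\beta\ge 0$; in particular, taking $\beta=0$ gives a kernel $\mathscr{K}_j(t,\cdot)$ that is, after rescaling by $2^{-j}$, dominated by a fixed sub-exponentially decaying profile $\Phi$ uniformly in $t\in[0,T]$, $\eps$ and $j$.

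\textbf{Key steps.}
First I would prove the $L^p\to L^p$ bound. Using $\beta=0$ in~\eqref{eq:SchauderKeyBound} we get $|\mathscr{K}_{(j)}(t,x)|\lesssim\Phi(x)$ with $\Phi(x)=e^{-\lambda|x|^\sigma}$, hence $|\mathscr{K}_j(t,x)|\lesssim 2^{jd}\Phi(2^j x)=:\Phi^{2^{-j}}(x)$. Applying the discrete weighted Young inequality (Lemma~\ref{lem:DiscreteWeightedYoungInequality}, with the caveat noted in the proof of Proposition~\ref{lem:SemiGroup} that $\Phi$ need only have suitable decay rather than lie in $\Sw_\omega$) with $\delta=2^{-j}\gtrsim\eps$ and $p$ as given yields
\begin{align*}
\|\varDelta_j^{\GGe} e^{tL^\eps_\mu} f\|_{L^p(\GGe,\rho)} = \|\mathscr{K}_j(t,\cdot)\aste\varDelta_j^{\GGe}f\|_{L^p(\GGe,\rho)}\lesssim \|\varDelta_j^{\GGe}f\|_{L^p(\GGe,\rho)}.
\end{align*}
Summing over the finitely many $j=-1,\ldots,j_{\GGe}$ and using that $f=\sum_j\varDelta_j^{\GGe}f$ is \emph{not} quite enough, since the number of blocks is $\eps$-dependent; instead I would argue directly on $f$: since $\sum_{i:|i-j|\le1}\varphi_i^{\GGe}$ acts as identity on the spectral support of $\varphi_j^{\GGe}\FFge f$, one can also write $e^{tL^\eps_\mu}f=\FFge^{-1}(e^{-tl^\eps_\mu}\FFge f)$ directly and note that the full kernel $\FFge^{-1}(e^{-tl^\eps_\mu})$ is, by the $\beta=0$ case of the estimate, again dominated by a fixed sub-exponential profile uniformly in $t\in[0,T]$ and $\eps$ (one sums the dyadic pieces $\sum_j\mathscr K_j(t,\cdot)$, whose rescaled versions are annulus- or ball-supported with uniformly controlled decay, exactly as in the construction of $\phi_\Sigma$ in Lemma~\ref{lem:ScalingBoundary}). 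A single application of Lemma~\ref{lem:DiscreteWeightedYoungInequality} with $\delta\approx 1$ then gives the first inequality. For the second inequality, I would instead keep the block decomposition and use~\eqref{eq:SchauderKeyBound} with $\beta>0$: for each $j$,
\begin{align*}
\|\varDelta_j^{\GGe} e^{tL^\eps_\mu}f\|_{L^p(\GGe,\rho)}\lesssim 2^{-j\beta}t^{-\beta/2}\|\varDelta_j^{\GGe}f\|_{L^p(\GGe,\rho)}\lesssim 2^{-j\beta}2^{j\beta}t^{-\beta/2}\|f\|_{\mathcal C^{-\beta}_p(\GGe,\rho)}=t^{-\beta/2}\|f\|_{\mathcal C^{-\beta}_p(\GGe,\rho)},
\end{align*}
and then take the supremum over $j=-1,\ldots,j_{\GGe}$, which is legitimate here because $L^p(\GGe,\rho)=\mathcal B^0_{p,\infty}(\GGe,\rho)$ up to uniform constants only in the sense that we bound the $\ell^\infty$ norm of the blocks — more precisely one needs $\|g\|_{L^p(\GGe,\rho)}\lesssim\sum_j\|\varDelta_j^{\GGe}g\|_{L^p(\GGe,\rho)}$, so I would rather sum, absorbing the geometric factor: for $\beta>0$, $\sum_j 2^{-j\beta}$ is \emph{not} summable at the bottom, so one should be slightly more careful and split $t^{-\beta/2}$ differently. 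The clean route is: $\|e^{tL^\eps_\mu}f\|_{L^p}\le\sum_j\|\mathscr K_j(t,\cdot)\aste\varDelta_j^{\GGe}f\|_{L^p}$, and estimate low blocks ($2^{-j}\gtrsim t^{1/2}$) by the $\beta=0$ bound and high blocks ($2^{-j}\lesssim t^{1/2}$) by the $\beta$-bound with $\beta$ slightly larger than the given one, so that the resulting series $\sum_j\min(1,(2^{2j}t)^{-\beta'/2})2^{j\beta}$ converges and is $\lesssim t^{-\beta/2}$; this is the standard Besov-space interpolation argument and is carried out essentially verbatim as in~\cite{Gubinelli2017KPZ}.

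\textbf{Main obstacle.}
The only genuine subtlety, and the point I expect to need the most care, is the passage from the uniform-in-$j$ block estimates to an estimate on the full norm without picking up $\eps$-dependent (i.e. $j_{\GGe}$-dependent) constants: one must package the dyadic kernels either into a single kernel with uniform sub-exponential decay (for the first bound) or into a convergent geometric series via the low/high frequency split (for the second bound), using throughout that $\mathscr{K}_{(j)}$ obeys~\eqref{eq:SchauderKeyBound} with constants independent of $\eps$ and $j$, and that Lemma~\ref{lem:DiscreteWeightedYoungInequality} applies with $\delta=2^{-j}\gtrsim\eps$. Everything else is a routine application of Lemma~\ref{lem:DiscreteWeightedYoungInequality} and the kernel bound from the proof of Proposition~\ref{lem:SemiGroup}; indeed the statement is essentially the $L^p$-endpoint companion of~\eqref{eq:SemigroupHoelder}--\eqref{eq:SemigroupLp} and the argument parallels~\cite[Lemma~6.6]{Gubinelli2017KPZ}.
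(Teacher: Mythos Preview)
Your approach to the second inequality agrees with the paper's: both defer to the block-wise kernel bound and the argument of \cite[Lemma~6.6]{Gubinelli2017KPZ}.

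For the first inequality, however, the paper takes a genuinely different and much shorter route. It uses the \emph{probabilistic representation} of the semigroup: since $L^\eps_\mu$ generates a random walk $(X^\eps_t)_{t\ge0}$ on $\GG^\eps$, one has $e^{tL^\eps_\mu}f(x)=\mathbb{E}[f(x+X^\eps_t)]$, so
\[
\rho(x)\,|e^{tL^\eps_\mu}f(x)|\le \mathbb{E}\!\left[\tfrac{\rho(x)}{\rho(x+X^\eps_t)}\,\rho(x+X^\eps_t)|f(x+X^\eps_t)|\right]\lesssim \mathbb{E}\big[e^{\lambda\omega(X^\eps_t)}\big]\,\|f\|_{L^p(\GG^\eps,\rho)}^{1/p}\cdot(\dots)
\]
and after taking the $L^p$ norm one is reduced to the random-walk moment bound $\sup_{t\le T,\eps}\mathbb{E}[e^{\lambda\omega(X^\eps_t)}]<\infty$, which is Lemma~\ref{lem:MomentsRandomWalk}. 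This completely sidesteps the $j$-summation issue.

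Your Fourier route, as written, has a real gap precisely at the point you flag as the main obstacle. The full heat kernel $K_t^\eps=\FFge^{-1}(e^{-tl^\eps_\mu})$ is \emph{not} pointwise dominated by a fixed profile uniformly in $t\in[0,T]$ and $\eps$: at $t=0$ (and for $t\ll\eps^2$) it is essentially the lattice delta, so $K_t^\eps(0)\approx|\GG^\eps|^{-1}\sim\eps^{-d}$. Hence Lemma~\ref{lem:DiscreteWeightedYoungInequality} with a fixed $\delta\approx1$ cannot apply. Equivalently, summing the $\beta=0$ bound $|\mathscr{K}_j(t,x)|\lesssim 2^{jd}\Phi(2^jx)$ over $j=-1,\ldots,j_{\GG^\eps}$ gives, at $x=0$, a contribution $\sim 2^{j_{\GG^\eps}d}\sim\eps^{-d}$. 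The telescoping trick from Lemma~\ref{lem:ScalingBoundary} does not help here because the multiplier $e^{-tl^\eps_\mu}$ breaks the exact scaling identity $\sum_{i<j}\varphi_i=\varphi_{-1}(2^{-j}\cdot)$. What \emph{is} true, and is exactly what the weighted Young inequality needs, is that the weighted $L^1$ norm of $K_t^\eps$ is uniformly bounded; but that quantity equals $\mathbb{E}[e^{\lambda\omega(X_t^\eps)}]$, so you are back to the moment bound the paper proves via L\'evy--Khintchine and Fa\`a di Bruno. A purely Fourier proof would have to exploit the extra factor $e^{-ct2^{2j}}$ coming from the annular support of $\varphi_{(j)}$ to make the $j$-sum converge, which is more work than you indicate.
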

\begin{proof}
With the random walk $(X_t^\eps)_{t\in \RR_+}$ which is generated by $L^\eps_\mu$ on $\GG^\eps$ we can express the semigroup as $
	e^{t L^\eps_\mu} f(x)=\mathbb{E}[f(x+X_t^\eps)]$, so that
\begin{align*}
	&\| \rho e^{t L^\eps_\mu}f \|_{L^p(\GG^\varepsilon)}  = \left\| \mathbb{E}\left[\frac{\rho(\cdot)}{\rho(\cdot+X^\varepsilon_t)} \rho(\cdot + X^\varepsilon_t) f(\cdot +X_t^\eps)\right] \right\|_{L^p(\GG^\varepsilon)} \\
	&\hspace{20pt} \le \mathbb{E} \left[ \sup_{x \in \GG^\varepsilon} \frac{\rho(x)}{\rho(x+X^\varepsilon_t)} \| f \|_{L^p(\GG^\varepsilon, \rho)}\right]  \lesssim \mathbb{E}[e^{\lambda \omega(X^\varepsilon_t)}] \| f \|_{L^p(\GG^\varepsilon, \rho)}
\end{align*}
%
%
An application of the next lemma finishes the proof of the first estimate. 
The second estimate follows as in Lemma 6.6. of \cite{Gubinelli2017KPZ}.
\end{proof}

\begin{lemma}
\label{lem:MomentsRandomWalk}
The random walk generated by $L^\eps_\mu$ on $\GG^\eps$ satisfies for any $\lambda >0$ and $t\in [0,T]$ 
\begin{align*}
\mathbb{E}[e^{\lambda \omega(X_t^\eps)}]\lesssim_{\lambda,T} 1\,.
\end{align*}
\end{lemma}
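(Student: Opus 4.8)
\textbf{Proof strategy for Lemma~\ref{lem:MomentsRandomWalk}.} The plan is to bound the exponential moment $\mathbb{E}[e^{\lambda\omega(X_t^\eps)}]$ by a Fourier/generating-function computation, using that $X_t^\eps$ is a compound Poisson-type random walk whose jump law is controlled by $\mu\in\mm(\omega)$ and which therefore inherits sub-exponential moments from $\mu$. First I would recall that the generator $L^\eps_\mu u(x)=\eps^{-2}\int_\GG(u(x+\eps y)-u(x))\,\dd\mu|_{\{0\}^c}(y) + (\text{diagonal part})$ is the generator of a pure-jump Markov process on $\GG^\eps$: since $\mu|_{\{0\}^c}\ge 0$ is a finite measure (by Definition~\ref{def:Measure}), the process $X_t^\eps$ waits an exponential time with rate $\eps^{-2}|\mu|(\{0\}^c)=:\eps^{-2}m$ and then jumps by $\eps y$ with $y$ distributed according to the normalized measure $\mu|_{\{0\}^c}/m$, with $X_0^\eps=0$. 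Equivalently, $X_t^\eps=\sum_{k=1}^{N_t}\eps Y_k$ where $N_t\sim\mathrm{Poisson}(t\eps^{-2}m)$ and the $Y_k$ are i.i.d.\ with law $\mu|_{\{0\}^c}/m$, independent of $N_t$.

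The key step is then a direct computation of the Laplace-type transform. Since $\omega\in\ww$ satisfies the subadditivity-type bound $\omega(x+y)\le \omega(x)+\omega(y)$ for $\omega=\wexps$ (as $|x+y|^\sigma\le|x|^\sigma+|y|^\sigma$ for $\sigma\in(0,1)$) — and similarly $\wpol(x+y)=\log(1+|x+y|)\le\log(1+|x|)+\log(1+|y|)$ — we get $\omega(X_t^\eps)\le\sum_{k=1}^{N_t}\omega(\eps Y_k)\le\sum_{k=1}^{N_t}\omega(Y_k)$, using also that $\omega$ is nondecreasing in $|\cdot|$ and $\eps\le 1$. Conditioning on $N_t$ and using independence,
\begin{align*}
\mathbb{E}[e^{\lambda\omega(X_t^\eps)}]
&\le \mathbb{E}\Big[\prod_{k=1}^{N_t} \mathbb{E}[e^{\lambda\omega(Y_1)}]\Big]
= \sum_{n\ge 0} e^{-t\eps^{-2}m}\frac{(t\eps^{-2}m)^n}{n!}\, C_\lambda^{\,n}
= \exp\big(t\eps^{-2}m(C_\lambda-1)\big),
\end{align*}
where $C_\lambda:=\mathbb{E}[e^{\lambda\omega(Y_1)}]=m^{-1}\int_\GG e^{\lambda\omega(y)}\,\dd\mu|_{\{0\}^c}(y)<\infty$ by the third bullet of Definition~\ref{def:Measure}. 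This bound is \emph{not} yet uniform in $\eps$ because of the prefactor $\eps^{-2}$ in the exponent; the fix is to exploit that $\mu$ has \emph{vanishing first moment and integrates affine functions to zero} (fourth bullet: $\mu$ symmetric, $\mu(\GG)=0$), so the process has no drift and, more importantly, the \emph{variance} of a single jump scales like $\eps^2$: $\mathrm{Var}(\eps Y_k\text{-coordinate})\cdot\eps^{-2}(\text{rate})\sim O(1)$. Concretely, I would not use the crude bound $\omega(\eps Y_k)\le\omega(Y_k)$ above but rather Taylor-expand: write $e^{\lambda\omega(X_t^\eps)}$ via its moment generating function in the spirit of $\mathbb{E}[e^{\langle\theta,X_t^\eps\rangle}]=\exp\big(t\eps^{-2}\int_\GG(e^{\eps\langle\theta,y\rangle}-1)\,\dd\mu(y)\big)$ (valid since $\mu$ has exponential moments), and note that by symmetry $\int_\GG(e^{\eps\langle\theta,y\rangle}-1)\,\dd\mu(y)=\int_\GG(\cosh(\eps\langle\theta,y\rangle)-1)\,\dd\mu(y)\le \tfrac{\eps^2}{2}\langle\theta,\cdot\rangle^2\text{-type}\cdot\int_\GG|y|^2 e^{\eps|\theta||y|}\,\dd|\mu|(y)$, which is $\le C\eps^2|\theta|^2$ uniformly for $|\theta|$ bounded, using the finite exponential moments of $\mu$. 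Hence $\mathbb{E}[e^{\langle\theta,X_t^\eps\rangle}]\le e^{Ct|\theta|^2}$ uniformly in $\eps\in(0,1]$ and $|\theta|\le R$, for any fixed $R$.

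To pass from this Gaussian-type bound on linear exponentials to the bound on $e^{\lambda\omega(X_t^\eps)}$, I would use the standard trick already employed in the paper (see the proof of the Lemma on $\Sww$ stability, and Lemma~\ref{lem:PolynomialBoundGivesSubexponential}): for $\omega=\wexps$, expand $e^{\lambda|X_t^\eps|^\sigma}=\sum_k \frac{\lambda^k}{k!}|X_t^\eps|^{\sigma k}$, bound $|X_t^\eps|^{\sigma k}\le C^k\sum_{i=1}^d|X_t^{\eps,i}|^{\lceil\sigma k\rceil}$, and estimate each $\mathbb{E}[|X_t^{\eps,i}|^{\lceil\sigma k\rceil}]$ by differentiating the uniform bound $\mathbb{E}[e^{\theta X_t^{\eps,i}}]\le e^{Ct\theta^2}$ at $\theta=0$ (Cauchy estimates on a disc of fixed radius), which yields $\mathbb{E}[|X_t^{\eps,i}|^{2n}]\le (Ct)^n(2n)!/n!\lesssim (C't)^n n!$ and interpolation for odd/fractional exponents; Stirling's formula then makes the series $\sum_k\frac{\lambda^k}{k!}(C't)^{k\sigma}(\sigma k)!/\cdots$ converge for $t\le T$, giving $\mathbb{E}[e^{\lambda\omega(X_t^\eps)}]\lesssim_{\lambda,T}1$ uniformly in $\eps$. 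The case $\omega=\wpol$ is easier: $e^{\lambda\wpol(X_t^\eps)}=(1+|X_t^\eps|)^\lambda$ and polynomial moments of $X_t^\eps$ are uniformly bounded on $[0,T]$ by the same variance-scaling computation.

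\textbf{Main obstacle.} The only real subtlety is the uniformity in $\eps$: the naive compound-Poisson bound has a diverging $\eps^{-2}$ in the exponent, and one must genuinely use the symmetry/zero-drift and the $O(\eps^2)$ scaling of the jump variance (equivalently, the $\eps^2$ in front of $\langle\theta,y\rangle^2$ after the $\cosh$ expansion) to absorb it. Once the uniform bound $\mathbb{E}[e^{\theta\cdot X_t^\eps}]\le e^{Ct|\theta|^2}$ on a fixed ball of $\theta$'s is in hand, the passage to the sub-exponential weight $\omega$ is routine and mirrors arguments already present in the excerpt (Lemma~\ref{lem:PolynomialBoundGivesSubexponential} and the Gevrey-type estimates for $\Sww$).
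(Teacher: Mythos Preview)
Your overall architecture is right and matches the paper's: bound the polynomial moments $\mathbb{E}[|X_t^\eps|^k]$ uniformly in $\eps$ with the correct $k$-dependence, then sum the Taylor series of $e^{\lambda|\cdot|^\sigma}$. The gap is in how you obtain the moment bounds. You propose to control the real Laplace transform
\[
\mathbb{E}[e^{\langle\theta,X_t^\eps\rangle}]=\exp\Big(t\eps^{-2}\int_\GG\big(e^{\eps\langle\theta,y\rangle}-1\big)\,\dd\mu(y)\Big)
\]
and then extract moments via Cauchy estimates on a disc. But for $\mu\in\mm(\wexps)$ with $\sigma<1$ (and also for $\mu\in\mm(\wpol)$), the hypothesis $\int e^{\lambda|y|^\sigma}\,\dd|\mu|(y)<\infty$ does \emph{not} give any exponential moment $\int e^{c|y|}\,\dd|\mu|(y)<\infty$; the moments of $|\mu|$ grow like $\int|y|^{2n}\,\dd|\mu|\lesssim_\delta \delta^{2n}((2n)!)^{1/\sigma}$, and since $1/\sigma>1$ the cosh-series $\sum_n\frac{(\eps|\theta|)^{2n}}{(2n)!}\int|y|^{2n}\,\dd|\mu|$ diverges for every $\theta\neq 0$. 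So the MGF need not exist for any $\theta\neq 0$, the bound $\mathbb{E}[e^{\theta\cdot X_t^\eps}]\le e^{Ct|\theta|^2}$ is vacuous, and there is no analytic disc on which to run Cauchy estimates. Your parenthetical ``valid since $\mu$ has exponential moments'' is exactly the false step.

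The paper avoids this by working with the \emph{characteristic} function $\mathbb{E}[e^{\imath\theta\cdot X_t^\eps}]=e^{-t\,l^\eps_\mu(\theta)}$, which always exists, and differentiating it at $\theta=0$ via Fa\`a di Bruno together with the derivative bounds $|\partial^k l^\eps_\mu(0)|\lesssim_\delta \delta^{|k|}(k!)^{1/\sigma}$ from Lemma~\ref{lem:SchauderPrelude}. This yields $\mathbb{E}[|X_t^\eps|_k^k]\lesssim_\delta \delta^k C^k(k!)^{1/\sigma}(1+t)^k$ uniformly in $\eps$ (the $\eps^{-2}$ is absorbed exactly as you anticipated, but through the $|k|=0,1$ bounds on $l^\eps_\mu$ rather than through an MGF), and then the series argument you outline goes through. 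Your proof can be repaired by replacing the MGF/Cauchy step with this characteristic-function computation; the rest of your outline is fine.
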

\begin{proof}
We assume $\omega=\wexps$, if $\omega$ is of the polynomial form the proof follows by similar, but simpler arguments. 
In this proof we write shorthand $s=1/\sigma$.  By the L\'evy-Khintchine-formula we have $\mathbb{E}[e^{\imath \theta X_{t}^\eps}]=e^{-t/\eps^2 \int_{\GG} (1-e^{\imath \theta \eps x})\dd \mu(x)} =e^{-t l^\eps_\mu(\theta)}$ for all $\theta\in\RR$. 
We want to bound first for $k\geq 1$
\begin{align*}
	\mathbb{E}[|X_{t,1}^\eps|^k+\ldots+|X_{t,d}^\eps|^k]=\sum_{j=1}^d \left|\partial_{\theta_j}^k \vert_{\theta=0} \mathbb{E}[e^{\imath \theta X_t^\eps}]\right|\,.
\end{align*}
To this end we apply Fa\'a-di-Brunos formula with $u(v)=e^{-tv},\,v(\theta)=l^\eps_\mu(\theta)$.
Note that with Lemma \ref{lem:SchauderPrelude} for $m \in \NN$ and $j=1,\ldots,d$
\begin{align*}
	u^{(m)}(0) &=(-t)^m \\ 
	|\partial^{m}_{\theta_j}v(0)| &\lesssim_\delta \delta^{m} (m!)^s.
\end{align*}
Thus with $A_{m,k}=\{(\alpha_1,\ldots,\alpha_m)\in \NN^m\,\vert\, \sum_{i=1}^m \alpha_i \cdot  i =k\}$ we get for any $\delta\in (0,1]$
\begin{align*}
	&\left|\partial_{\theta_j}^k \vert_{\theta=0} \mathbb{E}[e^{\imath \theta X_t^\eps}]\right| =\left| \sum_{1\leq m\leq k,\,\alpha\in A_{m,k} } \frac{k!}{\alpha!} u^{(m)}(0) \prod_{i=1}^m \left(\frac{1}{i!}\partial^{i}_{\theta_j} v(0)\right)^{\alpha_i}  \right| \\
& \lesssim_\delta \sum_{1\leq m\leq k ,\,\alpha \in A_{m,k} } \frac{k!}{\alpha!} t^m \prod_{i=1}^m (i!)^{\alpha_i(s-1)} \delta^{i\cdot \alpha_i} \overset{\mathrm{Stirling}}{\leq} \delta^k C^k \sum_{1\leq m\leq k,\,\alpha \in A_{m,k}} \frac{k!}{\alpha!} t^m 
\prod_{i=1}^m C^{i\alpha_i} i^{i\alpha_i(s-1)} \\
&\overset{i\leq m\leq k}{\leq} \delta^k C^k \sum_{1\leq m\leq k,\,\alpha \in A_{m,k}} \frac{k!}{\alpha!} t^m 
 k^{k(s-1)} \overset{\mathrm{Stirling}}{\leq }  \delta^k C^k \sum_{1\leq m\leq k,\,\alpha \in A_{m,k}} \frac{(k!)^s}{\alpha!} t^m \\ 
&\overset{(\alpha!)^{-1}\leq 1}{\leq}  \delta^k C^k (k!)^s  \sum_{1\leq m\leq k} |A_{m,k}|\, t^m = \delta^k C^k (k!)^s  \sum_{1\leq m\leq k} \binom{k-1}{m-1} t^m \\ &= \delta^k C^k (k!)^s t (1+t)^{k-1} \leq \delta^k C^k (k!)^s (1+t)^k \,,
\end{align*}
where $C>0$ denotes as usual a generic constant that changes from line to line.  
With $|x|_k^k:=|x_1|^k+\ldots+|x_d|^k$ we get
\begin{align*}
	\mathbb{E}[|X^\eps_t|_k^k] \lesssim \delta^k C^k (k!)^s (1+t)^k
\end{align*}
and therefore, using once more Stirling's formula and $|x|^k \lesssim C^k \cdot |x|_k^k$,
\begin{align*}
	\mathbb{E}[e^{\lambda |X_t^\eps|^{\sigma}}] &\lesssim 1+ \mathbb{E}[e^{\lambda|X_t^\eps|^{\sigma}} \mathbf{1}_{|X_t^\eps|\geq 1}] \leq 1+\sum_{k=0}^\infty \frac{\lambda^k}{k!} \mathbb{E}[|X^\eps_t|^{\lceil k \sigma \rceil}] \\
	&\lesssim 1+\sum_{k=0}^\infty \frac{C^k (1+t)^{\lceil k \sigma \rceil}}{k^k} \delta^{\lceil k \sigma\rceil} \lceil k \sigma \rceil^{\lceil k \sigma \rceil s} \lesssim 1+ (1+t)\sum_{k=0}^\infty \frac{C^k \delta^{k \sigma} (1+t)^{k \sigma}}{k^k} k^k \lesssim 1\,,
\end{align*}
where in the last step we chose $\delta>0$ small enough to make the series converge. 
\end{proof}

\subsection*{Results related to Section \ref{sec:ParacontrolledAnalysisonBravaisLattices}}

\begin{lemma}
\label{lem:SupportParaproduct}
Let $\GG^\eps$ as in Definition \ref{def:LatticeSequence}, let $\omega \in \ww$, and let $(\varphi_j^{\GG^\eps})_{j=-1,\ldots,j_{\GGe}}$ be a partition of unity as on page~\pageref{DiscreteDyadicPartition}.
For $-1\leq i\leq j\leq j_{\GGe}$ the function
\begin{align*}
\varDelta_i^{\GGe} f_1 \cdot \varDelta_j^{\GGe} f_2 \in \Sww'(\GG^\eps)
\end{align*}
is spectrally supported in a set of the form $2^j \B \cap \wGGe$, where $\B$ is a ball around $0$ that can be chosen independently of $i,\,j$ and $\eps$. For $f_1,\,f_2\in \Sww'(\GGe)$ and $0<j\leq j_{\GG^\eps}$ the function
\begin{align*}
S_{j-1}^{\GGe} f_1 \cdot \varDelta_j^{\GGe} f_2 \in \Sww'(\GG^\eps)\,, 
\end{align*}
is spectrally supported in a set of the form $2^j \rA \cap \wGGe$, where $\rA$ is an annulus around $0$ that can be chosen independently of $j$ and $\eps$. 
\end{lemma}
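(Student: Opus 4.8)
\textbf{Proof plan for Lemma~\ref{lem:SupportParaproduct}.}
The plan is to reduce everything to the convolution structure of the Littlewood--Paley blocks on $\wGGe$ and the support properties of the building blocks $\varphi_j^{\GGe}$. First I would recall from the construction on page~\pageref{DiscreteDyadicPartition} that for $j<j_{\GGe}$ we have $\supp\varphi_j^{\GGe}=\supp\varphi_j\subseteq 2^j\tilde\rA$ for a fixed annulus $\tilde\rA$ (and $\supp\varphi_{-1}^{\GGe}\subseteq \tilde\B_0$ for a fixed ball), while for $j=j_{\GGe}$ the function $\varphi_{j_{\GGe}}^{\GGe}=1-\sum_{i<j_{\GGe}}\varphi_i^{\GGe}$ is supported in $\wGGe$, and since $2^{-j_{\GGe}}\approx\eps$ its support is contained in $2^{j_{\GGe}}\tilde\B$ for a fixed ball $\tilde\B$ as well. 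In all cases, then, $\supp\varphi_j^{\GGe}\subseteq 2^j\tilde\B\cap\wGGe$ for a ball $\tilde\B$ that is uniform in $j$ and $\eps$ (and an annulus when $-1<j<j_{\GGe}$, with the $j=j_{\GGe}$ scale behaving like a ball because of the boundary modification).

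Next, recall from \eqref{eq:ConvolutionBandwidth} that the Fourier transform on $\GGe$ turns pointwise products into convolutions on $\wGGe\simeq\RR^d/\CR^\eps$: for $g_1,g_2\in\Sww'(\GGe)$ one has $\FFge(g_1\cdot g_2)=\FFge g_1 \ast_{\wGGe}\FFge g_2$, where the convolution uses the periodic representative $[\,\cdot\,]_{\wGGe}$. Applying this with $g_1=\varDelta_i^{\GGe}f_1$, $g_2=\varDelta_j^{\GGe}f_2$ and using $\FFge\varDelta_i^{\GGe}f_1=\varphi_i^{\GGe}\FFge f_1$, I get that the spectral support of $\varDelta_i^{\GGe}f_1\cdot\varDelta_j^{\GGe}f_2$ is contained in
\[
\Big(\supp\varphi_i^{\GGe}+\supp\varphi_j^{\GGe}\Big)\ \text{mod}\ \CR^\eps\ \subseteq\ \Big(2^i\tilde\B+2^j\tilde\B\Big)\cap\wGGe\ \subseteq\ 2^{j+1}\tilde\B\cap\wGGe,
\]
using $i\le j$; enlarging the ball $\B:=2\tilde\B$ (still uniform in $i,j,\eps$) gives the first claim. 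For the second statement, write $S_{j-1}^{\GGe}f_1\cdot\varDelta_j^{\GGe}f_2$, whose spectral support is contained in $\big(\bigcup_{i<j-1}\supp\varphi_i^{\GGe}+\supp\varphi_j^{\GGe}\big)$ mod $\CR^\eps$. Since $0<j\le j_{\GGe}$, the block $\varDelta_j^{\GGe}f_2$ is localized on an annulus $2^j\rA_0$ (for $j<j_{\GGe}$ this is the annulus $\supp\varphi_j$; for $j=j_{\GGe}$ one uses $2^{-j_{\GGe}}\approx\eps$ together with $\chi\equiv1$ near $\partial\wGGe$ in the boundary construction, cf.\ the proof of Lemma~\ref{lem:ScalingBoundary}, so that $\supp\varphi_{j_{\GGe}}^{\GGe}$ is an annulus up to the identification on $\wGGe$), while $\bigcup_{i<j-1}\supp\varphi_i^{\GGe}\subseteq 2^{j-2}\tilde\B$ is contained in a ball of radius strictly smaller than the inner radius of $2^j\rA_0$. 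Hence the sum set stays inside an annulus $2^j\rA$ with $\rA$ uniform in $j$ and $\eps$, and since it is also contained in $\wGGe$ by construction, the claim follows.

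The main obstacle is the careful treatment of the top scale $j=j_{\GGe}$, where the modified partition function $\varphi_{j_{\GGe}}^{\GGe}$ is not simply a rescaled copy of $\varphi_{j_{\GGe}}$ and is wrapped periodically on $\wGGe$: one must check that the ``annulus'' and ``ball'' statements survive this modification. The key points are that $2^{-j_{\GGe}}=\eps\cdot 2^k$ for a fixed $k\in\ZZ$ (dyadic scaling), so the top scale is comparable to $\eps^{-1}$ uniformly, and that near $\partial\wGGe$ the function $\varphi_{j_{\GGe}}^{\GGe}$ equals $1-\varphi_{-1}(2^{-j_{\GGe}}\cdot)$, which is supported outside a fixed ball — giving the required annular localization. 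Once the support of each $\varphi_j^{\GGe}$ is pinned down uniformly as above, the rest is just the elementary sum-set computation for convolutions, so I would keep that part brief.
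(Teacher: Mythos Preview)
Your approach via the convolution formula and sum-set supports is the same as the paper's, and the ball case is essentially correct (though the displayed inclusion $(\supp\varphi_i^{\GGe}+\supp\varphi_j^{\GGe})\bmod\CR^\eps\subseteq(2^i\tilde\B+2^j\tilde\B)\cap\wGGe$ is not literally true without the case distinction ``$2^j\tilde\B\subseteq\wGGe$'' versus ``$j\sim j_{\GGe}$''; in the latter case one simply takes $\B$ large enough that $2^j\B\supseteq\wGGe$).

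The annulus case, however, has a real gap. You write that the sum set $\supp\varphi_{<j-1}^{\GGe}+\supp\varphi_j^{\GGe}$ ``is also contained in $\wGGe$ by construction'', and conclude that the mod $\CR^\eps$ operation is harmless. This is false at the top scale: for $j=j_{\GGe}$ the support of $\varphi_{j_{\GGe}}^{\GGe}$ reaches $\partial\wGGe$, so adding even a small ball pushes the sum set out of $\wGGe$, and the periodic reduction $[\cdot]_{\wGGe}$ genuinely acts. The danger is that a point $x_1+x_2$ with $x_2$ near $\partial\wGGe$ could, after subtracting some $r\in\CR^\eps\setminus\{0\}$, land arbitrarily close to $0$, destroying the lower bound you need for an annulus. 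Your closing paragraph acknowledges the top scale as an obstacle, but only argues that $\supp\varphi_{j_{\GGe}}^{\GGe}$ itself is annular; it does not address the wraparound.

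What is missing is precisely the step the paper supplies: one must show that $\dist\big((\supp\varphi_{<j-1}^{\GGe}+\supp\varphi_j^{\GGe}+\CR^\eps\setminus\{0\})\cap\wGGe,\,0\big)\gtrsim 2^j$. The paper does this by observing that the sum set lies in a ball $B(0,2^j b')$ with $b'$ small enough that $B(0,2^{j_{\GGe}}b')\cap\CR^\eps=\{0\}$ (using that the nearest nonzero reciprocal lattice point is at distance $\approx\eps^{-1}\approx 2^{j_{\GGe}}$, uniformly in $\eps$ by dyadic scaling), and hence the nonzero lattice translates of this ball stay at distance $\gtrsim 2^j$ from the origin. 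You should insert this argument to close the proof.
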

\begin{proof}
We can rewrite 
\begin{align*}
\FFge \big( \varDelta^{\GGe}_i f_1 \cdot \varDelta_j^{\GGe} f_2 \big) &= (\varphi_{i}^{\GGe} \FFge f)\ast_{\wGGe}(\varphi_j^{\GGe} \FFge f_2) \\
&= \int_{\wGGe} (\varphi_{i}^{\GGe} \FFge f)(z) \cdot  (\varphi_j^{\GGe} \FFge f_2)([\cdot -z]_{\wGGe})     \dd z \,, 
\end{align*}
where we used formal notation in the last step and $[\cdot]_{\wGGe}$ as in \eqref{eq:ConvolutionBandwidth}. From this one sees that the spectral support of $\varDelta^{\GGe}_i f_1 \cdot \varDelta_j^{\GGe} f_2$ is contained in
\begin{align}
\label{eq:SupportProductBall}
(\supp \varphi_{i}^{\GGe}\, +\,\supp \varphi_j^{\GG^\eps} + \CR^\eps) \cap \wGGe \,,
\end{align}
where we recall that $\supp \varphi^{\GGe}_i=\overline{\{x\in \wGGe \, \vert\, \varphi_i^{\GGe}(x)\neq 0 \}}$ is a subset of (the closure of) $\wGGe\subseteq \RR^d$, while the sum of sets in the parentheses should be read as a subset of $\RR^d$.
Now, by the dyadic scaling of $\varphi_j^{\GGe}$ we have for all $i \le j$
\begin{align*}
\supp \varphi_{i}^{\GGe}\, +\,\supp \varphi_j^{\GG^\eps} \subseteq B(0,2^{j}\,b)
\end{align*}
for some $b>0$, independent of $\eps$ and $j$. Set: $\B_1:=B(0,b)$ and consider first the case $2^j \mathcal{B}_1=B(0,2^j b)\subseteq \wGGe$. In this case we have 
\begin{align*}
(\supp \varphi_{i}^{\GGe}\, +\,\supp \varphi_j^{\GG^\eps} + \CR^\eps) \cap \wGGe \subseteq (2^j \B_1 + \CR^\eps) \cap \wGGe = 2^j\B_1 \cap \wGG^\eps = 2^j \B_1\,.
\end{align*}
On the other hand, if $2^j\B_1=B(0,2^j b)\subsetneq \wGGe$ we are in the regime $j\sim j_{\GGe}$ and take a ball $\B_2$ around $0$ such that $2^{j} \B_2 \supseteq \wGGe$ and hence $2^{j} \B_2\cap \wGGe =\wGGe$ for all $j\sim j_{\GGe}$ (by the dyadic scaling of $\GGe$ from Definition \ref{def:LatticeSequence} we have $2^{j_{\GGe}}=c\cdot  \eps^{-1}$ so that we can choose $\B_2$ independently of $\eps$). Choosing then $\B=\B_1\cup \B_2$ shows the first part of the claim.

Let us now consider $S_{j-1}^{\GGe} f_1 \cdot \varDelta_j^{\GGe} f_2$. With $\varphi_{<j-1}^{\GG^\eps}:= \sum_{j'<j-1} \varphi_{j'}^{\GGe}$ we see as above that the spectral support of $S_j^{\GGe} f_1 \cdot \varDelta_j^{\GGe} f_2$ is contained in
\begin{align}
\label{eq:SupportParaproduct}
(\supp \varphi_{<j-1}^{\GGe}\, +\,\supp \varphi_j^{\GG^\eps} + \CR^\eps) \cap \wGGe \,, 
\end{align}
We already know from above that this set is contained in a ball of size $2^j$ so that is enough to show that \eqref{eq:SupportParaproduct} is bounded away from $0$. Since $\supp \varphi_{<j-1}^{\GGe}$ and $\supp \varphi_j^{\GGe}$ are symmetric and disjoint, we have due to the scaling from \eqref{eq:ScalingVarphij} and \eqref{eq:ScalingSumVarphij}, which we observed in the proof of Lemma \ref{lem:ScalingBoundary}, that 
\begin{align*}
 \dist (\supp \varphi_{<j-1}^{\GGe} + \supp \varphi_j^{\GGe},0) \geq 2^j a
 \end{align*} 
 for some $a>0$ and 
 \begin{align}
 \label{eq:SupportParaproductb}
\supp \varphi_{<j-1}^{\GGe} + \supp \varphi_j^{\GGe} \subseteq B(0,2^j \cdot b')\,,
 \end{align}
 for some $b'>0$.
Note, that we can choose $b'>0$ small enough such that $B(0,2^{j_{\GGe}} b')\cap \CR^\eps=\{ 0 \}$. Indeed, otherwise there are $x_1\in \supp \varphi_{<j_{\GGe}-1}^{\GGe}$, $x_2\in \supp \varphi_{j_{\GGe}}^{\GGe}$ such that $x_1+x_2=r$ for some $r\in \CR^\eps\backslash \{0\}$. But from $|x_1|<\dist(\partial \wGGe,0)$ one sees that $|x_2|=|r-x_1|> \diam(\wGGe)/2$ which contradicts $x_2\in \supp \varphi_j^{\GGe}\subseteq \wGGe$. This choice of the parameter $b'$ can be done independently of $\eps$ due to the dyadic scaling of our lattice (Definition \ref{def:LatticeSequence}).

Consequently, there exists $r>0$ such that $\dist(B(0,2^j b')+\CR^\eps \backslash\{0\},0)= 2^j r$ (to see that $r>0$ is independent of $\eps$, use once more the dyadic scaling of the sequence $\GG^\eps$). But then we have 
\begin{align*}
\dist\big((\supp \varphi_{<j-1}^{\GGe}\, +\,\supp \varphi_j^{\GG^\eps} + \CR^\eps) \cap \wGGe,\,0\big)\geq (a \wedge r) \cdot 2^j\,,
\end{align*}
which closes the proof. 

\end{proof}

\subsection*{Results related to Section \ref{sec:PAM}}
\begin{proof}[Proof of Lemma \ref{lem:ChaosExpansionConvergence}]
We will write shorthand $\widehat{f^\varepsilon_k}:=\Ff_{(\GG^\varepsilon)^k}  f^\varepsilon_k$ and $\widehat{f_k}:=\Ff_{(\RR^d)^k} f_k$. The claimed convergence is a consequence of the results in~\cite{Caravenna}. For $z\in\GG^\varepsilon$ let $G^\varepsilon(z)=z+[-\varepsilon/2,\varepsilon/2)a_1+\ldots+[-\varepsilon/2,\varepsilon/2)a_d$, where $a_1,\ldots,a_d$ denote the vectors that span $\GG$. For $x\in\RR^d$ let $[x]_\eps$ be the (unique) element in $ \GG^\varepsilon$ such that $x\in G^\varepsilon([x]_\eps)$ and for $x \in (\RR^d)^k$ set $[x]_\eps = ([x_1]_\eps, \dots, [x_k]_\eps)$.  We will start by showing 
   \begin{align}
   \label{eq:ProofChaosConvergenceCentralStatement}
   \lim_{\varepsilon \to 0} \| f^\varepsilon_k([ \cdot]_\eps) - f_k\|_{L^2((\RR^d)^k)} = 0
   \end{align} for all $k$.

By Parseval's identity we have $\| f^\varepsilon_k([ \cdot]_\eps) - f_k\|_{L^2((\RR^d)^k)} = \| \mathcal{F}_{(\RR^{d})^k}(f^\varepsilon_k([ \cdot]_\eps)) - \widehat{f_k}\|_{L^2((\RR^d)^k)}$, where $\mathcal{F}_{(\RR^{d})^k}$ denotes the Fourier transform on $(\RR^d)^k$ for which one easily checks that
\[
   \mathcal{F}_{(\RR^{d})^k}(f^\varepsilon_k([ \cdot]_\eps)) = (\widehat{f^\varepsilon_k})_{\mathrm{ext}} \cdot p^\varepsilon_k,
\]
where we recall that $(\widehat{f^\varepsilon_k})_{\mathrm{ext}}$ is the periodic extension of the discrete Fourier transform of $f^\varepsilon_k$ (on $(\RR^d)^k$) as in \eqref{eq:PeriodicExtension2} and where
\begin{align*}
	p^\varepsilon_k(y_1, \dots, y_k) = \int_{G^1(0)^k} \frac{\dd z_1 \dots \dd z_k}{|\GG^1|^k} \,e^{-2\pi \imath \varepsilon (y_1 \scl z_1 + \dots + y_k \scl z_k)}.
\end{align*}
The function $p^\varepsilon_k$ is uniformly bounded and tends to $1$ as $\varepsilon$ goes to $0$. Now we apply Parseval's identity, once on $(\RR^{d})^k$ and once on $(\widehat{\GG^\varepsilon})^k$, and obtain
\begin{align*}
	\int_{(\RR^d)^k} \dd x_1 \dots \dd x_k \,\left| \big((\widehat{f^\varepsilon_k})_{\mathrm{ext}} \,p^\varepsilon\big)(x_1, \dots, x_k) \right|^2 & = \sum_{z_1, \dots, z_k \in \GG^\varepsilon} |\GG^\varepsilon|^k |f^\varepsilon_k(z_1, \dots, z_k)|^2 \\
	& = \int_{\widehat{(\GG^\varepsilon})^k} \dd x_1 \dots \dd x_k \left| \widehat{f^\varepsilon_k}(x_1, \dots, x_k)\right|^2
\end{align*}
and thus
\begin{align*}
	\int_{((\widehat{\GG^\varepsilon})^k)^c}\dd x_1 \dots \dd x_k\,\left| \big( (\widehat{f^\varepsilon_k})_{\mathrm{ext}}\, p^\varepsilon \big)(x_1, \dots, x_k) \right|^2 = \int_{\widehat{(\GG^\varepsilon})^k} \dd x_1 \dots \dd x_k \big(| \widehat{f^\varepsilon_k}|^2 (1 - |p^\varepsilon|^2 \big)(x_1, \dots, x_k) \,.
\end{align*}
Since $\1_{(\widehat{\GG^\varepsilon})^k} \widehat{f^\varepsilon_k}$ is uniformly in $\varepsilon$ bounded by $g_k \in L^2((\RR^d)^k)$ and since $1 - |p^\varepsilon|^2$ converges pointwise to zero, it follows from the dominated convergence theorem that $\1_{((\widehat{\GG^\varepsilon})^k)^c} (\widehat{f^\varepsilon_k})_{\mathrm{ext}} \,p^\varepsilon_k$ converges to zero in $L^2((\RR^d)^k)$. Thus, we get
\begin{align*}
   \lim_{\varepsilon \to 0} \| (\widehat{f^\varepsilon_k})_{\mathrm{ext}}\, p^\varepsilon_k - \widehat{f_k} \|_{L^2((\RR^d)^k)} & = \lim_{\varepsilon \to 0} \| \1_{(\widehat{\GG^\varepsilon})^k} \widehat{f^\varepsilon_k} p^\varepsilon_k  - \widehat{f_k} \|_{L^2((\RR^d)^k)} \\
   & \le \lim_{\varepsilon \to 0} \| (\1_{(\widehat{\GG^\varepsilon})^k} \widehat{f^\varepsilon_k}  - \widehat{f_k}) p^\varepsilon_k  \|_{L^2((\RR^d)^k)} + \lim_{\varepsilon \to 0} \| \widehat{f_k} (1 - p^\varepsilon_k ) \|_{L^2((\RR^d)^k)} = 0,
\end{align*}
where for the first term we used that $p^\varepsilon_k$ is uniformly bounded in $\varepsilon$ and that by assumption $\1_{(\widehat{\GG^\varepsilon})^k} \widehat{f^\varepsilon_k}$ converges to $\widehat{f_k}$ in $L^2((\RR^d)^k)$ and for the second term we combined the fact that $p^\varepsilon_k$ converges pointwise to $1$ with the dominated convergence theorem. We have therefore shown \eqref{eq:ProofChaosConvergenceCentralStatement}. Note that this implies
\begin{align}
\label{eq:ProofChaosConvergenceDiagonals}
\|f^\varepsilon_k([\cdot]_\eps) \mathbf{1}_{\forall i\neq j\, [z_i]_\eps\neq [z_j]_\eps}-f_k\|_{L^2(\RR^d)}\rightarrow 0 \qquad\& \qquad\|f^\varepsilon_k([\cdot]_\eps)  \mathbf{1}_{\exists i\neq j \, [z_i]_\eps=[z_j]_\eps} \|_{L^2(\RR^d)} \rightarrow 0\,.
\end{align}

As in the proof of Lemma \ref{lem:DiscreteItoIsometry} we identify $\GG^\varepsilon$ with an enumeration $\NN\rightarrow \GG^\varepsilon$ and use the set $A^k_r=\{a\in \mathbb{N}^r \,\vert\, \sum_i a_i=k\}$ so that we can write
\begin{align*}
\mathscr{I}_k f^\varepsilon_k=\sum_{1\leq r\leq k,\,a\in A^k_r} r! \sum_{z_1<\ldots<z_r} |\GG^\varepsilon|^k \tilde{f}^k_{\varepsilon,a}(z_1,\ldots,z_r)\cdot \prod_{j=1}^r \xi^\eps(z_j)^{\diamond {a_j}}\,,
\end{align*}
where we denote as in the proof of Lemma \ref{lem:DiscreteItoIsometry} by $\tilde{f}^k_{\varepsilon,a}$ the symmetrized restriction of $f^k_\varepsilon$ to $(\RR^d)^r$. By Theorem~2.3 of \cite{Caravenna} we see that due to \eqref{eq:ProofChaosConvergenceDiagonals} the $r=k$ term of $\mathscr{I}_k f^\varepsilon_k$ 
converges in distribution to the desired limit, so that we only have to show that the remaining terms vanish as $\varepsilon$ tends to 0. The idea is to redefine for fixed $a \in A^k_r$ the noise as $\overline{\xi}^\varepsilon_j(z)=\xi^\eps(z)^{\diamond {a_j}}/r_j^\varepsilon(z)$ where $r^\varepsilon_j(z):=\sqrt{\mathrm{Var}(\xi^\eps(z)^{\diamond {a_j}})\cdot |\GG^\varepsilon|}\lesssim |\GG^\varepsilon|^{(1-a_j)/2}$, so that in view of \cite[Lemma 2.3]{Caravenna} it suffices to show that 
\begin{align*}
\sum_{z_1<\ldots<z_r} |\GG^\varepsilon|^r \prod_{j=1}^r r^\varepsilon_j(z_j)^2 \cdot |\tilde{f}^\varepsilon_{k,a}(z_1,\ldots,z_r)|^2\lesssim 
\sum_{z_1<\ldots<z_r} |\GG^\varepsilon|^k \cdot |\tilde{f}^\varepsilon_{k,a}(z_1,\ldots,z_r)|^2 \rightarrow	 0\,,
\end{align*}
but this follows from \eqref{eq:ProofChaosConvergenceDiagonals}.
\end{proof}

\begin{lemma}
\label{lem:Fatou}
Let $(f_n)_{n\geq 0}$ be a sequence which is bounded in the space $\mathscr{L}^{\gamma,\alpha}_{p,T}(\GG,e^\sigma_l)$ and let $\alpha'\in(0,\alpha)$ and $\sigma'\in (0,\sigma)$. There is a subsequence $(f_{n_k})_{k\geq 0}$, convergent in $\mathscr{L}^{\gamma,\alpha'}_{p,T}(\GG,e^{\sigma'}_l)$, with limit $f$ such that
\begin{align}
\label{eq:Fatou}
\|f\|_{\mathscr{L}^{\gamma,\alpha}_{p,T}(\GG,e^{\sigma}_l)}\leq \liminf_{k\rightarrow \infty} \|f_{n_k}\|_{\mathscr{L}^{\gamma,\alpha}_{p,T}(\GG,e^{\sigma}_l)}
\end{align}
\end{lemma}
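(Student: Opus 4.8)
The statement is a compactness-plus-lower-semicontinuity result of Banach--Alaoglu / Fatou type, so the plan is to combine a weak-type compactness argument (exploiting the compact embeddings from Lemma~\ref{lem:DiscreteEmbedding}) with lower semicontinuity of the stronger norm under convergence in the weaker one. First I would unwind the definition of $\mathscr{L}^{\gamma,\alpha}_{p,T}(\GG,e^\sigma_l)$: a bounded sequence $(f_n)$ controls both $\|t\mapsto t^\gamma f_n(t)\|_{C^{\alpha/2}_T L^p(\GG,e^\sigma_l)}$ and $\|f_n\|_{\mathcal{M}^\gamma_T \mathcal{C}^\alpha_p(\GG,e^\sigma_l)}$. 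Since $\GG$ is a fixed (unscaled) lattice here, the spatial spaces $\mathcal{C}^\alpha_p(\GG,e^\sigma_l)$ on a fixed lattice are, up to equivalence of norms, just weighted $\ell^p$ spaces, and the embedding $\mathcal{C}^\alpha_p(\GG,e^\sigma_l)\hookrightarrow \mathcal{C}^{\alpha'}_p(\GG,e^{\sigma'}_l)$ is compact when $\alpha'<\alpha$ and $e^{\sigma'}_l/e^\sigma_l \to 0$ at infinity, which holds since $\sigma'<\sigma$ (so $e^{-(l+t)(1+|x|)^{\sigma}}/e^{-(l+t)(1+|x|)^{\sigma'}} = e^{-(l+t)[(1+|x|)^\sigma-(1+|x|)^{\sigma'}]}\to 0$; one has to be mildly careful with the sign of $l+t$, but since we only need decay of the ratio and $l+t$ ranges in a bounded set, a crude bound suffices).

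The extraction of a convergent subsequence then proceeds via an Arzelà--Ascoli argument in the time variable. For each fixed rational $t\in[0,T]\cap\mathbb{Q}$, the sequence $(t^\gamma f_n(t))_n$ is bounded in $\mathcal{C}^\alpha_p(\GG,e^\sigma_l)$, hence precompact in $\mathcal{C}^{\alpha'}_p(\GG,e^{\sigma'}_l)$; by a diagonal argument I extract a subsequence converging at all rational times. The Hölder-in-time bound $\|t^\gamma f_n(t)-s^\gamma f_n(s)\|_{L^p(\GG,e^\sigma_l)}\lesssim |t-s|^{\alpha/2}$ (from the $C^{\alpha/2}_T L^p$ part of the norm) provides equicontinuity of $t\mapsto t^\gamma f_n(t)$ with values in $L^p(\GG,e^\sigma_l)\hookrightarrow L^p(\GG,e^{\sigma'}_l)$, and interpolating this with the uniform $\mathcal{C}^\alpha_p$ bound upgrades convergence at rational times to uniform convergence on $[0,T]$ in $\mathcal{C}^{\alpha'}_p(\GG,e^{\sigma'}_l)$ (with the weight $t^\gamma$; for $\gamma>0$ the factor $t^\gamma$ handles the possible singularity at $t=0$, and dividing back out gives convergence of $f_{n_k}$ itself in $\mathcal{M}^\gamma_T\mathcal{C}^{\alpha'}_p$). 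Similarly the Hölder seminorm in time passes to the limit at the level $\alpha'/2<\alpha/2$ by the same interpolation. This yields the claimed subsequence $(f_{n_k})$ converging to some $f$ in $\mathscr{L}^{\gamma,\alpha'}_{p,T}(\GG,e^{\sigma'}_l)$.

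For the lower-semicontinuity bound~\eqref{eq:Fatou}, I would argue norm by norm. The $\mathcal{M}^\gamma_T\mathcal{C}^\alpha_p$ part decomposes as a supremum over $t\in[0,T]$ and over Littlewood--Paley indices $j$ of quantities of the form $2^{j\alpha}\|e^\sigma_{l+t}\varDelta_j^\GG(t^\gamma f(t))\|_{L^p(\GG)}$ (using that for fixed $\GG$ there are only finitely many $j$). For each fixed $t$ and $j$, convergence in $\mathscr{L}^{\gamma,\alpha'}_{p,T}(\GG,e^{\sigma'}_l)$ implies $\varDelta_j^\GG(t^\gamma f_{n_k}(t)) \to \varDelta_j^\GG(t^\gamma f(t))$ pointwise on $\GG$ (since $\varDelta_j^\GG$ is a bounded operator and pointwise evaluation on a lattice is continuous on the relevant spaces); then Fatou's lemma for the $\ell^p$ sum with the weight $e^\sigma_{l+t}$ gives $2^{j\alpha}\|e^\sigma_{l+t}\varDelta_j^\GG(t^\gamma f(t))\|_{L^p(\GG)}\le \liminf_k 2^{j\alpha}\|e^\sigma_{l+t}\varDelta_j^\GG(t^\gamma f_{n_k}(t))\|_{L^p(\GG)} \le \liminf_k \|f_{n_k}\|_{\mathscr{L}^{\gamma,\alpha}_{p,T}(\GG,e^\sigma_l)}$; taking the supremum over $t,j$ on the left preserves the inequality. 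The $C^{\alpha/2}_T L^p(\GG,e^\sigma_l)$ part is handled the same way: both the sup-in-time term and the Hölder difference quotient $\|(\text{stuff at }t)-(\text{stuff at }s)\|_{L^p(\GG,e^\sigma_l)}/|t-s|^{\alpha/2}$ are, for fixed $t,s$, lower semicontinuous under the pointwise (hence $L^p(\GG,e^{\sigma'}_l)$, hence after Fatou $L^p(\GG,e^\sigma_l)$) convergence, and taking the relevant suprema gives~\eqref{eq:Fatou}.

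\textbf{Main obstacle.} The delicate point is the interaction between the two different weights/regularities and the time variable: one must be sure that the equicontinuity furnished by the $C^{\alpha/2}_T L^p$ bound, when combined via interpolation with the uniform $\mathcal{C}^\alpha_p$ bound, actually yields convergence in the \emph{compact} target space $\mathcal{C}^{\alpha'}_p(\GG,e^{\sigma'}_l)$ uniformly in $t$ — in particular handling the weight gap $e^\sigma_l$ versus $e^{\sigma'}_l$ (which is what makes the spatial embedding compact) and the $t^\gamma$-singularity at the origin simultaneously. Once convergence is established in the weaker space, the Fatou-type lower bound is essentially routine because, on a \emph{fixed} lattice, all the norms are built from honest $\ell^p$ sums to which the classical Fatou lemma applies directly, and there are only finitely many Littlewood--Paley blocks so no subtlety in taking the $\ell^q$ (here $\ell^\infty$) norm over $j$. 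I would also note that this is the discrete/fixed-lattice analogue of~\cite[Lemma~A.?]{Gubinelli2017KPZ}-type statements and the continuous version on $\RR^d$ (needed in Theorem~\ref{thm:ConvergencePAM}) follows by the same argument with $\GG$ replaced by $\RR^d$ and the compact embedding taken from Lemma~\ref{lem:ContinuousEmbedding}, plus a standard localization to reduce to bounded regions of $\RR^d$ before invoking Arzelà--Ascoli.
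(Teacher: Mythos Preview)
Your proposal is correct and follows essentially the same strategy as the paper: reduce to $g_n=t^\gamma f_n$, combine the compact spatial embedding of Lemma~\ref{lem:DiscreteEmbedding} with time-equicontinuity to extract a convergent subsequence in the weaker space, and then obtain~\eqref{eq:Fatou} by applying Fatou's lemma term by term to the $\ell^p$ sums defining the norm. The only cosmetic difference is that the paper first interpolates to get joint H\"older regularity $C_T^{\tilde{\alpha}/2}\mathcal{C}_p^{\delta_x}\cap C_T^{\delta_t}\mathcal{C}_p^{\tilde{\alpha}}$ and then invokes a single compact embedding, whereas you carry out the Arzel\`a--Ascoli/diagonal extraction by hand; both routes are standard and equivalent here.
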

\begin{proof}
Take in the following $\tilde{\alpha}=\frac{\alpha+\alpha'}{2}$ and $\tilde{\sigma}=\frac{\sigma+\sigma'}{2}$. By Definition of $\mathscr{L}^{\gamma,\alpha}_{p,T}(\GG,e^\sigma_l)$ we know that $(g_n)_{n\geq 0}:=\big((t,x)\mapsto t^{\gamma} f_n(t,x) \big)_{n\geq 0}$ is bounded in $C_T^{\alpha/2}L^p(\GG,e^\sigma_l)\cap C_T\cC_p^{\alpha}(\GG,e^\sigma_l)$. Interpolation then shows that $(g_n)_{n\geq 0}$ is bounded in $C_T^{\tilde{\alpha}/2}\cC_p^{\delta_x}(\GG,e^\sigma_l)\cap C_T^{\delta_t}\cC_p^{\tilde{\alpha}}(\GG,e^\sigma_l)$ for some $\delta_x,\,\delta_t>0$. We obtain by compact embedding (Lemma \ref{lem:DiscreteEmbedding}) for $\delta_x'\in (0,\delta_x),\,\delta_t'\in (0,\delta_t)$ the existence of a convergent subsequence $(g_{n_k})_{k\geq 0}$ in $C_T^{\alpha'}\cC_p^{\delta_x'}(\GG,e^{\sigma'}_l)\cap C_T^{\delta_t'}\cC_p^{\alpha'}(\GG,e^{\sigma'}_l)$ with some limit $g$. From the convergence of $g_{n_k}\rightarrow g$ in $C_T^{\alpha'}\cC_p^{\delta_x'}(\GG,e^{\sigma'}_l)\cap C_T^{\delta_t'}\cC_p^{\alpha'}(\GG,e^{\sigma'}_l)$ it follows that for $f:=t^{-\gamma} g$ we have $f_{n_k}\rightarrow	f$ in $\mathscr{L}^{\gamma,\alpha'}_{p,T}(\GG,e^{\sigma'}_l)$. 

The estimate \eqref{eq:Fatou} is then just an iterative application of Fatou like arguments for the norms from which $\|\cdot\|_{\mathscr{L}^{\gamma,\alpha}_{p,T}(\GG,\rho)}$ is constructed. 
\end{proof}

{
\setlength{\glsdescwidth}{7in}
\small
\printglossaries
}


\end{document}